\crefname{hypothesis}{Hypothesis}{Hypotheses}
  \newcommand{\IR}{\ensuremath\mathds{R}}                        
  \newcommand{\IP}{\ensuremath\mathds{P}}                        
\newcommand*{\setE}{\ensuremath{\mathcal{T}}}                    
\newcommand*{\Gammah}{\Gamma_h}                                  
\newcommand*{\Nst}{N_\mathrm{T}}                                 
\renewcommand*{\vec}[1]{{\boldsymbol{#1}}}                       
\DeclareMathAlphabet{\mathbfsf}{\encodingdefault}{\sfdefault}{bx}{n}
\newcommand*{\normal}{\vec{n}}                                   
\newcommand*{\dd}{\mathrm{d}}                                    
\newcommand*{\grad}{\vec{\nabla}}                                
\renewcommand*{\div}{\vec{\nabla}\cdot}                          
\newcommand*{\gradh}{\vec{\nabla}_h}                             
\newcommand*{\divh}{\vec{\nabla}_h\cdot}                         
\newcommand*{\laplace}{\upDelta}                                 
\newcommand*{\strain}{\mathcal{D}}                
\newcommand*{\avg}[1]{\{{#1}\}}                                  
\newcommand*{\jump}[1]{[{#1}]}                                   
\newcommand*{\abs}[1]{\ensuremath{|#1|}}                         
\newcommand*{\norm}[2]{\|#1\|_{#2}}                              
\newcommand*{\on}[2]{\left.#1\right\vert_{#2}}                   
\newcommand{\adif}{a_\mathrm{diff}}
\newcommand{\aadv}{a_\mathrm{adv}}
\newcommand{\erru}{\bm{\xi}_{\bm{u}}}
\newcommand{\errv}{\bm{\xi}_{\bm{v}}}
\newcommand{\DG}{\mathrm{DG}}
\newcommand{\vertiii}[1]{{\left\vert\kern-0.25ex\left\vert\kern-0.25ex\left\vert #1 
    \right\vert\kern-0.25ex\right\vert\kern-0.25ex\right\vert}}%
  \newcolumntype{R}{>{\raggedleft\arraybackslash}X}
  \newcolumntype{L}{>{\raggedright\arraybackslash}X}
  \newcolumntype{C}{>{\centering\arraybackslash}X}
\crefname{figure}{Fig.}{Figs.}
\Crefname{figure}{Figure}{Figures}
\crefname{section}{Sec.}{Secs.}
\Crefname{section}{Section}{Sections}
\crefname{table}{Tab.}{Tables}
\Crefname{table}{Table}{Tables}
\crefname{theorem}{Thm.}{Thms.}
\Crefname{theorem}{Theorem}{Theorems}
\title{Convergence of a Decoupled Splitting Scheme for the Cahn--Hilliard--Navier--Stokes System}
\author{Chen Liu\thanks{Department of Mathematics,  Purdue University, 150 North University Street, West Lafayette, Indiana 47907 (\email{liu3373@purdue.edu}).}
\and Rami Masri\thanks{Department of Numerical Analysis and Scientific Computing, Simula Research Laboratory, Oslo 0164, Norway (\email{rami@simula.no}).}
\and Beatrice Riviere\thanks{Department of Computational Applied Mathematics and Operations Research, Rice University, 6100 Main Street, Houston, Texas 77005 (\email{riviere@rice.edu}). Partially supported by NSF-DMS 2111459.}}
\begin{document}

\maketitle

\begin{abstract}
This paper is devoted to the analysis of an energy-stable  discontinuous Galerkin algorithm for solving the Cahn--Hilliard--Navier--Stokes equations within a decoupled splitting framework. 
We show that the proposed scheme is uniquely solvable and mass conservative. The energy dissipation and the $L^\infty$ stability of the order parameter are obtained under a CFL condition. Optimal a priori error estimates in the broken gradient norm and in the $L^2$ norm are derived.
The stability proofs and error analysis are based on induction arguments and do not require any regularization of the potential function.
\end{abstract}

\begin{keywords}
Cahn--Hilliard--Navier--Stokes, discontinuous Galerkin, stability, optimal error bounds
\end{keywords}
\begin{AMS}
65M12, 65M15, 65M60
\end{AMS}

\section{Introduction}
The Cahn--Hilliard--Navier--Stokes (CHNS) system serves as a fundamental phase-field model extensively used  in many fields of science and engineering. 
The simulation of the CHNS equations is a challenging computational task primarily because of: 
(\emph{i}) the coupling of highly nonlinear equations; and
(\emph{ii}) the requirement of preserving certain physical principles, such as conservation of mass and dissipation of energy.
A common approach to overcome these difficulties is to decouple the mass and momentum equations, and to further split the nonlinear convection from the incompressibility constraint \cite{shen2012modeling}. 
The splitting scheme constructed from this strategy only requires the successive solution of several simpler equations at each time step. Thus, such an algorithm is  both convenient for programming and efficient in large-scale simulations.
A non-exhaustive list of several computational papers on the CHNS model include \cite{dong2012time,BaoShi2012,ChenShen2016,liu2020efficient,ZhaoHan2021,liu2022pressure}. 
\par
The analysis of semi-discrete  spatial formulations with continuous and discontinuous Galerkin (dG) methods for solving the CHNS equations has been extensively investigated. 
Without being exhaustive, we refer to the papers  \cite{feng2006fully,styles2008finite,diegel2017convergence,LiuRiviere2018numericalCHNS} for the study of fully coupled schemes. 
For decoupled splitting algorithms based on projection methods, we mention a few papers \cite{HanWang2015,cai2018error,shen2010numerical,shen2010phase}. 
Han and Wang in \cite{HanWang2015} introduce a second order in time scheme and show unique solvability, but this work does not contain any theoretical proof of convergence.
Cai and Shen in \cite{cai2018error} formulate an energy-stable scheme and show convergence based on a compactness argument. In this work, in order to obtain energy dissipation, the authors introduced an additional stabilization term. Similar stabilizing strategies can be found in \cite{shen2010numerical,shen2010phase}. Although this technique enforces a discrete energy law, it also introduces an extra consistency error.
A major difficulty in proving optimal convergence error rates of a numerical scheme for the CHNS system arises from the nonlinear potential function. A widely used regularization technique is to truncate the potential and to extend it  with a quadratic growth \cite{cai2018error,shen2012modeling,li2020discontinuous}. An important objective of our work is to obtain a rigorous convergence analysis of the scheme without modifying and regularizing the potential function.
To the best of our knowledge, the theoretical analysis of a decoupled splitting scheme in conjunction with interior penalty dG discretization without any regularization on the potential function is not available in literature.
\par
The main contribution of this work is the stability and error analysis of a dG discretization of a splitting scheme for the CHNS model. 
We prove the energy stability, the $L^\infty$ stability of the order parameter, and we derive the optimal a priori error bounds in both the broken gradient norm and the $L^2$ norm. 
Our analysis is novel and general  in sense that: 
(\emph{i}) we successfully avoid using any artificial stabilizing terms (which introduce an extra consistency error) when discretizing the CHNS system, and
(\emph{ii}) no regularization (truncation and extension) assumptions on the potential function are needed for the analysis.
The proofs are technical and rely on induction arguments. A priori error bounds are valid for convex domains because the convergence
analysis utilizes dual problems.  Our arguments can be extended to analyze splitting algorithms for other type of phase-field models.
\par
The outline of this paper is as follows. In Section~\ref{sec:CHNS_model}, the CHNS mathematical model is presented. In Section~\ref{sec:scheme}, we introduce the fully discrete numerical scheme. The unique solvability of the scheme is proved in Section~\ref{sec:solvability}. We show that our scheme is energy stable in Section~\ref{sec:stability}, and we derive error estimates in Section~\ref{sec:error_analysis}. Numerical experiments validating our theoretical results are presented in  Section~\ref{sec:numerical_experimants}. Concluding remarks follow.

\section{Model problem}\label{sec:CHNS_model}
Let $\Omega\subset\IR^d$ ($d=2$ or $3$) be an open bounded polyhedral domain and let $\normal$ denote the unit outward normal to the boundary $\partial\Omega$.
In the context of incompressible immiscible two-phase flows, we introduce a scalar field order parameter as a phase indicator, which is defined as the difference between mass fractions.
The unknown variables in the CHNS system are the order parameter $c$, the chemical potential $\mu$, the velocity $\vec{u}$, and the pressure $p$, satisfying:
\begin{subequations}\label{eq:CHNS:model}
  \begin{align}
    \partial_t{c} - \Delta \mu  + \div{(c\vec{u})}&= 0 && \text{in}~(0,\, T]\times\Omega,\label{eq:CHNS:model1}\\
    \mu &= \Phi'(c) - \kappa\, \laplace{c} && \text{in}~(0,\, T]\times\Omega,\label{eq:CHNS:model2}\\
    \partial_t{\vec{u}} + \vec{u}\cdot\grad{\vec{u}} - \mu_\mathrm{s}\laplace{\vec{u}} &= -\grad{p} - c\grad{\mu} && \text{in} ~(0,\, T]\times\Omega,\label{eq:CHNS:model3}\\
    \div{\vec{u}} &= 0 && \text{in} ~(0,\, T]\times\Omega.\label{eq:CHNS:model4}
  \end{align}
\end{subequations}
The parameter $\kappa$ and shear viscosity $\mu_\mathrm{s}$ are positive constants. The Ginzburg--Landau potential function $\Phi$ is
defined by:
\begin{equation}\label{eq:CHNS_GL_potential}
  \Phi(c) = \frac{1}{4}(1-c)^2(1+c)^2.
\end{equation}
This polynomial potential can be decomposed into the sum of a convex part $\Phi_{+}$ and a concave part $\Phi_{-}$. We have:
\begin{align*}
  \Phi = \Phi_{+} + \Phi_{-}, \quad\text{where}~~
  \Phi_{+} = \frac{1}{4}(1+c^4) ~~\text{and}~~
  \Phi_{-} = -\frac{1}{2}c^2.
\end{align*}
We supplement our model problem \eqref{eq:CHNS:model} with the following initial and boundary conditions:
\begin{subequations}\label{eq:proj_CHNS:ini_boundary_conditions}
  \begin{align}
    c &= c^0, & \vec{u} &= \vec{u}^0 & && &\text{on}~\{0\}\times\Omega, \label{eq:proj_CHNS:model:initial_c}\\
    \grad{c}\cdot\normal &= 0, & \grad{\mu}\cdot\normal &= 0, & \vec{u} = \vec{0} && &\text{on}~(0,\, T]\times\partial\Omega. \label{eq:proj_CHNS:model:BC}
  \end{align}
\end{subequations}
%
Let $\overline{c_0}$ denote the average of the initial order parameter. 
The model problem~\eqref{eq:CHNS:model} satisfies the global mass conservation property:
  \begin{equation}\label{eq:model_mass_conservation}
    \frac{1}{\abs{\Omega}} \int_\Omega c = \frac{1}{\abs{\Omega}} \int_\Omega c^0 = \overline{c_0},
  \end{equation}
as well as the energy dissipation property \cite{shenYang2015,FrankLiuAlpakRiviere2018}. Let $F$ denote the total energy of the system.
  \begin{equation}\label{eq:CHNS:energy}
    F(c,\vec{u})=\int_\Omega \frac{1}{2}\abs{\vec{u}}^2
    + \int_\Omega \Big(\Phi(c) + \frac{\kappa}{2}\abs{\grad c}^2\Big), \quad \frac{\dd}{\dd t} F(c,\vec{u}) \leq 0.
  \end{equation}

We end this section by briefly stating the functional setting used throughout the paper.
For a given real number $p\geq1$, on  a domain $\mathcal{O}\in \IR^d$, where $d=2$ or $3$, the standard notation for the $L^p(\mathcal{O})$ spaces is employed. Let $(\cdot,\cdot)_\mathcal{O}$ denote the $L^2$ inner product over $\mathcal{O}$. We also define
\begin{align*}
  L^{2}_0(\mathcal{O}) = \{\omega\in L^2(\mathcal{O}):~(\omega,1)_\mathcal{O}=0\}.
\end{align*}
Let $D^\vec{\alpha}$ denote the weak $\vec{\alpha}$-th partial derivative with multi-index $\vec{\alpha}$. For a given integer $m\geq0$, the Sobolev space $W^{m,p}(\mathcal{O})$ is defined by
\begin{align*}
  W^{m,p}(\mathcal{O}) = \{\omega\in L^p(\mathcal{O}):~D^\vec{\alpha}\omega \in L^p(\mathcal{O}),~\forall\abs{\vec{\alpha}}\leq m\}.
\end{align*}
The usual Sobolev semi-norm $\abs{\cdot}_{W^{m,p}(\mathcal{O})}$ and norm $\norm{\cdot}{W^{m,p}(\mathcal{O})}$ are employed. We introduce the space $H^m(\mathcal{O}) = W^{m,2}(\mathcal{O})$ with the associated semi-norm $\abs{\cdot}_{H^{m}(\mathcal{O})}=\abs{\cdot}_{W^{m,2}(\mathcal{O})}$ and norm $\norm{\cdot}{H^{m}(\mathcal{O})}=\norm{\cdot}{W^{m,2}(\mathcal{O})}$.
For convenience, we use $(\cdot,\cdot)$  and $\|\cdot\|$ to denote the $L^2$ inner product and the $L^2$ norm, when $\mathcal{O}$ is the whole computational domain.

\section{Scheme}\label{sec:scheme}
Let $\setE_h = \{E_i\}$ be a family of conforming nondegenerate (regular) quasi-uniform meshes of the computational domain $\Omega$ with the maximum element diameter $h$. 
 The mesh consists of simplices or of parallelepiped (parallelograms for $d=2$). 
Let $\Gammah$ denote the set of interior faces. For each interior face $e \in \Gammah$ shared by elements $E_{i^-}$ and $E_{i^+}$, with $i^- < i^+$, we define a unit normal vector $\normal_e$ that points from $E_{i^-}$ into $E_{i^+}$. For a boundary face $e$, i.\,e., $e = \partial E_{i^-} \cap \partial\Omega$, the normal $\normal_e$ is taken to be the unit outward vector to $\partial\Omega$. We also denote by $\normal_E$ the unit normal vector outward to the element $E$. We introduce the broken Sobolev spaces, $s\geq1$,
\begin{equation*}
H^s(\setE_h) = \big\{\omega\in L^2(\Omega):~\forall E \in \setE_h,\, \on{\omega}{E} \in H^s(E) \big\}. 
\end{equation*}
The average and jump operators of any scalar function $\omega \in H^s(\mathcal{T}_h)$ is defined for each interior face $e\in\Gammah$ by
\begin{align*}
\avg{\omega}|_e = \frac{1}{2}\on{\omega}{E_{i^-}}\! + \frac{1}{2}\on{\omega}{E_{i^+}}, \quad
\jump{\omega}|_e = \on{\omega}{E_{i^-}}\! - \on{\omega}{E_{i^+}}, \quad e = \partial E_{i^-} \cap \partial E_{i^+}.
\end{align*}
If $e$ belongs to the boundary $\partial\Omega$, the jump and average of $\omega$ coincide with its trace on $e$. The related definitions of any vector quantity in $H^s(\setE_h)^d$ are similar~\cite{riviere2008}. 
Fix an integer $k\geq 1$ and denote by $\IP_{k}(E)$ the set of all polynomials of degree at most $k$ on an element $E$.
Define the following discontinuous polynomial spaces for simplicial meshes: 
\begin{align*}
M_h^k &= \big\{\omega_h\in L^2(\Omega):~\forall E \in \setE_h,\, \on{\omega_h}{E} \in \IP_{k}(E) \big\}, \\
\quad M_{h0}^k &= \big\{\omega_h\in M_h^k:~(\omega_h,1) = 0 \big\},\\
\mathbf{X}_h^k &= \big\{\vec{\theta}_h \in L^2(\Omega)^d:~\forall E \in \setE_h,\, \on{\vec{\theta}_h}{E} \in \IP_{k}(E)^d \big\}.
\end{align*}
 For meshes with parallelograms or parallelepipeds, the space $\mathbb{Q}_k(E)$, namely the space of tensor product polynomials of degree at most $k$ on an element $E$, is used instead of $\mathbb{P}_k(E)$ in the above definitions. 
We now present the dG pressure projection algorithm for solving \eqref{eq:CHNS:model} with initial and boundary conditions \eqref{eq:proj_CHNS:ini_boundary_conditions}. Uniformly partition $[0,T]$ into $\Nst$ intervals with length equal to $\tau$ and 
for any $1 \leq n \leq \Nst$ let $t^n = n \tau$ and  let $\delta_\tau$ be the temporal backward finite difference operator $\delta_\tau{c_h^n} = (c_h^n-c_h^{n-1})/\tau$. The scheme consists of four sequential steps. \\ Given $(c_h^{n-1},\vec{u}_h^{n-1}) \in M_h^{k}\times\mathbf{X}_h^{k}$, compute $(c_h^n,\mu_h^n) \in M_h^{k}\times M_h^{k}$, such that for all $\chi_h \in M_h^{k}$ and for all $\varphi_h \in M_h^{k}$
\begin{align}
(\delta_\tau c_h^n,\chi_h) + a_{\mathrm{diff}}(\mu_h^n,\chi_h) + a_\mathrm{adv}(c_h^{n-1},\vec{u}_h^{n-1},\chi_h) &= 0,\label{eq:fully_dis1}\\
\big(\Phi_{+}\,\!'(c_h^n)+\Phi_{-}\,\!'(c_h^{n-1}),\varphi_h\big) + \kappa\,a_{\mathrm{diff}}(c_h^n,\varphi_h) - (\mu_h^n,\varphi_h) &= 0.\label{eq:fully_dis2}
\end{align}
Second, given $(c_h^{n-1},\mu_h^n,\vec{u}_h^{n-1},p_h^{n-1}) \in M_h^{k}\times M_h^{k}\times\mathbf{X}_h^{k}\times M_h^{k-1}$, compute $\vec{v}_h^{n} \in \mathbf{X}_h^{k}$, such that for all $\vec{\theta}_h \in \mathbf{X}_h^{k}$
\begin{multline}
\frac{1}{\tau}(\vec{v}^n_h - \vec{u}^{n-1}_h,\vec{\theta}_h) + a_{\mathcal{C}}(\vec{u}^{n-1}_h,\vec{u}^{n-1}_h,\vec{v}^n_h,\vec{\theta}_h) + \mu_\mathrm{s}a_\mathcal{D}(\vec{v}^n_h,\vec{\theta}_h)  
\\ = b_{\mathcal{P}}(\vec{\theta}_h, p_h^{n-1}) + b_{\mathcal{I}}(c_h^{n-1},\mu_h^n,\vec{\theta}_h).\label{eq:fully_dis3} 
\end{multline}
Next, given $\vec{v}_h^n \in \mathbf{X}_h^{k}$, compute $\phi_h^n \in M_{h0}^{k-1}$, such that for all $\varphi_h \in M_{h0}^{k-1}$
\begin{align}
a_{\mathrm{diff}}(\phi^n_h,\varphi_h) = -\frac{1}{\tau}b_{\mathcal{P}}(\vec{v}^n_h, \varphi_h).\label{eq:fully_dis4}
\end{align}
Finally, given $(\vec{v}_h^{n},p_h^{n-1},\phi_h^n) \in \mathbf{X}_h^{k}\times M_h^{k-1}\times M_{h0}^{k-1}$, compute $(\vec{u}^n_h, p_h^n) \in \mathbf{X}_h^{k}\times M_h^{k-1}$, such that for all $\chi_h \in M_h^{k-1}$ and for all $\vec{\theta}_h \in \mathbf{X}_h^{k}$
\begin{align}
(p^n_h,\chi_h) &= (p^{n-1}_h,\chi_h) + (\phi^n_h,\chi_h) - \sigma_\chi\mu_\mathrm{s}b_{\mathcal{P}}(\vec{v}^n_h, \chi_h),\label{eq:fully_dis5}\\
(\vec{u}^n_h,\vec{\theta}_h) &= (\vec{v}^n_h,\vec{\theta}_h) + \tau b_{\mathcal{P}}(\vec{\theta}_h, \phi^n_h).\label{eq:fully_dis6}
\end{align}
For the approximation of the initial values, let $\vec{u}_h^0$ be the $L^2$ projection of $\vec{u}^0$ and let
$c_h^0$ be the elliptic projection of $c^0$, namely $c_h^0$ satisfies
\begin{equation}
a_\mathrm{diff}(c_h^0-c^0,\chi_h) = 0,~\forall \chi_h\in M_h^{k},~\text{with constraint}~(c_h^0-c^0,1) = 0. \label{eq:ch0ellip}
\end{equation}
In addition, we set $p_h^0 = \phi_h^0 = 0$  and $\bm{v}_h^0 = \bm{u}_h^0$.  The parameter $\sigma_\chi$ is a (user-specified) positive number 
that can be chosen between $0$ and $1/(4d)$. 
\par 
The forms $a_\mathrm{diff}$ and $a_\mathcal{D}$ are the SIPG discretizations of the scalar and vector Laplace operator, $-\laplace{\omega}$ and $-\laplace{\vec{v}}$, respectively.
Let $\tilde{\sigma}\geq 1, \sigma\geq 1$ be given penalty parameters. We define 
\begin{align}
a_{\mathrm{diff}}&(\omega,\chi) =
\sum_{E\in\setE_h} \int_E \grad \omega \cdot \grad \chi
-\sum_{e\in\Gammah} \int_e \avg{\grad \omega \cdot \normal_e} \jump{\chi}\label{eq:CHNS:DG_diffusion}\\
&-\sum_{e\in\Gammah} \int_e \avg{\grad \chi \cdot \normal_e} \jump{\omega}
+ \frac{\tilde{\sigma}}{h} \sum_{e\in\Gammah}\int_e \jump{\omega}\jump{\chi}, \quad \forall \omega,\chi\in H^2(\mathcal{T}_h), \nonumber \\
a_\mathcal{D}&(\vec{v}, \vec{\theta}) = 
\sum_{E\in\setE_h} \int_E \grad{\vec{v}}:\grad{\vec{\theta}} 
- \sum_{e\in\Gammah\cup\partial\Omega} \int_e\avg{\grad{\vec{v}}\,\normal_e}\cdot\jump{\vec{\theta}} \label{eq:CHNS:DG_strain}\\
&-\sum_{e\in\Gammah\cup\partial\Omega} \int_e\avg{\grad{\vec{\theta}}\,\normal_e}\cdot\jump{\vec{v}}
+ \frac{\sigma}{h}\sum_{e\in\Gammah\cup\partial\Omega} \int_e \jump{\vec{v}}\cdot\jump{\vec{\theta}}, \quad\forall \vec{v}, \vec{\theta}\in H^2(\mathcal{T}_h)^d. \nonumber 
\end{align}
The dG form $a_{\mathcal{C}}: H^2(\setE_h)^d\times H^2(\setE_h)^d\times H^2(\setE_h)^d\times H^2(\setE_h)^d\rightarrow \IR$ of the convection term $\vec{v}\cdot\grad{\vec{v}}$ is
\begin{align}\label{eq:CHNS:DG_convection}
a_{\mathcal{C}}(\vec{w},\vec{v},\vec{z},\vec{\theta}) =&
\sum_{E\in\setE_h}\Big(\int_E (\vec{v}\cdot\grad\vec{z})\cdot\vec{\theta} 
+ \int_{\partial E_{-}^\vec{w}} \abs{\avg{\vec{v}}\cdot\normal_E}\,(\vec{z}^\mathrm{int}-\vec{z}^\mathrm{ext})\cdot\vec{\theta}^\mathrm{int}\Big)\\
&+ \frac{1}{2} \sum_{E\in\setE_h}\int_E (\div{\vec{v}})\,\vec{z}\cdot{\vec{\theta}}
- \frac{1}{2} \sum_{e\in\Gammah\cup\partial\Omega}\int_e \jump{\vec{v}\cdot\normal_e}\avg{\vec{z}\cdot\vec{\theta}}. \nonumber 
\end{align}
Here, the set $\partial E_{-}^\vec{w}$ is the inflow part of $\partial E$, defined by 
$\partial E_{-}^\vec{w} = \big\{\vec{x}\in\partial E: \avg{\vec{w}(\vec{x})}\cdot\normal_E <0 \big\}$,
and the superscript $\mathrm{int}$ (resp. $\mathrm{ext}$) refers to the trace of the function on a face of $E$ coming from the interior of $E$ (resp. coming from the exterior of $E$ on that face). In addition, if the face lies on the boundary of the domain, we take the exterior trace to be zero.
The discretization of the linear advection term $\div{(c\vec{v})}$ is done with the dG form $a_{\mathrm{adv}}: H^2(\setE_h)\times H^2(\setE_h)^d\times H^2(\setE_h)\rightarrow \IR$:
\begin{equation}\label{eq:CHNS:DG_advection} 
a_{\mathrm{adv}}(c,\vec{v},\chi) = 
-\sum_{E\in\setE_h} \int_E c\vec{v}\cdot\grad{\chi}
+\sum_{e\in\Gammah} \int_e \avg{c} \avg{\vec{v}\cdot\normal_e} \jump{\chi}.
\end{equation}
The dG form $b_\mathcal{I}: H^2(\setE_h)\times H^2(\setE_h)\times H^2(\setE_h)^d\rightarrow \IR$ of the interface term $-c\grad{\mu}$ is equal to $a_\mathrm{adv}$ with switched arguments:
\begin{align}\label{eq:CHNS:DG_interface}
b_\mathcal{I}(c,\mu,\vec{\theta}) = a_\mathrm{adv}(c,\vec{\theta},\mu).
\end{align}
Finally, for the discretization of the gradient and divergence terms, such as $-\grad{p}$, $-\grad{\phi}$, and $\div{\vec{v}}$, we introduce the dG bilinear form $b_{\mathcal{P}}: H^2(\setE_h)^d \times H^1(\setE_h) \rightarrow \IR$:
\begin{align}\label{eq:CHNS:DG_pressure}
b_{\mathcal{P}}(\vec{\theta},p) =
\sum_{E\in\setE_h} \int_E p\div{\vec{\theta}}
-\sum_{e\in\Gammah\cup\partial\Omega} \int_e \avg{p}\jump{\vec{\theta}\cdot\normal_e}.
\end{align}
With Green's theorem, an equivalent expression for $b_{\mathcal{P}}$ is:
\begin{align}\label{eq:CHNS:DG_pressure2}
b_{\mathcal{P}}(\vec{\theta},p) =
-\sum_{E\in\setE_h} \int_E \vec{\theta} \cdot \grad{p}
+\sum_{e\in\Gammah} \int_e \avg{\vec{\theta}\cdot\normal_e}\jump{p}.
\end{align}
\par
The broken space $H^1(\setE_h)$ and discrete space $M_h^k$  are equiped with the semi-norm $\vert \cdot \vert_{\mathrm{DG}}$.
\begin{align*}
\vert \omega\vert_{\mathrm{DG}}^2 = \sum_{E\in\setE_h} \norm{\grad{\omega}}{L^2(E)}^2 +  \frac{\tilde{\sigma}}{h} \sum_{e\in\Gammah} \norm{\jump{\omega}}{L^2(e)}^2, \quad \forall \omega \in H^1(\setE_h).
\end{align*}
Note, $\vert \cdot\vert_{\mathrm{DG}}$ is a norm on $H^1(\setE_h)\cap L^2_0(\Omega)$. The vector space $H^1(\setE_h)^d$ is equiped with the following norm:
\begin{align*}
\norm{\vec{v}}{\mathrm{DG}}^2 = \sum_{E\in\setE_h} \norm{\grad{\vec{v}}}{L^2(E)}^2 + \frac{\sigma}{h} \sum_{e\in\Gammah\cup\partial\Omega} \norm{\jump{\vec{v}}}{L^2(e)}^2, \quad\forall\vec{v}\in H^1(\setE_h)^d.
\end{align*}
We now recall several properties satisfied by the dG forms. The forms $a_\mathrm{diff}$ and $a_\mathcal{D}$ are coercive.
There exist $\tilde{\sigma}_0$ and $\sigma_0$ such that for all $\tilde{\sigma}\geq \tilde{\sigma}_0$
and $\sigma\geq\sigma_0$, there exist $K_\alpha>0$ and $K_\mathcal{D}>0$ independent of $h$ such that
\begin{align}
K_\alpha \vert\omega_h\vert_{\mathrm{DG}}^2 &\leq a_{\mathrm{diff}}(\omega_h,\omega_h), \quad \forall \omega_h\in M_h^k,  \label{eq:coercivity_adiff}\\
K_\strain \norm{\vec{v}_h}{\mathrm{DG}}^2 &\leq a_\strain(\vec{v}_h,\vec{v}_h), \quad \forall\vec{v}_h\in \mathbf{X}_h^k. \label{eq:coercivity_astrain}
\end{align}
 Without loss of generality, we can set $K_\alpha = K_\strain = 1/2$. Indeed, this is true  if $\tilde{\sigma}_0 = \sigma_0  = 2N_{\mathrm{face}} C^2_\mathrm{tr}$ where  $C_\mathrm{tr}$ is the trace constant in
\eqref{eq:trace_estimate_discrete}  (that depends on $k$)  and $N_\mathrm{face}$ is the maximum number of faces of a mesh element.  Let us denote 
\begin{equation}\label{eq:tildeMk}
\tilde{M}_k = 2N_{\mathrm{face}} C^2_\mathrm{tr}.
\end{equation} 
Continuity also holds for these two forms.  There exist  constants $C_\alpha>0$ and $C_\strain>0$ independent of mesh size $h$, such that 
\begin{align}
\abs{a_{\mathrm{diff}}(\omega_h,\chi_h)} &\leq C_\alpha \vert\omega_h\vert_{\mathrm{DG}} \vert\chi_h\vert_{\mathrm{DG}}, \quad \forall \omega_h, \chi_h\in M_h^k, \label{eq:adiffcont}\\
\abs{a_\strain(\vec{v}_h,\vec{\theta}_h)} &\leq C_\strain\norm{\vec{v}_h}{\mathrm{DG}} \norm{\vec{\theta}_h}{\mathrm{DG}}, \quad
\forall \vec{v}_h, \vec{\theta}_h \in \mathbf{X}_h^k. \label{rq:continuity_a_D}
\end{align}
The form $a_\mathrm{adv}$ satisfies the following bounds (see Lemma~3.2 in \cite{LiuRiviere2018numericalCHNS}). For all $ c_h, \chi_h\in M_h^k,$ and for all $\vec{v}_h\in \mathbf{X}_h^k.$ 
\begin{align}
\vert a_{\mathrm{adv}}(c_h,\vec{v}_h,\chi_h)\vert &\leq C_\gamma \Big( \vert c_h \vert_\mathrm{DG} 
+ \vert\int_\Omega c_h\vert \Big) \Vert \vec{v}_h\Vert_\mathrm{DG} \, \vert \chi_h\vert_{\mathrm{DG}}, \label{eq:bound_aadv_1}  \\ 
|\aadv(c_h, \bm{v}_h, \chi_h )| &\leq C_\gamma \Big(\vert c_h\vert_{\mathrm{DG}} + \vert\int_{\Omega} c_h\vert \Big) \|\bm{v}_h\|^{1/2} \|\bm{v}_h\|_{\DG}^{1/2}\vert\chi_h\vert_{\mathrm{DG}}. \label{eq:bound_aadv_2}  
\end{align}
The form $a_{\mathcal{C}}$ satisfies the following positivity property:
\begin{align}\label{eq:aCformpos}
a_{\mathcal{C}}(\vec{v}_h,\vec{v}_h,\vec{\theta}_h,\vec{\theta}_h)\geq 0, \quad \forall \vec{v}_h,  \vec{\theta}_h\in \mathbf{X}_h^k.
\end{align}
We define lift operators $R_h: \mathbf{X}_h^{k}\rightarrow M_h^{k-1}$ and $\vec{G}_h: M_h^{k-1}\rightarrow\mathbf{X}_h^{k}$ by 
\begin{alignat*}{2}
(R_h(\jump{\vec{\theta}_h}), q_h) &= \sum_{e\in\Gammah\cup\partial{\Omega}} \int_e \{q_h\} [\bm{\theta}_h] \cdot \bm{n}_e, && \quad \forall\vec{\theta}_h\in \mathbf{X}_h^{k},~ \forall q_h \in M_h^{k-1}, \\
(\vec{G}_h(\jump{\chi_h}), \bm{\theta}_h) &= \sum_{e\in\Gammah} \int_e \{\bm{\theta}_h\}\jump{\chi_h}, && \quad \forall\chi_h\in M_h^{k-1},~ \forall \vec{\theta}_h\in \mathbf{X}_h^{k}.
\end{alignat*}
 One can easily obtain the bounds  \cite{inspaper1,di2010discrete},  recalling the definition \eqref{eq:tildeMk} 
\begin{align}
       \|R_h(\jump{\bm{\theta}_h})\|&\leq  \tilde{M}_{k-1}  \Big( h^{-1} \sum_{e \in \Gamma_h \cup \partial \Omega} \|\jump{\bm{\theta}_h}\|_{L^2(e)}^2\Big)^{1/2},&& \quad\forall \bm{\theta}_h \in \mathbf{X}^{k}_h, \label{eq:lift_prop_r} \\ 
       \|\bm{G}_h(\jump{q_h})\| &\leq \tilde{M}_{k} \Big(h^{-1}\sum_{e \in \Gamma_h} \|\jump{q_h}\|_{L^2(e)}^2\Big)^{1/2}, &&\quad\forall q_h \in M^{k-1}_h. \label{eq:lift_prop_g}
       \end{align}
With the lift operators, we can rewrite $b_\mathcal{P}$ into two different ways:
\begin{align}
b_{\mathcal{P}}(\vec{\theta}_h, p_h) &= ( \divh{\vec{\theta}_h}, p_h ) - (R_h(\jump{\vec{\theta}_h}), p_h), && \quad \forall \vec{\theta}_h\in\mathbf{X}_h^{k}, \, \forall p_h \in M_h^{k-1},\label{eq:def_b_lift} \\ 
b_{\mathcal{P}}(\vec{\theta}_h, p_h) &= -(\gradh{p_h}, \vec{\theta}_h) + (\vec{G}_h(\jump{p_h}), \vec{\theta}_h),&&  \quad \forall \vec{\theta}_h\in\mathbf{X}_h^{k}, \, \forall p_h \in M_h^{k-1}. \label{eq:def_b_lift_2}
\end{align}
In the above, $\divh$ and $\gradh$ denote the broken divergence and gradient operators respectively. 
\begin{proposition}\label{thm:dis_mass_conservation}
The dG pressure correction scheme satisfies the discrete global mass conservation property, i.\,e., for any $1\leq n \leq \Nst$, we have
\begin{equation}\label{thm:CHNS:discrete_mass_conservation}
(c_h^n,1) = (c_h^{0},1) = (c^0,1) = (c(t^n),1), \quad \forall 1\leq n\leq \Nst.
\end{equation}
\end{proposition}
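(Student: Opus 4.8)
The plan is to test the discrete mass balance \eqref{eq:fully_dis1} with the constant function and to exploit the fact that the diffusion and advection forms annihilate constants. Since $k \geq 1$, the constant function $\chi_h \equiv 1$ belongs to $M_h^{k}$, so it is an admissible test function in \eqref{eq:fully_dis1}. Note also that only this first substep of the four-step scheme is relevant, because the order parameter $c_h^n$ is determined entirely by the coupled pair \eqref{eq:fully_dis1}--\eqref{eq:fully_dis2} and is left untouched by the velocity--pressure projection steps \eqref{eq:fully_dis3}--\eqref{eq:fully_dis6}.

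First I would substitute $\chi_h = 1$ into \eqref{eq:fully_dis1}. The temporal term becomes $(\delta_\tau c_h^n, 1) = \tau^{-1}\big((c_h^n,1) - (c_h^{n-1},1)\big)$. For the diffusion term, using the definition \eqref{eq:CHNS:DG_diffusion}, every one of its four contributions carries a factor of either $\grad 1 = \vec{0}$ elementwise or the jump $\jump{1} = 0$ across interior faces (the constant is single-valued), so $a_\mathrm{diff}(\mu_h^n, 1) = 0$. Similarly, inspecting \eqref{eq:CHNS:DG_advection}, the volume contribution to $a_\mathrm{adv}(c_h^{n-1}, \vec{u}_h^{n-1}, 1)$ vanishes because of $\grad 1 = \vec{0}$, and the face contribution vanishes because of $\jump{1} = 0$. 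Hence \eqref{eq:fully_dis1} collapses to $(c_h^n, 1) = (c_h^{n-1}, 1)$.

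Next, a straightforward induction on $n$ yields $(c_h^n, 1) = (c_h^0, 1)$ for every $1 \le n \le \Nst$. To relate this to the continuous data, I would invoke the constraint imposed on the elliptic projection defining $c_h^0$ in \eqref{eq:ch0ellip}, namely $(c_h^0 - c^0, 1) = 0$, which gives $(c_h^0, 1) = (c^0, 1)$. Finally, the continuous mass conservation property \eqref{eq:model_mass_conservation} provides $\int_\Omega c(t^n) = \int_\Omega c^0$, that is, $(c(t^n), 1) = (c^0, 1)$, which closes the chain of equalities asserted in \eqref{thm:CHNS:discrete_mass_conservation}.

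There is no substantial analytical obstacle here; the argument is essentially bookkeeping. The only point requiring genuine care is the term-by-term verification that both $a_\mathrm{diff}$ and $a_\mathrm{adv}$ vanish on the constant test function. This hinges precisely on the two elementary facts that the jump of a constant is zero on every interior face and that the gradient of a constant is zero on every element, so that no penalty or consistency term survives. I would state these two facts explicitly before carrying out the substitution, since they are what make the whole scheme mass conservative.
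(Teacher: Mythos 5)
Your proposal is correct and follows essentially the same route as the paper: test \eqref{eq:fully_dis1} with $\chi_h=1$, use that $a_\mathrm{diff}$ and $a_\mathrm{adv}$ annihilate constants, then invoke the zero-mean constraint in \eqref{eq:ch0ellip} and the continuous conservation property \eqref{eq:model_mass_conservation}. The only difference is that you spell out the term-by-term vanishing of the dG forms, which the paper leaves implicit.
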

\begin{proof}
The first equality  is  obtained by choosing $\chi_h = 1$ in \eqref{eq:fully_dis1} and by using $a_{\mathrm{diff}}(\mu_h^n,1)=0$ and $a_{\mathrm{adv}}(c_h^{n-1},\vec{u}_h^{n-1},1)=0$. The second equality is a constraint on the initial order parameter and the third equality is 
from \eqref{eq:model_mass_conservation}.
\end{proof}
\par
We finish this section by recalling Poincar\'e's, inverse and trace inequalities.
For $p < +\infty$ when $d = 2$  and for $p \leq 6$ when $d = 3$,  we have \cite{girault2016strong,di2010discrete}: 
\begin{align}
\norm{\omega_h - \frac{1}{\abs{\Omega}}\int_\Omega \omega_h}{L^p(\Omega)} &\leq C_\mathrm{P} \vert \omega_h\vert_{\mathrm{DG}}, && \forall \omega_h \in M_h^k,  
\label{eq:PoincareMh} \\ 
 \Vert \phi \Vert_{L^p(\Omega)} & \leq C_\mathrm{P} \left( \vert \phi \vert_{\mathrm{DG}}^2 +  \frac{1}{\vert \Omega\vert}
\Big|\int_\Omega \phi \Big|^2 \right)^{1/2}, &&\forall \phi \in H^1(\setE_h), \label{eq:PoincareBroken}\\ 
\norm{\vec{v}_h}{L^p(\Omega)} 
& \leq C_\mathrm{P} \norm{\vec{v}_h}{\mathrm{DG}}, && \forall \vec{v}_h \in \mathbf{X}^k_h.  \label{eq:poincare_ineq}
\end{align}
Here, $C_\mathrm{P}>0$ denote a constant independent of the mesh size $h$.
The following are two well-known inverse inequalities, for $p,q \in [1,\infty]$ and $p \geq q $ ,
\begin{align}
\norm{\vec{v}_h}{L^p(\Omega)} &\leq C_\mathrm{inv}h^{d/p-d/q}\norm{\vec{v}_h}{L^q(\Omega)}, &&  \forall \vec{v}_h \in \mathbf{X}_h^k, \label{eq:inverse_estmate_lp}\\
\norm{\vec{v}_h}{\mathrm{DG}} &\leq C_\mathrm{inv}h^{-1}\norm{\vec{v}_h}{}, &&   \forall \vec{v}_h \in \mathbf{X}_h^k, \label{eq:inverse_estimate_dg}
\end{align} 
where $C_\mathrm{inv}$ is a constant independent of $h$. We also use the following trace estimates: 
\begin{align}
\|\bm{v}_h\|_{L^r(e)} \leq  C_\mathrm{tr}  h^{-1/r} \|\bm{v}_h\|_{L^r(E)}, \quad \forall \bm{v}_h \in \mathbf{X}_h^k,~ r \geq 1,~ e \in \partial E, \label{eq:trace_estimate_discrete} \\ 
\|\bm{v}\|_{L^2(e)} \leq  \tilde{C}_\mathrm{tr}  h^{-1/2}(\|\bm{v}\|_{L^2(E)} + h\|\grad \bm{v}\|_{L^2(E)}), \quad \forall \bm{v} \in H^1(\mathcal{T}_h),~ e \in \partial E. \label{eq:trace_estimate_continuous}   
\end{align}
We remark that the above inverse and trace estimates also hold for scalar valued functions. For brevity, we may refer to the above bounds for both scalar and vector valued functions.
%
%
For any function $\xi$ we denote by $\overline\xi$ the average of $\xi$:
\[
\overline \xi = \frac{1}{\vert \Omega\vert}\int_\Omega \xi.
\]
We end this section by stating a discrete Sobolev inequality. 
\begin{lemma}
Define $\Theta_{h,d}$ as follows:
\begin{equation} 
       \Theta_{h,d} = \begin{cases}
         (1 + |\ln(h)|)^{1/2}, & d = 2, \\ 
           h^{-1/2}, & d = 3. 
       \end{cases} \label{eq:def_Theta_hd}
\end{equation} 
There exists a constant $\tilde{C}_P$ independent of $h$ such that 
\begin{equation}       
\|\omega_h - \overline{\omega_h} \|_{L^{\infty}(\Omega)} 
\leq \tilde{C}_P \Theta_{h,d} |\omega_h|_{\DG}, \quad \forall \omega_h \in M_h^k. \label{eq:sobolev_ineq} 
\end{equation}
\end{lemma}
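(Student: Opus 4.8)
The plan is to combine the discrete inverse estimate \eqref{eq:inverse_estmate_lp}, which trades an $L^p$ norm for the $L^\infty$ norm at the cost of a negative power of $h$, with the Poincar\'e--Sobolev inequality \eqref{eq:PoincareMh}, and then to optimize over the integrability exponent $p$. Concretely, for any finite $p$ the inverse estimate applied with the pair $(\infty,p)$ gives $\|\omega_h - \overline{\omega_h}\|_{L^\infty(\Omega)} \le C_\mathrm{inv}\, h^{-d/p}\,\|\omega_h - \overline{\omega_h}\|_{L^p(\Omega)}$, and since $\omega_h - \overline{\omega_h}$ has zero mean, \eqref{eq:PoincareMh} controls the right-hand factor by the $\DG$ seminorm. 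The whole game is then to choose $p$ as a function of $h$ so that the product of $h^{-d/p}$ with the ($p$-dependent) Poincar\'e--Sobolev constant is as small as possible; this is exactly what produces the dimension-dependent factor $\Theta_{h,d}$ of \eqref{eq:def_Theta_hd}.

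The three-dimensional case is immediate. Because \eqref{eq:PoincareMh} is available for $d=3$ up to $p=6$, I would simply fix $p=6$. Then \eqref{eq:inverse_estmate_lp} with $q=6$ yields $h^{-d/p}=h^{-3/6}=h^{-1/2}$, while \eqref{eq:PoincareMh} bounds $\|\omega_h-\overline{\omega_h}\|_{L^6(\Omega)}$ by $C_\mathrm{P}\,\vert\omega_h\vert_{\DG}$. Multiplying the two constants gives \eqref{eq:sobolev_ineq} with $\Theta_{h,3}=h^{-1/2}$. No optimization is needed here and the resulting constant is a fixed number.

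The two-dimensional case is where the logarithm appears, and it requires a quantitative refinement of \eqref{eq:PoincareMh}, namely $\|\omega_h - \overline{\omega_h}\|_{L^p(\Omega)} \le C\,p^{1/2}\,\vert\omega_h\vert_{\DG}$ with $C$ independent of both $p$ and $h$. Granting this, the inverse estimate gives $\|\omega_h - \overline{\omega_h}\|_{L^\infty(\Omega)} \le C\,C_\mathrm{inv}\, p^{1/2} h^{-2/p}\,\vert\omega_h\vert_{\DG}$. Writing $L=\vert\ln h\vert$ and minimizing $p^{1/2}\exp(2L/p)$ over $p$ leads to the choice $p\simeq 4L=4\vert\ln h\vert$, at which $p^{1/2}h^{-2/p}=2\sqrt{e}\,\vert\ln h\vert^{1/2}(1+o(1))$. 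Taking $p=\max\{2,\,4\vert\ln h\vert\}$, which keeps $p$ admissible when $h$ is close to $1$, then yields the factor $(1+\vert\ln h\vert)^{1/2}$, i.e.\ \eqref{eq:sobolev_ineq} with $\Theta_{h,2}$ as in \eqref{eq:def_Theta_hd}.

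I expect the main obstacle to be the sharp $p^{1/2}$ dependence of the Sobolev constant in the plane, since the generic statement \eqref{eq:PoincareMh} does not expose it. I would obtain it by transferring the classical continuous bound $\|v-\overline v\|_{L^p(\Omega)}\le C\,p^{1/2}\,\|\grad v\|_{L^2(\Omega)}$ to the broken setting through a conforming averaging (Oswald-type) interpolant $I_h\omega_h\in H^1(\Omega)$ satisfying the $\DG$-stability bound $\|I_h\omega_h\|_{H^1(\Omega)}\le C\,\vert\omega_h\vert_{\DG}$ and the approximation bound $\|\omega_h-I_h\omega_h\|_{L^2(\Omega)}\le C\,h\,\vert\omega_h\vert_{\DG}$. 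Splitting $\omega_h-\overline{\omega_h}=(\omega_h-I_h\omega_h)-\overline{(\omega_h-I_h\omega_h)}+(I_h\omega_h-\overline{I_h\omega_h})$, applying the continuous inequality to the last conforming mean-zero term, and controlling the nonconforming remainder in $L^p$ via an element-wise inverse estimate applied to the $O(h)$ $L^2$ bound, reproduces the $p^{1/2}$ scaling while keeping the remainder uniformly bounded in $p$. A secondary point I would verify, but do not expect to be serious, is that the inverse-estimate constant in $\|\cdot\|_{L^\infty(\Omega)}\le C_\mathrm{inv}\,h^{-d/p}\,\|\cdot\|_{L^p(\Omega)}$ stays bounded as $p\to\infty$; this is clear because on the reference element $\|v\|_{L^p}\to\|v\|_{L^\infty}$, so the relevant norm-equivalence constant on the finite-dimensional polynomial space is uniform in $p$.
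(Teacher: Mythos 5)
Your argument is correct. For $d=3$ it coincides exactly with the paper's proof: apply \eqref{eq:inverse_estmate_lp} with $(p,q)=(\infty,6)$ and then \eqref{eq:PoincareMh}, giving $\Theta_{h,3}=h^{-1/2}$. For $d=2$ the paper does not prove anything: it simply cites (19) in Theorem 5 of \cite{brenner2004discrete} for the logarithmic discrete Sobolev inequality. You instead reconstruct that result from scratch by the classical $p$-optimization: the sharp planar embedding $\|v-\overline v\|_{L^p}\leq C\,p^{1/2}\|\grad v\|_{L^2}$ transferred to $M_h^k$ through an Oswald-type averaging operator (whose $\DG$-stability and $O(h)$ approximation bounds are standard, and whose nonconforming remainder is indeed uniformly bounded in $p$ after the elementwise inverse estimate since $h^{2/p}\leq 1$), combined with the $L^\infty$--$L^p$ inverse estimate whose constant is uniform in $p$ by norm equivalence on the reference element, and the choice $p\simeq 4|\ln h|$. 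This is essentially the mechanism behind the cited Brenner result, so the two routes buy the same estimate; yours is self-contained at the cost of needing the quantitative $p^{1/2}$ dependence of the Sobolev constant, which the generic statement \eqref{eq:PoincareMh} indeed does not expose and which you correctly identify as the one nontrivial ingredient.
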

\begin{proof}
For $d=2$, bound \eqref{eq:sobolev_ineq} is proved in \cite{brenner2004discrete} (see (19) in Theorem 5). For $d=3$, inequality \eqref{eq:sobolev_ineq} follows from \eqref{eq:inverse_estmate_lp} (with $p = \infty$ and $q=6$) and \eqref{eq:PoincareMh}. 
\[\|\omega_h - \overline{\omega_h}\|_{L^{\infty}(\Omega)} \leq C_\mathrm{inv}h^{-1/2} \|\omega_h - \overline{\omega_h}\|_{L^6(\Omega)} \leq C_\mathrm{inv} C_\mathrm{P} h^{-1/2}|\omega_h|_{\DG}.\]
\end{proof}
Throughout the paper, $C$ denotes a generic constant that takes different values at different places and that is independent of mesh size $h$ and time step size $\tau$.
\section{Existence and uniqueness}\label{sec:solvability} We show that the discrete problem \eqref{eq:fully_dis1}-\eqref{eq:fully_dis6} is well posed in three steps. First, we show  equivalence of \eqref{eq:fully_dis1}-\eqref{eq:fully_dis2} to a problem posed in $M_{h0}^k$ in Lemma \ref{lem:extint}. Similar to the arguments found in \cite{HanWang2015,LiuRiviere2018numericalCHNS}, we then use the Browder--Minty theorem to show existence and uniqueness of a solution to \eqref{eq:CHNS:ProblemQ:a}-\eqref{eq:CHNS:ProblemQ:b} in Lemma \ref{lem:CH:solvability:B-M}. 
The existence of solutions to \eqref{eq:fully_dis3}-\eqref{eq:fully_dis6} follow by showing uniqueness in Lemma \ref{lem:exivh}.
\begin{lemma}\label{lem:extint}
The unique solvability of \eqref{eq:fully_dis1}-\eqref{eq:fully_dis2} is equivalent to the unique solvability of the following problem: given $(c_h^{n-1},\vec{u}_h^{n-1}) \in M_h^{k}\times\mathbf{X}_h^{k}$, compute $(y_h^n, w_h^n) \in M_{h0}^{k}\times M_{h0}^{k}$, such that for all $\mathring{\chi}_h \in M_{h0}^{k}$ and for all $\mathring{\varphi}_h \in M_{h0}^{k}$
\begin{align}
(\delta_\tau y_h^n,\mathring{\chi}_h) + a_{\mathrm{diff}}(w_h^n,\mathring{\chi}_h) + a_\mathrm{adv}(c_h^{n-1},\vec{u}_h^{n-1},\mathring{\chi}_h) &= 0, \label{eq:CHNS:ProblemQ:a}\\
(\Phi_{+}\,\!'(y_h^n+\overline{c_0})+\Phi_{-}\,\!'(c_h^{n-1}),\mathring{\varphi}_h) + \kappa a_{\mathrm{diff}}(y_h^n,\mathring{\varphi}_h) - (w_h^n,\mathring{\varphi}_h) &= 0, \label{eq:CHNS:ProblemQ:b}
\end{align}
where $y_h^{n-1} = c_h^{n-1} - \overline{c_0}$.
\end{lemma}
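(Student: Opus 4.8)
The plan is to exhibit an explicit bijection between the solution set of \eqref{eq:fully_dis1}--\eqref{eq:fully_dis2} and that of \eqref{eq:CHNS:ProblemQ:a}--\eqref{eq:CHNS:ProblemQ:b}, so that unique solvability of either problem transfers to the other. The correspondence I would use is $y_h^n = c_h^n - \overline{c_0}$ together with $w_h^n = \mu_h^n - \overline{\mu_h^n}$, and its inverse $c_h^n = y_h^n + \overline{c_0}$, $\mu_h^n = w_h^n + m^n$, where the additive constant $m^n$ is not free but is forced by the scheme. The only structural facts needed are that constants are annihilated by $a_\mathrm{diff}$ in either argument and by $a_\mathrm{adv}$ in its last argument (both immediate from \eqref{eq:CHNS:DG_diffusion} and \eqref{eq:CHNS:DG_advection}, since $\grad(\mathrm{const})=0$ and $\jump{\mathrm{const}}=0$), and that constants are $L^2$-orthogonal to $M_{h0}^{k}$.

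First I would establish that any solution of \eqref{eq:fully_dis1}--\eqref{eq:fully_dis2} has $\overline{c_h^n}=\overline{c_0}$: choosing $\chi_h=1$ in \eqref{eq:fully_dis1} and using $a_\mathrm{diff}(\mu_h^n,1)=0$ and $a_\mathrm{adv}(c_h^{n-1},\vec{u}_h^{n-1},1)=0$ gives $(\delta_\tau c_h^n,1)=0$, hence $\overline{c_h^n}=\overline{c_h^{n-1}}=\overline{c_0}$ using the hypothesis $y_h^{n-1}=c_h^{n-1}-\overline{c_0}\in M_{h0}^{k}$. Consequently $y_h^n\in M_{h0}^{k}$ and $w_h^n\in M_{h0}^{k}$ are well defined. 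To show $(y_h^n,w_h^n)$ solves the reduced problem, I would restrict the test functions in \eqref{eq:fully_dis1}--\eqref{eq:fully_dis2} to $\mathring{\chi}_h,\mathring{\varphi}_h\in M_{h0}^{k}\subset M_h^{k}$ and use the identities $\delta_\tau y_h^n=\delta_\tau c_h^n$, $a_\mathrm{diff}(w_h^n,\mathring{\chi}_h)=a_\mathrm{diff}(\mu_h^n,\mathring{\chi}_h)$, $a_\mathrm{diff}(y_h^n,\mathring{\varphi}_h)=a_\mathrm{diff}(c_h^n,\mathring{\varphi}_h)$, $(w_h^n,\mathring{\varphi}_h)=(\mu_h^n,\mathring{\varphi}_h)$, and $y_h^n+\overline{c_0}=c_h^n$; each holds by dropping a constant killed either by $a_\mathrm{diff}$ or by orthogonality to $M_{h0}^{k}$.

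For the converse, given a reduced solution $(y_h^n,w_h^n)$ I would set $c_h^n=y_h^n+\overline{c_0}$, define $m^n=\tfrac{1}{\abs{\Omega}}\big(\Phi_{+}\,\!'(c_h^n)+\Phi_{-}\,\!'(c_h^{n-1}),1\big)$, and put $\mu_h^n=w_h^n+m^n$; this value of $m^n$ is exactly the one obtained by testing \eqref{eq:fully_dis2} with $\varphi_h=1$, which is the key point, since it shows $\overline{\mu_h^n}$ is slaved to the data rather than arbitrary. To verify \eqref{eq:fully_dis1}--\eqref{eq:fully_dis2} for all $\chi_h,\varphi_h\in M_h^{k}$, I would split each test function as $\chi_h=\mathring{\chi}_h+\overline{\chi_h}$ with $\mathring{\chi}_h\in M_{h0}^{k}$: the mean-zero part reproduces \eqref{eq:CHNS:ProblemQ:a}--\eqref{eq:CHNS:ProblemQ:b} after the same constant-dropping identities, while the constant part reduces for \eqref{eq:fully_dis1} to $(\delta_\tau c_h^n,1)=0$ (true since $y_h^n,y_h^{n-1}\in M_{h0}^{k}$) and for \eqref{eq:fully_dis2} to the defining identity for $m^n$. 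The two maps are mutually inverse once one checks that the reconstructed mean $\overline{\mu_h^n}$ equals $m^n$, which is again the constant-test of \eqref{eq:fully_dis2}. Solutions are therefore in bijection, and uniqueness of one problem is equivalent to uniqueness of the other. I expect the only genuinely delicate step to be the bookkeeping of the additive constant $m^n$ for the chemical potential—everything else is linear algebra on the splitting $M_h^{k}=M_{h0}^{k}\oplus\IR$—since it is precisely what makes the passage to the mean-free spaces reversible.
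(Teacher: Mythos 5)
Your proposal is correct and follows essentially the same route as the paper: the same change of variables $c_h^n=y_h^n+\overline{c_0}$, $\mu_h^n=w_h^n+\overline{\Phi_{+}\,\!'(y_h^n+\overline{c_0})+\Phi_{-}\,\!'(c_h^{n-1})}$ (your $m^n$ is exactly the paper's $\overline{f(y_h^n)}$), the same splitting of test functions into mean-zero and constant parts, and the same use of the fact that $a_\mathrm{diff}$ and $a_\mathrm{adv}$ annihilate constants. The only difference is presentational: you package the two directions as an explicit bijection of solution sets, which makes the transfer of uniqueness (in particular, that $\overline{\mu_h^n}$ is determined by testing \eqref{eq:fully_dis2} with $\varphi_h=1$ and hence cannot vary between solutions) slightly more transparent than the paper's contradiction argument.
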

\begin{proof}
Assume that the system \eqref{eq:CHNS:ProblemQ:a}-\eqref{eq:CHNS:ProblemQ:b} has a unique solution $(y_h^n, w_h^n)$. 
Let $f(y_h^n) =  \Phi_+\,\!'(y_h^n + \overline{c_0}) + \Phi_{-}\,\!'(c_h^{n-1})$. Define $c_h^n = y_h^n + \overline{c_0}$ and $\mu_h^n = w_h^n + \overline{f(y_h^n)}$. To see that $(c_h^n, \mu_h^n)$ solves \eqref{eq:fully_dis1}-\eqref{eq:fully_dis2}, take any $\chi_h \in M_h^{k}$ and let $\mathring{\chi}_h   = \chi_h - \overline{\chi_h}\in M_{h0}^{k}$ in \eqref{eq:CHNS:ProblemQ:a}. Then, since $a_{\mathrm{diff}}(z_h, q) = a_{\mathrm{adv}}(c_h^{n-1}, \bm{u}_h^{n-1}, q)= 0$ for any constant $q$ and any $z_h \in M_h^{k}$, we have by \eqref{eq:CHNS:ProblemQ:a} 
\begin{equation}
(\delta_\tau y_h^n, \chi_h - \overline{\chi_h} )+ a_\mathrm{diff}(\mu_h^n, \chi_h) + a_{\mathrm{adv}}(c_h^{n-1}, \bm{u}_h^{n-1}, \chi_h) = 0, \quad \forall \chi_h \in M_{h}^{k}. 
\end{equation} 
By Proposition~\ref{thm:dis_mass_conservation} and the definitions of $c_h^n$ and $y_h^{n-1}$,  we have 
\begin{align}
    (\delta_{\tau} y_h^n, \chi_h - \overline{\chi_h}) = (\delta_{\tau} c_h^n, \chi_h - \overline{\chi_h}) = (\delta_{\tau} c_h^n, \chi_h). 
\end{align}
Hence, \eqref{eq:fully_dis1} is satisfied. Similarly, let $\mathring{\phi}_h = \phi_h - \overline{\phi_h}$ for any $\phi_h \in M_h^{k}$. Since $w_h^n= \mu_h^n - \overline{f(y_h^n)}  \in M_{h0}^{k}$, we have 
\begin{align}
    (f(y_h^n), \phi_h &- \overline{\phi_h} ) - (\mu_h^n - \overline{f(y_h^n)}, \phi_h - \overline{\phi_h} ) 
     = (f(y_h^n),\phi_h)  -(\mu_h^n, \phi_h).
\end{align}
This implies that \eqref{eq:fully_dis2} is satisfied. To see that this solution is unique, assume that there exists a different pair $(c_h^{n,1}, \mu_h^{n,1})$ that satisfies \eqref{eq:fully_dis1}-\eqref{eq:fully_dis2}. Define $(y_h^{n,1}, w_h^{n,1}) = (c_h^{n,1} - \overline{c_0}, \mu_h^{n,1} - \overline{\mu_h^n})$. This pair also solves \eqref{eq:CHNS:ProblemQ:a}-\eqref{eq:CHNS:ProblemQ:b} which is a contradiction. Hence, the solution to \eqref{eq:fully_dis1}-\eqref{eq:fully_dis2} is unique. 

Conversely, assume that \eqref{eq:fully_dis1}-\eqref{eq:fully_dis2} has a unique solution $(c_h^n, \mu_h^n)$. Then, it is easy to see that $(y_h^n, w_h^n ) = (c_h^n - \overline{c_0}, \mu_h^n - \overline{\mu_h^n})$ solves \eqref{eq:CHNS:ProblemQ:a}-\eqref{eq:CHNS:ProblemQ:b}. To show uniqueness, assume there is a different pair $(y_h^{n,1}, w_h^{n,1})$ which solves \eqref{eq:CHNS:ProblemQ:a}-\eqref{eq:CHNS:ProblemQ:b}. Then, the pair $(c_h^{n,1}, \mu_h^{n,1}) = \big(y_h^{n,1} + \overline{c_0}, w_h^{n,1} + \overline{f(y_h^{n,1})}\big)$ also solves \eqref{eq:fully_dis1}-\eqref{eq:fully_dis2}. This provides a contradiction. Hence, the solution to \eqref{eq:fully_dis1}-\eqref{eq:fully_dis2} is unique. 
\end{proof}
\begin{lemma}\label{lem:CH:solvability:B-M}
The system \eqref{eq:CHNS:ProblemQ:a}-\eqref{eq:CHNS:ProblemQ:b} is uniquely solvable for any fixed time step size $\tau$ and mesh size $h$.
\end{lemma}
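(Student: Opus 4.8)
The plan is to eliminate $w_h^n$ from the coupled pair \eqref{eq:CHNS:ProblemQ:a}--\eqref{eq:CHNS:ProblemQ:b}, reduce everything to a single nonlinear equation for $y_h^n$ in the finite-dimensional space $M_{h0}^k$, and then apply the Browder--Minty theorem to a monotone operator. Since $a_{\mathrm{diff}}$ is symmetric and coercive on $M_{h0}^k$ (by \eqref{eq:coercivity_adiff}, using that $\vert\cdot\vert_{\mathrm{DG}}$ is a norm on $M_{h0}^k$), I first introduce the discrete Green's operator $\mathcal{G}\colon M_{h0}^k\to M_{h0}^k$ defined by $a_{\mathrm{diff}}(\mathcal{G}g,\mathring{\chi}_h)=(g,\mathring{\chi}_h)$ for all $\mathring{\chi}_h\in M_{h0}^k$. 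This operator is well defined, self-adjoint, and positive semi-definite, since $(\mathcal{G}g,g)=a_{\mathrm{diff}}(\mathcal{G}g,\mathcal{G}g)\geq 0$. Letting $g_{\mathrm{adv}}\in M_{h0}^k$ denote the Riesz representative of the functional $\mathring{\chi}_h\mapsto a_{\mathrm{adv}}(c_h^{n-1},\vec{u}_h^{n-1},\mathring{\chi}_h)$ (well defined on $M_{h0}^k$ because this form annihilates constants, as used in the proof of Proposition~\ref{thm:dis_mass_conservation}), equation \eqref{eq:CHNS:ProblemQ:a} determines $w_h^n$ explicitly from $y_h^n$ through $w_h^n=\mathcal{G}\big(-\tfrac{1}{\tau}(y_h^n-y_h^{n-1})-g_{\mathrm{adv}}\big)$.

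Substituting this expression into \eqref{eq:CHNS:ProblemQ:b} removes $w_h^n$ and leaves the single equation: find $y_h^n\in M_{h0}^k$ such that $(\mathcal{A}(y_h^n),\mathring{\varphi}_h)=(\ell,\mathring{\varphi}_h)$ for all $\mathring{\varphi}_h\in M_{h0}^k$, where I define
\begin{equation*}
(\mathcal{A}(y),\mathring{\varphi}_h) = (\Phi_{+}'(y+\overline{c_0}),\mathring{\varphi}_h) + \kappa\,a_{\mathrm{diff}}(y,\mathring{\varphi}_h) + \frac{1}{\tau}(\mathcal{G}y,\mathring{\varphi}_h),
\end{equation*}
and the fixed right-hand side $\ell\in M_{h0}^k$ collects the data $-\Phi_{-}'(c_h^{n-1})$, $\frac{1}{\tau}\mathcal{G}y_h^{n-1}$, and $-\mathcal{G}g_{\mathrm{adv}}$. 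The whole problem is now equivalent to showing that $\mathcal{A}$ is a bijection of $M_{h0}^k$.

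Next I would verify the three hypotheses of Browder--Minty on the finite-dimensional space $M_{h0}^k$. Continuity is immediate because $\Phi_{+}'(c)=c^3$ is a polynomial, $a_{\mathrm{diff}}$ is bilinear, and $\mathcal{G}$ is linear. For monotonicity I examine $(\mathcal{A}(y_1)-\mathcal{A}(y_2),y_1-y_2)$ and observe that each of the three contributions is nonnegative: the $\kappa\,a_{\mathrm{diff}}$ term by coercivity, the $\frac{1}{\tau}\mathcal{G}$ term by positive semi-definiteness of $\mathcal{G}$, and the term $(\Phi_{+}'(y_1+\overline{c_0})-\Phi_{+}'(y_2+\overline{c_0}),y_1-y_2)$ because $\Phi_{+}$ is convex so $\Phi_{+}'$ is monotone increasing. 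As the $a_{\mathrm{diff}}$ contribution is strictly positive whenever $y_1\neq y_2$, the operator is strictly monotone, which yields uniqueness. For coercivity I use $\kappa\,a_{\mathrm{diff}}(y,y)\geq\kappa K_\alpha\vert y\vert_{\mathrm{DG}}^2$ together with the lower bound $(\Phi_{+}'(y+\overline{c_0}),y)=\int_\Omega (y+\overline{c_0})^3 y\geq -C$, obtained by writing the integrand in terms of $c=y+\overline{c_0}$ and applying Young's inequality; since $\vert\cdot\vert_{\mathrm{DG}}$ is a norm on the finite-dimensional space $M_{h0}^k$, this gives $(\mathcal{A}(y),y)/\|y\|\to\infty$ as $\|y\|\to\infty$.

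Browder--Minty then provides a solution $y_h^n$ of $\mathcal{A}(y_h^n)=\ell$, and strict monotonicity makes it unique; recovering $w_h^n$ from the explicit formula above completes the unique solvability. The step I expect to be the main obstacle, and the reason the convex--concave splitting of $\Phi$ is indispensable, is securing the monotone structure of $\mathcal{A}$: only the convex part $\Phi_{+}$ is treated implicitly, producing the monotone cubic nonlinearity, while the concave part $\Phi_{-}$ is evaluated at the previous time level $t^{n-1}$ and thus enters purely as data; a fully implicit treatment of $\Phi'$ would destroy monotonicity. The other delicate point is that the elimination of $w_h^n$ must be carried out through the positive semi-definite operator $\mathcal{G}$, which is precisely what converts the saddle-point-type pair into a single monotone equation amenable to Browder--Minty.
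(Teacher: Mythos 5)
Your argument is correct, but it reduces the system in the opposite direction from the paper, and the comparison is worth recording. The paper parametrizes by the chemical potential: for each $w_h\in M_{h0}^k$ it first solves the \emph{nonlinear} equation \eqref{eq:CHNS:ProblemQ:b} for $y_h(w_h)$ (invoking Lemma~3.14 of \cite{LiuRiviere2018numericalCHNS} for the solvability of that sub-problem), establishes the Lipschitz-type bound $\vert y_h(w_h)-y_h(s_h)\vert_{\mathrm{DG}}\leq C\vert w_h-s_h\vert_{\mathrm{DG}}$ from the convexity of $\Phi_+$, and then applies Browder--Minty to the map $w_h\mapsto\mathcal{F}(w_h)$ built from \eqref{eq:CHNS:ProblemQ:a}; monotonicity of $\mathcal{F}$ there comes from the $\tau\,a_{\mathrm{diff}}(w_h-s_h,w_h-s_h)$ term. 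You instead parametrize by the order parameter and eliminate $w_h^n$ through the \emph{linear} solve $w_h^n=\mathcal{G}\big(-\delta_\tau y_h^n-g_{\mathrm{adv}}\big)$ (your $\mathcal{G}$ is exactly the operator $\mathcal{J}$ of \eqref{eq:def_of_J}), arriving at a single strictly monotone equation $\mathcal{A}(y_h^n)=\ell$ whose monotonicity combines the convexity of $\Phi_+$, the coercivity of $a_{\mathrm{diff}}$, and the positive semi-definiteness of $\mathcal{G}$. Your route is more self-contained — it needs no auxiliary nonlinear solvability lemma, and it exposes the $H^{-1}$-gradient-flow structure of the convex-splitting step (your $\mathcal{A}$ is the gradient of a strictly convex functional, so one could even replace Browder--Minty by direct minimization); the paper's route buys consistency with the framework of \cite{HanWang2015,LiuRiviere2018numericalCHNS} and produces along the way the bound \eqref{eq:CH:uniq_G_continuity2} on the map $w_h\mapsto y_h(w_h)$. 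Two cosmetic points: your Young's-inequality bound $\int_\Omega(y+\overline{c_0})^3y\geq -C$ is fine, though the paper's Taylor argument \eqref{eq:CH:Phi_Taylor2} using $(y,1)=0$ gives the sharper bound $\geq 0$; and it would be worth stating explicitly that the equivalence between the pair \eqref{eq:CHNS:ProblemQ:a}--\eqref{eq:CHNS:ProblemQ:b} and the single equation $\mathcal{A}(y_h^n)=\ell$ holds because $a_{\mathrm{diff}}(\cdot,\cdot)$ restricted to $M_{h0}^k$ determines $w_h^n\in M_{h0}^k$ uniquely.
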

\begin{proof}
For any $w_h \in M_{h0}^{k}$, let $y_h = y_h(w_h)\in M_{h0}^k$ be the unique function such that
\begin{align}\label{eq:CH:solvability:uniq_c}
\big(\Phi_{+}\,\!'(y_h+\overline{c_0})+\Phi_{-}\,\!'(c_h^{n-1}),\mathring{\varphi}\big) &+ \kappa a_{\mathrm{diff}}(y_h,\mathring{\varphi}) - (w_h,\mathring{\varphi}) = 0, && \forall \mathring{\varphi} \in M_{h0}^{k}.
\end{align}
Existence and uniqueness of such a $y_h(w_h)$ is proved in Lemma~3.14 of \cite{LiuRiviere2018numericalCHNS}.
If we choose $\mathring{\varphi}_h=y_h$ in \cref{eq:CH:solvability:uniq_c} and use the coercivity property \eqref{eq:coercivity_adiff},  we obtain
\begin{equation}\label{eq:CH:uniq_G_boundedness2}
\big(\Phi_{+}\,\!'(y_h+\overline{c_0})+\Phi_{-}\,\!'(c_h^{n-1}),y_h\big) + K_\alpha \kappa \vert y_h\vert_{\mathrm{DG}}^2 \leq (w_h,y_h).
\end{equation}
By convexity of $\Phi_+$ and the fact that $(y_h,1) = 0$, we have
\begin{equation}\label{eq:CH:Phi_Taylor2}
\big(\Phi_{+}\,\!'(y_h+\overline{c_0}),y_h\big) = \big(\Phi_{+}\,\!'(\overline{c_0}),y_h\big) + \big(\Phi_{+}\,\!''(\xi_h),y_h^2\big) = \big(\Phi_{+}\,\!''(\xi_h),y_h^2\big) \geq 0.
\end{equation}
Therefore,  \eqref{eq:PoincareMh} and Cauchy--Schwarz's inequality yield
\begin{equation}\label{eq:CH:uniq_G_boundedness3}
\vert y_h(w_h) \vert_{\mathrm{DG}} \leq \frac{C_\mathrm{P}^2}{K_\alpha\kappa}\vert w_h \vert_{\mathrm{DG}}
+ \frac{C_\mathrm{P}}{K_\alpha\kappa}\norm{\Phi_{-}\,\!'(c_h^{n-1})}{}.
\end{equation}
Let $M_{h0}^{k\ast}$ denote the dual space of $M_{h0}^k$ and let 
$\mathcal{F}:~M_{h0}^{k}\rightarrow M_{h0}^{k\ast}$ be a mapping defined as follows: for all $\mathring{\chi}_h$ in $M_{h0}^{k}$,
\begin{align*}
\langle\mathcal{F}(w_h),\mathring{\chi}_h\rangle = \big(y_h(w_h)-y_h^{n-1},\mathring{\chi}_h\big) + \tau a_{\mathrm{diff}}(w_h,\mathring{\chi}_h) + \tau a_\mathrm{adv}(c_h^{n-1},\vec{u}_h^{n-1},\mathring{\chi}_h).
\end{align*}
First, let us check the boundedness of $\mathcal{F}$. For any $\mathring{\chi}_h \in M_{h0}^{k}$ with $\vert \mathring{\chi}_h \vert_{\mathrm{DG}}=1$, by Cauchy--Schwarz's inequality, \eqref{eq:poincare_ineq} with $p=2$, \eqref{eq:adiffcont}, \eqref{eq:bound_aadv_1}, we have
\begin{align}\label{eq:CH:uniq_G_boundedness1}
\abs{\langle\mathcal{F}(w_h),\mathring{\chi}_h\rangle} 
\leq C_\alpha\tau \vert w_h \vert_{\mathrm{DG}} + C_\mathrm{P}^2(\vert y_h \vert_{\mathrm{DG}} + \vert y_h^{n-1}\vert_{\mathrm{DG}}) \\
+ C_\gamma \tau (\vert c_h^{n-1} \vert_{\mathrm{DG}} + \abs{\Omega}\overline{c_0} \, )\norm{\vec{u}_h^{n-1}}{\mathrm{DG}}. \nonumber
\end{align}
Using \eqref{eq:CH:uniq_G_boundedness1} and \eqref{eq:CH:uniq_G_boundedness3}, we have
\begin{multline}
\norm{\mathcal{F}(w_h)}{M_{h0}^{k\ast}} 
\leq \Big(C_\alpha\tau + \frac{C_\mathrm{P}^4}{K_\alpha\kappa}\Big) \vert w_h \vert_{\mathrm{DG}} 
+ \frac{C_\mathrm{P}^3}{K_\alpha\kappa}\norm{\Phi_{-}\,\!'(c_h^{n-1})}{} \\
+ C_\mathrm{P}^2\vert y_h^{n-1} \vert_{\mathrm{DG}} + C_\gamma \tau (\vert c_h^{n-1} \vert_{\mathrm{DG}} + \abs{\Omega}\overline{c_0} \, )\norm{\vec{u}_h^{n-1}}{\mathrm{DG}}.
\label{eq:boundG}
\end{multline}
Thanks to \eqref{eq:boundG}, we have shown that 
the operator $\mathcal{F}$ maps bounded sets in $M_{h0}^{k}$ to bounded sets in $M_{h0}^{k\ast}$\,, i.e., we have proved boundedness of the operator.
Second, we show the coercivity of $\mathcal{F}$. 
With \eqref{eq:coercivity_adiff}, \eqref{eq:poincare_ineq} and \eqref{eq:bound_aadv_1}, we have
\begin{align*}
\langle\mathcal{F}(w_h),w_h\rangle 
&\geq (y_h, w_h) +  \tau \vert w_h \vert_{\mathrm{DG}}^2 - C_\mathrm{P} \Vert y_h^{n-1}\Vert \, \vert w_h \vert_{\mathrm{DG}} \\
&- C_\gamma \tau (\vert c_h^{n-1} \vert_{\mathrm{DG}} + \abs{\Omega}\overline{c_0} \, )\norm{\vec{u}_h^{n-1}}{\mathrm{DG}}\vert w_h\vert_{\mathrm{DG}}. 
\end{align*}
With \eqref{eq:CH:uniq_G_boundedness2}, \eqref{eq:CH:Phi_Taylor2}, \eqref{eq:poincare_ineq} and Young's inequality, we have
\begin{equation}\label{eq:CH:uniq_G_coercivity2}
(y_h,w_h) \geq K_\alpha \kappa \vert y_h\vert_{\mathrm{DG}}^2 + (\Phi_{-}'(c_h^{n-1}),y_h)
 \geq -\frac{C_\mathrm{P}^2}{4K_\alpha\kappa}\norm{\Phi_{-}\,\!'(c_h^{n-1})}{}^2.
\end{equation}
Therefore we obtain 
%
\begin{align*}
\langle\mathcal{F}(w_h),w_h\rangle 
\geq&\, \tau \vert w_h \vert_{\mathrm{DG}}^2 - C_\mathrm{P} \norm{y_h^{n-1}}{}\vert w_h \vert_{\mathrm{DG}} - \frac{C_\mathrm{P}^2}{4K_\alpha\kappa}\norm{\Phi_{-}\,\!'(c_h^{n-1})}{}^2\\
&- C_\gamma \tau (\vert c_h^{n-1} \vert_{\mathrm{DG}} + \abs{\Omega}\overline{c_0} \, )\norm{\vec{u}_h^{n-1}}{\mathrm{DG}}\vert w_h\vert_{\mathrm{DG}}. 
\end{align*}
This implies coercivity of $\mathcal{F}$: 
\begin{equation*}
\lim_{\vert w_h\vert_{\mathrm{DG}}\rightarrow +\infty} \frac{\langle\mathcal{F}(w_h),w_h\rangle}{\vert w_h\vert_{\mathrm{DG}}}= +\infty.
\end{equation*}
%
Third, let us check the monotonicity of $\mathcal{F}$. For any $w_h$ and $s_h$ in $M_{h0}^{k}$, we have:
\begin{equation}\label{eq:CH:monotonicity_1}
\langle\mathcal{F}(w_h)-\mathcal{F}(s_h),w_h-s_h\rangle = (y_h(w_h)-y_h(s_h),w_h-s_h) + \tau a_{\mathrm{diff}}(w_h-s_h,w_h-s_h).
\end{equation}
Due to the coercivity of $a_{\mathrm{diff}}$, the second term in the right-hand side is nonnegative, which means we only need to check the sign of the first term. From \eqref{eq:CH:solvability:uniq_c}, we have, for any $\mathring{\varphi}_h \in M_{h0}^{k}$:
\begin{multline*}
(w_h-s_h,\mathring{\varphi}_h) = \big(\Phi_{+}\,\!'(y_h(w_h)+\overline{c_0} \, )-\Phi_{+}\,\!'(y_h(s_h)+\overline{c_0} \, ),\mathring{\varphi}_h\big) \\
+ \kappa a_{\mathrm{diff}}\big(y_h(w_h)-y_h(s_h),\mathring{\varphi}_h\big).
\end{multline*}
Choosing $\mathring{\varphi}_h = y_h(w_h)-y_h(s_h) \in M_{h0}^{k}$ and 
using \eqref{eq:coercivity_adiff} and the convexity of $\Phi_+$, we have
\begin{align}\label{eq:CH:uniq_G_monotonicity}
\big(w_h-s_h,y_h(w_h)-y_h(s_h)\big) 
\geq K_\alpha\kappa \vert y_h(w_h)-y_h(s_h) \vert_{\mathrm{DG}}^2
\geq 0.
\end{align}
Substituting \eqref{eq:CH:uniq_G_monotonicity} into \eqref{eq:CH:monotonicity_1}, considering $\vert \cdot \vert_{\mathrm{DG}}$ is a norm in $M_{h0}^{k}$\,, the following inequality is strict whenever $w_h \neq s_h$,
\begin{equation*}
\langle\mathcal{F}(w_h)-\mathcal{F}(s_h), w_h-s_h\rangle \geq K_\alpha\tau\vert w_h-s_h\vert_{\mathrm{DG}}^2 \geq 0.
\end{equation*}
Thus we have established the strict monotonicity of $\mathcal{F}$.
Finally, let us check the continuity of $\mathcal{F}$. For any $\mathring{\chi}_h \in M_{h0}^{k}$ with $\vert{\mathring{\chi}_h}\vert_{\mathrm{DG}}=1$, by Cauchy--Schwarz's inequality, \eqref{eq:poincare_ineq}, \eqref{eq:adiffcont}, we have
\begin{equation}\label{eq:CH:uniq_G_continuity1}
\abs{\langle\mathcal{F}(w_h)-\mathcal{F}(s_h),\mathring{\chi}_h\rangle}
\leq C_\alpha\tau \vert w_h-s_h \vert_{\mathrm{DG}} + C_\mathrm{P}^2 \vert y_h(w_h)-y_h(s_h) \vert_{\mathrm{DG}}.
\end{equation}
To bound the second term in the right-hand side, we revert to \eqref{eq:CH:uniq_G_monotonicity} and use \eqref{eq:poincare_ineq} to obtain:
\begin{align}\label{eq:CH:uniq_G_continuity2}
\vert y_h(w_h)-y_h(s_h) \vert_{\mathrm{DG}} \leq \frac{C_\mathrm{P}^2}{K_\alpha\kappa} \vert w_h-s_h \vert_{\mathrm{DG}}.
\end{align}
Combining \eqref{eq:CH:uniq_G_continuity1} and \eqref{eq:CH:uniq_G_continuity2}, we have
\begin{align*}
\norm{\mathcal{F}(w_h)-\mathcal{F}(s_h)}{M_{h0}^{k\ast}} 
 =  \sup_{\substack{\mathring{\chi}_h \in M_{h0}^{k} \\ \vert \mathring{\chi}_h\vert_{\mathrm{DG}}=1}} \abs{\langle\mathcal{F}(w_h)-\mathcal{F}(s_h),\mathring{\chi}_h\rangle} 
 \leq  \Big(C_\alpha\tau + \frac{C_\mathrm{P}^4}{K_\alpha\kappa}\Big)\vert w_h-s_h\vert_{\mathrm{DG}},
\end{align*}
which means $\norm{\mathcal{F}(w_h)-\mathcal{F}(s_h)}{M_{h0}^{k\ast}}$ tends to zero whenever $\vert w_h-s_h\vert_{\mathrm{DG}}$ tends to zero, i.\,e., we proved the continuity of the operator $\mathcal{F}$.
We can then apply the Browder--Minty theorem to  conclude that there exists a unique solution $w_h^n$ such that $\langle\mathcal{F}(w_h^n),\mathring{\chi}_h\rangle = 0$ for all $\mathring{\chi}_h \in M_{h0}^{k}$. This implies that $(y_h(w_h^n),w_h^n)$ is the unique solution of system \eqref{eq:CHNS:ProblemQ:a}--\eqref{eq:CHNS:ProblemQ:b}.
\end{proof}
\begin{lemma}\label{lem:exivh}
Given $(c_h^{n-1},\mu_h^n,\vec{v}_h^{n-1},\vec{u}_h^{n-1}, p_h^{n-1}) \in M_{h0}^{k} \times M_{h0}^{k} \times \vec{X}_h^{k} \times \vec{X}_h^{k} \times M_{h0}^{k-1}$, there exists a unique solution $(\vec{v}_h^n, \vec{u}_h^{n}, p_h^n) \in \vec{X}_h^{k} \times \vec{X}_h^{k} \times M_{h0}^{k-1}$ to problem \eqref{eq:fully_dis3}-\eqref{eq:fully_dis6}. 
\end{lemma}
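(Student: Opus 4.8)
The plan is to exploit the sequential, one-way-coupled structure of \eqref{eq:fully_dis3}--\eqref{eq:fully_dis6}: each unknown $\vec{v}_h^n$, then $\phi_h^n$, then $(p_h^n,\vec{u}_h^n)$ is determined entirely from data already computed, so it suffices to establish unique solvability of each step \emph{in turn}. Since all the spaces are finite dimensional and every step is \emph{linear} in its unknown, unique solvability of each problem reduces to showing that the associated homogeneous problem admits only the trivial solution.

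First I would treat the velocity prediction \eqref{eq:fully_dis3}. Fixing the data $(c_h^{n-1},\mu_h^n,\vec{u}_h^{n-1},p_h^{n-1})$, the left-hand side defines a bilinear form $B(\vec{v}_h,\vec{\theta}_h) = \tau^{-1}(\vec{v}_h,\vec{\theta}_h) + a_{\mathcal{C}}(\vec{u}_h^{n-1},\vec{u}_h^{n-1},\vec{v}_h,\vec{\theta}_h) + \mu_{\mathrm{s}}\,a_{\mathcal{D}}(\vec{v}_h,\vec{\theta}_h)$ on $\mathbf{X}_h^k\times\mathbf{X}_h^k$, while the right-hand side is a bounded linear functional of $\vec{\theta}_h$. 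Testing with $\vec{\theta}_h=\vec{v}_h$ and invoking the positivity property \eqref{eq:aCformpos} (applied with first two arguments $\vec{u}_h^{n-1}$ and last two arguments $\vec{v}_h$) together with the coercivity \eqref{eq:coercivity_astrain} gives $B(\vec{v}_h,\vec{v}_h)\geq \tau^{-1}\|\vec{v}_h\|^2$, which is strictly positive whenever $\vec{v}_h\neq\vec{0}$. Hence $B$ is positive definite, the corresponding square linear system is nonsingular, and $\vec{v}_h^n\in\mathbf{X}_h^k$ exists and is unique.

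Next, with $\vec{v}_h^n$ in hand, the projection step \eqref{eq:fully_dis4} is a symmetric coercive problem for $\phi_h^n\in M_{h0}^{k-1}$: since $|\cdot|_{\mathrm{DG}}$ is a genuine norm on $M_{h0}^{k-1}\subset L^2_0(\Omega)$ and $a_{\mathrm{diff}}$ is coercive by \eqref{eq:coercivity_adiff}, while $\varphi_h\mapsto -\tau^{-1}b_{\mathcal{P}}(\vec{v}_h^n,\varphi_h)$ is a bounded linear functional, the Lax--Milgram theorem (or simply positive-definiteness in finite dimensions) furnishes a unique $\phi_h^n$. Finally, the correction step \eqref{eq:fully_dis5}--\eqref{eq:fully_dis6} defines $p_h^n$ and $\vec{u}_h^n$ through $L^2$ projections: for each test function the left-hand sides are exactly $(p_h^n,\chi_h)$ and $(\vec{u}_h^n,\vec{\theta}_h)$, so the mass matrix of the $L^2$ inner product, which is positive definite on the finite dimensional spaces $M_h^{k-1}$ and $\mathbf{X}_h^k$, uniquely determines both unknowns.

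The one point requiring a small additional argument --- and the step I would flag as the main subtlety rather than a genuine difficulty --- is verifying that $p_h^n$ indeed lies in the stated space $M_{h0}^{k-1}$. For this I would test \eqref{eq:fully_dis5} with $\chi_h=1$ and use $b_{\mathcal{P}}(\vec{v}_h^n,1)=0$, which is immediate from the equivalent form \eqref{eq:CHNS:DG_pressure2} since a constant has vanishing broken gradient and vanishing interior jumps, together with $(\phi_h^n,1)=0$ and the inductive hypothesis $(p_h^{n-1},1)=0$; this yields $(p_h^n,1)=0$, so the mean-zero property propagates. Everything else follows routinely from coercivity, the positivity of $a_{\mathcal{C}}$, and the invertibility of the $L^2$ mass matrices, so I do not anticipate any real obstacle in this lemma.
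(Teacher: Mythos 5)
Your proposal is correct and follows essentially the same route as the paper: the zero-mean property of $p_h^n$ is propagated by induction via $\chi_h=1$ in \eqref{eq:fully_dis5}, unique solvability of \eqref{eq:fully_dis3} comes from \eqref{eq:aCformpos} and \eqref{eq:coercivity_astrain} (you phrase it as positive-definiteness of the form, the paper as triviality of the difference of two solutions, which is the same thing in finite dimensions), and the remaining steps follow from coercivity of $a_{\mathrm{diff}}$ on $M_{h0}^{k-1}$ and invertibility of the $L^2$ mass matrices. No gaps.
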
 
\begin{proof}
Let us first show that $p_h^n$ belongs to $M_{h0}^{k-1}$ by induction on $n$. The statement trivially holds for $n=0$. Assume $p_h^{n-1} \in M_{h0}^{k-1}$. Then, take $\chi_h = 1$ in \eqref{eq:fully_dis5}. Since $\phi_h^n$ has zero average, we have
\begin{equation*}
\int_{\Omega} p_h^n = \int_\Omega p_h^{n-1} -\sigma_\chi \mu_s b_\mathcal{P}(\vec{v}_h^n,1) = 0.
\end{equation*}
The last equality is obtained by applying the induction hypothesis and the fact that $b_\mathcal{P}(\vec{v},1)=0$ for any $\vec{v}$.
Second, let us show the existence of the intermediate velocity $\vec{v}_h^n$. Since this is a linear problem in finite dimensions, it suffices to show uniqueness of the solution. Suppose there exist two solutions $\vec{v}_h^n$ and $\vec{\tilde{v}}_h^n$ 
and let $\vec{w}^n_h = \vec{v}_h^n - \vec{\tilde{v}}_h^n$ denote the difference. Choosing $\vec{\theta}_h = \vec{w}_h^n$ in \eqref{eq:fully_dis3}
and using \eqref{eq:aCformpos} and \eqref{eq:coercivity_astrain} yield
\begin{equation*}
\norm{\vec{w}_h^n}{}^2 + K_\strain\tau\mu_\mathrm{s}\norm{\vec{w}_h^n}{\mathrm{DG}}^2 \leq 0.
\end{equation*}
This implies that $\vec{w}_h^n = {\bf 0}$, which yields uniqueness and existence of $\vec{v}_h^n$.
The existence of $\phi_h^n \in M_{h0}^{k-1}$ follows by similar arguments, and the fact that $|\cdot|_\mathrm{DG}$ is a norm for the space $M_{h0}^{k-1}$.
Existence and uniqueness of $p_h^n$ and $\vec{u}_h^n$ is trivial.
\end{proof}
Lemma~\ref{lem:extint}, Lemma~\ref{lem:CH:solvability:B-M} and Lemma~\ref{lem:exivh} imply existence and uniqueness of a solution to problem~\eqref{eq:fully_dis1}-\eqref{eq:fully_dis6}.

\section{Stability}\label{sec:stability}
We will prove that our scheme satisfies a discrete energy dissipation property in two steps. First we obtain the energy dissipation
property under an assumption on the boundedness of the order parameter. Second, we use an induction argument to
show that this assumption holds true.
Define the discrete total energy at time $t^n$ as follows.
\begin{equation}\label{eq:CHNS:discrete_energy}
F_h(c_h^n,\vec{u}_h^n) = \frac{1}{2}(\vec{u}_h^n,\vec{u}_h^n) + \big(\Phi(c_h^n),1\big) + \frac{\kappa}{2}a_{\mathrm{diff}}(c_h^n,c_h^n).
\end{equation}
We can bound the initial discrete energy by assuming enough regularity on the initial conditions and by using stability
of the $L^2$ projection and elliptic projection. 
\begin{equation*}
F_h(c_h^0,\vec{u}_h^0) \leq \frac12\norm{\vec{u}^0}{}^2 + C\norm{c^0}{H^1(\Omega)}^4 + C  \norm{c^0}{H^1(\Omega)}^2 + C \leq C.
\end{equation*}
%
%
We introduce auxiliary variables that play an important role in the analysis, similar variables were introduced in \cite{pyo2013error,inspaper1}.
Define $S_h^0 = 0$ and $\zeta_h^0 = p_h^0$. For any $1\leq n\leq\Nst$, we define $S_h^n \in M_{h0}^{k} $ and $\zeta_h^n \in M_{h0}^{k}$ as follows:
\begin{equation}
S_h^n = \sigma_\chi\mu_\mathrm{s} \sum_{i=1}^n\Big(\divh{\vec{v}_h^i} - R_h(\jump{\vec{v}_h^i})\Big), 
\quad
\zeta_h^n = p_h^n + S_h^n.\label{eq:defShzetah}
\end{equation}
We now show a discrete energy dissipation inequality. 
\begin{theorem}\label{thm:CHNS:dis_energy_dissipation}
Assume that $\tau$ and $h$ satisfy the following CFL condition. 
\begin{equation}\label{eq:CHNS:dis_energy_dissipation_CFL}
    \tau \leq \min \Big(  \frac{K_\alpha}{ 8 \tilde{C}_P^2 \max{(\frac{2}{K_\alpha\kappa},2)} F_h(c_h^0,\vec{u}_h^0)} \Theta_{h,d}^{-2},~ \frac{K_\alpha}{8 \overline{c_0}^2}\Big), 
    \end{equation}
where $\Theta_{h,d}$ is given \eqref{eq:def_Theta_hd}. Fix $n\geq 1$ and assume that $c_h^{n-1}$ satisfies the bound:
\begin{align}\label{eq:CHNS:dis_energy_dissipation_ass}
\vert c_h^{n-1}\vert_{\mathrm{DG}}^2 \leq \max{\left(\frac{2}{K_\alpha\kappa},2\right)} F_h(c_h^0,\vec{u}_h^0).
\end{align}
If $\sigma > M_{k-1}^2/d,\, \tilde{\sigma} >  \tilde{M}_k^2$, and $\sigma_\chi < K_\strain/(2d)$, 
the discrete dissipation inequality holds: 
\begin{align}
&\,F_h(c_h^n, \vec{u}_h^n) + \frac{\tau}{2\sigma_\chi\mu_\mathrm{s}}\norm{S_h^n}{}^2 + \frac{\tau^2}{2}a_{\mathrm{diff}}(\zeta_h^n,\zeta_h^n) \label{eq:CHNS:dis_energy_dissipation_ineq} \\
-&\, F_h(c_h^{n-1}, \vec{u}_h^{n-1}) - \frac{\tau}{2\sigma_\chi\mu_\mathrm{s}}\norm{S_h^{n-1}}{}^2 - \frac{\tau^2}{2}a_{\mathrm{diff}}(\zeta_h^{n-1},\zeta_h^{n-1})\nonumber \\
\leq&\, - \frac{K_\alpha\tau}{2}\vert \mu_h^n\vert_{\mathrm{DG}}^2 - \frac{K_\strain\mu_\mathrm{s}\tau}{2}\norm{\vec{v}_h^{n}}{\mathrm{DG}}^2 - \frac{1}{4}\norm{\vec{v}_h^n-\vec{u}_h^{n-1}}{}^2. \nonumber 
\end{align}
\end{theorem}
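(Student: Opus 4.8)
The plan is to establish \eqref{eq:CHNS:dis_energy_dissipation_ineq} by testing the Cahn--Hilliard substep \eqref{eq:fully_dis1}--\eqref{eq:fully_dis2} and the momentum substep \eqref{eq:fully_dis3} with suitable functions, summing, and then converting the pressure-projection substeps \eqref{eq:fully_dis4}--\eqref{eq:fully_dis6} into telescoping quantities via $S_h^n$ and $\zeta_h^n$. First I would test \eqref{eq:fully_dis1} with $\chi_h=\mu_h^n$ and \eqref{eq:fully_dis2} with $\varphi_h=c_h^n-c_h^{n-1}$; after multiplying the former by $\tau$ and adding, the terms $\pm(\mu_h^n,c_h^n-c_h^{n-1})$ cancel. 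For the potential I would invoke the convexity of $\Phi_{+}$ and the concavity of $\Phi_{-}$ to get $\big(\Phi_{+}\,\!'(c_h^n)+\Phi_{-}\,\!'(c_h^{n-1}),c_h^n-c_h^{n-1}\big)\geq\big(\Phi(c_h^n)-\Phi(c_h^{n-1}),1\big)$, while the polarization identity gives $\kappa a_\mathrm{diff}(c_h^n,c_h^n-c_h^{n-1})\geq\tfrac{\kappa}{2}a_\mathrm{diff}(c_h^n,c_h^n)-\tfrac{\kappa}{2}a_\mathrm{diff}(c_h^{n-1},c_h^{n-1})$. Combined with the coercivity \eqref{eq:coercivity_adiff}, this produces the phase-field part of $F_h$, the dissipation $K_\alpha\tau\vert\mu_h^n\vert_{\mathrm{DG}}^2$, and a leftover advective term $\tau a_\mathrm{adv}(c_h^{n-1},\vec{u}_h^{n-1},\mu_h^n)$ still to be absorbed.

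Next I would test \eqref{eq:fully_dis3} with $\vec{\theta}_h=\tau\vec{v}_h^n$. The discrete time-derivative term splits as $(\vec{v}_h^n-\vec{u}_h^{n-1},\vec{v}_h^n)=\tfrac12\|\vec{v}_h^n\|^2-\tfrac12\|\vec{u}_h^{n-1}\|^2+\tfrac12\|\vec{v}_h^n-\vec{u}_h^{n-1}\|^2$, the convection term is nonnegative by \eqref{eq:aCformpos}, and the viscous term is bounded below through \eqref{eq:coercivity_astrain} by $K_\strain\mu_\mathrm{s}\tau\|\vec{v}_h^n\|_{\mathrm{DG}}^2$. Since $b_\mathcal{I}(c_h^{n-1},\mu_h^n,\vec{v}_h^n)=a_\mathrm{adv}(c_h^{n-1},\vec{v}_h^n,\mu_h^n)$, adding this identity to the Cahn--Hilliard one lets the two advective contributions combine, by linearity in the velocity slot, into the single splitting mismatch $\tau a_\mathrm{adv}(c_h^{n-1},\vec{v}_h^n-\vec{u}_h^{n-1},\mu_h^n)$; the only remaining right-hand side is the pressure coupling $\tau b_\mathcal{P}(\vec{v}_h^n,p_h^{n-1})$.

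The conceptual heart of the proof is to absorb this mismatch using the CFL condition. I would bound it by $C\,\|c_h^{n-1}\|_{L^\infty(\Omega)}\,\|\vec{v}_h^n-\vec{u}_h^{n-1}\|\,\vert\mu_h^n\vert_{\mathrm{DG}}$, estimating the volume part of $a_\mathrm{adv}$ directly and the face part with the trace inequality \eqref{eq:trace_estimate_discrete} together with the penalty weight hidden in $\vert\cdot\vert_{\mathrm{DG}}$. Writing $\|c_h^{n-1}\|_{L^\infty(\Omega)}\leq\abs{\overline{c_0}}+\|c_h^{n-1}-\overline{c_0}\|_{L^\infty(\Omega)}$ and applying the discrete Sobolev inequality \eqref{eq:sobolev_ineq} brings in the factor $\Theta_{h,d}\vert c_h^{n-1}\vert_{\mathrm{DG}}$. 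Young's inequality then splits the mismatch into $\tfrac14\|\vec{v}_h^n-\vec{u}_h^{n-1}\|^2$ plus a constant multiple of $\tau^2\big(\overline{c_0}^2+\Theta_{h,d}^2\vert c_h^{n-1}\vert_{\mathrm{DG}}^2\big)\vert\mu_h^n\vert_{\mathrm{DG}}^2$; bounding $\vert c_h^{n-1}\vert_{\mathrm{DG}}^2$ by the hypothesis \eqref{eq:CHNS:dis_energy_dissipation_ass} and then invoking the two branches of \eqref{eq:CHNS:dis_energy_dissipation_CFL} makes this last contribution at most $\tfrac{K_\alpha\tau}{2}\vert\mu_h^n\vert_{\mathrm{DG}}^2$, i.e.\ half of the available Cahn--Hilliard dissipation. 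This is exactly where the $\Theta_{h,d}^{-2}$ and $F_h(c_h^0,\vec{u}_h^0)$ scalings in the CFL originate.

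It remains to turn $\tau b_\mathcal{P}(\vec{v}_h^n,p_h^{n-1})$ and the projection step into telescoping energy. Using \eqref{eq:fully_dis6} I would expand $\tfrac12\|\vec{u}_h^n\|^2$ in terms of $\tfrac12\|\vec{v}_h^n\|^2$, the Poisson relation \eqref{eq:fully_dis4} tested with $\phi_h^n$ (which yields $b_\mathcal{P}(\vec{v}_h^n,\phi_h^n)=-\tau a_\mathrm{diff}(\phi_h^n,\phi_h^n)$), and a remainder handled through the lift representations \eqref{eq:def_b_lift}--\eqref{eq:def_b_lift_2}. The pressure update \eqref{eq:fully_dis5} and the definitions \eqref{eq:defShzetah}, with $S_h^n=S_h^{n-1}+\sigma_\chi\mu_\mathrm{s}\big(\divh\vec{v}_h^n-R_h(\jump{\vec{v}_h^n})\big)$ and $\zeta_h^n=p_h^n+S_h^n$, are engineered precisely so that the surviving pressure terms collapse into the differences $\tfrac{\tau}{2\sigma_\chi\mu_\mathrm{s}}(\|S_h^n\|^2-\|S_h^{n-1}\|^2)$ and $\tfrac{\tau^2}{2}\big(a_\mathrm{diff}(\zeta_h^n,\zeta_h^n)-a_\mathrm{diff}(\zeta_h^{n-1},\zeta_h^{n-1})\big)$. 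The lift bounds \eqref{eq:lift_prop_r}--\eqref{eq:lift_prop_g} control the $R_h$ and $\vec{G}_h$ remainders, while the $\|S_h^n-S_h^{n-1}\|^2$-type term is absorbed into a fraction of $K_\strain\mu_\mathrm{s}\tau\|\vec{v}_h^n\|_{\mathrm{DG}}^2$; this is why only $\tfrac12$ of the viscous dissipation survives in \eqref{eq:CHNS:dis_energy_dissipation_ineq} and why the conditions $\sigma>M_{k-1}^2/d$, $\tilde{\sigma}>\tilde{M}_k^2$, and $\sigma_\chi<K_\strain/(2d)$ are imposed, keeping every leftover term nonpositive. Collecting the three contributions then produces \eqref{eq:CHNS:dis_energy_dissipation_ineq}. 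I anticipate this pressure-projection telescoping, together with the sign bookkeeping dictated by the penalty parameters, to be the main obstacle, while the CFL-based absorption of the advective mismatch is the conceptual crux.
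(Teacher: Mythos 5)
Your proposal is correct and follows essentially the same route as the paper's proof: the same test functions (up to the harmless factor $\tau$), the same convex--concave splitting of $\Phi$, the same combination of the two advective terms into $a_{\mathrm{adv}}(c_h^{n-1},\vec{v}_h^n-\vec{u}_h^{n-1},\mu_h^n)$ absorbed via the discrete Sobolev inequality and the CFL condition, and the same telescoping of the pressure terms through $S_h^n$ and $\zeta_h^n$ with the lift-operator bounds. No gaps.
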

\begin{proof}
Fix $n\geq 1$ and take $\chi_h=\mu_h^n$ in \eqref{eq:fully_dis1}, $\varphi_h=\delta_\tau c_h^n$ in \eqref{eq:fully_dis2}, $\vec{\theta}_h=\vec{v}_h^n$ in \eqref{eq:fully_dis3}
\begin{eqnarray*}
(\delta_\tau c_h^n,\mu_h^n) + a_{\mathrm{diff}}(\mu_h^n,\mu_h^n) + a_{\mathrm{adv}}(c_h^{n-1},\vec{u}_h^{n-1},\mu_h^n) = 0, \\
(\Phi_{+}\,\!'(c_h^n)+\Phi_{-}\,\!'(c_h^{n-1}),\,\delta_\tau c_h^n) + \kappa a_{\mathrm{diff}}(c_h^n,\delta_\tau c_h^n) - (\mu_h^n,\delta_\tau c_h^n) = 0, \\
\frac{1}{\tau}(\vec{v}^n_h - \vec{u}^{n-1}_h,\vec{v}_h^n)
+ a_{\mathcal{C}}(\vec{u}_h^{n-1},\vec{u}_h^{n-1},\vec{v}_h^n,\vec{v}_h^n) + \mu_\mathrm{s} a_\strain(\vec{v}_h^n, \vec{v}_h^n) \hspace*{5.15em}\\ 
=  b_{\mathcal{P}}(\vec{v}_h^n, p_h^{n-1}) + b_\mathcal{I}(c_h^{n-1},\mu_h^n,\vec{v}_h^n).
\end{eqnarray*}
Adding the equations above, and using \eqref{eq:aCformpos}, \eqref{eq:coercivity_adiff} and \eqref{eq:coercivity_astrain},  we have
\begin{multline}\label{eq:CHNS:Dis_energy_1}
\frac{1}{\tau}(\vec{v}^n_h - \vec{u}^{n-1}_h,\vec{v}_h^n) + \big(\Phi_{+}\,\!'(c_h^n)+\Phi_{-}\,\!'(c_h^{n-1}),\,\delta_\tau c_h^n\big) + \kappa a_{\mathrm{diff}}(c_h^n,\delta_\tau c_h^n) \\
+ K_\alpha\vert \mu_h^n \vert_{\mathrm{DG}}^2 + K_\strain \mu_\mathrm{s} \norm{\vec{v}_h^n}{\mathrm{DG}}^2 
= b_{\mathcal{P}}(\vec{v}_h^n, p_h^{n-1}) - a_{\mathrm{adv}}(c_h^{n-1},\vec{u}_h^{n-1},\mu_h^n) + b_\mathcal{I}(c_h^{n-1},\mu_h^n,\vec{v}_h^n).
\end{multline}
With Taylor's expansions and the fact that $\Phi_+$ is convex and $\Phi_-$ is concave, we have for some
$\xi_h$ and $\eta_h$ between $c_h^{n-1}$ and $c_h^n$
\begin{align}
&\big(\Phi_{+}\,\!'(c_h^n)+\Phi_{-}\,\!'(c_h^{n-1}),\,\delta_\tau c_h^n\big) \label{eq:CHNS:Dis_energy_2} \big(\delta_\tau\Phi(c_h^n),\,1\big) + \frac{1}{2\tau}\big(\Phi_{+}\,\!''(\xi_h), (c_h^{n-1}-c_h^n)^2\big)
\\
&=\,- \frac{1}{2\tau}\big(\Phi_{-}\,\!''(\eta_h), (c_h^n-c_h^{n-1})^2\big) \geq  \big(\delta_\tau\Phi(c_h^n),\,1\big). \nonumber
\end{align}
With \eqref{eq:CHNS:Dis_energy_2} and the symmetry of $a_\mathrm{diff}$, we obtain
\begin{align}\label{eq:CHNS:Dis_energy_5}
&\frac{1}{2\tau}\norm{\vec{v}_h^n}{}^2 - \frac{1}{2\tau}\norm{\vec{u}_h^{n-1}}{}^2 + \frac{1}{2\tau}\norm{\vec{v}_h^n-\vec{u}_h^{n-1}}{}^2 + \big(\delta_\tau\Phi(c_h^n),\,1\big)\\
&+ \frac{\kappa}{2\tau}a_{\mathrm{diff}}(c_h^n,c_h^n) - \frac{\kappa}{2\tau}a_{\mathrm{diff}}(c_h^{n-1},c_h^{n-1}) + \frac{\kappa}{2\tau}a_{\mathrm{diff}}(c_h^n-c_h^{n-1},c_h^n-c_h^{n-1}) \nonumber\\
&+ K_\alpha\vert \mu_h^n\vert_{\mathrm{DG}}^2 + K_\strain \mu_\mathrm{s} \norm{\vec{v}_h^n}{\mathrm{DG}}^2 
\leq  b_{\mathcal{P}}(\vec{v}_h^n, p_h^{n-1}) \nonumber\\
&- a_{\mathrm{adv}}(c_h^{n-1},\vec{u}_h^{n-1},\mu_h^n) + b_\mathcal{I}(c_h^{n-1},\mu_h^n,\vec{v}_h^n). \nonumber 
\end{align}
Next, we choose $\vec{\theta}_h = \vec{u}_h^n$ in \eqref{eq:fully_dis6}
\begin{equation}\label{eq:stab_1}
\frac{1}{2}\norm{\vec{u}_h^n}{}^2 - \frac{1}{2}\norm{\vec{v}_h^{n}}{}^2 + \frac{1}{2}\norm{\vec{u}_h^n-\vec{v}_h^{n}}{}^2
= \tau b_{\mathcal{P}}(\vec{u}_h^n, \phi^n_h).
\end{equation}
Let $\vec{\theta}_h = \vec{u}_h^n - \vec{v}_h^n$ in \eqref{eq:fully_dis6}, then
\begin{equation}
\norm{\vec{u}_h^n-\vec{v}_h^{n}}{}^2
=  \tau b_{\mathcal{P}}(\vec{u}_h^n - \vec{v}_h^n, \phi^n_h).
\end{equation}
Therefore \eqref{eq:stab_1} becomes
\begin{equation}\label{eq:stab_1a}
\frac{1}{2}\norm{\vec{u}_h^n}{}^2 - \frac{1}{2}\norm{\vec{v}_h^{n}}{}^2 
- \frac{\tau}{2} b_\mathcal{P}(\vec{v}_h^n,\phi_h^n) 
= \frac{\tau}{2}  b_{\mathcal{P}}(\vec{u}_h^n, \phi^n_h).
\end{equation}
We can also show (see Lemma~5.1 in \cite{inspaper1})
\begin{align}    
b_{\mathcal{P}}(\bm{u}_h^n, q_h) = - \tau \sum_{e\in \Gamma_h} \frac{\tilde{\sigma}}{h} \int_{e} [\phi^n_h][q_h] 
+ \tau (\bm{G}_h([\phi^n_h]), \bm{G}_h([q_h])), \quad \forall q_h \in M_h^{k-1}. \label{eq:preposition_1}
\end{align}
Using \eqref{eq:fully_dis4} (with $\varphi_h = \phi_h^n$) and choosing $q_h = \phi_h^n$ in \eqref{eq:preposition_1}, we rewrite \eqref{eq:stab_1a} as
\begin{equation}\label{eq:stab_2}
\frac{1}{2}\norm{\vec{u}_h^n}{}^2 - \frac{1}{2}\norm{\vec{v}_h^{n}}{}^2 + \frac{\tau^2}{2} a_{\mathrm{diff}}(\phi_h^n,\phi_h^n) + \frac{\tau^2}{2}\sum_{e\in\Gammah}\frac{\tilde{\sigma}}{h}\norm{\jump{\phi_h^n}}{L^2(e)}^2 = \frac{\tau^2}{2}\norm{\vec{G}_h(\jump{\phi_h^n})}{}^2.
\end{equation}
With \eqref{eq:lift_prop_g}, we have
\begin{equation}\label{eq:stab_4}
\frac{1}{2\tau}\norm{\vec{u}_h^n}{}^2 - \frac{1}{2\tau}\norm{\vec{v}_h^{n}}{}^2 + \frac{\tau}{2} a_{\mathrm{diff}}(\phi_h^n,\phi_h^n) + \tau \frac{\tilde{\sigma}-\tilde{M}_{k}^2}{2h} \sum_{e\in\Gammah} \norm{\jump{\phi_h^n}}{L^2(e)}^2 \leq 0.
\end{equation}
Adding \eqref{eq:CHNS:Dis_energy_5} and \eqref{eq:stab_4} and choosing $\tilde{\sigma}>\tilde{M}_k^2$  yields
\begin{align}\label{eq:CHNS:Dis_energy_6}
&\,\frac{1}{2\tau}\norm{\vec{u}_h^n}{}^2 - \frac{1}{2\tau}\norm{\vec{u}_h^{n-1}}{}^2 + \frac{1}{2\tau}\norm{\vec{v}_h^n-\vec{u}_h^{n-1}}{}^2
+ \big(\delta_\tau\Phi(c_h^n),\,1\big)\\
+&\, \frac{\kappa}{2\tau}a_{\mathrm{diff}}(c_h^n,c_h^n) - \frac{\kappa}{2\tau}a_{\mathrm{diff}}(c_h^{n-1},c_h^{n-1}) + \frac{\kappa}{2\tau}a_{\mathrm{diff}}(c_h^n-c_h^{n-1},c_h^n-c_h^{n-1}) \nonumber\\
+&\, K_\alpha\vert \mu_h^n\vert_{\mathrm{DG}}^2 + K_\strain \mu_\mathrm{s} \norm{\vec{v}_h^n}{\mathrm{DG}}^2 
+ \frac{\tau}{2} a_{\mathrm{diff}}(\phi_h^n,\phi_h^n)  \nonumber \\
\leq&\, b_{\mathcal{P}}(\vec{v}_h^n, p_h^{n-1}) - a_{\mathrm{adv}}(c_h^{n-1},\vec{u}_h^{n-1},\mu_h^n) + b_\mathcal{I}(c_h^{n-1},\mu_h^n,\vec{v}_h^n).\nonumber 
\end{align}
To proceed with the term $b_{\mathcal{P}}(\vec{v}_h^n, p_h^{n-1})$, we rewrite it using the variables $S_h^n$ and $\zeta_h^n$.
\begin{align}\label{eq:stab:bpv}
b_{\mathcal{P}}(\vec{v}_h^n, p_h^{n-1}) = b_{\mathcal{P}}(\vec{v}_h^n, \zeta_h^{n-1}) - b_{\mathcal{P}}(\vec{v}_h^n,S_h^{n-1}).
\end{align}
Using \eqref{eq:defShzetah},  \eqref{eq:fully_dis5} and \eqref{eq:def_b_lift}, we note that
\begin{align}
\zeta_h^n - \zeta_h^{n-1} = p_h^n - p_h^{n-1} + \sigma_\chi \mu_\mathrm{s}(\divh{\vec{v}_h^n}-R_h\jump{\vec{v}_h^n}) = \phi_h^n.
\end{align}
Therefore, by \eqref{eq:fully_dis4}, we have
\begin{align}
b_{\mathcal{P}}(\vec{v}_h^n, \zeta_h^{n-1}) 
=& -\tau a_\mathrm{diff}(\phi_h^{n},\zeta_h^{n-1})
= -\tau a_\mathrm{diff}(\zeta_h^n - \zeta_h^{n-1},\zeta_h^{n-1}) \label{eq:stab:term1}\\
=& -\frac{\tau}{2} a_{\mathrm{diff}}(\zeta_h^n,\zeta_h^n) + \frac{\tau}{2} a_{\mathrm{diff}}(\zeta_h^{n-1},\zeta_h^{n-1}) + \frac{\tau}{2} a_{\mathrm{diff}}(\phi_h^n,\phi_h^n).\nonumber
\end{align}
For the second term of the right-hand side of \eqref{eq:stab:bpv}, by \eqref{eq:defShzetah} and \eqref{eq:def_b_lift}, we have
\begin{align}
b_{\mathcal{P}}(\vec{v}_h^n, S_h^{n-1})
&= (\div{\vec{v}_h^n}-R_h\jump{\vec{v}_h^n}, S_h^{n-1})
= \frac{1}{\sigma_\chi \mu_\mathrm{s}}(S_h^{n}-S_h^{n-1}, S_h^{n-1}) \label{eq:stab:term2}\\
&= \frac{1}{2\sigma_\chi \mu_\mathrm{s}}(\norm{S_h^n}{}^2 - \norm{S_h^{n-1}}{}^2 - \norm{S_h^n-S_h^{n-1}}{}^2). \nonumber
\end{align}
Substitute \eqref{eq:stab:term1} and \eqref{eq:stab:term2} into \eqref{eq:stab:bpv} to obtain
\begin{multline}
b_{\mathcal{P}}(\vec{v}_h^n, p_h^{n-1}) = -\frac{\tau}{2}a_{\mathrm{diff}}(\zeta_h^n,\zeta_h^n) + \frac{\tau}{2}a_{\mathrm{diff}}(\zeta_h^{n-1},\zeta_h^{n-1}) + \frac{\tau}{2}a_{\mathrm{diff}}(\phi_h^n,\phi_h^n) 
\\- \frac{1}{2\sigma_\chi\mu_\mathrm{s}}(\norm{S_h^n}{}^2 - \norm{S_h^{n-1}}{}^2 - \norm{S_h^n-S_h^{n-1}}{}^2). \label{eq:expression_b_pn}
\end{multline}
In addition, if we choose parameters $\sigma_\chi$ and $\sigma$ such that $\sigma_\chi < K_\strain/(2d)$ and $\sigma > M_{k-1}^2/d$, we have
\begin{equation}
\frac{1}{2\sigma_\chi\mu_\mathrm{s}}\norm{S_h^n-S_h^{n-1}}{}^2 \leq \frac{K_\strain\mu_\mathrm{s}}{2}\norm{\vec{v}_h^n}{\mathrm{DG}}^2.
\label{eq:bounddiffSh}
\end{equation}
Thus, with \eqref{eq:CHNS:DG_interface}, \eqref{eq:expression_b_pn}, and \eqref{eq:bounddiffSh}, the \eqref{eq:CHNS:Dis_energy_6} becomes 
\begin{align}
&\,\frac{1}{2\tau}\norm{\vec{u}_h^n}{}^2 - \frac{1}{2\tau}\norm{\vec{u}_h^{n-1}}{}^2 + \frac{1}{2\tau}\norm{\vec{v}_h^n-\vec{u}_h^{n-1}}{}^2
+ \big(\delta_\tau\Phi(c_h^n),\,1\big)\label{eq:CHNS:Dis_energy_7}\\
+&\, \frac{\kappa}{2\tau}a_{\mathrm{diff}}(c_h^n,c_h^n) - \frac{\kappa}{2\tau}a_{\mathrm{diff}}(c_h^{n-1},c_h^{n-1}) + \frac{\kappa}{2\tau}a_{\mathrm{diff}}(c_h^n-c_h^{n-1},c_h^n-c_h^{n-1}) \nonumber\\
+&\, K_\alpha\vert \mu_h^n\vert_{\mathrm{DG}}^2 + \frac{K_\strain \mu_\mathrm{s}}{2} \norm{\vec{v}_h^n}{\mathrm{DG}}^2
+ \frac{\tau}{2}a_{\mathrm{diff}}(\zeta_h^n,\zeta_h^n) - \frac{\tau}{2}a_{\mathrm{diff}}(\zeta_h^{n-1},\zeta_h^{n-1}) \nonumber\\
+&\, \frac{1}{2\sigma_\chi\mu_\mathrm{s}}\norm{S_h^n}{}^2 - \frac{1}{2\sigma_\chi\mu_\mathrm{s}}\norm{S_h^{n-1}}{}^2
\leq  a_{\mathrm{adv}}(c_h^{n-1},\vec{v}_h^n-\vec{u}_h^{n-1},\mu_h^n). \nonumber
\end{align}
Using the definition of $a_{\mathrm{adv}}$, Holder's inequality, \eqref{eq:trace_estimate_discrete}, and the
fact that $\tilde{\sigma}>\tilde{M}_k^2$ and $\tilde{M}_k = \sqrt{2} C_\mathrm{tr} N_\mathrm{face}^{1/2}$, we have
\begin{equation}
 a_{\mathrm{adv}}(c_h^{n-1},\vec{v}_h^n-\vec{u}_h^{n-1},\mu_h^n) 
\leq \norm{c_h^{n-1}}{L^\infty(\Omega)}\norm{\vec{v}_h^{n}-\vec{u}_h^{n-1}}{L^2{(\Omega)}}\vert \mu_h^n\vert_{\mathrm{DG}}. 
\end{equation}
Using \eqref{eq:sobolev_ineq}, Young's inequality, \eqref{thm:CHNS:discrete_mass_conservation}, and \eqref{eq:CHNS:dis_energy_dissipation_ass}, we obtain 
\begin{align}
& a_{\mathrm{adv}}(c_h^{n-1},\vec{v}_h^n-\vec{u}_h^{n-1},\mu_h^n)
\leq \frac{1}{4\tau}\norm{\vec{v}_h^{n}-\vec{u}_h^{n-1}}{}^2 
+  \tau \norm{c_h^{n-1}}{L^\infty(\Omega)}^2\vert \mu_h^n\vert_{\mathrm{DG}}^2 \label{eq:sec:stab:Dis_energy_adv}\\
\leq&\, \frac{1}{4\tau}\norm{\vec{v}_h^{n}-\vec{u}_h^{n-1}}{}^2 
+  \tau (2\tilde{C}_P^2 \Theta_{h,d}^2 \vert c_h^{n-1}\vert_{\mathrm{DG}}^2 + 2\overline{c_0}^2 ) \vert \mu_h^n\vert_{\mathrm{DG}}^2 \nonumber\\ 
\leq &  \frac{1}{4\tau}\norm{\vec{v}_h^{n}-\vec{u}_h^{n-1}}{}^2 
+  \tau \left(2\tilde{C}_P^2 \Theta_{h,d}^2\max \Big(\frac{2}{K_\alpha \kappa}, 2\Big)F_h(c_h^0,\vec{u}_h^0) + 2\overline{c_0}^2 \right) \vert \mu_h^n\vert_{\mathrm{DG}}^2. \nonumber
\end{align}
%
Substitute \eqref{eq:sec:stab:Dis_energy_adv} into the right-hand side of \eqref{eq:CHNS:Dis_energy_7}. The condition \eqref{eq:CHNS:dis_energy_dissipation_CFL} implies
\begin{align}
\tau \left(2\tilde{C}_P^2 \Theta_{h,d}^2\max \Big(\frac{2}{K_\alpha \kappa}, 2\Big)F_h(c_h^0,\vec{u}_h^0) + 2\overline{c_0}^2 \right) \leq \frac{K_\alpha}{2}.
\end{align} 
Then, multiply by $\tau$. The result follows.  
\end{proof}
\begin{remark}
In 3D, as $h$ tends to $0$, the CFL constraint \eqref{eq:CHNS:dis_energy_dissipation_CFL} simply reads
$\tau \leq C h$.   In 2D, the CFL constraint is milder:   $\tau \leq C(1+\vert \ln h \vert)^{-1}$. 
\end{remark}
\begin{theorem}\label{thm:CHNS:stability_bound1}
Assume that $\tau$ and $h$ satisfy the CFL condition \eqref{eq:CHNS:dis_energy_dissipation_CFL}.
If $\sigma > M_{k-1}^2/d,\, \tilde{\sigma} > \tilde{M}_k^2$ and $\sigma_\chi < K_\strain/(2d)$, 
for any $1 \leq \ell \leq \Nst$, we have
\begin{multline}\label{eq:CHNS:Stability}
\frac{1}{2}\norm{\vec{u}_h^\ell}{}^2 + \big(\Phi(c_h^\ell),1\big) + \frac{K_\alpha\kappa}{2}\vert c_h^\ell\vert_{\mathrm{DG}}^2  + \frac{\tau}{2\sigma_\chi\mu_\mathrm{s}}\norm{S_h^\ell}{}^2 + \frac{\tau^2}{2}a_{\mathrm{diff}}(\zeta_h^\ell,\zeta_h^\ell) \\
+ \frac{K_\alpha\tau}{2}\sum_{n=1}^\ell \vert \mu_h^n\vert_{\mathrm{DG}}^2 + \frac{K_\strain \mu_\mathrm{s}\tau}{2}\sum_{n=1}^\ell \norm{\vec{v}_h^n}{\mathrm{DG}}^2 + \frac{1}{4}\sum_{n=1}^\ell\norm{\vec{v}_h^n-\vec{u}_h^{n-1}}{}^2
\leq F_h(c_h^0, \vec{u}_h^0),
\end{multline}
\end{theorem}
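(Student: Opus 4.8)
The plan is to derive \eqref{eq:CHNS:Stability} by summing the one-step dissipation inequality \eqref{eq:CHNS:dis_energy_dissipation_ineq} of \cref{thm:CHNS:dis_energy_dissipation} over $n=1,\dots,\ell$ and telescoping. The one genuine obstacle is that \cref{thm:CHNS:dis_energy_dissipation} is \emph{conditional}: to invoke it at step $n$ I must already know that $c_h^{n-1}$ obeys the boundedness hypothesis \eqref{eq:CHNS:dis_energy_dissipation_ass}. I would therefore phrase the whole argument as an induction whose actual content is the propagation of \eqref{eq:CHNS:dis_energy_dissipation_ass} from one time level to the next; the stability estimate then drops out as a byproduct of the same telescoping.

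The engine is a lower bound on the discrete energy. Since $\Phi\geq 0$ and $\frac12\|\vec u_h^m\|^2\geq 0$, the definition \eqref{eq:CHNS:discrete_energy} together with coercivity \eqref{eq:coercivity_adiff} gives
\[
F_h(c_h^m,\vec u_h^m)\;\geq\;\frac{\kappa}{2}\,a_{\mathrm{diff}}(c_h^m,c_h^m)\;\geq\;\frac{K_\alpha\kappa}{2}\vert c_h^m\vert_{\mathrm{DG}}^2,
\]
so any control of the form $F_h(c_h^m,\vec u_h^m)\leq F_h(c_h^0,\vec u_h^0)$ immediately yields $\vert c_h^m\vert_{\mathrm{DG}}^2\leq\tfrac{2}{K_\alpha\kappa}F_h(c_h^0,\vec u_h^0)\leq\max(\tfrac{2}{K_\alpha\kappa},2)F_h(c_h^0,\vec u_h^0)$, i.e.\ exactly \eqref{eq:CHNS:dis_energy_dissipation_ass} at index $m$. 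For the base case $m=0$ this same chain holds \emph{unconditionally}, giving $\vert c_h^0\vert_{\mathrm{DG}}^2\leq\tfrac{2}{K_\alpha\kappa}F_h(c_h^0,\vec u_h^0)$.

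For the inductive step I would assume \eqref{eq:CHNS:dis_energy_dissipation_ass} for all indices $0,\dots,n-1$. This licenses \cref{thm:CHNS:dis_energy_dissipation} at every step $1,\dots,n$, so I may add \eqref{eq:CHNS:dis_energy_dissipation_ineq} over these steps. Writing $G_m=F_h(c_h^m,\vec u_h^m)+\tfrac{\tau}{2\sigma_\chi\mu_\mathrm{s}}\|S_h^m\|^2+\tfrac{\tau^2}{2}a_{\mathrm{diff}}(\zeta_h^m,\zeta_h^m)$, the left-hand sides telescope to $G_n-G_0$, and since $S_h^0=0$ and $\zeta_h^0=p_h^0=0$ by \eqref{eq:defShzetah} one has $G_0=F_h(c_h^0,\vec u_h^0)$. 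Every summand on the right of \eqref{eq:CHNS:dis_energy_dissipation_ineq} is nonpositive, and by \eqref{eq:coercivity_adiff} both $a_{\mathrm{diff}}(\zeta_h^n,\zeta_h^n)\geq 0$ and $\|S_h^n\|^2\geq 0$; dropping these nonnegative pieces leaves $F_h(c_h^n,\vec u_h^n)\leq G_n\leq F_h(c_h^0,\vec u_h^0)$. The energy lower bound above then closes the induction by furnishing \eqref{eq:CHNS:dis_energy_dissipation_ass} at index $n$.

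Once the induction secures \eqref{eq:CHNS:dis_energy_dissipation_ass} for every index up to $\ell-1$, I apply the telescoped inequality one last time at level $\ell$, now \emph{retaining} the accumulated dissipation on the right:
\[
G_\ell+\sum_{n=1}^\ell\Big(\tfrac{K_\alpha\tau}{2}\vert\mu_h^n\vert_{\mathrm{DG}}^2+\tfrac{K_\strain\mu_\mathrm{s}\tau}{2}\|\vec v_h^n\|_{\mathrm{DG}}^2+\tfrac14\|\vec v_h^n-\vec u_h^{n-1}\|^2\Big)\leq F_h(c_h^0,\vec u_h^0).
\]
Finally, bounding the energy term inside $G_\ell$ from below via $\frac{\kappa}{2}a_{\mathrm{diff}}(c_h^\ell,c_h^\ell)\geq\frac{K_\alpha\kappa}{2}\vert c_h^\ell\vert_{\mathrm{DG}}^2$ replaces $F_h(c_h^\ell,\vec u_h^\ell)$ by $\frac12\|\vec u_h^\ell\|^2+(\Phi(c_h^\ell),1)+\frac{K_\alpha\kappa}{2}\vert c_h^\ell\vert_{\mathrm{DG}}^2$, which is precisely the left-hand side of \eqref{eq:CHNS:Stability}. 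The main (and essentially the only) subtlety is this bootstrap: the admissibility hypothesis required to run \cref{thm:CHNS:dis_energy_dissipation} must be recovered from its own conclusion, so the induction has to be organized to feed the $n$-th energy bound back as the $(n{+}1)$-th admissibility condition; all of the genuine analytic difficulty, namely the CFL restriction and the discrete Sobolev/$L^\infty$ estimate, has already been absorbed into \cref{thm:CHNS:dis_energy_dissipation}.
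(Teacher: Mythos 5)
Your proposal is correct and follows essentially the same route as the paper: an induction in which the energy lower bound $F_h(c_h^m,\vec u_h^m)\geq \frac{K_\alpha\kappa}{2}\vert c_h^m\vert_{\mathrm{DG}}^2$ propagates the admissibility hypothesis \eqref{eq:CHNS:dis_energy_dissipation_ass}, the one-step inequality \eqref{eq:CHNS:dis_energy_dissipation_ineq} is telescoped using $S_h^0=\zeta_h^0=0$, and coercivity converts $\frac{\kappa}{2}a_{\mathrm{diff}}(c_h^\ell,c_h^\ell)$ into the stated term. The only cosmetic difference is that the paper states its induction hypothesis as the validity of \eqref{eq:CHNS:Stability} for $\ell\leq j$ rather than of \eqref{eq:CHNS:dis_energy_dissipation_ass}, which is equivalent.
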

\begin{proof}
To prove \eqref{eq:CHNS:Stability} we use an induction argument on $\ell$. The positivity of
the chemical energy density and \eqref{eq:coercivity_adiff} imply
\begin{equation}
F_h(c_h^0,\vec{u}_h^0) = \frac{1}{2}(\vec{u}_h^0,\vec{u}_h^0) + \big(\Phi(c_h^0),1\big) + \frac{\kappa}{2}a_{\mathrm{diff}}(c_h^0,c_h^0) \geq \frac{K_\alpha\kappa}{2}\vert c_h^0\vert_{\mathrm{DG}}^2.
\end{equation}
Therefore, assumption \eqref{eq:CHNS:dis_energy_dissipation_ass} holds for $n=1$. We apply \Cref{thm:CHNS:dis_energy_dissipation}
to obtain \eqref{eq:CHNS:dis_energy_dissipation_ineq} for $n=1$, which implies \eqref{eq:CHNS:Stability}  for $\ell = 1$ since $S_h^0=\zeta_h^0 = 0$.
\par
Fix $j \geq 1$ and assume that \eqref{eq:CHNS:Stability} holds for all $1\leq \ell\leq j$.  This means that
\eqref{eq:CHNS:dis_energy_dissipation_ass} is valid for all $1\leq n \leq j+1$. With \Cref{thm:CHNS:dis_energy_dissipation}, we have
that \eqref{eq:CHNS:dis_energy_dissipation_ineq} is valid for any $1\leq n \leq j+1$. Summing \eqref{eq:CHNS:dis_energy_dissipation_ineq} over $n$ yields
\eqref{eq:CHNS:Stability}  for $\ell =  j+1$. 
\end{proof}
%
\begin{remark}
Using a triangle inequality, Poincare's inequality \eqref{eq:PoincareMh}, and mass conservation \eqref{thm:CHNS:discrete_mass_conservation}, stability bound \eqref{eq:CHNS:Stability} implies that for any $p\leq 6$
\begin{equation}
\Vert c_h^\ell \Vert_{L^p(\Omega)} \leq C, \quad 1\leq \ell \leq N_T.
\label{eq:L2boundchLp}
\end{equation}
\end{remark}

\section{Error analysis}\label{sec:error_analysis}
In the remainder of the paper, we assume that $\Omega$ is convex. The goal of this section is to show the following convergence result. 
\begin{theorem}\label{thm:conv_estimate_1}
Assume that $\sigma \geq \tilde{M}_{k-1}^2/d$, $\tilde{\sigma} \geq 4\tilde{M}_{k}^2$, and $\sigma_\chi \leq K_\strain/(2d)$.
Fix  $0<\delta< 1$.   There exist constants  $\gamma, C_\mathrm{err}, h_0, \tau_0$  independent of $h$ and $\tau$, 
such that if $h\leq h_0, \tau \leq \tau_0$ and    
\begin{equation} 
\tau \leq \gamma h^{ 1+ \delta }, \label{eq:cfl_cond}
\end{equation}
then the following error estimate holds. For $1\leq m \leq \Nst$,  
\begin{align}\label{eq:error_estimate_theorem}
K_\alpha \tau \sum_{n=1}^{m} \vert\mu_h^n - \mu^n \vert_{\mathrm{DG}}^2 
&+ K_\strain \mu_\mathrm{s}\tau \sum_{n=1}^m \|\bm{v}_h^n -\vec{u}^n \|_{\DG}^2 
+ \tau^2 \sum_{n=1}^m \vert\phi_h^n\vert_{\mathrm{DG}}^2 \\
&+ \kappa K_\alpha \vert c_h^m - c^m\vert_{\mathrm{DG}}^2 + \|\bm{u}_h^m -  \bm{u}^m \|^2 
\leq C_\mathrm{err}(\tau + h^{2k}). \nonumber
\end{align}
In addition, there exists a constant $\widetilde{C}_\mathrm{err}$ independent of $h$, $\tau$, such that the following improved estimate holds. For $1\leq m \leq \Nst$,
\begin{align}\label{eq:improved_error_estimate_theorem}
\|c_h^m - c^m\|^2 &+ \tau  \sum_{n=1}^m \|\mu_h^n - \mu^n\|^2 \\  
&+ \mu_\mathrm{s} \tau  \sum_{n=1}^m \left(\|\bm{v}_h^n - \bm{u}^n \|^2 +\|\bm{u}_h^n - \bm{u}^n \|^2 \right) \leq \widetilde{C}_\mathrm{err} (\tau^2 + \tau h^2 + h^{2k+2}).  \nonumber
\end{align}
The above estimates hold under the following regularity assumptions:
 $\grad{c^0} \cdot\bm{n} = 0$ on $\partial\Omega$ and 
\begin{align*}
    c, \mu &\in L^{\infty}(0,T;H^{k+1}(\Omega)), & \partial_t c &\in L^2(0,T;H^{k+1}(\Omega)), & \partial_{tt} c &\in L^2(0,T;L^2(\Omega)),  \\ 
    \bm{u} &\in L^\infty(0,T;H^{k+1}(\Omega)^d), & \partial_t \bm{u} &\in L^2(0,T;H^{k+1}(\Omega)^d), & p &\in L^{\infty}(0,T;H^{k}(\Omega)). 
\end{align*}
\end{theorem}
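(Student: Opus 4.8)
The plan is to combine an energy-type error identity with a nested induction, in the spirit of the stability analysis of Section~\ref{sec:stability}, and then to upgrade the resulting broken-norm bounds to the sharper $L^2$ estimate \eqref{eq:improved_error_estimate_theorem} by duality. First I would split each error into a projection part and a discrete part: for the phase field and chemical potential I would use the elliptic projection associated with $a_\mathrm{diff}$ (as in \eqref{eq:ch0ellip}), for the velocity a Stokes-type projection compatible with $b_{\mathcal{P}}$, and for the pressure the $L^2$ projection onto $M_h^{k-1}$. Writing $c_h^n-c^n=\xi_c^n+\eta_c^n$ with $\xi_c^n$ the discrete part and $\eta_c^n$ the projection error (and analogously $\xi_\mu^n,\eta_\mu^n$ for $\mu$ and for the velocity and pressure), the projection parts are controlled by standard approximation theory at the rates $O(h^{k+1})$ in $L^2$ and $O(h^k)$ in $|\cdot|_{\mathrm{DG}}$, so only the discrete parts remain to be estimated.

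Next I would derive the error equations by inserting the exact solution into the discrete forms \eqref{eq:fully_dis1}--\eqref{eq:fully_dis6} and subtracting; the exact solution satisfies these relations up to consistency terms from (i) the backward-Euler and operator-splitting time discretization, of size $O(\tau)$, and (ii) the projection residuals, which are absorbed by approximation theory. Testing the phase-field error equation with $\xi_\mu^n$, the potential error equation with $\delta_\tau\xi_c^n$, and the momentum error equation with the discrete intermediate-velocity error—mirroring the choices $\chi_h=\mu_h^n,\ \varphi_h=\delta_\tau c_h^n,\ \vec\theta_h=\vec v_h^n$ from Theorem~\ref{thm:CHNS:dis_energy_dissipation}—produces a telescoping structure in $|\xi_c^n|_{\mathrm{DG}}^2$ and $\|\vec u_h^n-\vec u^n\|^2$ together with the coercive dissipation terms appearing on the left of \eqref{eq:error_estimate_theorem}. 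The splitting and pressure contributions are handled exactly as in the stability proof, via the auxiliary variables $S_h^n,\zeta_h^n$ of \eqref{eq:defShzetah} and the lift identities \eqref{eq:preposition_1}--\eqref{eq:expression_b_pn}, which again yield telescoping $a_\mathrm{diff}(\zeta_h^n,\zeta_h^n)$ and $\|S_h^n\|^2$ terms and the $\tau^2|\phi_h^n|_{\mathrm{DG}}^2$ dissipation.

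The crucial and most delicate step is the treatment of the nonlinear and transport terms \emph{without} regularizing $\Phi$. For the potential I would exploit the decomposition $\Phi=\Phi_++\Phi_-$: convexity of $\Phi_+$ gives monotonicity, hence a favorable sign for the leading contribution of $(\Phi_+'(c_h^n)-\Phi_+'(\tilde c^n),\xi_c^n)$, while $\Phi_-'$ is linear and harmless; the remaining cross terms are bounded through the local Lipschitz estimate $|\Phi_+'(a)-\Phi_+'(b)|\le C(1+|a|^2+|b|^2)|a-b|$, combined with the $L^p$ stability \eqref{eq:L2boundchLp} of $c_h$ and the $L^\infty$ regularity of $c$. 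The transport terms $\aadv$, $a_{\mathcal{C}}$ and $b_{\mathcal{I}}$ are controlled with \eqref{eq:bound_aadv_1}--\eqref{eq:bound_aadv_2}, the positivity \eqref{eq:aCformpos}, and Young's inequality to absorb the coupling between $\|\vec v_h^n-\vec u^n\|_{\mathrm{DG}}$ and $|\xi_\mu^n|_{\mathrm{DG}}$ into the coercive quantities. At this stage the estimate does not yet close, because controlling the quartic nonlinearity needs an $L^\infty$ bound on the \emph{discrete} phase field; this is supplied by the induction hypothesis that \eqref{eq:error_estimate_theorem} already holds through step $m-1$. From that hypothesis, the discrete Sobolev inequality \eqref{eq:sobolev_ineq}—with the factor $\Theta_{h,d}$ of \eqref{eq:def_Theta_hd}—converts the $|c_h^{m-1}-c^{m-1}|_{\mathrm{DG}}$ bound into an $L^\infty$ bound that is $o(1)$ precisely when $\tau\le\gamma h^{1+\delta}$; the CFL relation \eqref{eq:cfl_cond} is calibrated so that this inverse-type loss is dominated. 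Summing over $n$ and applying a discrete Gr\"onwall inequality then closes the induction and yields \eqref{eq:error_estimate_theorem} with rate $\tau+h^{2k}$.

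Finally, to obtain \eqref{eq:improved_error_estimate_theorem} I would run an Aubin--Nitsche duality argument, which is where convexity of $\Omega$ is used. For the phase-field error I would introduce the elliptic dual problem $-\laplace\psi=c_h^m-c^m$ with homogeneous Neumann data, whose solution has $H^2$ regularity on the convex domain; testing the error relation with the elliptic projection of $\psi$ and invoking the energy estimate \eqref{eq:error_estimate_theorem} gains one power of $h$ relative to the energy norm (and the improved temporal order after summation in time). For the velocity I would use the corresponding Stokes dual problem together with the $b_{\mathcal{P}}$-compatibility of the velocity projection. Assembling these with \eqref{eq:error_estimate_theorem} produces the $O(\tau^2+\tau h^2+h^{2k+2})$ rate. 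I expect the principal obstacle to be closing the induction for the unregularized quartic potential—maintaining Gr\"onwall constants independent of $h$ and $\tau$ while the discrete $L^\infty$ control of $c_h$ is itself only obtained \emph{a posteriori} from the error bound—so that \eqref{eq:cfl_cond} exactly balances the inverse-inequality factors; a secondary difficulty is the duality step for the decoupled pressure--velocity splitting, where the dual velocity and pressure must be reconstructed consistently through $b_{\mathcal{P}}$ and the lift operators.
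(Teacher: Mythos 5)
Your overall architecture (projection/discrete error splitting with $\Pi_h$ compatible with $b_{\mathcal{P}}$, an induction coupled to the CFL condition \eqref{eq:cfl_cond}, and duality for the improved estimate) matches the paper, but there is a genuine gap at the heart of the energy argument. You propose to test the potential error equation with $\delta_\tau\xi_c^n$, mirroring the stability proof; in the stability proof the convex/concave splitting makes $(\Phi_+'(c_h^n)+\Phi_-'(c_h^{n-1}),\delta_\tau c_h^n)$ telescope, but in the \emph{error} equation the corresponding term is $(\Phi_+'(c^n)-\Phi_+'(c_h^n),\delta_\tau\xi_c^n)$, which neither telescopes nor has a sign, and your left-hand side contains no control of $\|\delta_\tau\xi_c^n\|$ (or any negative norm of it) with which to absorb it; the local Lipschitz bound you invoke therefore leaves an unabsorbable factor. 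The paper's essential device here is the discrete inverse Laplacian $\mathcal{J}$ of \eqref{eq:def_of_J}: testing the mass-balance error equation with $\mathcal{J}(\delta_\tau\xi_c^n)$ produces $\vert\mathcal{J}(\delta_\tau\xi_c^n)\vert_{\mathrm{DG}}^2$ on the left, and Lemma~\ref{lemma:boundedness_J} lets every term paired with $\delta_\tau\xi_c^n$ be bounded by $C_\mathrm{J}\vert\mathcal{J}(\delta_\tau\xi_c^n)\vert_{\mathrm{DG}}\vert\lambda\vert_{\mathrm{DG}}$ (see Lemma~\ref{lemma:bound_xic}). This $H^{-1}$-type argument, absent from your proposal, is what closes the Gr\"onwall loop; it also forces you to estimate $\vert\Phi_+'(c^n)-\Phi_+'(c_h^n)\vert_{\mathrm{DG}}$, which needs $W^{1,3}$ control of $\gradh c_h^n$, not just $L^\infty$ control.

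The second gap is the source and timing of the $L^\infty$ bound. You induct on the error estimate through step $m-1$ and convert it to an $L^\infty$ bound via \eqref{eq:sobolev_ineq}; but the scheme treats $\Phi_+'$ implicitly, so the error equation at step $m$ contains $\Phi_+'(c_h^m)$ and you need $\|c_h^m\|_{L^\infty(\Omega)}$ \emph{before} the error estimate at step $m$ is available --- a circularity your argument does not break. The paper avoids this by choosing a weaker induction hypothesis \eqref{eq:induction_hyp_linf} (on $\tau^2\sum\vert\phi_h^n\vert_{\mathrm{DG}}^2$ and $\tau\sum\|\bm{v}_h^n-\Pi_h\bm{u}^n\|_{\DG}^2$ up to $m-1$) and deriving from it, via a separate stability argument (Lemma~\ref{lemma:L_infty_bound}, using the discrete Laplacian \eqref{eq:def_discrete_laplace}, the discrete Agmon and Gagliardo--Nirenberg inequalities \eqref{eq:discrete_Agmon}--\eqref{eq:discrete_GN}, and Gr\"onwall on $\|\mu_h^n\|^2$), uniform $L^\infty$ and $W^{1,3}$ bounds on $c_h^n$ valid for all $n\leq m$ \emph{including the current level}. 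Your duality step for the improved estimate is in the right spirit (dual Stokes problem for the velocity, negative-norm/elliptic duality for the order parameter), but it too relies on the $L^2$ bounds of Lemmas~\ref{lemma:bound_l2norm_c}--\ref{lemma:bound_l2norm_mu}, which again hinge on the $\mathcal{J}$-based machinery you have not set up.
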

\begin{remark}
Hereinafter, we use an induction argument to prove \eqref{eq:error_estimate_theorem}. In each induction iteration,  the constants 
$\gamma, C_\mathrm{err}, h_0, \tau_0$  are unchanged. Therefore, the algorithm \eqref{eq:fully_dis1}-\eqref{eq:fully_dis6} is suited in simulations with any prescribed end time $T$.  
\end{remark} 
\begin{remark} The bound \eqref{eq:improved_error_estimate_theorem} is optimal for $k=1$ since $\tau h^2 \leq (\tau^2 + h^4)/2$. For $k \geq 2$, the reverse CFL condition ``$h^2 \leq \tau$"  is required for optimality. 
\end{remark}
\textbf{Proof Outline:} Since the proof of this result requires several intermediate Lemmas, we provide a brief outline here.
The proof of  \eqref{eq:error_estimate_theorem} is in Section \ref{sec:conv_estimate_1} and it is a consequence of the following
bound, valid for $1\leq m \leq \Nst$: 
\begin{equation}
     \tau^2 \sum_{n = 0}^{m-1} \vert \phi_h^n \vert_{\mathrm{DG}}^{2} + \tau \sum_{n = 0}^{m-1} \|\bm{v}_h^n - \Pi_h \bm{u}^n \|^2_{\DG} \leq \tau^{\frac{1}{1+\delta}}+ h^{\frac{2+\delta}{1+\delta}}. \label{eq:induction_hyp_linf}
\end{equation}
where $\Pi_h$ is a suitable interpolant, see \eqref{eq:def_elliptic_projection}.
We will show \eqref{eq:induction_hyp_linf} by induction on $m$.  For the starting value $m=1$, the bound \eqref{eq:induction_hyp_linf}
is easy to obtain since $\phi_h^0 = 0$ and $\bm{v}_h^0 = \bm{u}_h^0$ which is the $L^2$ projection of $\bm{u}^0$.
\par
Next, we fix $m\geq 1$ and assume that the induction hypothesis \eqref{eq:induction_hyp_linf} holds true. To show \eqref{eq:error_estimate_theorem}, our induction argument contains the following steps:
\begin{itemize}
\item[(i)] The induction hypothesis \eqref{eq:induction_hyp_linf} implies an $L^\infty(\Omega)$ bound for the discrete order parameter $c_h^m$ (see Lemma~\ref{lemma:L_infty_bound}).
\item[(ii)] We then obtain a bound on the dG norm of $c_h^m-\mathcal{P}_h c^m$, where $\mathcal{P}_h$ is a suitable projection (see Section~\ref{subsec:interpolation}). This bound is proved in Lemma~\ref{lemma:bound_xic} and uses Lemma~\ref{lemma:L_infty_bound}.
\item[(iii)] A  bound on the dG norm of $\mu_h^m -  \mathcal{P}_h \mu^m$ easily follows (see Lemma \ref{lemma:bound_ximu}).
\item[(iv)] We then show that \eqref{eq:error_estimate_theorem} holds true (see Lemma~\ref{lemma:err_eq_pressure_corretion} and
Section \ref{sec:conv_estimate_1} for more details).
\item[(v)] We complete the induction proof by then showing the induction hypothesis for $m+1$ (see Lemma~\ref{lem:prove_induction_hyp}).
\end{itemize}
\par
Finally, to show \eqref{eq:improved_error_estimate_theorem}, we use several duality arguments and \eqref{eq:error_estimate_theorem}. The main proof of \eqref{eq:improved_error_estimate_theorem} is provided in subsection \ref{sec:proof_improved_err}.
\subsection{Approximation operators}\label{subsec:interpolation}
As a prelude to showing the above steps, we introduce the several approximations of the exact solution that are  employed in the error analysis. Let $\mathcal{P}_h: H^2(\mathcal{T}_h) \rightarrow M_h^{k}$ be the elliptic projection operator. For $\phi \in H^2(\mathcal{T}_h)$, define $\mathcal{P}_h \phi$ as the solution of the following problem. 
\begin{align}\label{eq:def_elliptic_projection}
\adif(\mathcal{P}_h \phi - \phi, \chi_h )  = 0, \quad \forall \chi_h \in M_h^{k},  \quad 
\int_\Omega (\mathcal{P}_h \phi - \phi) = 0. 
\end{align} 
We have the following error bounds which can be derived from the dG error analysis for elliptic problems on convex domains \cite{riviere2008}. 
\begin{align}
\|\phi - \mathcal{P}_h \phi \| + h \vert \phi - \mathcal{P}_h \phi \vert_{\mathrm{DG}} \leq C h^{k+1} | \phi |_{H^{k+1}(\mathcal{T}_h)}, \quad \forall \phi \in H^{k+1}(\mathcal{T}_h). \label{eq:elliptic_projection_error} 
\end{align} 
For functions in $H^1_0(\mathcal{T}_h)^d$, we will make use of the operator $\Pi_h: H^1_0(\mathcal{T}_h)^d \rightarrow \mathbf{X}_h^{k}$. For $\bm{u}(t) \in  H^1_0(\mathcal{T}_h)^d$, this operator satisfies the following: 
\begin{equation}
b_{\mathcal{P}}(\Pi_h \bm{u}(t) - \bm{u}(t), q_h) = 0, \quad \forall q_h \in M_h^{k-1}.  \label{eq:def_Pih}
\end{equation}
The proof for the existence of this operator and the following approximation bounds can be found in \cite{chaabane2017convergence}. 
\begin{lemma}\label{lemma:Pi_projection_error}
    For $E \in \mathcal{T}_h$, $1\leq p \leq \infty$, $1 \leq s \leq k+1$, $0 \leq n \leq \Nst$, and $\bm{u}(t) \in (W^{s,p}(E) \cap H^1_0(\Omega))^d$,  
    \begin{align}
    \| \Pi_h \bm{u}(t) - \bm{u}(t) \|_{L^p(E)} +   h_E \| \grad (\Pi_h \bm{u}(t) - \bm{u}(t)) \|_{L^p(E) } &\leq Ch_E^s \vert \bm{u}(t) \vert_{W^{s,p}(\Delta_E)}.  \label{eq:approximation_prop_local} 
    \end{align}
    where $\Delta_E$ is a macro element that contains $E$. 
\end{lemma}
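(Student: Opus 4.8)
The plan is to realize $\Pi_h$ as a divergence-preserving interpolation operator of Brezzi--Douglas--Marini (BDM) type, whose image lies in the $H(\mathrm{div})$-conforming subspace $\mathrm{BDM}_k \subset \mathbf{X}_h^k$, and then to obtain the local estimate \eqref{eq:approximation_prop_local} by a scaling argument. First I would recall the construction: on each element $E$, $\Pi_h\bm u$ is fixed by the face moments $\int_e (\Pi_h\bm u)\cdot\normal_e\, q\,\dd s = \int_e \bm u\cdot\normal_e\, q\,\dd s$ for all $q\in\IP_k(e)$ and every face $e\subset\partial E$, together with interior moments against a N\'ed\'elec-type space. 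Because face traces of $\bm u$ are not pointwise defined when $sp$ is small, for the borderline regularity these functionals must be replaced by averages over the macro-element $\Delta_E$ in the spirit of Scott--Zhang; this patch dependence is exactly why the right-hand side is controlled by $|\bm u|_{W^{s,p}(\Delta_E)}$ rather than by a seminorm on $E$ alone.

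Next I would verify the defining orthogonality \eqref{eq:def_Pih}. Since the image of $\Pi_h$ has continuous normal component across interior faces and $\bm u\in H^1_0(\Omega)^d$ is itself continuous, the jump contribution in $b_{\mathcal{P}}(\cdot,q_h)$ (see \eqref{eq:CHNS:DG_pressure}) vanishes for both $\Pi_h\bm u$ and $\bm u$, leaving only $\sum_{E\in\setE_h}\int_E q_h\,\div(\Pi_h\bm u-\bm u)$. The commuting-diagram property of the BDM interpolant gives $\div\Pi_h\bm u = P_{k-1}(\div\bm u)$, the elementwise $L^2$-projection of $\div\bm u$ onto $\IP_{k-1}$; hence $\int_E q_h\,\div(\Pi_h\bm u-\bm u)=0$ for every $q_h\in M_h^{k-1}$, which is precisely \eqref{eq:def_Pih}.

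Finally, the approximation bound \eqref{eq:approximation_prop_local} follows by mapping $E$ and its patch $\Delta_E$ to fixed reference configurations through the element map (affine on simplices, multilinear on quadrilaterals and hexahedra). The pulled-back operator $\hat\Pi$ reproduces all polynomials of degree at most $k$, so $I-\hat\Pi$ annihilates $\IP_{s-1}(\hat\Delta)^d$ for every $1\le s\le k+1$. The Bramble--Hilbert/Deny--Lions lemma then yields $\|\hat{\bm u}-\hat\Pi\hat{\bm u}\|_{L^p(\hat E)} + \|\grad(\hat{\bm u}-\hat\Pi\hat{\bm u})\|_{L^p(\hat E)} \le C|\hat{\bm u}|_{W^{s,p}(\hat\Delta)}$. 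Scaling back, the Jacobian factors contribute $h_E^s$ to the $L^p$ term and $h_E^{s-1}$ to the gradient term, and quasi-uniformity of the mesh turns the reference seminorm into $|\bm u|_{W^{s,p}(\Delta_E)}$, giving \eqref{eq:approximation_prop_local}.

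The main obstacle is establishing well-definedness and $L^p$-stability of the averaged degrees of freedom uniformly over the full range $1\le p\le\infty$ and $1\le s\le k+1$, including the endpoint $s=1$ where the naive face-moment functionals are unbounded on $W^{s,p}$. Resolving this requires the patch-based averaging together with a careful local stability estimate that also preserves the commuting property $\div\Pi_h = P_{k-1}\div$; once stability and polynomial reproduction are secured, the scaling argument is routine. These technical steps are carried out in \cite{chaabane2017convergence}.
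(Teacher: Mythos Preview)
The paper does not supply a proof of this lemma: immediately before the statement it says ``The proof for the existence of this operator and the following approximation bounds can be found in \cite{chaabane2017convergence},'' and nothing further is argued. Your proposal ultimately defers to the same reference, so you are aligned with the paper; the BDM-type construction, the verification of \eqref{eq:def_Pih} via the commuting-diagram property, and the Bramble--Hilbert scaling argument you outline are a reasonable sketch of what that citation covers, but they go beyond anything the paper itself claims to prove.
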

For $ 0\leq t\leq T$, if $\bm{u}(t) \in (W^{s,p}(\Omega) \cap H_0^1(\Omega))^d$ for $1\leq s \leq k+1$, then the above bound yields the global estimates: 
    \begin{align}
        \| \Pi_h \bm{u}(t) - \bm{u}(t) \|_{L^p(\Omega)} &\leq Ch^s \vert \bm{u}(t) \vert_{W^{s,p}(\Omega)}, \label{eq:approximation_prop_1} \\ 
         \| \Pi_h \bm{u}(t) - \bm{u}(t) \|_{\mathrm{DG}}& \leq C h^{s-1}  \vert \bm{u}(t) \vert_{H^{s}(\Omega)} \label{eq:approximation_prop_2}.
        \end{align}
        Define the  $L^2$ projection  $\pi_h: L^2(\Omega) \rightarrow M^{k-1}_h$ as follows: For $ 0\leq n \leq \Nst$,  a given function $p(t) \in L^2(\Omega)$,  and any $E \in \mathcal{T}_h$,  
        \begin{equation}
            \int_{E} (\pi_h p(t) - p(t))q_h = 0, \quad  \forall q_h \in \IP_{k-1}(E) .
        \end{equation} 
        The following error bound for the $L^2$ projection hold. For $t \in [0,T]$ and $p(t) \in H^s(\Omega)$,  
        \begin{align} \label{eq:l2_proj_approximation}
            \|\pi_h p(t) - p(t) \|+ h\|\gradh(\pi_h p(t) - p(t))\| \leq Ch^{\min(k,s)}\vert p(t) \vert_{H^s(\Omega)}.  
        \end{align}
    We will also make use of a linear operator $\mathcal{J}: M_{h0}^{k} \rightarrow M_{h0}^{k}$. Given $\chi_h \in M_{h0}^{k}$, define $\mathcal{J}(\chi_h) \in M_{h0}^{k}$ as the solution of the following elliptic problem:
\begin{equation}
    \adif(\mathcal{J}(\chi_h), \varphi_h ) = (\chi_h, \varphi_h), \quad \forall \varphi_h \in M_{h0}^{k}.  \label{eq:def_of_J}
\end{equation} 
The following property for this operator is shown in \cite{LiuRiviere2018numericalCHNS}.
\begin{lemma}\label{lemma:boundedness_J}
For a function $\lambda \in H^1(\mathcal{T}_h)$, there exists a constant $C_\mathrm{J}$ independent of $h$ such that 
\begin{equation}
| (\lambda, \chi_h) | \leq C_\mathrm{J} \vert\mathcal{J}(\chi_h)\vert_{\mathrm{DG}} \vert \lambda \vert_{\mathrm{DG}}, \quad \forall \chi_h \in M_{h0}^{k}. 
\end{equation}
\end{lemma}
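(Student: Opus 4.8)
The plan is to realize $\mathcal{J}(\chi_h)$ as a discrete Green's function and to transfer the $L^2$ pairing $(\lambda,\chi_h)$ onto the bilinear form $\adif$, where the continuity bound \eqref{eq:adiffcont} and the $\mathrm{DG}$-seminorm become available. Write $w_h := \mathcal{J}(\chi_h)\in M_{h0}^{k}$ and let $Q_h:L^2(\Omega)\to M_h^{k}$ denote the $L^2$ projection onto $M_h^{k}$; since $M_h^{k}$ is fully discontinuous, $Q_h$ acts \emph{locally} on each $E\in\setE_h$ as the $L^2(E)$-projection onto $\IP_k(E)$. Because $\chi_h\in M_h^{k}$ with $(\chi_h,1)=0$, I can replace $\lambda$ by its projection and subtract the mean,
\[
(\lambda,\chi_h)=(Q_h\lambda,\chi_h)=(Q_h\lambda-\overline{Q_h\lambda},\chi_h),
\]
so that $\varphi_h:=Q_h\lambda-\overline{Q_h\lambda}$ lies in $M_{h0}^{k}$.

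Next I would test the defining relation \eqref{eq:def_of_J} of $\mathcal{J}$ with $\varphi_h$ to obtain $(\lambda,\chi_h)=(\chi_h,\varphi_h)=\adif(w_h,\varphi_h)$, and then apply \eqref{eq:adiffcont} to bound this by $C_\alpha\,|w_h|_{\mathrm{DG}}\,|\varphi_h|_{\mathrm{DG}}$. Since constants have vanishing $\mathrm{DG}$-seminorm, $|\varphi_h|_{\mathrm{DG}}=|Q_h\lambda|_{\mathrm{DG}}$, so the lemma follows with $C_\mathrm{J}=C_\alpha C$ as soon as I establish the $\mathrm{DG}$-stability estimate $|Q_h\lambda|_{\mathrm{DG}}\le C\,|\lambda|_{\mathrm{DG}}$ with $C$ independent of $h$.

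That stability bound is the heart of the argument. For the volume part I would use the local $H^1$-stability of the $L^2$ projection: combining the inverse inequality on $\IP_k(E)$, the $L^2(E)$-stability of $Q_h$, and a scaled Poincar\'e estimate (exploiting that $Q_h$ reproduces constants) gives $\|\grad Q_h\lambda\|_{L^2(E)}\le C\|\grad\lambda\|_{L^2(E)}$, hence $\sum_E\|\grad Q_h\lambda\|_{L^2(E)}^2\le C\,|\lambda|_{\mathrm{DG}}^2$. For the penalty part I split $\jump{Q_h\lambda}=\jump{\lambda}+\jump{Q_h\lambda-\lambda}$; the first piece is controlled by $|\lambda|_{\mathrm{DG}}^2$ directly, while for the second I use the trace estimate \eqref{eq:trace_estimate_continuous} together with the local approximation bound $\|Q_h\lambda-\lambda\|_{L^2(E)}\le Ch\|\grad\lambda\|_{L^2(E)}$ and the $H^1$-stability above, which yields $\|Q_h\lambda-\lambda\|_{L^2(e)}\le Ch^{1/2}\|\grad\lambda\|_{L^2(E)}$. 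The factor $h^{1/2}$ exactly absorbs the $h^{-1}$ weight in the penalty term, so that summing over faces gives $\tilde{\sigma}h^{-1}\sum_e\|\jump{Q_h\lambda-\lambda}\|_{L^2(e)}^2\le C\,|\lambda|_{\mathrm{DG}}^2$.

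I expect the main obstacle to be precisely this $\mathrm{DG}$-norm stability of $Q_h$ for piecewise-$H^1$ data: the naive elementwise inverse estimate $\|\grad Q_h\lambda\|_{L^2(E)}\le Ch^{-1}\|\lambda\|_{L^2(E)}$ is far too lossy, so genuine $H^1$-stability (through constant reproduction and Poincar\'e) is required, and the penalty contribution must be handled so that the $h^{-1}$ weight cancels against the $O(h^{1/2})$ face approximation error. The remaining steps—rewriting the pairing through \eqref{eq:def_of_J} and invoking continuity \eqref{eq:adiffcont}—are routine.
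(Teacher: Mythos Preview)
Your proposal is correct. The paper does not supply a self-contained proof of this lemma; it simply cites \cite{LiuRiviere2018numericalCHNS}. Your argument---projecting $\lambda$ onto $M_h^k$ via the local $L^2$ projection $Q_h$, subtracting the mean to land in $M_{h0}^k$, invoking the definition \eqref{eq:def_of_J} of $\mathcal{J}$ to rewrite $(\lambda,\chi_h)=\adif(\mathcal{J}(\chi_h),Q_h\lambda-\overline{Q_h\lambda})$, then applying continuity \eqref{eq:adiffcont}---is exactly the standard route, and your justification of the $\mathrm{DG}$-stability bound $|Q_h\lambda|_{\mathrm{DG}}\le C|\lambda|_{\mathrm{DG}}$ is complete: the elementwise $H^1$-stability follows from constant reproduction plus inverse and Poincar\'e estimates, and the jump contribution is handled by the trace inequality \eqref{eq:trace_estimate_continuous} together with the $O(h)$ local approximation error of $Q_h$ in $L^2(E)$, so that the $h^{-1}$ penalty weight is absorbed. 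This matches the argument in the cited reference.
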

We introduce a discrete Laplacian operator $\laplace_h: M_h^{k} \rightarrow M_{h0}^{k}$ via the following variational problem: Given $z_h \in M_h^{k}$, find $\laplace_h{z_h}\in M_{h0}^{k}$ such that
\begin{equation}
(\laplace_h{z_h},\chi_h) = -a_{\mathrm{diff}}(z_h,\chi_h), \quad \forall \chi_h \in M_h^{k}. \label{eq:def_discrete_laplace}
\end{equation}
We now show discrete broken versions to the following Agmon's and Gagliardo--Nirenberg inequalities in $d=2$ and $d = 3$. For $z \in H^2(\Omega)$, 
\begin{align}
\|z\|_{L^{\infty}(\Omega)} &\leq C \|z\|^{1-d/4}\|z\|_{H^2(\Omega)}^{d/4}, \label{eq:continuous_Agmon} \\
\|\grad z\|_{L^{3}(\Omega)} &\leq C \|z\|^{1/2-d/12} \|z\|^{1/2+d/12}_{H^2(\Omega)}. \label{eq:continuous_GN}
\end{align}
For a proof for the above inequalities, we refer to  Theorem 5.8 in \cite{adams1977cone} for \eqref{eq:continuous_Agmon} and to Lecture 2 in \cite{ASNSP_1959_3_13_2_115_0} for \eqref{eq:continuous_GN}.
The proof of the following Lemma follows the arguments in \cite{kay2009discontinuous} presented for $d=2$. For completeness, we provide a proof here for $d \in \{2,3\}.$
\begin{lemma}\label{lemma:broken_agmon_gagliardo} 
There exists a constant $C$ independent of $h$ such that 
\begin{align}
\norm{z_h - \frac{1}{|\Omega|}\int_{\Omega} z_h }{L^{\infty}(\Omega)} &\leq C \|z_h\|^{1-d/4}\|\laplace_h z_h \|^{d/4}, \quad \forall z_h \in M_h^k, \label{eq:discrete_Agmon}\\
\|\grad_h z_h\|_{L^{3}(\Omega)} &\leq C \|z_h\|^{1/2-d/12}\|\laplace_h z_h\|^{1/2+d/12}, \quad \forall z_h \in M_h^k. \label{eq:discrete_GN}
\end{align}
\end{lemma}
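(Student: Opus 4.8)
The key is to reduce both discrete inequalities to their continuous counterparts \eqref{eq:continuous_Agmon} and \eqref{eq:continuous_GN} by introducing an auxiliary continuous function and identifying $z_h-\overline{z_h}$ with its elliptic projection. Given $z_h\in M_h^k$, let $w\in H^2(\Omega)\cap L^2_0(\Omega)$ be the solution of the Neumann problem $-\laplace w = \laplace_h z_h$ in $\Omega$, $\grad w\cdot\normal = 0$ on $\partial\Omega$, $\int_\Omega w = 0$. This is well posed because $\laplace_h z_h\in M_{h0}^k$ has zero average, and since $\Omega$ is convex, elliptic regularity gives $\|w\|_{H^2(\Omega)}\leq C\|\laplace_h z_h\|$. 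Because $w$ is continuous and satisfies homogeneous Neumann data, elementwise integration by parts shows the consistency identity $a_{\mathrm{diff}}(w,\chi_h) = (-\laplace w,\chi_h) = (\laplace_h z_h,\chi_h) = -a_{\mathrm{diff}}(z_h,\chi_h)$ for all $\chi_h\in M_h^k$, where the last equality is the definition \eqref{eq:def_discrete_laplace}. Setting $v_h = \mathcal{P}_h w + z_h$ and using $a_{\mathrm{diff}}(\mathcal{P}_h w,\chi_h)=a_{\mathrm{diff}}(w,\chi_h)$ from \eqref{eq:def_elliptic_projection}, we get $a_{\mathrm{diff}}(v_h,\chi_h)=0$ for all $\chi_h$; testing with $\chi_h = v_h-\overline{v_h}$ and invoking coercivity \eqref{eq:coercivity_adiff} forces $v_h$ to be constant, and since $\overline{\mathcal{P}_h w}=0$ this yields the central identity $z_h-\overline{z_h} = -\mathcal{P}_h w$. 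In particular $\gradh z_h = -\gradh \mathcal{P}_h w$.

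\textbf{Two preliminary bounds.} I would first record the inverse-type estimate $h^2\|\laplace_h z_h\|\leq C\|z_h\|$: testing \eqref{eq:def_discrete_laplace} with $\chi_h=\laplace_h z_h$, applying continuity \eqref{eq:adiffcont} and the inverse inequality \eqref{eq:inverse_estimate_dg} to both $|z_h|_{\mathrm{DG}}$ and $|\laplace_h z_h|_{\mathrm{DG}}$, gives $\|\laplace_h z_h\|^2\leq C h^{-2}\|z_h\|\,\|\laplace_h z_h\|$, hence the claim. Second, since $\|\mathcal{P}_h w-w\|\leq Ch^2|w|_{H^2}$ by \eqref{eq:elliptic_projection_error} (used with regularity index $s=2$), the identity above gives $\|w\|\leq \|\mathcal{P}_h w\|+Ch^2\|\laplace_h z_h\| = \|z_h-\overline{z_h}\|+Ch^2\|\laplace_h z_h\|\leq C\|z_h\|$.

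\textbf{Proof of the two inequalities.} For \eqref{eq:discrete_Agmon} I would write $\|z_h-\overline{z_h}\|_{L^\infty(\Omega)}=\|\mathcal{P}_h w\|_{L^\infty(\Omega)}\leq \|w\|_{L^\infty(\Omega)}+\|\mathcal{P}_h w-w\|_{L^\infty(\Omega)}$. Continuous Agmon \eqref{eq:continuous_Agmon} together with the two preliminary bounds controls the first term by $C\|z_h\|^{1-d/4}\|\laplace_h z_h\|^{d/4}$. For the second term, insert a continuous piecewise-polynomial (Scott--Zhang) interpolant $I_h w$ and split $\|\mathcal{P}_h w-w\|_{L^\infty}\leq \|\mathcal{P}_h w-I_h w\|_{L^\infty}+\|I_h w-w\|_{L^\infty}$; the inverse inequality \eqref{eq:inverse_estmate_lp} ($p=\infty$, $q=2$) applied to the discrete difference, combined with $\|\mathcal{P}_h w-w\|+\|w-I_h w\|\leq Ch^2|w|_{H^2}$ and the standard $L^\infty$ interpolation bound (valid for $d\leq3$), gives $\|\mathcal{P}_h w-w\|_{L^\infty}\leq Ch^{2-d/2}|w|_{H^2}\leq Ch^{2-d/2}\|\laplace_h z_h\|$. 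The exponent arithmetic is then the crux: since $2-d/2 = (4-d)/2$, the estimate $h^2\|\laplace_h z_h\|\leq C\|z_h\|$ raised to the power $\tfrac{4}{4-d}$ shows exactly $h^{2-d/2}\|\laplace_h z_h\|\leq C\|z_h\|^{1-d/4}\|\laplace_h z_h\|^{d/4}$, completing \eqref{eq:discrete_Agmon}. For \eqref{eq:discrete_GN} the argument is identical with $\gradh z_h = -\gradh\mathcal{P}_h w$, using continuous Gagliardo--Nirenberg \eqref{eq:continuous_GN} on $\|\grad w\|_{L^3}$ and bounding $\|\gradh(\mathcal{P}_h w-w)\|_{L^3}\leq Ch^{1-d/6}\|\laplace_h z_h\|$ via the combined gradient/$L^3$ inverse inequalities and $W^{1,3}$ interpolation; because $1-d/6=(6-d)/6$, raising $h^2\|\laplace_h z_h\|\leq C\|z_h\|$ to the power $\tfrac{12}{6-d}$ again makes the error term collapse into the desired bound $C\|z_h\|^{1/2-d/12}\|\laplace_h z_h\|^{1/2+d/12}$.

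\textbf{Main obstacle.} The conceptual step is the identity $z_h-\overline{z_h}=-\mathcal{P}_h w$, which transfers everything to the continuous inequalities; once it is in place the rest is bookkeeping. The technical heart is the error-term analysis, where the inverse inequalities must be balanced so that $h^{2-d/2}\|\laplace_h z_h\|$ (resp.\ $h^{1-d/6}\|\laplace_h z_h\|$) reproduces precisely the interpolation exponents $(1-d/4,d/4)$ (resp.\ $(1/2-d/12,1/2+d/12)$); this works only because the scaling exponent in $h^2\|\laplace_h z_h\|\leq C\|z_h\|$ matches exactly, and the $L^\infty$/$W^{1,3}$ interpolation estimates for $H^2$ functions require $d\leq3$.
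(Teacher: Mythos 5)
Your proof is correct and takes essentially the same route as the paper's: the paper works with the discrete solution operator $\mathcal{J}$ and the continuous Green's operator $\mathcal{G}$, using the identity $-\mathcal{J}(\laplace_h z_h) = z_h - \overline{z_h}$, the duality estimate $\|\mathcal{J}(\varphi_h)-\mathcal{G}(\varphi_h)\|\leq Ch^2\|\mathcal{G}(\varphi_h)\|_{H^2(\Omega)}$, a Scott--Zhang interpolant with inverse estimates, and the same absorption bound $h^2\|\laplace_h z_h\|\leq C\|z_h\|$ --- and your central identity $z_h-\overline{z_h}=-\mathcal{P}_h w$ is the same statement, since the consistency of $a_{\mathrm{diff}}$ for $H^2$ Neumann functions gives exactly $\mathcal{P}_h w=\mathcal{J}(\laplace_h z_h)$. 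One cosmetic slip: the powers you quote, $\tfrac{4}{4-d}$ and $\tfrac{12}{6-d}$, should be their reciprocals $1-\tfrac{d}{4}$ and $\tfrac12-\tfrac{d}{12}$, but the inequalities you then state are the correct ones, so nothing breaks.
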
 
\begin{proof}
We first define a Green's operator $\mathcal{G}: M_{h0}^{k} \rightarrow H^1(\Omega) \cap L^2_0(\Omega)$ as the solution to  $-\laplace (\mathcal{G} (\varphi_h)) = \varphi_h$ in $\Omega$ with homogeneous Neumann boundary conditions. By elliptic regularity, we have 
\begin{equation}
    \|\mathcal{G} (\varphi_h)\|_{H^2(\Omega)} \leq C \|\varphi_h\|, \quad \varphi_h \in M_{h0}^{k}.  \label{eq:elliptic_reg_G}
\end{equation} 
Since $\adif$ is symmetric and $\Omega$ is convex, we have \cite{riviere2008} 
\begin{align}
\|\mathcal{J} (\varphi_h) - \mathcal{G} (\varphi_h) \| \leq C h^2 \|\mathcal{G}(\varphi_h)\|_{H^2(\Omega)}, \quad \varphi_h \in M_{h0}^{k}. \label{eq:symm_ip_approximation}
 \end{align}
To simplify notation, for $z_h \in M_h^{k}$, let  $ \zeta_h = \laplace_h z_h $. From the definitions, it can be readily deduced that (see (2.18) in \cite{kay2009discontinuous})
\begin{equation}
  -  \mathcal{J} (\zeta_h) = z_h -\frac{1}{|\Omega|}\int_{\Omega} z_h.  \label{eq:identity_Gz}
\end{equation}
Let $\mathcal{I}_h$ denote the Scott--Zhang interpolation operator. 
Using the approximation properties of the Scott--Zhang operator, \eqref{eq:inverse_estmate_lp} (with $p=+\infty$ and $q = 2$), \eqref{eq:continuous_Agmon} and \eqref{eq:symm_ip_approximation}, 
\begin{align}
   \|\mathcal{J}(\zeta_h)\|_{L^{\infty}(\Omega)}  \leq &
\Vert \mathcal{J}(\zeta_h) - \mathcal{I}_h(\mathcal{G}(\zeta_h))\Vert_{L^{\infty}(\Omega)}
+ \Vert \mathcal{I}_h(\mathcal{G}(\zeta_h)) - \mathcal{G}(\zeta_h)\Vert_{L^{\infty}(\Omega)} \nonumber\\
& + \|\mathcal{G} (\zeta_h)\|_{L^{\infty}(\Omega)}\nonumber\\
 \leq & C h^{2-d/2} \|\mathcal{G}(\zeta_h)\|_{H^{2}(\Omega)} + C \|\mathcal{G}(\zeta_h)\|^{1-d/4}\|\mathcal{G} (\zeta_h)\|_{H^2(\Omega)}^{d/4}.  \label{eq:Agmon_1}
\end{align}
Note that with triangle inequality, \eqref{eq:symm_ip_approximation} and \eqref{eq:identity_Gz},
\begin{equation}
    \|\mathcal{G}(\zeta_h)\| \leq  2 \|z_h\| + Ch^{2} \|\mathcal{G}(\zeta_h)\|_{H^2(\Omega)}. \label{eq:bound_l2_G}
\end{equation}
Using the above bound, \eqref{eq:elliptic_reg_G}, and the definition of $\zeta_h$ in \eqref{eq:Agmon_1} yields 
\begin{align}
\|\mathcal{J}(\zeta_h)\|_{L^\infty(\Omega)} &\leq C \|z_h\|^{1-d/4} \|\laplace_h z_h\|^{d/4} + Ch^{2-d/2}\|\laplace_h z_h\|\label{eq:Agmon_2} \\ & =C \|z_h\|^{1-d/4} \|\laplace_h z_h\|^{d/4} + C(h^{2}\|\laplace_h z_h\|)^{1-d/4}\|\laplace_h z_h\|^{d/4}. \nonumber
\end{align}
Taking $\chi_h = \laplace_h z_h$ in  \eqref{eq:def_discrete_laplace} and  using \eqref{eq:adiffcont} and \eqref{eq:inverse_estimate_dg} yields
\begin{equation}
\|\laplace_h z_h\|^2 = \adif(z_h, \laplace_h z_h) \leq C \vert z_h\vert_{\mathrm{DG}} \vert \laplace_h z_h \vert_{\mathrm{DG}} \leq Ch^{-2} \|z_h\|\|\laplace_h z_h\|. \label{eq:bound_laplace_zh}
\end{equation}
Using  \eqref{eq:bound_laplace_zh} and \eqref{eq:identity_Gz} in \eqref{eq:Agmon_2} proves \eqref{eq:discrete_Agmon}. To show \eqref{eq:discrete_GN}, we proceed in a similar way. With Holder's inequality, \eqref{eq:inverse_estmate_lp}, \eqref{eq:inverse_estimate_dg}, approximation properties, and \eqref{eq:symm_ip_approximation}:
\begin{align*}
 \|\gradh \mathcal{J}(\zeta_h)\|_{L^3(\Omega)}  \leq & \|\gradh \mathcal{G}(\zeta_h)\|_{L^3(\Omega)} + \|\gradh (\mathcal{G}(\zeta_h) - \mathcal{I}_h(\mathcal{G}(\zeta_h))\|_{L^3(\Omega)} \nonumber\\
& + \| \grad_h (\mathcal{I}_h(\mathcal{G}(\zeta_h)) - \mathcal{J}(\zeta_h))\|_{L^3(\Omega)} \\ 
 \leq & C  \|\mathcal{G}(\zeta_h)\|^{1/2-d/12}\|\mathcal{G}(\zeta_h)\|^{1/2+d/12}_{H^2(\Omega)} + Ch^{1-d/6}\|\mathcal{G}(\zeta_h)\|_{H^2(\Omega)}  \nonumber\\
&+ Ch^{-d/6-1}\|\mathcal{I}_h(\mathcal{G}(\zeta_h)) - \mathcal{J}(\zeta_h)\|  \\ 
  \leq  &C  \|\mathcal{G}(\zeta_h)\|^{1/2-d/12}\|\mathcal{G}(\zeta_h)\|^{1/2+d/12}_{H^2(\Omega)} + Ch^{1-d/6}\|\mathcal{G}(\zeta_h)\|_{H^2(\Omega)} . 
\end{align*}
Using \eqref{eq:bound_l2_G}, \eqref{eq:elliptic_reg_G},  and \eqref{eq:bound_laplace_zh} yields 
\begin{multline}
    \|\gradh \mathcal{J}(\zeta_h)\|_{L^3(\Omega)} \leq C  \|z_h\|^{1/2-d/12}\|\laplace_h z_h\|^{1/2+d/12} \\ + C(h^2\|\laplace_h z_h \|)^{1/2-d/12}\|\laplace_h z_h\|^{1/2+d/12} \leq C  \|z_h\|^{1/2-d/12}\|\laplace_h z_h\|^{1/2+d/12}.
\end{multline}
The result is concluded by recalling \eqref{eq:identity_Gz}.
\end{proof}
We end this section by stating the consistency properties of our scheme. For readability, for any function $\bm{g} \in L^1(0,T;H^2(\mathcal{T}_h)^d)$ 
we denote $\bm{g}^n = \bm{g}(t^n)$ and use a similar notation for scalar functions. The weak solution of model problem \eqref{eq:CHNS:model} satisfies the following. For any $1 \leq n\leq \Nst$, for any $\chi_h \in M_h^k$, $\varphi_h \in M_h^{k}$, and $\bm{\theta}_h \in \mathbf{X}_h^{k}$,
\begin{align}
((\partial_t c)^n, \chi_h)  + \adif(\mu^n,\chi_h) + \aadv(c^n,\bm{u}^n,\chi_h)  = 0,  \label{eq:consistency_1} \\
(\Phi_+\,\!'(c^n) + \Phi_-\,\!'(c^n), \varphi_h)  + \kappa \adif(c^n,\varphi_h) -(\mu^n , \varphi_h)  = 0,  \label{eq:consistency_2}\\ 
((\partial_t \bm{u})^n ,\bm{\theta}_h ) + a_\mathcal{C}(\bm{u}^n,\bm{u}^n,\bm{u}^n, \bm{\theta}_h)  + \mu_\mathrm{s} a_\strain(\bm{u}^n, \bm{\theta}_h ) \label{eq:consistency_3} \\  = b_{\mathcal{P}}(\bm{\theta}_h, p^n) + b_\mathcal{I}(c^n,\mu^n,\bm{\theta}_h). \nonumber 
\end{align}
\subsection{The $L^\infty$ bound (Step (i))} 
\begin{lemma}\label{lemma:L_infty_bound}
Fix $m$, with $1 \leq m \leq \Nst$, and assume that \eqref{eq:induction_hyp_linf} holds.   
In addition, assume that $\tau$ satisfies \eqref{eq:cfl_cond}.
Then, there exists a constant $C$ independent of $h$, $\tau$, and $m$,  but depending linearly on $T$, such that
\begin{align}
\max_{1\leq n \leq m} \left( \norm{\mu_h^n}{}^2  + \norm{c_h^n}{L^\infty(\Omega)}^2 + \norm{\gradh{c_h^n }}{L^3(\Omega)}^2 \right) + \kappa\tau \sum_{n=1}^{m} \norm{\delta_\tau{c_h^n}}{}^2 \leq  C.  \label{eq:stability_infty1} 
\end{align}
\end{lemma}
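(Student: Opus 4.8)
The strategy is to reduce all three pointwise-in-time norms of $c_h^n$ to a single bound on the broken seminorm $\vert\mu_h^n\vert_{\mathrm{DG}}$, and then to obtain the latter (together with the control of $\delta_\tau c_h^n$) from a time-differenced energy estimate for the chemical potential.

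I would first dispose of the $L^\infty$ and $L^3$ norms. By the discrete Agmon and Gagliardo--Nirenberg inequalities \eqref{eq:discrete_Agmon}--\eqref{eq:discrete_GN} of Lemma~\ref{lemma:broken_agmon_gagliardo}, it suffices to bound $\norm{c_h^n}{}$ and $\norm{\laplace_h c_h^n}{}$: the stability estimate \eqref{eq:CHNS:Stability} together with \eqref{eq:L2boundchLp} already gives $\norm{c_h^n}{}\le C$ and $\overline{c_h^n}=\overline{c_0}$, so $\norm{c_h^n}{L^\infty(\Omega)}$ and $\norm{\gradh c_h^n}{L^3(\Omega)}$ are controlled once $\norm{\laplace_h c_h^n}{}$ is. Choosing $\varphi_h=\laplace_h c_h^n$ in \eqref{eq:fully_dis2} and invoking the definition \eqref{eq:def_discrete_laplace} of $\laplace_h$ gives $\kappa\norm{\laplace_h c_h^n}{}^2=(\Phi_{+}\,\!'(c_h^n)+\Phi_{-}\,\!'(c_h^{n-1})-\mu_h^n,\laplace_h c_h^n)$; since $\Phi_{+}\,\!'(c_h^n)=(c_h^n)^3$ and $\Phi_{-}\,\!'(c_h^{n-1})=-c_h^{n-1}$, the $L^6$ bound \eqref{eq:L2boundchLp} yields $\norm{\laplace_h c_h^n}{}\le C(1+\norm{\mu_h^n}{})$. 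Finally $\varphi_h=1$ in \eqref{eq:fully_dis2} shows $\overline{\mu_h^n}$ is bounded, and Poincar\'e's inequality \eqref{eq:PoincareMh} gives $\norm{\mu_h^n}{}\le C(1+\vert\mu_h^n\vert_{\mathrm{DG}})$. The lemma therefore reduces to the uniform bound $\max_{1\le n\le m}\vert\mu_h^n\vert_{\mathrm{DG}}^2+\kappa\tau\sum_{n=1}^m\norm{\delta_\tau c_h^n}{}^2\le C$.

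For this core estimate I would imitate the continuous regularity argument (differentiate the chemical-potential relation in time and test with $\partial_t c$). Subtracting \eqref{eq:fully_dis2} at levels $n$ and $n-1$ and dividing by $\tau$ yields a time-differenced identity; testing it with $\varphi_h=\delta_\tau c_h^n$ and eliminating $(\delta_\tau\mu_h^n,\delta_\tau c_h^n)$ by means of \eqref{eq:fully_dis1} tested with $\delta_\tau\mu_h^n$ produces, using the symmetry of $\adif$,
\begin{multline*}
\tfrac12\delta_\tau\adif(\mu_h^n,\mu_h^n)+\tfrac{\tau}{2}\adif(\delta_\tau\mu_h^n,\delta_\tau\mu_h^n)+\kappa\,\adif(\delta_\tau c_h^n,\delta_\tau c_h^n)\\
=-\aadv(c_h^{n-1},\vec{u}_h^{n-1},\delta_\tau\mu_h^n)-\big(\delta_\tau\Phi_{+}\,\!'(c_h^n)+\delta_\tau\Phi_{-}\,\!'(c_h^{n-1}),\,\delta_\tau c_h^n\big).
\end{multline*}
The convex contribution is nonnegative, because $\delta_\tau\Phi_{+}\,\!'(c_h^n)=\delta_\tau c_h^n\,[(c_h^n)^2+c_h^nc_h^{n-1}+(c_h^{n-1})^2]$, while the concave contribution equals $(\delta_\tau c_h^{n-1},\delta_\tau c_h^n)$ and is absorbed using that $\delta_\tau c_h^n$ has zero mean (Proposition~\ref{thm:dis_mass_conservation}), Poincar\'e's inequality \eqref{eq:PoincareMh}, and the coercivity \eqref{eq:coercivity_adiff} of $\adif$. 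Multiplying by $2\tau$, summing, and using \eqref{eq:coercivity_adiff} leaves $\vert\mu_h^m\vert_{\mathrm{DG}}^2+\kappa\tau\sum_n\vert\delta_\tau c_h^n\vert_{\mathrm{DG}}^2$ (which, by the zero-mean property and \eqref{eq:PoincareMh}, controls $\kappa\tau\sum_n\norm{\delta_\tau c_h^n}{}^2$) bounded by $\vert\mu_h^1\vert_{\mathrm{DG}}^2$, the advection contribution, and the data. The starting value $\vert\mu_h^1\vert_{\mathrm{DG}}$ is estimated directly from \eqref{eq:fully_dis1}--\eqref{eq:fully_dis2} and the regularity and projection bounds on the initial data, uniformly in $h$ and $\tau$.

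The main obstacle is the advective coupling $\tau\sum_n\aadv(c_h^{n-1},\vec{u}_h^{n-1},\delta_\tau\mu_h^n)$. Absorbing $\vert\delta_\tau\mu_h^n\vert_{\mathrm{DG}}$ into the $\tau$-weighted dissipation $\tfrac\tau2\adif(\delta_\tau\mu_h^n,\delta_\tau\mu_h^n)$ loses a factor $\tau^{-1}$ and fails, since \eqref{eq:CHNS:Stability} only provides $\tau\sum_n\norm{\vec{u}_h^n}{\mathrm{DG}}^2\le C$ rather than a pointwise velocity bound. I would instead perform summation by parts in time, moving the difference operator off $\mu_h^n$: the resulting interior sum involves $\aadv(\delta_\tau c_h^n,\vec{u}_h^n,\mu_h^n)$ and $\aadv(c_h^{n-1},\delta_\tau\vec{u}_h^n,\mu_h^n)$, which I would control with \eqref{eq:bound_aadv_1}, Young's inequality (absorbing $\vert\delta_\tau c_h^n\vert_{\mathrm{DG}}$ into $\kappa\,\adif(\delta_\tau c_h^n,\delta_\tau c_h^n)$), and a discrete Gronwall inequality whose summable weights $\norm{\vec{u}_h^n}{\mathrm{DG}}^2$ are furnished by \eqref{eq:CHNS:Stability}. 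The genuinely delicate point is the boundary term $\aadv(c_h^{m-1},\vec{u}_h^{m-1},\mu_h^m)$, whose velocity factor at the current time does not close under the energy estimates alone; controlling it is precisely where the induction hypothesis \eqref{eq:induction_hyp_linf} and the CFL condition \eqref{eq:cfl_cond} enter. Writing $\vec{u}_h^{m-1}=\Pi_h\vec{u}^{m-1}+(\vec{v}_h^{m-1}-\Pi_h\vec{u}^{m-1})+(\vec{u}_h^{m-1}-\vec{v}_h^{m-1})$, the smooth part is bounded via \eqref{eq:approximation_prop_2} and Lemma~\ref{lemma:Pi_projection_error}, the projection increment contributes $\tau\vert\phi_h^{m-1}\vert_{\mathrm{DG}}$ controlled by the first term of \eqref{eq:induction_hyp_linf}, and the remaining velocity error is estimated by combining \eqref{eq:induction_hyp_linf}, the inverse estimate \eqref{eq:inverse_estimate_dg}, and \eqref{eq:cfl_cond}; it is this balancing of powers of $h$ against $\tau$ that fixes the admissible exponent $\delta$. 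A final discrete Gronwall step closes the estimate, with the constant depending linearly on $T$ through the number of summed time levels.
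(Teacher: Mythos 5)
Your reduction of the $L^\infty$ and $W^{1,3}$ bounds is exactly the paper's first step and is correct: discrete Agmon/Gagliardo--Nirenberg plus $\varphi_h=\laplace_h c_h^n$ in \eqref{eq:fully_dis2} gives $\norm{c_h^n}{L^\infty(\Omega)}^2+\norm{\gradh c_h^n}{L^3(\Omega)}^2\leq C(1+\norm{\mu_h^n}{}^2)$, and $\varphi_h=1$ bounds $\overline{\mu_h^n}$. But note that this only requires a pointwise-in-time bound on the $L^2$ norm $\norm{\mu_h^n}{}$; you then aim for the strictly stronger bound on $\max_n\vert\mu_h^n\vert_{\mathrm{DG}}$, and this choice forces the problematic test-function pairing. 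Testing the time-differenced \eqref{eq:fully_dis2} with $\delta_\tau c_h^n$ and \eqref{eq:fully_dis1} with $\delta_\tau\mu_h^n$ puts the advection against $\delta_\tau\mu_h^n$, and your summation by parts does not close. The endpoint term $a_{\mathrm{adv}}(c_h^{m-1},\vec{u}_h^{m-1},\mu_h^m)$ needs a pointwise-in-time bound on $\norm{\vec{v}_h^{m-1}-\Pi_h\vec{u}^{m-1}}{\mathrm{DG}}$, but \eqref{eq:induction_hyp_linf} only controls the time-summed quantity; dividing by $\tau$ gives $\norm{\vec{v}_h^{m-1}-\Pi_h\vec{u}^{m-1}}{\mathrm{DG}}^2\leq\tau^{-\delta/(1+\delta)}+\tau^{-1}h^{(2+\delta)/(1+\delta)}$, which is unbounded as $\tau\to0$, and the CFL \eqref{eq:cfl_cond} is an upper bound on $\tau$ so it cannot help. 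The interior terms $a_{\mathrm{adv}}(c_h^{n-1},\vec{u}_h^n-\vec{u}_h^{n-1},\mu_h^n)$ are worse: the only available control of velocity increments is $\sum_n\norm{\vec{v}_h^n-\vec{u}_h^{n-1}}{}^2\leq C$ from \eqref{eq:CHNS:Stability}; converting to the $\mathrm{DG}$ norm via \eqref{eq:inverse_estimate_dg} costs $h^{-1}$ and Cauchy--Schwarz over $n$ costs $\tau^{-1/2}$, leaving a factor $h^{-1}\tau^{-1/2}$ (or, via a Gronwall with weights $h^{-1}\norm{\vec{u}_h^n-\vec{u}_h^{n-1}}{}^2$, a constant $e^{Ch^{-1}}$). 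Separately, the concave contribution $(\delta_\tau c_h^{n-1},\delta_\tau c_h^n)$ cannot be absorbed into $\kappa K_\alpha\vert\delta_\tau c_h^n\vert_{\mathrm{DG}}^2\gtrsim(\kappa K_\alpha/C_\mathrm{P}^2)\norm{\delta_\tau c_h^n}{}^2$ when $\kappa$ is small, since there is no free small factor; and your starting value $\vert\mu_h^1\vert_{\mathrm{DG}}$ is not obviously $h$-uniformly bounded (the stability estimate only gives $\vert\mu_h^1\vert_{\mathrm{DG}}\leq C\tau^{-1/2}$).

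The paper sidesteps all of this by choosing the other pairing: it tests the differenced \eqref{eq:fully_dis2} with $\mu_h^n/\tau$ and \eqref{eq:fully_dis1} with $\kappa\,\delta_\tau c_h^n$. The cross term $a_{\mathrm{diff}}(\delta_\tau c_h^n,\mu_h^n)$ cancels, the left side telescopes $\norm{\mu_h^n}{}^2$ (the $L^2$ norm, which is all the reduction step needs) and produces $\kappa\norm{\delta_\tau c_h^n}{}^2$, and the advection now appears as $a_{\mathrm{adv}}(c_h^{n-1},\vec{u}_h^{n-1},\delta_\tau c_h^n)$ --- paired with $\delta_\tau c_h^n$, whose $L^2$ norm sits on the left with coefficient $\kappa$, so it is absorbed by Young's inequality with no summation by parts. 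The induction hypothesis and CFL then enter only through time-summed quantities such as $\tau\sum_n(\tau^2h^{-2}\vert\phi_h^n\vert_{\mathrm{DG}}^2+h^{-d/3}\norm{\vec{v}_h^n-\Pi_h\vec{u}^n}{\mathrm{DG}}^2)$, which \eqref{eq:induction_hyp_linf} and \eqref{eq:cfl_cond} do control, and the concave term is handled as $-(\delta_\tau c_h^{n-1},\mu_h^n)\leq\frac{\kappa}{6}\norm{\delta_\tau c_h^{n-1}}{}^2+C\norm{\mu_h^n}{}^2$ with the second piece fed to Gronwall. You would need to restructure your core estimate along these lines; the summation-by-parts route does not appear repairable with the estimates available at this stage of the induction.
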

\begin{proof}
Let $1\leq n \leq m$. From \eqref{thm:CHNS:discrete_mass_conservation}, \eqref{eq:discrete_Agmon}, \eqref{eq:discrete_GN}
and Young's inequality (of the form $ab\leq (1/p) a^p + (1/q) b^q$ for $p>1, q>1$ and $1/p+1/q=1$), we have 
\begin{align*}
\norm{c_h^n - \overline{c_0}}{L^\infty(\Omega)} 
\leq C \norm{c_h^n}{}^{1-d/4} \norm{\laplace_h{c_h^n}}{}^{d/4} \leq C (\Vert c_h^n\Vert + \Vert \laplace_h{c_h^n}\Vert), \\
 \norm{\nabla_h c_h^n}{L^3(\Omega)} 
\leq  C\norm{c_h^n}{}^{1/2- d/12} \norm{\laplace_h{c_h^n}}{}^{1/2+d/12} \leq C (\Vert c_h^n\Vert + \Vert \laplace_h{c_h^n}\Vert). 
\end{align*}
With \eqref{eq:L2boundchLp}, the above bounds yield
\begin{align}
\norm{c_h^n}{L^\infty(\Omega)}^2 + \norm{\grad{c_h^n}}{L^3(\Omega)}^2 &
\leq C ( 1+ \norm{\laplace_h{c_h^n}}{}^2 ).  \label{eq:CHNS:equation_1a}
\end{align}
Choosing $\varphi_h = \laplace_h{c_h^n}$ in \eqref{eq:fully_dis2}, and using the definition of the discrete Laplacian operator \eqref{eq:def_discrete_laplace}, Cauchy--Schwarz's and Young's inequalities yield  
\begin{multline}\label{eq:CHNS:equation_2}
\kappa\norm{\laplace_h{c_h^n}}{}^2 = -\kappa a_{\mathrm{diff}}(c_h^n, \laplace_h{c_h^n}) =  (\Phi_+\,\!'(c_h^n) + \Phi_-\,\!'(c_h^{n-1}), \laplace_h{c_h^n}) - (\mu_h^n, \laplace_h{c_h^n})\\
\leq  \frac{\kappa}{2} \norm{\laplace_h{c_h^n}}{}^2 + C \norm{\Phi_+\,\!'(c_h^n) + \Phi_-\,\!'(c_h^{n-1})}{}^2 + C\norm{\mu_h^n}{}^2.
\end{multline}
Since $\Phi_+\,\!'(c) = c^3$ and $\Phi_-\,\!'(c) = -c$, by \eqref{eq:L2boundchLp}, we obtain
\begin{align}\label{eq:CHNS:equation_3}
\norm{\Phi_+\,\!'(c_h^n) + \Phi_-\,\!'(c_h^{n-1})}{}^2 \leq 2(\norm{c_h^n}{L^6(\Omega)}^6 + \norm{c_h^{n-1}}{}^2) \leq C.
\end{align}
Thus, from \eqref{eq:CHNS:equation_1a}, \eqref{eq:CHNS:equation_2}, and \eqref{eq:CHNS:equation_3}, it follows that for all $1\leq n \leq m $,
\begin{align}
\norm{c_h^n}{L^\infty(\Omega)}^2 + \norm{\grad{c_h^n}}{L^3(\Omega)}^2  &\leq C ( 1+ \norm{\mu_h^n}{}^2). \label{eq:stab_mu_L2_ctrl}
\end{align}
Therefore to obtain \eqref{eq:stability_infty1}, it suffices to bound $\max_{1\leq n\leq m} \Vert \mu_h^n\Vert^2
+ \tau\sum_{n=1}^m \Vert \delta_\tau c_h^n\Vert^2$, which is done below via Gronwall's lemma.
We first set by convention $c_h^{-1} = 0$. 
Then, we introduce $\mu_h^0\in M_h^k$ via the following variational problem
\begin{align}
(\mu_h^0, \varphi_h) = (\Phi_+\,\!'(c_h^0) + \Phi_-\,\!'(c_h^{-1}), \varphi_h) + \kappa a_{\mathrm{diff}}(c_h^0,\varphi_h), && \forall \varphi_h \in M_h^{k}. \label{eq:defmuh0}
\end{align}
Note, $\mu_h^0$ is well-defined by Riesz representation theorem. 
Choose $\varphi_h = \mu_h^0$ above, use \eqref{eq:ch0ellip}  and a bound similar to \eqref{eq:CHNS:equation_3}, to obtain
\[
\Vert \mu_h^0\Vert^2 \leq C \Vert \mu_h^0 \Vert \, (\Vert c_h^0 \Vert_{L^6(\Omega)}^6 + \Vert c_h^0\Vert^2)^{1/2} 
+ \kappa \vert a_{\mathrm{diff}}(c^0,\mu_h^0) \vert. 
\]
Under the assumption $c^0\in H^2(\Omega)$ and $\grad{c^0}\cdot\bm{n} = 0$ on $\partial\Omega$, we have
\[
a_{\mathrm{diff}}(c^0,\mu_h^0) = -(\laplace{c^0}, \mu_h^0) 
\leq \Vert c^0 \Vert_{H^2(\Omega)} \Vert \mu_h^0\Vert.
\]
Then by \eqref{eq:L2boundchLp}, we obtain
\begin{equation}\label{eq:L2_bound_init_mu_h}
\Vert \mu_h^0 \Vert \leq C.
\end{equation}
Now, we subtract \eqref{eq:fully_dis2}  at time $t^{n-1}$ from itself at time $t^n$ (for $n=1$ we use \eqref{eq:defmuh0}) and choose $\varphi_h = \mu_h^n/\tau$.  We have
\begin{align*}
(\delta_\tau{\mu_h^n}, \mu_h^n) =& \frac{1}{\tau} (\Phi_+\,\!'(c_h^n) - \Phi_+\,\!'(c_h^{n-1}), \mu_h^n) \\
&+ \frac{1}{\tau} (\Phi_-\,\!'(c_h^{n-1}) - \Phi_-\,\!'(c_h^{n-2}), \mu_h^n)  + \kappa a_{\mathrm{diff}}(\delta_\tau{c_h^n},\mu_h^n). 
\end{align*}
Choosing $\chi_h =  \kappa  \delta_\tau{c_h^n}$ in \eqref{eq:fully_dis1}, we have
\begin{align*}
\kappa(\delta_\tau{c_h^n}, \delta_\tau{c_h^n}) = -\kappa a_{\mathrm{diff}}(\mu_h^n, \delta_\tau{c_h^n}) - \kappa a_{\mathrm{adv}}(c_h^{n-1}, \vec{u}_h^{n-1}, \delta_\tau{c_h^n}).
\end{align*}
Adding the two equations above yields 
\begin{multline}\label{eq:CHNS:equation_4}
\kappa\norm{\delta_\tau{c_h^{n}}}{}^2 + \frac{1}{2\tau}\norm{\mu_h^{n}}{}^2 - \frac{1}{2\tau}\norm{\mu_h^{n-1}}{}^2 
\leq  \frac{1}{\tau}(\Phi_+\,\!'(c_h^n) - \Phi_+\,\!'(c_h^{n-1}), \mu_h^n) \\ + \frac{1}{\tau}(\Phi_-\,\!'(c_h^{n-1}) - \Phi_-\,\!'(c_h^{n-2}), \mu_h^n) - \kappa a_{\mathrm{adv}}(c_h^{n-1}, \vec{u}_h^{n-1}, \delta_\tau{c_h^n}).
\end{multline}
We separately bound the terms on the right-hand side of \eqref{eq:CHNS:equation_4}. Using  
\eqref{eq:L2boundchLp}, Holder's and
Poincar\'e's inequalities  \eqref{eq:PoincareMh},   we obtain 
\begin{align}
\frac{1}{\tau}(\Phi_+\,\!'(c_h^n) - \Phi_+\,\!'(c_h^{n-1}), \mu_h^n) 
 &\leq \norm{\delta_\tau{c_h^n}}{} \norm{(c_h^{n})^2 + c_h^{n}c_h^{n-1} + (c_h^{n-1})^2}{L^3(\Omega)} \norm{\mu_h^n}{L^6(\Omega)}   \nonumber\\ 
&\leq \frac{\kappa}{6} \norm{\delta_\tau{c_h^n}}{}^2 + C(\norm{c_h^n}{L^6(\Omega)}^4 + \norm{c_h^{n-1}}{L^6(\Omega)}^4)\norm{\mu_h^n}{L^6(\Omega)}^2\nonumber\\
&\leq \frac{\kappa}{6} \norm{\delta_\tau{c_h^n}}{}^2 + C\vert \mu_h^n\vert_{\mathrm{DG}}^2 + C \Vert \mu_h^n \Vert^2.  \label{eq:phi_plus_bound}
\end{align}
Since $\Phi_-\,\!'(c) = -c$, with Cauchy--Schwarz's and Young's inequalities, we obtain
\begin{align}\label{eq:bound_phi-}
\frac{1}{\tau}(\Phi_-\,\!'(c_h^{n-1}) - \Phi_-\,\!'(c_h^{n-2}), \mu_h^n) \leq \frac{\kappa}{6} \norm{\delta_\tau{c_h^{n-1}}}{}^2 + C\norm{\mu_h^n}{}^2.
\end{align}
Handling the last term on the right-hand side of \eqref{eq:CHNS:equation_4} is technical. Here, we provide an outline of the proof and suppress details for brevity. Considering the definition of $a_{\mathrm{adv}}$, integrating by parts and  and rearranging terms,  yield
\begin{align*}
a_{\mathrm{adv}}(&c_h^{n-1}, \vec{u}_h^{n-1}, \delta_\tau{c_h^n})
=\, \sum_{E\in\setE_h} \int_E \grad{c_h^{n-1}}\cdot\vec{u}_h^{n-1}\, \delta_\tau{c_h^n}
+ \sum_{E\in\setE_h} \int_E c_h^{n-1}\div{\vec{u}_h^{n-1}}\, \delta_\tau{c_h^n}\\
- &\sum_{e\in\Gammah\cup\partial{\Omega}} \int_e \avg{c_h^{n-1}\delta_\tau{c_h^n}}\jump{\vec{u}_h^{n-1}\cdot\normal_e}
- \sum_{e\in\Gammah} \int_e \jump{c_h^{n-1}}\avg{\vec{u}_h^{n-1}\cdot\normal_e}\avg{\delta_\tau{c_h^n}}= \sum_{i=1}^4 T_i.
\end{align*}
For the terms $T_1$ to $T_3$, we apply Holder's, Poincar\'e's,  trace and triangle inequalities. We also use \eqref{eq:fully_dis6}, \eqref{eq:def_b_lift_2}, and inverse estimate \eqref{eq:inverse_estimate_dg} to deduce that 
\[\|\bm{u}_h^{n-1}\|_{\DG} \leq C_\mathrm{inv}h^{-1}\|\bm{u}_h^{n-1} - \bm{v}_h^{n-1}\| + \|\bm{v}_h^{n-1}\|_{\DG} \leq C \tau h^{-1} |\phi_h^{n-1}|_{\DG} +\|\bm{v}_h^{n-1}\|_{\DG}. \] 
Thus, with the above bound, we obtain  
\begin{align*}
\sum_{i=1}^3 \abs{T_i} \leq&\, C(\norm{\gradh{c_h^{n-1}}}{L^3(\Omega)} + \norm{c_h^{n-1}}{L^\infty(\Omega)})(\tau h^{-1}  \vert \phi_h^{n-1}\vert_{\mathrm{DG}} + \norm{\vec{v}_h^{n-1}}{\mathrm{DG}})  \norm{\delta_\tau{c_h^n}}{}\\
\leq&\, \frac{1}{12}\norm{\delta_\tau{c_h^n}}{}^2 + C(\norm{\gradh{c_h^{n-1}}}{L^3(\Omega)} + \norm{c_h^{n-1}}{L^\infty(\Omega)})^2(\tau^2h^{-2}\vert\phi_h^{n-1}\vert_{\mathrm{DG}}^2 + \norm{\vec{v}_h^{n-1}}{\mathrm{DG}}^2).
\end{align*}
To handle $T_4$, we add and subtract the approximation $\Pi_h \bm{u}^{n-1}$
\begin{align*}
T_4 &= 
- \sum_{e\in\Gammah} \int_e \jump{c_h^{n-1}}\avg{(\vec{u}_h^{n-1} - \Pi_h \bm{u}^{n-1}) \cdot\normal_e}\avg{\delta_\tau{c_h^n}}  
- \sum_{e\in\Gammah} \int_e \jump{c_h^{n-1}}\avg{\Pi_h \bm{u}^{n-1}\cdot\normal_e}\avg{\delta_\tau{c_h^n}}  \\
&= T_4^1+T_4^2. 
\end{align*}
For $T_4^1$, we split it by inserting $\bm{v}_h^{n-1}$. We use trace inequality \eqref{eq:trace_estimate_discrete}, inverse estimate \eqref{eq:inverse_estmate_lp},  Poincar\'e's and Young's inequalities. With \eqref{eq:fully_dis6}, \eqref{eq:CHNS:Stability}, and \eqref{eq:def_b_lift_2}, we obtain
\begin{multline}
|T_4^1| \leq C h^{-1}\|c_h^{n-1}\|_{L^\infty(\Omega)} \|\bm{u}_h^{n-1} - \bm{v}_h^{n-1}\| \|\delta_\tau c_h^{n}\| \\ + C |c_h^{n-1}|_{\DG} \|\bm{v}_h^{n-1} - \Pi_h \bm{u}^{n-1}\|_{L^6(\Omega)}\|\delta_\tau c_h^n\|_{L^3(\Omega)} 
 \\ \leq \frac{1}{24}\|\delta_\tau c_h^n\|^2 + C \tau^2 h^{-2} \|c_h^{n-1}\|_{L^{\infty}(\Omega)} ^2 |\phi_h^{n-1}|^2_{\DG}  + C h^{-d/3}\|\bm{v}_h^{n-1} - \Pi_h \bm{u}^{n-1}\|^2_{\DG}. \nonumber 
\end{multline}
For $T_4^2$, we simply have  
\begin{equation} |T_4^2| \leq C \|\Pi_h \bm{u}^{n-1}\|_{L^{\infty}(\Omega)} \vert c_h^{n-1}\vert_{\mathrm{DG}} \|\delta_\tau c_h^n\| \leq \frac{1}{24} \|\delta_\tau c_h^n\|^2 + C.  \nonumber 
\end{equation} 
We combine the bounds on $T_1$ to $T_4$ with \eqref{eq:stab_mu_L2_ctrl}. 
We obtain 
\begin{multline}
\abs{a_{\mathrm{adv}}(c_h^{n-1},  \vec{u}_h^{n-1}, \delta_\tau{c_h^n})}
\leq \frac{1}{6}\norm{\delta_\tau{c_h^n}}{}^2 + Ch^{-d/3}\|\bm{v}_h^{n-1} - \Pi_h \bm{u}^{n-1}\|^2_{\DG}
\\ + C(1+ \|\mu_h^{n-1}\|^2 )(\norm{\vec{v}_h^{n-1}}{\mathrm{DG}}^2 + \tau^2 h^{-2} \vert \phi_h^{n-1}\vert_{\mathrm{DG}}^2) + C. \label{eq:bound_adv}
\end{multline}
We substitute  bounds \eqref{eq:phi_plus_bound}, \eqref{eq:bound_phi-}, and \eqref{eq:bound_adv} into \eqref{eq:CHNS:equation_4}, multiply by $2 \tau$,
sum from $n = 1$ to $\ell$, with $\ell \leq m$ (recall that $\delta_\tau c_h^0 = 0$ by convention):
\begin{align*}
&\norm{\mu_h^{\ell}}{}^2 + \kappa\tau\sum_{n=1}^{\ell}\norm{\delta_\tau{c_h^{n}}}{}^2 + \frac{\kappa\tau}{3}\norm{\delta_\tau{c_h^{\ell}}}{}^2 \leq  C T + C\kappa\tau h^{-d/3} \sum_{n=0}^{\ell-1} \|\bm{v}_h^{n} - \Pi_h \bm{u}^{n}\|^2_{\DG} \\ 
&  +  C\kappa \tau\sum_{n=0}^{\ell-1}(1 + \|\mu_h^{n}\|^2)(\norm{\vec{v}_h^{n}}{\mathrm{DG}}^2 + \tau^2 h^{-2}\vert \phi_h^{n}\vert_{\mathrm{DG}}^2)+ C\tau\sum_{n=1}^\ell(\Vert \mu_h^n\Vert^2 + \vert \mu_h^{n}\vert_{\mathrm{DG}}^2) + \norm{\mu_h^{0}}{}^2. 
\end{align*}
%
Take $\varphi_h = \mu_h^n$ in \eqref{eq:fully_dis2}, use \eqref{eq:adiffcont}, Cauchy--Schwarz's and Young's inequalities, we have
\begin{align}
\norm{\mu_h^n}{}^2 
&\leq \norm{(c_h^n)^3 - c_h^{n-1}}{}\norm{\mu_h^n}{} + C_\alpha \kappa \vert c_h^n \vert_{\mathrm{DG}} \vert \mu_h^n \vert_{\mathrm{DG}} \\
&\leq \frac{1}{2}\norm{\mu_h^n}{}^2 + \norm{c_h^n}{L^6(\Omega)}^6 + \norm{c_h^{n-1}}{}^2 + \frac{C_\alpha \kappa^2}{2}\vert c_h^n \vert_{\mathrm{DG}}^2 + \frac{C_\alpha}{2}\vert \mu_h^n \vert_{\mathrm{DG}}^2. \nonumber
\end{align}
Multiply by $\tau$ the above inequality and sum from $n=1$ to $\ell$. By stability bounds \eqref{eq:CHNS:Stability}, \eqref{eq:L2boundchLp}, and \eqref{eq:L2_bound_init_mu_h}, we have
\begin{align}\label{eq:stab_mu_L2_intermediate}
\tau  \sum_{n=1}^{\ell} \|\mu_h^{n}\|^2 \leq C + CT.
\end{align}
With the induction hypothesis \eqref{eq:induction_hyp_linf} and \eqref{eq:cfl_cond}, we have 
\begin{multline}
 \tau\sum_{n=0}^{\ell-1} ( \tau^2 h^{-2} \vert \phi_h^{n}\vert_{\mathrm{DG}}^2 + h^{-d/3}\|\bm{v}_h^{n} - \Pi_h \bm{u}^{n}\|_{\DG}^2)  
\leq (\tau h^{-2} + h^{-d/3}) (\tau^{\frac{1}{1+\delta}} +h^{\frac{2+\delta}{1+\delta}})\\
\leq \gamma^{\frac{2+\delta}{1+\delta}} + \gamma^{\frac{1}{1+\delta}} + \gamma h + h^{1/2} \leq C. 
\end{multline}
With the above bounds and  \eqref{eq:CHNS:Stability}, we obtain
\begin{multline}\label{eq:bound_mu_delta_tau_c_inter}
\norm{\mu_h^{\ell}}{}^2 + \kappa\tau\sum_{n=1}^{\ell}\norm{\delta_\tau{c_h^{n}}}{}^2 
\leq C T + C 
+ C\kappa\tau\sum_{n=0}^{\ell-1} (\norm{\vec{v}_h^{n}}{\mathrm{DG}}^2 + \tau^2 h^{-2} \vert \phi_h^{n}\vert^2_{\mathrm{DG}})\|\mu_h^{n}\|^2. 
\end{multline}
%
We now apply the Gronwall's inequality \cite{HeywoodRannacher1990} to obtain 
\begin{align*}
\norm{\mu_h^{\ell}}{}^2 + \kappa\tau\sum_{n=1}^{\ell}\norm{\delta_\tau{c_h^{n}}}{}^2
\leq C(1+T), 
\end{align*}
which concludes the proof.
Finally, it is important to point out that, in the above proof, every generic constant $C$ is independent of the induction iteration index $m$. In other words, at  each induction iteration the constant $C$ in \eqref{eq:stability_infty1} is unchanged.
\end{proof}
\subsection{Intermediate results (Steps (ii) and (iii))}
To simplify the writeup, we define the following projection and discretization error functions. 
\begin{alignat*}{3}
    \eta_c^n  &= \mathcal{P}_h c^n - c^n , && \quad  \eta_\mu^n = \mathcal{P}_h \mu^n - \mu^n ,&& \quad \eta_p^n  = \pi_h p^n - p^n ,  \\ \xi_c^n &  = c_h^n - \mathcal{P}_h c^n, && \quad \xi_\mu^n = \mu_h^n - \mathcal{P}_h \mu^n, && \quad \xi_p^n  = p_h^n - \pi_h p^n, \\ 
\bm{\eta}_{\bm{u}}^n &= \Pi_h \bm{u}^n - \bm{u}^n, && \quad  \bm{\xi}_{\bm{v}}^n = \bm{v}_h^n - \Pi_h \bm{u}^n , && \quad \bm{\xi}_{\bm{u}}^n = \bm{u}_h^n - \Pi_h \bm{u}^n. 
\end{alignat*}
From the consistency equations \eqref{eq:consistency_1}-\eqref{eq:consistency_3}, the fully discrete scheme \eqref{eq:fully_dis1}-\eqref{eq:fully_dis6}, and the definition of $\mathcal{P}_h$ \eqref{eq:def_elliptic_projection}, we obtain the following error equations for all $1\leq n\leq \Nst$. For all $\chi_h \in M_h^{k}$,
\begin{multline} 
    (\delta_\tau \xi_c^n, \chi_h)  + \adif(\xi_\mu^n, \chi_h) \\  = (-\delta_\tau c^n + (\partial_t c)^n - \delta_\tau \eta_c^n, \chi_h) - \aadv(c_h^{n-1}, \bm{u}_h^{n-1}, \chi_h) + \aadv(c^{n}, \bm{u}^n, \chi_h). \label{eq:first_err_eq}
\end{multline}
For all $\varphi_h \in M_h^{k}$,
\begin{multline}
    \kappa \adif(\xi_c^n, \varphi_h) -(\xi_\mu^n, \varphi_h) \\  = (\eta_\mu^n, \varphi_h) + (\Phi_+\,\!'(c^n) - \Phi_+\,\!'(c_h^n),\varphi_h) + (\Phi_-\,\!'(c^{n}) - \Phi_-\,\!'(c_h^{n-1}), \varphi_h).\label{eq:second_err_eq}
\end{multline}
For all $\bm{\theta}_h \in \mathbf{X}_h^{k}$, 
\begin{align} 
& \frac{1}{\tau}( \errv^n- \erru^{n-1}, \bm{\theta}_h) +  a_\mathcal{C}(\bm{u}_h^{n-1},\bm{u}_h^{n-1}, \errv^n, \bm{\theta}_h) +  \mu_\mathrm{s} a_{\strain}(\errv^n, \bm{\theta}_h)  = - \mu_\mathrm{s} a_{\strain}(\bm{\eta}_{\bm{u}}^n, \bm{\theta}_h )
\label{eq:third_err_eq} \\ \quad & + b_{\mathcal{P}}(\bm{\theta}_h, p_h^{n-1} - p^n) + a_\mathcal{C}(\bm{u}^n,\bm{u}^n, \bm{u}^n,\bm{\theta}_h) - a_\mathcal{C}(\bm{u}_h^{n-1}, \bm{u}_h^{n-1}, \Pi_h \bm{u}^n, \bm{\theta}_h) \nonumber \\ &  \quad+ ((\partial_t \bm{u})^n - \delta_\tau \bm{u}^n - \delta_\tau \bm{\eta}_{\bm{u}}^n , \bm{\theta}_h) + b_{\mathcal{I}}(c_h^{n-1},\mu_h^n, \bm{\theta}_h) - b_{\mathcal{I}}(c^n, \mu^n, \bm{\theta}_h) \nonumber . 
\end{align}
The next lemma gives a bound on the last two terms in \eqref{eq:first_err_eq}. 
\begin{lemma}\label{lemma:bound_aadv_terms}
There exists a constant $C$ independent of $h$, $\tau$ such that for any $\epsilon>0$ and $\chi_h \in M_h^{k}$, the bound holds
for all $n\geq 1$:
\begin{multline}
|\aadv(c^{n}, \bm{u}^n, \chi_h) - \aadv(c_h^{n-1}, \bm{u}_h^{n-1}, \chi_h ) |
\leq  5\epsilon \vert\chi_h \vert_{\mathrm{DG}}^2 + \frac{C}{\epsilon} h^{2k+2} \\
+ \frac{C}{\epsilon} \tau \int_{t^{n-1}}^{t^n} \left(\|\partial_t c\|^2_{H^1(\Omega)} + \|\partial_t \bm{u}\|^2_{H^1(\Omega)}\right)  + \frac{C}{\epsilon} \left( \|\xi_c^{n-1}\|^2  + \|\erru^{n-1}\|^2 \right). 
\end{multline}
\end{lemma}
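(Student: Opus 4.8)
The plan is to exploit the tri-linearity of $\aadv$ to split the difference into a temporal part, two projection-error parts, and two discretization-error parts, and then bound each piece using the explicit expression \eqref{eq:CHNS:DG_advection}, H\"older's inequality, and the trace estimates \eqref{eq:trace_estimate_discrete}--\eqref{eq:trace_estimate_continuous}. First I would insert the intermediate argument $\aadv(c_h^{n-1}, \bm{u}^n, \chi_h)$ to write
\begin{align*}
\aadv(c^{n}, \bm{u}^n, \chi_h) - \aadv(c_h^{n-1}, \bm{u}_h^{n-1}, \chi_h )
&= \aadv(c^n - c_h^{n-1}, \bm{u}^n, \chi_h) \\
&\quad + \aadv(c_h^{n-1}, \bm{u}^n - \bm{u}_h^{n-1}, \chi_h),
\end{align*}
and then decompose $c^n - c_h^{n-1} = (c^n - c^{n-1}) - \eta_c^{n-1} - \xi_c^{n-1}$ and $\bm{u}^n - \bm{u}_h^{n-1} = (\bm{u}^n - \bm{u}^{n-1}) - \bm{\eta}_{\bm{u}}^{n-1} - \erru^{n-1}$. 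Collecting the two time-difference evaluations into a single temporal contribution, this leaves one temporal term, two projection-error evaluations in $\eta_c^{n-1}$ and $\bm{\eta}_{\bm{u}}^{n-1}$, and two discretization-error evaluations in $\xi_c^{n-1}$ and $\erru^{n-1}$. Each of these five pieces will be bounded by $\epsilon\vert\chi_h\vert_{\DG}^2$ plus a data-dependent remainder, which accounts for the coefficient $5\epsilon$ after relabeling.

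For the temporal contribution I would use $c^n - c^{n-1} = \int_{t^{n-1}}^{t^n}\partial_t c$ and the analogue for $\bm{u}$, together with Cauchy--Schwarz in time, to produce the term $\tau\int_{t^{n-1}}^{t^n}(\|\partial_t c\|_{H^1(\Omega)}^2 + \|\partial_t\bm{u}\|_{H^1(\Omega)}^2)$; here the coefficients are controlled by the $L^\infty(\Omega)$ bound on $c_h^{n-1}$ from Lemma~\ref{lemma:L_infty_bound} and the bound on $\|\bm{u}^n\|_{L^\infty(\Omega)}$ coming from the regularity $\bm{u}\in L^\infty(0,T;H^{k+1}(\Omega)^d)$. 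For the two projection-error evaluations I would invoke the approximation bounds \eqref{eq:elliptic_projection_error} and \eqref{eq:approximation_prop_1}, which give $\|\eta_c^{n-1}\|, \|\bm{\eta}_{\bm{u}}^{n-1}\| \leq C h^{k+1}$, so that after Young's inequality these contribute $(C/\epsilon)h^{2k+2}$.

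The heart of the argument, and the step I expect to be most delicate, is extracting $L^2$ (rather than $\DG$) norms of the discretization errors, since only $\|\xi_c^{n-1}\|^2$ and $\|\erru^{n-1}\|^2$ are allowed on the right-hand side. In the volume integrals of \eqref{eq:CHNS:DG_advection} this is immediate by H\"older, placing the gradient on $\chi_h$. In the face integrals $\sum_{e\in\Gammah}\int_e\avg{\,\cdot\,}\avg{\,\cdot\,}\jump{\chi_h}$ the key observation is that, since $\xi_c^{n-1}\in M_h^{k}$ and $\erru^{n-1}\in\mathbf{X}_h^{k}$ are both discrete, the discrete trace inequality \eqref{eq:trace_estimate_discrete} applied to the error factor yields a factor $h^{-1/2}$, while $\big(\sum_{e\in\Gammah}\|\jump{\chi_h}\|_{L^2(e)}^2\big)^{1/2}\leq (h/\tilde{\sigma})^{1/2}\vert\chi_h\vert_{\DG}$ supplies a compensating $h^{1/2}$. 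The two powers of $h$ cancel exactly, leaving a clean bound $C\|\xi_c^{n-1}\|\,\vert\chi_h\vert_{\DG}$ (respectively $C\|\erru^{n-1}\|\,\vert\chi_h\vert_{\DG}$), after which Young's inequality completes the estimate. For the projection-error face integrals the same $h$-counting applies, but with the continuous trace estimate \eqref{eq:trace_estimate_continuous} in place of \eqref{eq:trace_estimate_discrete}; the extra $h^{-1/2}$ is then absorbed into the $h^{k+1}$ approximation rate. Summing the five contributions and renaming $\epsilon$ yields the stated bound.
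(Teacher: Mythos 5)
Your proposal is correct and follows essentially the same route as the paper's proof in Appendix~A: a six-term telescoping decomposition into temporal, projection-error, and discretization-error pieces, with the temporal pieces handled by Taylor expansion, the projection pieces by the approximation bounds together with the continuous trace estimate, and the $\xi_c^{n-1}$, $\erru^{n-1}$ pieces by the discrete trace estimate whose $h^{-1/2}$ cancels against the $h^{1/2}$ extracted from the jump part of $\vert\chi_h\vert_{\mathrm{DG}}$. The only (immaterial) difference is the order of insertion of intermediate arguments, so some of your frozen factors are $\bm{u}^n$ and $c_h^{n-1}$ where the paper uses $\bm{u}^{n-1}$ and $c^{n-1}$; both choices are controlled by the same $L^\infty$ bounds.
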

The proof is in Appendix~\ref{sec:app1}.
We are now ready to complete Step (ii). 
\begin{lemma} \label{lemma:bound_xic}
Fix $m$, with $1\leq m \leq \Nst$. Assume that \eqref{eq:induction_hyp_linf} and \eqref{eq:cfl_cond} hold. Then, for any $n$ with $1\leq n \leq m$, we have   for a constant $C$ independent of $h$ and $\tau$ 
\begin{multline}
 \frac{K_\alpha}{2} \tau \vert\mathcal{J}(\delta_\tau \xi_c^n)\vert_{\mathrm{DG}}^2 +  \kappa \left(\adif(\xi_c^{n}, \xi_c^n) - \adif(\xi_c^{n-1},\xi_c^{n-1})\right)   +  \kappa K_\alpha \vert\xi_c^n - \xi_c^{n-1}\vert_{\mathrm{DG}}^2  \label{eq:error_eq_xi_c} \\
\leq C \tau \int_{t^{n-1}}^{t^n} \left( \tau \|\partial_{tt}c \|^2 +\tau^{-1} h^{2k}|\partial_t c|^2_{H^{k+1}(\Omega)} + \tau \|\partial_t c\|^2_{H^1(\Omega)} + \tau \|\partial_t \bm{u}\|_{H^1(\Omega)}^2 \right)  \\ + C \tau h^{2k}  
+ C \tau \left(\vert\xi_c^n\vert_{\mathrm{DG}}^2 + \vert\xi_c^{n-1}\vert_{\mathrm{DG}}^2 +  \|\erru^{n-1} \|^2\right). 
     \end{multline}
\end{lemma}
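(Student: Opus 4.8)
The plan is to test the two scalar error equations \eqref{eq:first_err_eq}--\eqref{eq:second_err_eq} with carefully chosen functions and combine them, using the discrete inverse Laplacian $\mathcal{J}$ to tame the coupling between $\xi_\mu^n$ and $\delta_\tau\xi_c^n$. First note that, by mass conservation \eqref{thm:CHNS:discrete_mass_conservation} and the zero-average constraint in \eqref{eq:def_elliptic_projection}, both $\xi_c^n$ and $\delta_\tau\xi_c^n$ lie in $M_{h0}^{k}$, so $\mathcal{J}(\delta_\tau\xi_c^n)$ is well defined. The symmetric-form identity
\begin{equation*}
\adif(\xi_c^n,\xi_c^n) - \adif(\xi_c^{n-1},\xi_c^{n-1}) + \adif(\xi_c^n - \xi_c^{n-1}, \xi_c^n - \xi_c^{n-1}) = 2\tau\,\adif(\delta_\tau\xi_c^n, \xi_c^n),
\end{equation*}
combined with coercivity \eqref{eq:coercivity_adiff}, shows that the second and third terms on the left of \eqref{eq:error_eq_xi_c} are bounded by $2\kappa\tau\,\adif(\delta_\tau\xi_c^n,\xi_c^n)$. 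Thus it suffices to estimate this quantity together with the $\mathcal{J}$-term.

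Next I would take $\varphi_h = \delta_\tau\xi_c^n$ in \eqref{eq:second_err_eq}, which gives
\begin{equation*}
2\kappa\tau\,\adif(\delta_\tau\xi_c^n,\xi_c^n) = 2\tau(\xi_\mu^n,\delta_\tau\xi_c^n) + 2\tau(\eta_\mu^n,\delta_\tau\xi_c^n) + 2\tau\big(\Phi_+'(c^n)-\Phi_+'(c_h^n) + \Phi_-'(c^n)-\Phi_-'(c_h^{n-1}),\,\delta_\tau\xi_c^n\big).
\end{equation*}
The dangerous term $(\xi_\mu^n,\delta_\tau\xi_c^n)$ is converted by testing \eqref{eq:first_err_eq} with $\chi_h = \mathcal{J}(\delta_\tau\xi_c^n)$. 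The key observations are that, by symmetry of $\adif$, the definition \eqref{eq:def_of_J} of $\mathcal{J}$, and $(\delta_\tau\xi_c^n,1)=0$, one has $\adif(\xi_\mu^n,\mathcal{J}(\delta_\tau\xi_c^n)) = (\delta_\tau\xi_c^n,\xi_\mu^n)$ and $(\delta_\tau\xi_c^n,\mathcal{J}(\delta_\tau\xi_c^n)) = \adif(\mathcal{J}(\delta_\tau\xi_c^n),\mathcal{J}(\delta_\tau\xi_c^n)) \geq K_\alpha|\mathcal{J}(\delta_\tau\xi_c^n)|_{\mathrm{DG}}^2$. Hence $(\xi_\mu^n,\delta_\tau\xi_c^n)$ equals the right-hand side of \eqref{eq:first_err_eq} evaluated at $\mathcal{J}(\delta_\tau\xi_c^n)$ minus a coercive term. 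Assembling everything, the left-hand side of \eqref{eq:error_eq_xi_c} is bounded above by $-\tfrac32 K_\alpha\tau|\mathcal{J}(\delta_\tau\xi_c^n)|_{\mathrm{DG}}^2$ plus the data terms, so a strictly negative multiple of $|\mathcal{J}(\delta_\tau\xi_c^n)|_{\mathrm{DG}}^2$ is left over to absorb the remaining contributions.

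It then remains to estimate each data term. The advection difference $\aadv(c^n,\bm{u}^n,\mathcal{J}(\delta_\tau\xi_c^n)) - \aadv(c_h^{n-1},\bm{u}_h^{n-1},\mathcal{J}(\delta_\tau\xi_c^n))$ is handled directly by Lemma~\ref{lemma:bound_aadv_terms} with $\chi_h=\mathcal{J}(\delta_\tau\xi_c^n)$ and $\epsilon$ small, producing the $h^{2k}$, the $\tau\int(\|\partial_t c\|^2_{H^1}+\|\partial_t\bm{u}\|^2_{H^1})$, and the $\|\xi_c^{n-1}\|^2 + \|\erru^{n-1}\|^2$ contributions (the first of which becomes $|\xi_c^{n-1}|_{\mathrm{DG}}^2$ via Poincar\'e \eqref{eq:PoincareMh}). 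The temporal consistency term $(\partial_t c)^n - \delta_\tau c^n$ is bounded by a Taylor estimate $\|(\partial_t c)^n-\delta_\tau c^n\|^2 \leq C\tau\int_{t^{n-1}}^{t^n}\|\partial_{tt}c\|^2$, the term $\delta_\tau\eta_c^n$ by \eqref{eq:elliptic_projection_error} applied to $c^n-c^{n-1}=\int_{t^{n-1}}^{t^n}\partial_t c$, and $\eta_\mu^n$ by \eqref{eq:elliptic_projection_error} with $\mu\in L^\infty(H^{k+1})$. Each inner product against $\mathcal{J}(\delta_\tau\xi_c^n)$ is bounded by Cauchy--Schwarz and Poincar\'e, and each inner product against $\delta_\tau\xi_c^n$ by Lemma~\ref{lemma:boundedness_J}; in both cases Young's inequality absorbs the resulting $|\mathcal{J}(\delta_\tau\xi_c^n)|_{\mathrm{DG}}$ factor into the reserved negative term.

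The main obstacle is the nonlinear term $(\Phi_+'(c^n)-\Phi_+'(c_h^n),\delta_\tau\xi_c^n)$: since $\Phi_+'(c)=c^3$, applying Lemma~\ref{lemma:boundedness_J} requires a bound on $|(c^n)^3-(c_h^n)^3|_{\mathrm{DG}}$, i.e.\ on the broken gradient and on the face jumps of the cubic difference. I would write $(c^n)^3-(c_h^n)^3 = (c^n-c_h^n)\big((c^n)^2 + c^n c_h^n + (c_h^n)^2\big)$, split $c^n-c_h^n = -\eta_c^n-\xi_c^n$, expand the broken gradient by the product rule, and estimate the factors using the $L^\infty(\Omega)$ and $L^3(\Omega)$-gradient bounds for $c_h^n$ from Lemma~\ref{lemma:L_infty_bound} together with the Sobolev regularity of $c^n$ and the elliptic-projection bounds \eqref{eq:elliptic_projection_error}; on interior faces $\jump{c^n}=0$, so the jump contributions reduce to jumps of $c_h^n$ and are controlled by the trace estimates. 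The concave part $\Phi_-'(c)=-c$ is linear and immediate, yielding the $|\xi_c^{n-1}|_{\mathrm{DG}}^2$ and $\tau\|\partial_t c\|_{H^1}^2$ terms. The delicate point is the bookkeeping that expresses every nonlinear factor in terms of $|\xi_c^n|_{\mathrm{DG}}$, $|\xi_c^{n-1}|_{\mathrm{DG}}$ and $h^{k}$, so that after Young's inequality nothing uncontrolled survives on the right of \eqref{eq:error_eq_xi_c}.
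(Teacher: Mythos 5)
Your proposal is correct and follows essentially the same route as the paper: test \eqref{eq:first_err_eq} with $\mathcal{J}(\delta_\tau\xi_c^n)$ and \eqref{eq:second_err_eq} with $\delta_\tau\xi_c^n$, cancel the $(\xi_\mu^n,\delta_\tau\xi_c^n)$ coupling, reserve a portion of $K_\alpha\tau|\mathcal{J}(\delta_\tau\xi_c^n)|_{\mathrm{DG}}^2$ for absorption, invoke Lemma~\ref{lemma:bound_aadv_terms} for the advection difference and Lemma~\ref{lemma:boundedness_J} for the potential terms, and control $|(c^n)^3-(c_h^n)^3|_{\mathrm{DG}}$ via the $L^\infty$ and $L^3$-gradient bounds of Lemma~\ref{lemma:L_infty_bound}. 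No substantive differences from the paper's argument.
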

\begin{proof}
Note that with the definition of the operator $\mathcal{J}$ in  \eqref{eq:def_of_J}, for any $\chi_h \in M_{h0}^k$ and $\phi_h \in M_h^k$, we have: 
\begin{align}
\adif(\mathcal{J}(\chi_h), \phi_h) = \adif(\mathcal{J}(\chi_h), \phi_h - \overline{\phi_h}) = (\chi_h, \phi_h -\overline{\phi_h} ) = (\chi_h, \phi_h). 
\end{align}Thus, we obtain 
\begin{align}
(\delta_\tau \xi_c^n, \mathcal{J}(\delta_\tau \xi_c^n)) &= \adif(\mathcal{J}(\delta_\tau \xi_c^n),\mathcal{J}(\delta_\tau \xi_c^n)), \quad 
\adif(\mathcal{J}(\delta_\tau \xi_c^n), \xi_\mu^n) = (\xi_\mu^n , \delta_\tau \xi_c^n ). \nonumber
\end{align}
Let $\chi_h = \mathcal{J}(\delta_\tau \xi_c^n)$ in \eqref{eq:first_err_eq}, and use the coercivity properties of $\adif$ \eqref{eq:coercivity_adiff}. With the above equalities, we obtain  
\begin{multline}
K_\alpha \vert\mathcal{J}(\delta_\tau \xi_c^n)\vert_{\mathrm{DG}}^2 + (\xi_\mu^n , \delta_\tau  \xi_c^n )  \leq  (-\delta_\tau c^n + (\partial_t c)^n - \delta_\tau \eta_c^n, \mathcal{J}(\delta_\tau \xi_c^n)) \nonumber\\  \quad - \aadv(c_h^{n-1}, \bm{u}_h^{n-1}, \mathcal{J}(\delta_\tau \xi_c^n)) + \aadv(c^{n}, \bm{u}^n, \mathcal{J}(\delta_\tau \xi_c^n) )  =T_1+T_2+T_3.
\end{multline}  
Let $\varphi_h = \delta_\tau \xi_c^n$ in \eqref{eq:second_err_eq}. With the symmetry and coercivity of $\adif$, we have: 
\begin{multline}
\frac{\kappa}{2\tau}  \left(\adif(\xi_c^{n}, \xi_c^n) - \adif(\xi_c^{n-1},\xi_c^{n-1}) + K_\alpha \vert\xi_c^n - \xi_c^{n-1}\vert_{\mathrm{DG}}^2\right) - (\xi_\mu, \delta_\tau \xi_c^n) \nonumber \\  \leq (\eta_\mu^n,\delta_\tau \xi_c^n) + (\Phi_+\,\!'(c^n) - \Phi_+\,\!'(c_h^n),\delta_\tau \xi_c^n) + (\Phi_-\,\!'(c^{n}) - \Phi_-\,\!'(c_h^{n-1}), \delta_\tau \xi_c^n) = T_4+T_5+T_6. 
\end{multline}
Adding the above two inequalities yields: 
\begin{multline}
K_\alpha  \vert\mathcal{J}(\delta_\tau \xi_c^n)\vert_{\mathrm{DG}}^2 +  \frac{\kappa}{2\tau}  \left(\adif(\xi_c^{n}, \xi_c^n) - \adif(\xi_c^{n-1},\xi_c^{n-1}) \right) \\ + \frac{\kappa K_\alpha}{2\tau}  \vert\xi_c^n - \xi_c^{n-1}\vert_{\mathrm{DG}}^2 \leq \sum_{i=1}^{6} T_i. \label{eq:step2_error_eq0}
\end{multline}
With Cauchy--Schwarz's, Poincar\'e's and Young's inequalities, Taylor expansions, and \eqref{eq:elliptic_projection_error}, we obtain 
\begin{align}
  |T_1| & \leq \frac{C}{K_\alpha} (\|\delta_\tau c^n - (\partial_t c)^n\|^2 + \|\delta_\tau \eta_c^n\|^2 )  + \frac{K_\alpha}{18} \vert\mathcal{J}(\delta_\tau \xi_c^n)\vert_{\mathrm{DG}}^2 \label{eq:bound_T1} \\ 
  & \leq \frac{C}{K_\alpha}\int_{t^{n-1}}^{t^n} \big(\tau \| \partial_{tt} c\|^2 + \tau^{-1} h^{2k}|\partial_t c|^2_{H^{k+1}(\Omega)}\big) + \frac{K_\alpha}{18} \vert\mathcal{J}(\delta_\tau \xi_c^n)\vert_{\mathrm{DG}}^2. \nonumber
\end{align}
The terms $T_2 + T_3$ are bounded via Lemma~\ref{lemma:bound_aadv_terms} with $\chi_h = \mathcal{J}(\delta_\tau \xi_c^n)$ and $\epsilon = K_\alpha/18$. We also use Lemma~\ref{lemma:L_infty_bound} and obtain 
\begin{align}
|T_2  + T_3| \leq & \frac{5K_\alpha}{18} \vert\mathcal{J}(\delta_\tau \xi_c^n)\vert_{\mathrm{DG}}^2 + \frac{C}{K_\alpha} \tau \int_{t^{n-1}}^{t^n} \Big(\|\partial_t c\|^2_{H^1(\Omega)} + \|\partial_t \bm{u}\|^2_{H^1(\Omega)}\Big) \\ &  + \frac{C}{K_\alpha}h^{2k} + \frac{C}{K_\alpha} ( \vert\xi_c^{n-1}\vert_{\mathrm{DG}}^2 + \|\bm{\xi}_{\bm{u}}^{n-1}\|^2  ).
\end{align} 
The term $T_4$ is bounded with Lemma~\ref{lemma:boundedness_J}, Young's inequality, and \eqref{eq:elliptic_projection_error}. 
\begin{equation}
|T_4| \leq \frac{C}{K_\alpha} h^{2k} \|\mu\|^2_{L^{\infty}(0,T; H^{k+1}(\Omega))} + \frac{K_\alpha}{18} \vert\mathcal{J}(\delta_\tau \xi_c^n)\vert_{\mathrm{DG}}^2.
\end{equation}
For $T_5$ and $T_6$, we insert $\Phi_-\,\!'(c^{n-1})$ and 
apply Lemma \ref{lemma:boundedness_J} to obtain: 
\begin{multline} 
|T_5|+|T_6|\leq C \vert\Phi_+\,\!'(c^n) - \Phi_+\,\!'(c_h^n)\vert_{\mathrm{DG}}  \vert\mathcal{J}(\delta_\tau \xi_c^n)\vert_{\mathrm{DG}} \\ + C \left(\vert\Phi_-\,\!'(c^{n}) - \Phi_-\,\!'(c^{n-1})\vert_{\mathrm{DG}} + \vert\Phi_-\,\!'(c^{n-1}) - \Phi_-\,\!'(c_h^{n-1})\vert_{\mathrm{DG}} \right)\vert\mathcal{J}(\delta_\tau \xi_c^n)\vert_{\mathrm{DG}}.\label{eq:bound_T3T6_0} \end{multline}
For the first term and for the Ginzburg--Landau potential,  we have 
Using the $L^\infty$ bounds on $c^n$ and $c_h^n$ and H\"older's inequality, we can write
\[
\vert (c^n)^3 - (c_h^n)^3\vert_{\mathrm{DG}}
\leq C \Vert \nabla_h (c^n-c_h^n)\Vert +
C  \Vert c^n - c_h^n \Vert_{L^6(\Omega)} \left( \Vert \nabla c^n \Vert_{L^3(\Omega)}
+ \Vert \nabla_h c_h^n\Vert_{L^3(\Omega)}\right).
\]
 Lemma~\ref{lemma:L_infty_bound} and Poincar\'e's inequality applied to $c^n-c_h^n$ with \eqref{thm:CHNS:discrete_mass_conservation}
yield
\[
 \vert (c^n)^3 - (c_h^n)^3\vert_{\mathrm{DG}} 
\leq C \vert c_h^n -c^n\vert_{\mathrm{DG}}.
\]
We substitute the above bound in \eqref{eq:bound_T3T6_0}, and we apply a Taylor's expansion, \eqref{eq:elliptic_projection_error} and Young's inequality. We obtain 
\begin{align*}
|T_5| + |T_6| \!&\leq\! C(|c_h^n - c^n|_{\DG} 
+  |c^n - c^{n-1}|_{\DG} + |c^{n-1} - c_h^{n-1}|_{\DG})|\mathcal{J}(\delta_\tau \xi_c^n)|_{\DG}  \\ 
&\leq \frac{C}{K_\alpha} \Big(\vert \xi_c^n\vert_{\mathrm{DG}}^2 + \vert\xi_c^{n-1}\vert_{\mathrm{DG}}^2 + h^{2k} \|c\|^2_{L^{\infty}(0,T; H^{k+1}(\Omega))} \Big) \\  
& \qquad+ \frac{C}{K_\alpha}  \tau\int_{t^{n-1}}^{t^n}\|\partial_t c\|_{H^{1}(\Omega)}^2 +  \frac{K_\alpha}{18} \vert \mathcal{J}(\delta_\tau \xi_c^n)\vert_{\mathrm{DG}}^2. \nonumber
\end{align*}
Using the bounds on the terms $T_i$'s in \eqref{eq:step2_error_eq0} and multiplying by $2\tau$ yield the result. 
\end{proof}
Next, we derive a bound for $\xi_\mu^n$ in the energy norm.
\begin{lemma}\label{lemma:bound_ximu}
    Fix $m$, with $1\leq m \leq \Nst$. Assume that \eqref{eq:induction_hyp_linf} and \eqref{eq:cfl_cond} hold. Then, for any $n$ with $1\leq n \leq m$, we have   for a constant $C$ independent of $h$ and $\tau$ 
\begin{align}
\frac{K_\alpha}{2}  \vert\xi_\mu^n\vert_{\mathrm{DG}}^2  \!\leq\! &\frac{4C_\mathrm{J}^2 }{K_\alpha} \vert\mathcal{J}(\delta_\tau \xi_c^n)\vert_{\mathrm{DG}}^2 + C \tau\! \int_{t^{n-1}}^{t^n}\! \big(\|\partial_t c\|^2_{H^1(\Omega)} \!+ \|\partial_t \bm{u}\|^2_{H^1(\Omega)} \!+ \|\partial_{tt} c\|^2\big) \label{eq:error_xi_mu} \\ &  + C h^{2k} \Big( 1 + \tau^{-1} \int_{t^{n-1}}^{t^n} |\partial_t c|_{H^{k+1}(\Omega)}^2 \Big) + C\big( \vert \xi_c^{n-1}\vert_{\mathrm{DG}}^2  + \|\erru^{n-1} \|^2 \big). \nonumber
\end{align}
\end{lemma}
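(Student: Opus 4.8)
The plan is to test the first error equation \eqref{eq:first_err_eq} with $\chi_h = \xi_\mu^n$ and to exploit the coercivity \eqref{eq:coercivity_adiff} of $\adif$. Since $\xi_\mu^n \in M_h^k \subset H^1(\setE_h)$, this gives
\begin{equation*}
K_\alpha \vert\xi_\mu^n\vert_{\mathrm{DG}}^2 \leq \adif(\xi_\mu^n,\xi_\mu^n) = -(\delta_\tau\xi_c^n,\xi_\mu^n) + (g^n,\xi_\mu^n) + \aadv(c^n,\bm{u}^n,\xi_\mu^n) - \aadv(c_h^{n-1},\bm{u}_h^{n-1},\xi_\mu^n),
\end{equation*}
where $g^n = -\delta_\tau c^n + (\partial_t c)^n - \delta_\tau \eta_c^n$. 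It then remains to estimate the three groups of terms on the right and to absorb the resulting small multiples of $\vert\xi_\mu^n\vert_{\mathrm{DG}}^2$ into the left-hand side.

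First I would bound $(\delta_\tau\xi_c^n,\xi_\mu^n)$. The key observation here is that $\delta_\tau\xi_c^n \in M_{h0}^k$: discrete mass conservation \eqref{thm:CHNS:discrete_mass_conservation} together with the zero-mean constraint in the definition \eqref{eq:def_elliptic_projection} of $\mathcal{P}_h$ gives $(\xi_c^j,1)=0$ for every $j$, hence the same for its backward difference. Therefore Lemma~\ref{lemma:boundedness_J} applies with $\lambda=\xi_\mu^n$ and $\chi_h=\delta_\tau\xi_c^n$, yielding $|(\delta_\tau\xi_c^n,\xi_\mu^n)| \leq C_\mathrm{J}\vert\mathcal{J}(\delta_\tau\xi_c^n)\vert_{\mathrm{DG}}\vert\xi_\mu^n\vert_{\mathrm{DG}}$, which after Young's inequality (with parameter $K_\alpha/16$) produces the leading term $\tfrac{4C_\mathrm{J}^2}{K_\alpha}\vert\mathcal{J}(\delta_\tau\xi_c^n)\vert_{\mathrm{DG}}^2$ plus a small multiple of $\vert\xi_\mu^n\vert_{\mathrm{DG}}^2$. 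The advection difference is handled directly by Lemma~\ref{lemma:bound_aadv_terms} with $\chi_h=\xi_\mu^n$ and $\epsilon$ proportional to $K_\alpha$, which (after replacing $\|\xi_c^{n-1}\|$ by $C_\mathrm{P}\vert\xi_c^{n-1}\vert_{\mathrm{DG}}$ via Poincar\'e's inequality \eqref{eq:PoincareMh}) contributes exactly the time integrals of $\partial_t c$ and $\partial_t\bm{u}$, the $h^{2k}$ term, and $\vert\xi_c^{n-1}\vert_{\mathrm{DG}}^2 + \|\erru^{n-1}\|^2$ appearing in \eqref{eq:error_xi_mu}.

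The step I expect to be the main obstacle is controlling the genuinely $L^2$ term $(g^n,\xi_\mu^n)$, since $\vert\cdot\vert_{\mathrm{DG}}$ only controls $\xi_\mu^n$ up to its mean, and $\xi_\mu^n$ is not itself mean-free. The resolution is to notice that $g^n$ \emph{is} mean-free: $\int_\Omega \delta_\tau c^n = \int_\Omega (\partial_t c)^n = 0$ by the mass conservation \eqref{eq:model_mass_conservation} of the exact solution, while $\int_\Omega \delta_\tau\eta_c^n = 0$ by the mean constraint of $\mathcal{P}_h$. Consequently $(g^n,\xi_\mu^n) = (g^n, \xi_\mu^n - \overline{\xi_\mu^n})$, and since $\xi_\mu^n - \overline{\xi_\mu^n}$ has zero mean, Poincar\'e's inequality \eqref{eq:PoincareMh} gives $(g^n,\xi_\mu^n) \leq C_\mathrm{P}\|g^n\|\,\vert\xi_\mu^n\vert_{\mathrm{DG}}$.

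It then remains to bound $\|g^n\|$ by standard estimates: a Taylor expansion for the consistency error $\|\delta_\tau c^n - (\partial_t c)^n\|^2 \leq C\tau\int_{t^{n-1}}^{t^n}\|\partial_{tt}c\|^2$, and the elliptic projection bound \eqref{eq:elliptic_projection_error} for $\|\delta_\tau\eta_c^n\|^2 \leq C\tau^{-1}h^{2k+2}\int_{t^{n-1}}^{t^n}|\partial_t c|_{H^{k+1}(\Omega)}^2$, the latter being absorbed into the $h^{2k}$ scaling since $h\leq h_0\leq 1$. These produce the remaining $\partial_{tt}c$ and $h^{2k}\tau^{-1}\int|\partial_t c|_{H^{k+1}}^2$ terms of \eqref{eq:error_xi_mu}. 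Finally, choosing the Young parameters so that the three small multiples of $\vert\xi_\mu^n\vert_{\mathrm{DG}}^2$ sum to at most $\tfrac{K_\alpha}{2}\vert\xi_\mu^n\vert_{\mathrm{DG}}^2$ and subtracting from the left yields the stated bound.
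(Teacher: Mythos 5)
Your proposal is correct and follows essentially the same route as the paper: test \eqref{eq:first_err_eq} with $\chi_h=\xi_\mu^n$, use coercivity, bound $(\delta_\tau\xi_c^n,\xi_\mu^n)$ via Lemma~\ref{lemma:boundedness_J} and Young with parameter $K_\alpha/16$, exploit the zero mean of $-\delta_\tau c^n+(\partial_t c)^n-\delta_\tau\eta_c^n$ to apply Poincar\'e, and invoke Lemma~\ref{lemma:bound_aadv_terms} with $\epsilon$ proportional to $K_\alpha$ (the paper obtains the mean-free property by taking $\chi_h=1$ in the error equation rather than directly from mass conservation, which is equivalent). The only cosmetic omission is that you do not explicitly cite Lemma~\ref{lemma:L_infty_bound} to control the $\|c_h^{n-1}\|_{L^\infty(\Omega)}$ factor hidden in the constant of Lemma~\ref{lemma:bound_aadv_terms}, but this is available under the stated hypotheses and the paper uses it in exactly this way.
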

\begin{proof}
Let $\chi_h = \xi_\mu^n$ in \eqref{eq:first_err_eq}. With the coercivity property of $\adif$, we obtain: 
\begin{align}
K_{\alpha} \vert\xi_\mu^n\vert_{\mathrm{DG}}^2 \leq -(\delta_\tau \xi_c^n, \xi_\mu^n) +(-\delta_\tau c^n+ (\partial_t c)^n - \delta_\tau \eta_c^n, \xi_\mu^n)  \label{eq:intres1}\\
 - \aadv(c_h^{n-1}, \bm{u}_h^{n-1}, \xi_\mu^n) + \aadv(c^{n}, \bm{u}^n, \xi_\mu^n). \nonumber
\end{align}
The first term is bounded by Lemma \ref{lemma:boundedness_J} and Young's inequality. We have: 
\begin{align}
|(\delta_\tau \xi_c^n, \xi_\mu^n)| \leq \frac{K_\alpha}{16} \vert\xi_\mu^n\vert_{\mathrm{DG}}^2 + \frac{4C_\mathrm{J}^2 }{K_\alpha} \vert\mathcal{J}(\delta_\tau \xi_c^n)\vert_{\mathrm{DG}}^2. \nonumber 
 \end{align}
By taking $\chi_h = 1$ in \eqref{eq:first_err_eq}, we observe that the average of $\delta_\tau c^n -(\partial_t c)^n + \delta_\tau \eta_c^n$ is zero. Hence,  we bound the second term with Cauchy--Schwarz's and Poincar\'e's inequalities.  
\begin{align}
& |(\delta_\tau c^n -(\partial_t c)^n + \delta_\tau \eta_c^n, \xi_\mu^n)| = |(\delta_\tau c^n -(\partial_t c)^n  + \delta_\tau \eta_c^n, \xi_\mu^n - \overline{\xi_\mu^n})| \\ 
& \leq \frac{K_\alpha}{16} \vert\xi_\mu^n\vert_{\mathrm{DG}}^2 + \frac{C}{K_\alpha}\int_{t^{n-1}}^{t^n} \big(\tau \| \partial_{tt} c\|^2 + \tau^{-1} h^{2k}|\partial_t{c}|^2_{H^{k+1}(\Omega)}\big). \nonumber
\end{align}
We use the above two bounds in \eqref{eq:intres1}  and Lemma~\ref{lemma:bound_aadv_terms} with $\chi_h = \xi_\mu^n$ and $\epsilon = K_\alpha/16$ to bound the last two terms. We then conclude by  using Lemma~\ref{lemma:L_infty_bound}.
\end{proof}
We now show an estimate involving the errors $\erru$ and $\errv$.  To this end, we denote 
\begin{equation*}
    A^n_1  = \sum_{e\in\Gamma_h} \frac{\tilde{\sigma}}{h}\|[\phi_h^n]\|^2_{L^2(e)} - \sum_{e\in\Gamma_h} \frac{\tilde{\sigma}}{h}\|[\phi_h^{n-1}]\|_{L^2(e)}^2, \,\,\,  
    A^n_2  = \|\bm{G}_h([\phi_h^n]) \|^2 - \| \bm{G}_h([\phi_h^{n-1}])\|^2 .
\end{equation*}
 In the following lemma, the notation $\delta_{n,1}$ is the Kronecker symbol: $\delta_{1,1}= 1$ and $\delta_{n,1} = 0$
for $n > 1$. 
\begin{lemma}\label{lemma:err_eq_pressure_corretion}
Assume that $\sigma \geq \tilde{M}_{k-1}^2/d$, $\tilde{\sigma} \geq 4\tilde{M}_{k}^2$, and $\sigma_\chi \leq K_\strain/(2d)$. 
There exists a constant $C$  independent of $h$ and $\tau$  such that for all $n \geq 1$
and any $\epsilon >0$, we have
\begin{multline}
\frac{1}{2\tau} \left(\|\erru^n\|^2 - \|\erru^{n-1}\|^2 + \|\erru^n - \erru^{n-1}\|^2\right) + \frac{K_\strain\mu_\mathrm{s}}{4} \|\errv^n\|_{\DG}^2 + \frac{\tau}{16}\vert \phi_h^n \vert_{\mathrm{DG}}^2 \label{eq:err_eq_pressure_corretion}\\ 
+ \frac{1}{2\sigma_\chi \mu_\mathrm{s}} \left(\|S_h^n \|^2 - \|S_h^{n-1}\|^2\right)  + \frac{\tau}{2} \big(\adif(\zeta_h^n, \zeta_h^n) -  \adif(\zeta_h^{n-1},\zeta_h^{n-1}) + A^n_1 - A^n_2\big) \leq \epsilon  \vert\xi_\mu^n\vert_{\mathrm{DG}}^2 \\ 
+ C (h^{2k} +  \tau) 
 + C\Big(\|\erru^{n-1}\|^2 + \vert \xi_c^{n-1}\vert_{\mathrm{DG}}^2 + \frac{1}{\epsilon^2} \big(\|\erru^n\|^2 + \tau^2  \vert\phi_h^n\vert_{\mathrm{DG}}^2\big) \Big) \\ 
+ C\tau\int_{t^{n-1}}^{t^n} \big( \|\partial_t \bm{u}\|^2 + \|\partial_{tt} \bm{u}\|^2 +\|\partial_t c\|^2 + \tau^{-2} h^{2k} |\partial_t \bm{u} |^2_{H^{k}(\Omega)} \big)  + \delta_{n,1}\abs{b_{\mathcal{P}}(\erru^0, \phi_h^1)}.  
\end{multline} 
\end{lemma}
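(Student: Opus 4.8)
The plan is to run an energy argument on the velocity error equation \eqref{eq:third_err_eq}, tested with $\bm{\theta}_h=\errv^n$, and to reproduce the pressure--correction algebra of the stability proof (Theorem~\ref{thm:CHNS:dis_energy_dissipation}) so that the $S_h^n$, $\zeta_h^n$, $A_1^n$, $A_2^n$ and $\phi_h^n$ terms emerge by telescoping rather than by estimation. With $\bm{\theta}_h=\errv^n$, the convective term drops by positivity \eqref{eq:aCformpos}, since $a_\mathcal{C}(\bm{u}_h^{n-1},\bm{u}_h^{n-1},\errv^n,\errv^n)\ge 0$, and the viscous term gives $\mu_\mathrm{s}a_\strain(\errv^n,\errv^n)\ge K_\strain\mu_\mathrm{s}\norm{\errv^n}{\DG}^2$ by coercivity \eqref{eq:coercivity_astrain}; half of this is kept on the left as $\frac{K_\strain\mu_\mathrm{s}}{4}\norm{\errv^n}{\DG}^2$, and the remaining fraction will absorb the several $\norm{\errv^n}{\DG}^2$ contributions generated below.

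First I would treat the discrete time term $\frac1\tau(\errv^n-\erru^{n-1},\errv^n)$. Subtracting $\Pi_h\bm{u}^n$ from both sides of \eqref{eq:fully_dis6} gives the error identity $(\erru^n-\errv^n,\bm{\theta}_h)=\tau b_{\mathcal{P}}(\bm{\theta}_h,\phi_h^n)$. Combining the polarization of this identity (with $\bm{\theta}_h=\erru^n$) with the polarization of the time term, and using $b_{\mathcal{P}}(\Pi_h\bm{u}^n,\phi_h^n)=b_{\mathcal{P}}(\bm{u}^n,\phi_h^n)=0$ (from \eqref{eq:def_Pih} and $\div\bm{u}^n=0$) to replace $b_{\mathcal{P}}(\erru^n,\phi_h^n)$ by $b_{\mathcal{P}}(\bm{u}_h^n,\phi_h^n)$, the identity \eqref{eq:preposition_1} and the lift bound \eqref{eq:lift_prop_g} turn the steps \eqref{eq:stab_1}--\eqref{eq:stab_2} into their error analogues with $\bm{u}_h,\bm{v}_h$ replaced by $\erru,\errv$. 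This produces the telescoping term $\frac{1}{2\tau}(\norm{\erru^n}{}^2-\norm{\erru^{n-1}}{}^2+\norm{\erru^n-\erru^{n-1}}{}^2)$, the coercive gain $\frac{\tau}{16}\vert\phi_h^n\vert_{\DG}^2$ (a surviving fraction of $\frac{\tau}{2}a_{\mathrm{diff}}(\phi_h^n,\phi_h^n)$), and the jump/lift contributions of $\phi_h^n$, which are organized into the consecutive-level differences $A_1^n$ and $A_2^n$ so that they telescope upon the later summation. The extra contribution of the first step, where $\bm{v}_h^0=\bm{u}_h^0$ is merely the $L^2$ projection and $\phi_h^0=0$, is exactly what is left explicit as $\delta_{n,1}\abs{b_{\mathcal{P}}(\erru^0,\phi_h^1)}$.

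The pressure term $b_{\mathcal{P}}(\errv^n,p_h^{n-1}-p^n)$ is the crux. For the discrete part, \eqref{eq:def_Pih} and $\div\bm{u}^n=0$ give $b_{\mathcal{P}}(\errv^n,p_h^{n-1})=b_{\mathcal{P}}(\bm{v}_h^n,p_h^{n-1})$, so the stability algebra \eqref{eq:stab:bpv}--\eqref{eq:expression_b_pn} applies and yields the $\zeta_h^n$ and $S_h^n$ telescopes, with $\frac{1}{2\sigma_\chi\mu_\mathrm{s}}\norm{S_h^n-S_h^{n-1}}{}^2$ absorbed into $\frac{K_\strain\mu_\mathrm{s}}{2}\norm{\errv^n}{\DG}^2$ via \eqref{eq:bounddiffSh}. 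The genuinely new piece is $-b_{\mathcal{P}}(\errv^n,p^n)$, which is only $O(1)$ under a naive Cauchy--Schwarz bound because $\grad p^n$ is $O(1)$. The resolution is to split $p^n=\pi_h p^n-\eta_p^n$: the term $b_{\mathcal{P}}(\errv^n,\eta_p^n)$ is controlled directly by $\frac{K_\strain\mu_\mathrm{s}}{8}\norm{\errv^n}{\DG}^2+Ch^{2k}$ using \eqref{eq:l2_proj_approximation} and trace inequalities, while for $b_{\mathcal{P}}(\errv^n,\pi_h p^n)=b_{\mathcal{P}}(\bm{v}_h^n,\pi_h p^n)$ (again by \eqref{eq:def_Pih}) I would invoke the pressure--Poisson step \eqref{eq:fully_dis4}. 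Since $b_{\mathcal{P}}(\bm{v}_h^n,\cdot)$ annihilates constants, $\pi_h p^n$ may be taken mean-free, so \eqref{eq:fully_dis4} rewrites this term as $-\tau a_{\mathrm{diff}}(\phi_h^n,\pi_h p^n)$; this extracts the decisive factor $\tau$, and Young's inequality bounds it by $\frac{\tau}{32}\vert\phi_h^n\vert_{\DG}^2+C\tau\norm{p^n}{H^1(\Omega)}^2$, i.e. $O(\tau)$ plus a piece absorbed into $\frac{\tau}{16}\vert\phi_h^n\vert_{\DG}^2$.

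It remains to bound the four consistency terms on the right of \eqref{eq:third_err_eq}. The viscous consistency $-\mu_\mathrm{s}a_\strain(\bm{\eta}_{\bm{u}}^n,\errv^n)$ is handled by continuity \eqref{rq:continuity_a_D} and \eqref{eq:approximation_prop_2}, giving $\frac{K_\strain\mu_\mathrm{s}}{8}\norm{\errv^n}{\DG}^2+Ch^{2k}$; the time-derivative consistency $((\partial_t\bm{u})^n-\delta_\tau\bm{u}^n-\delta_\tau\bm{\eta}_{\bm{u}}^n,\errv^n)$ is handled by Taylor expansion and \eqref{eq:approximation_prop_1}, producing the $\norm{\partial_{tt}\bm{u}}{}^2$ and $\tau^{-2}h^{2k}\vert\partial_t\bm{u}\vert^2_{H^{k}(\Omega)}$ integrals; and the nonlinear convective difference $a_\mathcal{C}(\bm{u}^n,\bm{u}^n,\bm{u}^n,\errv^n)-a_\mathcal{C}(\bm{u}_h^{n-1},\bm{u}_h^{n-1},\Pi_h\bm{u}^n,\errv^n)$ is decomposed by inserting intermediate arguments and estimated with the $L^\infty$ bounds of Lemma~\ref{lemma:L_infty_bound}, contributing the $\norm{\erru^{n-1}}{}^2$, $\norm{\partial_t\bm{u}}{}^2$ and $\norm{\partial_t c}{}^2$ terms. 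The interface difference $b_{\mathcal{I}}(c_h^{n-1},\mu_h^n,\errv^n)-b_{\mathcal{I}}(c^n,\mu^n,\errv^n)$ produces the characteristic $\epsilon\vert\xi_\mu^n\vert_{\DG}^2$ and $\frac{1}{\epsilon^2}(\norm{\erru^n}{}^2+\tau^2\vert\phi_h^n\vert_{\DG}^2)$: rewriting it through \eqref{eq:CHNS:DG_interface} as $\aadv$, splitting the three slots to expose the small factors $c_h^{n-1}-c^n=\xi_c^{n-1}+O(h^k)+O(\tau)$ and $\mu_h^n-\mu^n=\xi_\mu^n+O(h^k)$, applying the sharp bound \eqref{eq:bound_aadv_2} with $\errv^n$ in the velocity slot, and finally using $\norm{\errv^n}{}\le\norm{\erru^n}{}+C\tau\vert\phi_h^n\vert_{\DG}$ (from the error form of \eqref{eq:fully_dis6} and \eqref{eq:def_b_lift_2}, \eqref{eq:lift_prop_g}) to convert the $L^2$ norm of $\errv^n$ into the stated $\epsilon^{-2}$ terms. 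Collecting everything, grouping the $\norm{\errv^n}{\DG}^2$ pieces so that $\frac{K_\strain\mu_\mathrm{s}}{4}$ survives and the $\vert\phi_h^n\vert_{\DG}^2$ pieces so that $\frac{\tau}{16}$ survives, yields \eqref{eq:err_eq_pressure_corretion}. The main obstacle is the exact-pressure term, whose $O(1)$ size is defeated only by routing it through \eqref{eq:fully_dis4} to gain a factor $\tau$; the interface term is the secondary difficulty, since reproducing exactly the $\epsilon$/$\epsilon^{-2}$ split hinges on the sharp advection bound \eqref{eq:bound_aadv_2} together with the substitution $\norm{\errv^n}{}\le\norm{\erru^n}{}+C\tau\vert\phi_h^n\vert_{\DG}$.
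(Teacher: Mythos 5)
Your proposal is correct and follows essentially the same route as the paper: testing \eqref{eq:third_err_eq} with $\errv^n$, reproducing the projection-step algebra to obtain the $\erru$, $\phi_h^n$, $S_h^n$, $\zeta_h^n$ and $A_1^n-A_2^n$ telescopes, splitting the exact pressure as $\pi_h p^n-\eta_p^n$ and routing $b_{\mathcal{P}}(\bm{v}_h^n,\pi_h p^n)$ through \eqref{eq:fully_dis4} to gain the factor $\tau$, and decomposing the interface difference into the five $\aadv$ pieces with \eqref{eq:bound_aadv_2} and $\|\erru^n-\errv^n\|\leq C\tau\vert\phi_h^n\vert_{\mathrm{DG}}$ producing the $\epsilon$/$\epsilon^{-2}$ structure. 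The only imprecisions are cosmetic: the absorption of $\|S_h^n-S_h^{n-1}\|^2$ uses the identity $\divh\Pi_h\bm{u}^n-R_h(\jump{\Pi_h\bm{u}^n})=0$ to restate \eqref{eq:bounddiffSh} in terms of $\errv^n$ rather than $\bm{v}_h^n$, and the convective difference relies on $L^\infty$ control of the exact velocity and its interpolant (plus the discrete stability bounds) rather than on Lemma~\ref{lemma:L_infty_bound}, which concerns the order parameter.
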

\begin{proof}
Taking $\bm{\theta}_h = \errv^n$ in \eqref{eq:third_err_eq} and using \eqref{eq:aCformpos} and \eqref{eq:coercivity_astrain} yield: 
\begin{multline}
 \frac{1}{2\tau} \left(\|\errv^n\|^2 - \|\erru^{n-1}\|^2 + \|\errv^n - \erru^{n-1}\|^2\right) + \mu_\mathrm{s} K_\strain \|\errv^n\|_{\DG}^2  \label{eq:conv_0} \\    
\leq  - \mu_\mathrm{s} a_{\strain}(\bm{\eta}_{\bm{u}}^n, \errv^n )
+ b_{\mathcal{P}}(\errv^n, p_h^{n-1} - p^n) + a_\mathcal{C}(\bm{u}^n,\bm{u}^n, \bm{u}^n,\errv^n) - a_\mathcal{C}(\bm{u}_h^{n-1}, \bm{u}_h^{n-1}, \Pi_h \bm{u}^n, \errv^n)  \\  +((\partial_t \bm{u})^n - \delta_\tau \bm{u}^n - \delta_\tau \bm{\eta}_{\bm{u}}^n , \errv^n) + b_{\mathcal{I}}(c_h^{n-1},\mu_h^n, \errv^n)  - b_{\mathcal{I}}(c^n, \mu^n, \errv^n )= \sum_{i=1}^{7}W_i.
\end{multline}   
Following similar arguments to the proof of Theorem~1 in \cite{inspaper1}, we can prove  (for completeness, we provide some details in the Appendix~\ref{sec:app:conv_1add2})
\begin{multline}
    \frac{1}{2\tau} \left( \|\erru^n \|^2 - \|\errv^n\|^2 +\|\erru^n - \erru^{n-1}\|^2 \right) +\frac{\tau}{4} | \phi_h^n|_{\DG}^2   + \frac{\tau}{2} \adif(\phi_h^n,\phi_h^n) \\+\frac{\tau}{2}(A_1^n - A_2^n)    +  \frac{\tau}{4} \sum_{e\in\Gamma_h} \frac{\tilde{\sigma}}{h}\|[\phi_h^n - \phi_h^{n-1}]\|^2_{L^2(e)}   \leq \frac{1}{2\tau}\|\errv^n  -\erru^{n-1}\|^2  + \delta_{n,1}  \abs{b_{\mathcal{P}}(\bm{\xi}_{\bm{u}}^0, \phi_h^1)}.\label{eq:conv_1add2}
\end{multline}
We add \eqref{eq:conv_1add2} with \eqref{eq:conv_0} to obtain  
\begin{multline}
\frac{1}{2\tau} \left(\|\erru^n\|^2 - \|\erru^{n-1}\|^2 + \|\erru^n - \erru^{n-1}\|^2\right) + \mu_\mathrm{s} K_\strain \|\errv^n\|_{\DG}^2  
+ \frac{\tau}{2}(A_1^n - A_2^n) \\ + \frac{\tau}{4}\vert\phi_h^n\vert_{\mathrm{DG}}^2 +  \frac{\tau}{4} \sum_{e\in\Gamma_h} \frac{\tilde{\sigma}}{h}\|[\phi_h^n - \phi_h^{n-1}]\|^2_{L^2(e)}  + \frac{\tau}{2} \adif(\phi_h^n, \phi_h^n) 
\leq   \delta_{n,1}  \abs{b_{\mathcal{P}}(\erru^0,\phi_h^1)} + \sum_{i=1}^7 W_i. \label{eq:conv_3}
\end{multline}  
The bounds for $W_i$ for $i = 1,\ldots, 5$ are handled in a similar way as in \cite{inspaper1}. We recall the main ideas for completeness. 
Using the continuity of $a_\mathcal{D}$, we have 
\begin{align}
|W_1| 
\leq C\mu_\mathrm{s} h^{2k} |\bm{u}^n|_{H^{k+1}(\Omega)}^2 + \frac{K_\mathcal{D} \mu_\mathrm{s}}{32} \|\errv^n\|_{\DG}^2. 
\end{align}
To handle $W_2$, we split it as follows. 
\begin{align}
W_2 =  b_{\mathcal{P}}(\errv^n, p_h^{n-1} - p^n) = b_{\mathcal{P}}(\errv^n, p_h^{n-1}) -  b_{\mathcal{P}}(\errv^n, \pi_h p^n) +  b_{\mathcal{P}}(\errv^n, \pi_h p^n - p^n).
\end{align} 
For the first term, recall that $b_{\mathcal{P}}(\Pi_h \bm{u}^n, p_h^{n-1}) = b_{\mathcal{P}}(\bm{u}^n, p_h^{n-1}) = 0$. From  \eqref{eq:expression_b_pn},
\begin{align}
b_{\mathcal{P}}(\errv^n, p_h^{n-1}) = b_{\mathcal{P}}(\vec{v}_h^n, p_h^{n-1})
= -\frac{\tau}{2}a_{\mathrm{diff}}(\zeta_h^n,\zeta_h^n) + \frac{\tau}{2}a_{\mathrm{diff}}(\zeta_h^{n-1},\zeta_h^{n-1}) + \frac{\tau}{2}a_{\mathrm{diff}}(\phi_h^n,\phi_h^n) \nonumber\\
- \frac{1}{2\sigma_\chi\mu_\mathrm{s}}\Big(\norm{S_h^n}{}^2 - \norm{S_h^{n-1}}{}^2 - \norm{S_h^n-S_h^{n-1}}{}^2\Big). \nonumber
\end{align}
Using the fact $\divh{\Pi_h{\vec{u}^n}} - R_h(\jump{\Pi_h{\vec{u}^n}})=0$, taking $\sigma_\chi \leq K_\strain/(2d)$ and $\sigma \geq \tilde{M}_{k-1}^2/d$, by \eqref{eq:lift_prop_r}, we have
\begin{equation}
\frac{1}{2\sigma_\chi\mu_\mathrm{s}} \|S_h^n - S_h^{n-1}\|^2 = \frac{\sigma_\chi\mu_\mathrm{s}}{2} \|\divh{\errv^n} - R_h([\errv^n])\|^2 \leq \frac{K_\strain \mu_\mathrm{s}}{2}\|\errv^n\|_{\DG}^2. \nonumber
\end{equation}
Since $\pi_h p^n \in M^{k-1}_{h0}$,  we use  \eqref{eq:fully_dis4}, \eqref{eq:adiffcont}, and stability of the $L^2$ projection. 
\begin{equation}
| - b_{\mathcal{P}}(\errv^n, \pi_h p^n)| = \tau| \adif(\phi_h^n, \pi_h p^n)| \!\leq\! C_\alpha\tau \vert \phi^n_h\vert_{\DG} \vert \pi_h p^n\vert_{\DG} \leq \frac{\tau}{8} \vert\phi_h^n\vert_{\mathrm{DG}}^2 + C\tau |p^n |_{H^1(\Omega)}^2.  \nonumber 
\end{equation}
Since $\div{\errv^n} \in M^{k-1}_h$, by the definition of $\pi_h p^n$ the first term in $b_{\mathcal{P}}(\errv^n, \pi_h p^n - p^n)$ is zero.  
Hence, by using trace estimate \eqref{eq:trace_estimate_continuous} and \eqref{eq:l2_proj_approximation}, we obtain 
\begin{align}
|b(\pi_h p^n - p^n, \errv^n) | &\leq  C (\| \pi_h p^n - p^n \| + h\|\gradh( \pi_h p^n - p^n) \|)\|\errv^n\|_{\DG}  \nonumber \\  
& \leq  \frac{C}{\mu_\mathrm{s}}  h^{2k} \vert p^n\vert_{H^{k}(\Omega)}^2 + \frac{K_\mathcal{D} \mu_\mathrm{s}}{32}  \|\errv^n \|_{\DG}^2.\nonumber
\end{align}
With the above bounds and expressions, \eqref{eq:conv_3} becomes: 
\begin{align}
&\frac{1}{2\tau} \left(\|\erru^n\|^2 - \|\erru^{n-1}\|^2 + \|\erru^n - \erru^{n-1}\|^2\right) + \frac{K_\strain\mu_\mathrm{s}}{2} \|\errv^n\|_{\DG}^2 + \frac{\tau}{8}\vert \phi_h^n \vert_{\mathrm{DG}}^2 \label{eq:conv_4}  \\ \quad &  + \frac{1}{2\sigma_\chi \mu_\mathrm{s}} \left(\|S_h^n \|^2 - \|S_h^{n-1}\|^2\right)+ \frac{\tau}{2}(A_1^n - A_2^n) +\frac{\tau}{4} \sum_{e\in\Gamma_h} \frac{\tilde{\sigma}}{h}\|[\phi_h^n - \phi_h^{n-1}]\|^2_{L^2(e)}  \nonumber \\ &+ \frac{\tau}{2} (\adif(\zeta_h^n, \zeta_h^n) - \adif(\zeta_h^{n-1},\zeta_h^{n-1}) )  \leq C h^{2k} |\bm{u}^n|_{H^{k+1}(\Omega)}^2  + \frac{K_\mathcal{D} \mu_\mathrm{s}}{16}  \|\errv^n\|_{\DG}^2   \nonumber \\ & + C \tau |p^n|^2_{H^1(\Omega)} + C h^{2k} \vert p^n\vert_{H^{k}(\Omega)}^2   +  \delta_{n,1}  \abs{b_{\mathcal{P}}(\erru^0, \phi_h^1)} + \sum_{i=3}^7 W_i. \nonumber 
\end{align} 
The terms $W_3 + W_4$ are bounded by Lemma~6.6 in the paper \cite{inspaper1}. We have:
\begin{align}
|W_3 + W_4|  \leq C \tau \int_{t^{n-1}}^{t^n} \|\partial_t \bm{u} \|^2 + C  h^{2k} + C \|\erru^{n-1}\|^2 +  \frac{K_\strain \mu_\mathrm{s}}{16}  \|\errv^n \|^2_{\DG}\label{eq:bound_W3W4}.  
\end{align}
The term $W_5$ is bounded with Cauchy--Schwarz's, Young's and Poincar\'e's inequalities, a Taylor expansion, and Lemma \ref{lemma:Pi_projection_error}. We obtain: 
\begin{align}
|W_5| &\leq C (\|(\partial_t \bm{u})^n - \delta_\tau \bm{u}^n\| + \|\delta_\tau \bm{\eta}_{\bm{u}}^n\| )\|\errv^n\|_{\DG} \label{eq:bound_W5}  \\ 
& \leq \frac{C}{\mu_\mathrm{s}}  \int_{t^{n-1}}^{t^n}\big( \tau \|\partial_{tt} \bm{u}\|^2 + \tau^{-1} h^{2k} |\partial_t \bm{u} |^2_{H^{k}(\Omega)}\big) + \frac{K_\mathcal{D} \mu_\mathrm{s}}{32} \|\errv^n\|_{\DG}^2. \nonumber 
\end{align}  
It remains to handle $W_6 + W_7$. Using \eqref{eq:CHNS:DG_interface} and following a similar approach to \cite{LiuRiviere2018numericalCHNS}, we  write: 
\begin{multline}
W_6 + W_7  = \aadv(c_h^{n-1}, \errv^n , \xi_\mu^n) + \aadv(c_h^{n-1}, \errv^n, \eta_\mu^n)  \\ 
  - \aadv(c^n - c^{n-1},\errv^n, \mu^n) + \aadv(\xi_c^{n-1}, \errv^n, \mu^n) + \aadv(\eta_c^{n-1}, \errv^n, \mu^n)  = \sum_{i=1}^5 B_i.\label{eq:bound_W67}
\end{multline}
For the term $B_1$, use \eqref{eq:bound_aadv_2}, Theorem~\ref{thm:CHNS:stability_bound1}, triangle inequality, and Young’s inequality, for any $\epsilon > 0$, we have 
\begin{align}
|B_1| &\leq C \|\errv^n\|^{1/2}\|\errv^n\|_{\DG}^{1/2}\vert\xi_\mu^n\vert_{\mathrm{DG}} 
\leq \frac{C}{\epsilon} (\|\errv^n - \erru^n\| + \|\erru^n\|)\|\errv^n\|_{\DG} + \epsilon  \vert\xi_\mu^n\vert_{\mathrm{DG}}^2.
\end{align}
 With \eqref{eq:fully_dis6}, \eqref{eq:def_b_lift_2} and \eqref{eq:lift_prop_g}, we have 
\begin{align}\label{eq:erruvint4}
\|\erru^n - \errv^n \| \leq  \|\tau \gradh{\phi_h^n} - \tau \bm{G}_h(\jump{\phi_h^n})\| \leq (1+\tilde{M}_{k})\tau \vert\phi_h^n\vert_{\mathrm{DG}}.
\end{align}
Then, by Young’s inequality, we obtain
\begin{align}
|B_1| \leq \frac{C}{\epsilon^2 \mu_\mathrm{s}} \Big(\|\erru^n\|^2 + \tau^2 (1+\tilde{M}_{k})^2 \vert\phi_h^n\vert_{\mathrm{DG}}^2\Big) 
+ \frac{K_\mathcal{D} \mu_\mathrm{s}}{32}  \|\errv^n\|^2_{\DG} + \epsilon \vert\xi_\mu^n\vert_{\mathrm{DG}}^2. \label{eq:bound_S1}
\end{align}
For the term $B_2$, apply Holder's, trace and  Poincar\'e's inequalities, \eqref{eq:L2boundchLp}, \eqref{eq:elliptic_projection_error}, and Young's inequality. 
\begin{align}
|B_2| \leq C \|c_h^{n-1}\|_{L^6(\Omega)} \|\errv^n\|_{L^3(\Omega)} \vert\eta_\mu^n\vert_{\mathrm{DG}} 
\leq \frac{C}{\mu_\mathrm{s}} h^{2k}|\mu|^2_{L^{\infty}(0,T;H^{k+1}(\Omega))} + \frac{K_\mathcal{D} \mu_\mathrm{s}}{32} \|\errv^n\|^2_{\DG}. \label{eq:bound_S2}  
\end{align}
The terms $B_3$, $B_4$, and $B_5$ simplify since the jumps of $\mu^n$ are zero. With Holder's and Poincar\'e's inequalities, we have 
\begin{multline}
|B_3| + |B_4| +|B_5| \leq (\|c^n  - c^{n-1}\| + \|\xi_c^{n-1}\| + \|\eta_c^{n-1}\|)  \|\errv^n\|_{L^6(\Omega)}|\mu^n|_{W^{1,3}(\Omega)}  \\ 
 \leq \frac{K_\mathcal{D} \mu_\mathrm{s}}{32}  \|\errv^n\|^2_{\DG} + \frac{C}{ \mu_\mathrm{s}} \Big(\vert\xi_c^{n-1}\vert_{\mathrm{DG}}^2 + h^{2k} + \tau \int_{t^{n-1}}^{t^n} \|\partial_t c\|^2\Big). \label{eq:bound_S345}
\end{multline}
Substituting bounds \eqref{eq:bound_W3W4}-\eqref{eq:bound_S1} into \eqref{eq:conv_4} yield the result.
\end{proof} 
With the above intermediate results, we are now ready to provide the proof of \eqref{eq:error_estimate_theorem} in 
the following section.
\subsection{Proof of \eqref{eq:error_estimate_theorem} in Theorem \ref{thm:conv_estimate_1}} \label{sec:conv_estimate_1}
\begin{proof}
As mentioned in the outline, the proof is based on an induction argument. We suppose that \eqref{eq:induction_hyp_linf} holds and we show that \eqref{eq:error_estimate_theorem} holds at time step $m$. 
We multiply \eqref{eq:err_eq_pressure_corretion} by $\tau$, choose $\epsilon =  K_\alpha^3/(32C_\mathrm{J}^2)$, 
denote $\alpha = 1/(K_\strain \epsilon^2 \mu_\mathrm{s})$, and substitute bound \eqref{eq:error_xi_mu} into \eqref{eq:err_eq_pressure_corretion}.  After adding the resulting bound to \eqref{eq:error_eq_xi_c}, we obtain
\begin{align}
&\frac{K_\alpha}{4}\tau \vert\mathcal{J}(\delta_\tau \xi_c^n)\vert_{\mathrm{DG}}^2 
+ \kappa\left( \adif(\xi_c^{n}, \xi_c^n) - \adif(\xi_c^{n-1},\xi_c^{n-1}) + K_\alpha \vert \xi_c^n - \xi_c^{n-1}\vert_{\mathrm{DG}}^2 \right) \nonumber \\
&+ \frac{1}{2}\left(\|\erru^n\|^2 - \|\erru^{n-1}\|^2\right) 
+ \frac{\tau}{2\sigma_\chi \mu_\mathrm{s}} \left(\|S_h^n \|^2 - \|S_h^{n-1}\|^2\right)  + \frac{K_\strain\mu_\mathrm{s}}{4} \tau\|\errv^n\|_{\DG}^2\nonumber \\ 
&+ \frac{\tau^2}{2} \left(\adif(\zeta_h^n, \zeta_h^n) - \adif(\zeta_h^{n-1},\zeta_h^{n-1}) + A^n_1 - A^n_2\right) 
+ \frac{1}{2}\|\erru^n - \erru^{n-1}\|^2 \nonumber \\
&+ \frac{1}{16}\tau^2\vert \phi_h^n\vert_{\mathrm{DG}}^2 \nonumber 
\leq C (\tau^2 + \tau h^{2k})
+ C \alpha\tau \big(\|\erru^n\|^2 + \tau^2  \vert\phi_h^n\vert_{\mathrm{DG}}^2\big) \\ 
&+ C  \tau ( |\xi_c^{n-1}|_{\DG}^2 + |\xi_c^{n}|_{\DG}^2 + \|\erru^{n-1}\|^2 ) + \delta_{n,1}\tau\abs{b_{\mathcal{P}}(\erru^0, \phi_h^1)} + \Lambda^n,  
\nonumber 
\end{align}
where 
\begin{align*}
\Lambda^n &= C \tau^2 \int_{t^{n-1}}^{t^n} \Big( \|\partial_t c\|^2_{H^1(\Omega)} + \|\partial_{tt}c \|^2 + \|\partial_t \bm{u}\|_{H^1(\Omega)}^2 + \|\partial_{tt} \bm{u}\|^2 \Big) \\
&+ C h^{2k} \int_{t^{n-1}}^{t^n} \Big(|\partial_t c|^2_{H^{k+1}(\Omega)} + |\partial_t \bm{u} |^2_{H^{k}(\Omega)}\Big). 
\end{align*}
We sum the above inequality from $n = 1$ to $n=m$. Recalling that $S_h^0 = 0$ and $\zeta_h^0 = 0$, we obtain
\begin{multline}\label{eq:induction_error_after_sum}
\frac{K_\alpha}{4} \tau \sum_{n=1}^{m} \vert\mathcal{J}(\delta_\tau \xi_c^n)\vert_{\mathrm{DG}}^2 
+ \kappa K_\alpha \sum_{n=1}^{m} \vert\xi_c^n - \xi_c^{n-1}\vert_{\mathrm{DG}}^2 
+ \frac{1}{2} \sum_{n=1}^{m} \|\erru^n - \erru^{n-1}\|^2 \\
+ (\kappa K_\alpha - C\tau)\vert \xi_c^m\vert_{\mathrm{DG}}^2
+ \Big(\frac{1}{2} - C \alpha \tau\Big) \|\erru^m\|^2
+ \Big(\frac{1}{16} - C\alpha\tau\Big)\tau^2 \sum_{n=1}^{m} \vert \phi_h^n\vert_{\mathrm{DG}}^2 \\
+ \frac{K_\strain\mu_\mathrm{s}}{4}\tau\sum_{n=1}^{m} \|\errv^n\|_{\DG}^2
+\frac{\tau^2}{2} \sum_{n=1}^{m} (A_1^n - A_2^n) 
\leq CT (\tau + h^{2k}) \\
+ C  \tau \sum_{n=0}^{m-1} \vert \xi_c^{n}\vert_{\mathrm{DG}}^2
+ C  \tau \sum_{n=0}^{m-1} \|\bm{\xi}_{\bm{u}}^{n}\|^2 + \tau\abs{b_{\mathcal{P}}(\erru^0, \phi_h^1)}
+ \kappa \adif(\xi_c^{0},\xi_c^{0})
+ \frac{1}{2}\|\erru^{0}\|^2.
\end{multline}
Noting that $\phi_h^0 = 0$ and using \eqref{eq:lift_prop_g} and the assumption that $\tilde{\sigma} \geq \tilde{M}_{k}^2$,  we have 
\begin{equation*}
\sum_{n=1}^m (A_1^n - A_2^n) =\sum_{e\in\Gamma_h} \frac{\tilde{\sigma}}{h}\|[\phi_h^m]\|^2_{L^2(e)} - \|\bm{G}_h[\phi_h^m]\|^2 \geq 0. 
\end{equation*}
Since $\vec{u}_h^0$ is the $L^2$ projection of the initial condition, 
 with \eqref{eq:CHNS:DG_pressure2} and the approximation properties,  we obtain
\begin{align*}
   |b_{\mathcal{P}}(\erru^0, \phi_h^1)| & \leq Ch^{k+1}\vert\phi_h^1\vert_{\mathrm{DG}}|\bm{u}^0|_{H^{k+1}(\Omega)} \leq \frac{\tau}{128} \vert\phi_h^1\vert_{\mathrm{DG}}^2 + C \tau^{-1}  h^{2k+2}|\bm{u}^0|_{H^{k+1}(\Omega)}^2. 
\end{align*}
We substitute the above bound into \eqref{eq:induction_error_after_sum} and recall that $\xi_c^0 = 0$ by definition. 
Assuming that the time step size $\tau$ is small enough (meaning that $\tau_0$ is small enough), 
we have
\begin{align}
&\frac{K_\alpha}{4} \tau \sum_{n=1}^{m} \vert\mathcal{J}(\delta_\tau \xi_c^n)\vert_{\mathrm{DG}}^2 
+ \kappa K_\alpha \sum_{n=1}^{m} \vert\xi_c^n - \xi_c^{n-1}\vert_{\mathrm{DG}}^2 
+ \frac{1}{2} \sum_{n=1}^{m} \|\erru^n - \erru^{n-1}\|^2 \\
& +\frac{\tau^2}{32} \sum_{n=1}^{m} \vert \phi_h^n\vert_{\mathrm{DG}}^2 
+ \frac{\kappa K_\alpha}{2} \vert \xi_c^m\vert_{\mathrm{DG}}^2
+ \frac{1}{4} \|\erru^m\|^2
+ \frac{K_\strain\mu_\mathrm{s}}{4}\tau\sum_{n=1}^{m} \|\errv^n\|_{\DG}^2 \nonumber \\ 
&\leq C T (\tau + h^{2k}) + 
C \tau \sum_{n=0}^{m-1} \vert \xi_c^{n}\vert_{\mathrm{DG}}^2
+  C  \tau \sum_{n=0}^{m-1} \|\bm{\xi}_{\bm{u}}^{n}\|^2. \nonumber
\end{align}
Use Gronwall's inequality, we obtain
\begin{align}
K_\alpha \tau \sum_{n=1}^{m} \vert\mathcal{J}(\delta_\tau \xi_c^n)\vert_{\mathrm{DG}}^2 
+ \kappa K_\alpha \sum_{n=1}^{m} \vert\xi_c^n - \xi_c^{n-1}\vert_{\mathrm{DG}}^2 
+ \sum_{n=1}^{m} \|\erru^n - \erru^{n-1}\|^2
+ \tau^2 \sum_{n=1}^{m} \vert \phi_h^n\vert_{\mathrm{DG}}^2 \label{eq:inter22} \\
+ \kappa K_\alpha \vert \xi_c^m\vert_{\mathrm{DG}}^2
+ \|\erru^m\|^2
+ K_\strain\mu_\mathrm{s}\tau\sum_{n=1}^{m} \|\errv^n\|_{\DG}^2 
\leq CT e^{C T}\, (\tau + h^{2k}). \nonumber
\end{align}
 Using triangle inequalities and approximation results, the bound above will yield the desired error estimate \eqref{eq:error_estimate_theorem} except for the bound on the first term $\sum_{n=1}^m \vert \mu_h^n -\mu^n\vert_{\mathrm{DG}}^2$.   
To obtain a bound on the error of the chemical potential, we multiply \eqref{eq:error_xi_mu} by $\tau$, sum the inequality from $n = 1$ to $n=m$, and use \eqref{eq:inter22}. We note that every generic constant $C$ in the proof is independent of the induction iteration index $m$.
\end{proof}
Finally, we complete the induction proof by verifying the induction hypothesis for $m+1$. 
\begin{lemma}\label{lem:prove_induction_hyp}
There exists a constant $C_\mathrm{err}$ independent of $h$ and $\tau$, such that  under the conditions $\tau \leq \gamma \leq C_\mathrm{err}^{-(1+\delta)/\delta}$ and $h \leq C_\mathrm{err}^{-(1+\delta)/\delta}$, the bound \eqref{eq:induction_hyp_linf} holds.
\end{lemma}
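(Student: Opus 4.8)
The plan is to close the induction by promoting \eqref{eq:induction_hyp_linf} from level $m$ to level $m+1$, feeding in the estimate \eqref{eq:error_estimate_theorem} which was just established at step $m$ under the standing hypothesis \eqref{eq:induction_hyp_linf}. First I would read off from \eqref{eq:error_estimate_theorem} the two quantities of interest, namely
\begin{equation*}
\tau^2 \sum_{n=1}^{m} \vert \phi_h^n\vert_{\mathrm{DG}}^2 \leq C_\mathrm{err}(\tau + h^{2k}), \qquad
K_\strain \mu_\mathrm{s}\,\tau \sum_{n=1}^m \|\bm{v}_h^n - \bm{u}^n\|_{\mathrm{DG}}^2 \leq C_\mathrm{err}(\tau + h^{2k}).
\end{equation*}
These are stated against the \emph{exact} velocity $\bm{u}^n$, whereas \eqref{eq:induction_hyp_linf} is phrased in terms of $\errv^n = \bm{v}_h^n - \Pi_h\bm{u}^n$ and the sums start at $n=0$, so some conversion is needed.

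Next I would rewrite the left-hand side of \eqref{eq:induction_hyp_linf} (with sums now running to $n=m$). Since $\phi_h^0=0$, extending the $\phi$-sum to include $n=0$ changes nothing. For the velocity term I would insert $\bm{u}^n$ and use the triangle inequality together with the interpolation bound \eqref{eq:approximation_prop_2} (with $s=k+1$),
\begin{equation*}
\|\errv^n\|_{\mathrm{DG}}^2 \leq 2\|\bm{v}_h^n - \bm{u}^n\|_{\mathrm{DG}}^2 + C h^{2k}\vert\bm{u}^n\vert_{H^{k+1}(\Omega)}^2,
\end{equation*}
so that $\tau\sum_{n=1}^m \|\errv^n\|_{\mathrm{DG}}^2 \leq C_\mathrm{err}(\tau+h^{2k}) + CT h^{2k}$. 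The remaining $n=0$ contribution is $\tau\|\bm{v}_h^0 - \Pi_h\bm{u}^0\|_{\mathrm{DG}}^2 = \tau\|\bm{u}_h^0 - \Pi_h\bm{u}^0\|_{\mathrm{DG}}^2$, which I would control via the inverse estimate \eqref{eq:inverse_estimate_dg} followed by the $L^2$-projection and $\Pi_h$ approximation errors, giving $\|\bm{u}_h^0 - \Pi_h\bm{u}^0\|_{\mathrm{DG}} \leq C h^{-1} h^{k+1} = C h^{k}$ and hence an $O(\tau h^{2k})$ contribution. Collecting everything, and if necessary enlarging the single constant $C_\mathrm{err}$ (which still depends only on $T$ and the data, not on $h$, $\tau$, or $m$), I obtain
\begin{equation*}
\tau^2 \sum_{n=0}^{m} \vert\phi_h^n\vert_{\mathrm{DG}}^2 + \tau \sum_{n=0}^{m} \|\bm{v}_h^n - \Pi_h\bm{u}^n\|_{\mathrm{DG}}^2 \leq C_\mathrm{err}(\tau + h^{2k}).
\end{equation*}

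It then remains to verify the exponent bookkeeping $C_\mathrm{err}(\tau + h^{2k}) \leq \tau^{1/(1+\delta)} + h^{(2+\delta)/(1+\delta)}$, which I would carry out termwise. The temporal part satisfies $C_\mathrm{err}\tau \leq \tau^{1/(1+\delta)}$ exactly when $\tau^{\delta/(1+\delta)} \leq C_\mathrm{err}^{-1}$, i.e. $\tau \leq C_\mathrm{err}^{-(1+\delta)/\delta}$, which is guaranteed by $\tau \leq \gamma \leq C_\mathrm{err}^{-(1+\delta)/\delta}$. For the spatial part I would use the elementary identity, valid for $k=1$,
\begin{equation*}
2k - \frac{2+\delta}{1+\delta} = 2 - \frac{2+\delta}{1+\delta} = \frac{\delta}{1+\delta},
\end{equation*}
so that $C_\mathrm{err} h^{2k} \leq h^{(2+\delta)/(1+\delta)}$ is equivalent to $h \leq C_\mathrm{err}^{-(1+\delta)/\delta}$, precisely the hypothesized threshold; for $k \geq 2$ the exponent $2k - (2+\delta)/(1+\delta)$ is strictly larger than $\delta/(1+\delta)$, so (taking $C_\mathrm{err}\geq 1$ without loss of generality) the same threshold suffices a fortiori. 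Combining the two estimates gives \eqref{eq:induction_hyp_linf} at level $m+1$ and closes the induction.

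I expect the main (and essentially only) subtlety to be this exponent matching rather than any analytic difficulty: the right-hand side exponents $1/(1+\delta)$ and $(2+\delta)/(1+\delta)$ in \eqref{eq:induction_hyp_linf} are engineered so that the genuinely higher-order error $O(\tau + h^{2k})$ fits strictly inside the induction budget, with the two smallness thresholds collapsing to the single value $C_\mathrm{err}^{-(1+\delta)/\delta}$ at the borderline case $k=1$. I would also stress that there is no circularity: the constant $C_\mathrm{err}$ is fixed first through \eqref{eq:error_estimate_theorem} independently of $h$, $\tau$, and the iteration index, after which $\gamma$ and $h_0$ are chosen small enough to enforce the smallness conditions, so the same constants are reused at every induction step.
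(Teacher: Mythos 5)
Your proof is correct and follows essentially the same route as the paper: the key step in both is to write $C_\mathrm{err}(\tau+h^{2k})\leq (C_\mathrm{err}\tau^{\delta/(1+\delta)})\tau^{1/(1+\delta)}+(C_\mathrm{err}h^{\delta/(1+\delta)})h^{(2+\delta)/(1+\delta)}$ and demand that both prefactors be at most one, which yields exactly the thresholds $\tau,h\leq C_\mathrm{err}^{-(1+\delta)/\delta}$. The only (harmless) difference is that you start from the final statement \eqref{eq:error_estimate_theorem} and convert $\bm{u}^n$ back to $\Pi_h\bm{u}^n$ via \eqref{eq:approximation_prop_2}, whereas the paper reads the needed bound directly off the intermediate estimate \eqref{eq:inter22}, which is already phrased in terms of $\errv^n$; your explicit treatment of the $n=0$ term is a detail the paper leaves as "straightforward."
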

\begin{proof}
From the previous proof, it is straightforward to see that,
\begin{equation*}
\tau^2 \sum_{n = 0}^{(m+1)-1} \vert \phi_h^n \vert_{\mathrm{DG}}^{2} + \tau \sum_{n = 0}^{(m+1)-1} \|\errv^n\|_{\DG}^2 
\leq C_\mathrm{err} (\tau + h^{2k})
\leq (C_\mathrm{err}\tau^{\frac{\delta}{1+\delta}}  )\tau^{\frac{1}{1+\delta}} 
+ (C_\mathrm{err} h^{ \frac{\delta}{1+\delta} }) h^{\frac{2+\delta}{1+\delta} }.
\end{equation*}
Therefore, the induction hypothesis holds for sufficiently small time step length and mesh size, namely 
$C_\mathrm{err}\tau^{\frac{\delta}{1+\delta}} \leq 1$ and $C_\mathrm{err} h^{\frac{\delta}{1+\delta}} \leq 1$.
\end{proof}

\subsection{Improved Estimate} 
In this section, we use duality arguments to obtain estimate \eqref{eq:improved_error_estimate_theorem} in Theorem~\ref{thm:conv_estimate_1}. 
We start by deriving bounds for $\|\xi_c^n\|$ and $\|\xi_\mu^n - \overline{\xi_\mu^n}\|$. 
Note that for any $\chi_h \in M_h^k$ and $\phi_h \in M_{h0}^k$, we have 
\begin{equation}\label{eq:relation_J_nonzero}
a_\mathrm{diff}(\mathcal{J}(\chi_h - \overline{\chi_h}), \phi_h) = (\chi_h - \overline{\chi_h}, \phi_h) = (\chi_h, \phi_h). 
\end{equation}
\begin{lemma} \label{lemma:bound_l2norm_c}
There exists a constant $C$  independent of $h$ and $\tau$ such that for all $m \geq 1$, we have  
\begin{align}
\kappa\|\xi_c^m\|^2 
+ \tau \sum_{n=1}^m \|\delta_\tau J(\xi_c^n) \|^2
\leq C (\tau^2 + h^{2k+2}) + C\tau \sum_{n=0}^m \|\xi_c^n\|^2 + C\tau \sum_{n=0}^{m-1} \|\erru^{n}\|^2.
\end{align}
\end{lemma}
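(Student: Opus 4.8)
The plan is to run a discrete duality (negative--norm) argument built on the operator $\mathcal{J}$ of \eqref{eq:def_of_J}. First I would record that $\xi_c^n$ has zero average: by the discrete mass conservation \eqref{thm:CHNS:discrete_mass_conservation} and the constraint in \eqref{eq:def_elliptic_projection}, $\int_\Omega \xi_c^n = \int_\Omega c_h^n - \int_\Omega \mathcal{P}_h c^n = 0$, so $\xi_c^n, \delta_\tau \xi_c^n \in M_{h0}^k$, the functions $\mathcal{J}(\delta_\tau\xi_c^n)$ and the twice--applied $\mathcal{J}(\mathcal{J}(\delta_\tau\xi_c^n))$ are well defined, $\mathcal{J}$ is linear so $\mathcal{J}(\delta_\tau\xi_c^n)=\delta_\tau\mathcal{J}(\xi_c^n)$, and $\xi_c^0=0$. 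The algebraic facts I would extract from \eqref{eq:def_of_J}, the symmetry of $\adif$, and \eqref{eq:relation_J_nonzero} are: for $\psi_h\in M_{h0}^k$ and any $\varphi_h\in M_h^k$ one has $\adif(\mathcal{J}(\psi_h),\varphi_h)=(\psi_h,\varphi_h)$; hence $(\delta_\tau\xi_c^n,\mathcal{J}(\mathcal{J}(\delta_\tau\xi_c^n)))=\|\mathcal{J}(\delta_\tau\xi_c^n)\|^2$, $\adif(\xi_\mu^n,\mathcal{J}(\mathcal{J}(\delta_\tau\xi_c^n)))=(\xi_\mu^n,\mathcal{J}(\delta_\tau\xi_c^n))$, and $\kappa\,\adif(\xi_c^n,\mathcal{J}(\delta_\tau\xi_c^n))=\kappa(\delta_\tau\xi_c^n,\xi_c^n)$.

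The core step is to test \eqref{eq:first_err_eq} with $\chi_h=\mathcal{J}(\mathcal{J}(\delta_\tau\xi_c^n))$ and \eqref{eq:second_err_eq} with $\varphi_h=\mathcal{J}(\delta_\tau\xi_c^n)$, then add. By the identities above the first test yields $\|\mathcal{J}(\delta_\tau\xi_c^n)\|^2+(\xi_\mu^n,\mathcal{J}(\delta_\tau\xi_c^n))$ on the left, and the second yields $\kappa(\delta_\tau\xi_c^n,\xi_c^n)-(\xi_\mu^n,\mathcal{J}(\delta_\tau\xi_c^n))$, so the chemical--potential cross terms cancel exactly. Using $(\delta_\tau\xi_c^n,\xi_c^n)=\tfrac{1}{2\tau}(\|\xi_c^n\|^2-\|\xi_c^{n-1}\|^2+\|\xi_c^n-\xi_c^{n-1}\|^2)$, I obtain the identity in which $\|\mathcal{J}(\delta_\tau\xi_c^n)\|^2+\tfrac{\kappa}{2\tau}(\|\xi_c^n\|^2-\|\xi_c^{n-1}\|^2+\|\xi_c^n-\xi_c^{n-1}\|^2)$ equals the sum of the right--hand sides of \eqref{eq:first_err_eq} evaluated at $\mathcal{J}(\mathcal{J}(\delta_\tau\xi_c^n))$ and of \eqref{eq:second_err_eq} evaluated at $\mathcal{J}(\delta_\tau\xi_c^n)$. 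Multiplying by $\tau$, summing over $n=1,\dots,m$, and using $\xi_c^0=0$ telescopes the middle terms into $\tfrac{\kappa}{2}\|\xi_c^m\|^2$, leaving $\tau\sum_n\|\mathcal{J}(\delta_\tau\xi_c^n)\|^2$ and the nonnegative increment $\tfrac{\kappa}{2}\sum_n\|\xi_c^n-\xi_c^{n-1}\|^2$; after adjusting the generic constant this is exactly the left side of the claim.

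It remains to bound the residuals, and the crucial preliminary is the norm nesting $|\mathcal{J}(\mathcal{J}(\delta_\tau\xi_c^n))|_{\mathrm{DG}}+\|\mathcal{J}(\mathcal{J}(\delta_\tau\xi_c^n))\|\le C\|\mathcal{J}(\delta_\tau\xi_c^n)\|$, which I would get by writing, with $g=\delta_\tau\xi_c^n$, $|\mathcal{J}(\mathcal{J}(g))|_{\mathrm{DG}}^2\le C\,\adif(\mathcal{J}(\mathcal{J}(g)),\mathcal{J}(\mathcal{J}(g)))=C(\mathcal{J}(g),\mathcal{J}(\mathcal{J}(g)))\le C\|\mathcal{J}(g)\|\,\|\mathcal{J}(\mathcal{J}(g))\|$ and applying \eqref{eq:PoincareMh} with the coercivity/continuity of $\adif$. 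With this, each term of the first residual paired against $\mathcal{J}(\mathcal{J}(\delta_\tau\xi_c^n))$ is handled by Cauchy--Schwarz and Young, producing an absorbable multiple of $\|\mathcal{J}(\delta_\tau\xi_c^n)\|^2$ plus consistency residuals: $\|\delta_\tau c^n-(\partial_t c)^n\|^2$ and $\|\delta_\tau\eta_c^n\|^2$ are controlled by Taylor expansion and \eqref{eq:elliptic_projection_error} and sum to $O(\tau^2+h^{2k+2})$, while the two advection terms are bounded directly by Lemma~\ref{lemma:bound_aadv_terms} with $\chi_h=\mathcal{J}(\mathcal{J}(\delta_\tau\xi_c^n))$, whose leading $5\epsilon|\chi_h|_{\mathrm{DG}}^2$ becomes $C\epsilon\|\mathcal{J}(\delta_\tau\xi_c^n)\|^2$ by the nesting bound and is absorbed. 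For the second residual paired against $\mathcal{J}(\delta_\tau\xi_c^n)$: the $\eta_\mu^n$ term gives $O(h^{2k+2})$; the convex term $((c^n)^3-(c_h^n)^3,\cdot)$ is estimated in $L^2$ using the $L^\infty$ bounds of Lemma~\ref{lemma:L_infty_bound} and the factorization $(c^n)^3-(c_h^n)^3=((c^n)^2+c^nc_h^n+(c_h^n)^2)(c^n-c_h^n)$, giving $C(\|\xi_c^n\|+h^{k+1})\|\mathcal{J}(\delta_\tau\xi_c^n)\|$; and the concave term, with $\Phi_-'(c)=-c$, expands as $-(c^n-c^{n-1},\cdot)+(\xi_c^{n-1},\cdot)+(\eta_c^{n-1},\cdot)$, contributing $\|\xi_c^{n-1}\|^2$, $h^{2k+2}$, and $O(\tau^2)$ time--difference terms.

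Collecting everything, choosing $\epsilon$ and $\tau_0$ small enough to absorb all multiples of $\|\mathcal{J}(\delta_\tau\xi_c^n)\|^2$ into the left side and dropping the nonnegative $\sum_n\|\xi_c^n-\xi_c^{n-1}\|^2$, gives the stated bound, with $\|\erru^{n-1}\|^2$ entering only through the advection estimate of Lemma~\ref{lemma:bound_aadv_terms}. I expect the main obstacle to be precisely this matching of norms: the left--hand control is only the $L^2$ norm of $\mathcal{J}(\delta_\tau\xi_c^n)$, whereas Lemma~\ref{lemma:bound_aadv_terms} is phrased in the $\mathrm{DG}$ norm of its test function, so the doubly--applied operator $\mathcal{J}(\mathcal{J}(\cdot))$ together with the nesting inequality is essential to close the estimate at the gained order $O(\tau^2+h^{2k+2})$.
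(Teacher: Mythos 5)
Your proposal is correct and follows essentially the same route as the paper: the paper sets $\hat{c}_h^n=\mathcal{J}(\xi_c^n)$ and tests \eqref{eq:first_err_eq} with $\mathcal{J}(\delta_\tau\hat{c}_h^n)$ and \eqref{eq:second_err_eq} with $\delta_\tau\hat{c}_h^n$, which by linearity and commutation of $\mathcal{J}$ with $\delta_\tau$ are exactly your $\mathcal{J}(\mathcal{J}(\delta_\tau\xi_c^n))$ and $\mathcal{J}(\delta_\tau\xi_c^n)$, and it closes the estimate with the same nesting bound $\vert\mathcal{J}(\delta_\tau\hat{c}_h^n)\vert_{\mathrm{DG}}\le (C_\mathrm{P}/K_\alpha)\Vert\delta_\tau\hat{c}_h^n\Vert$, the same use of Lemma~\ref{lemma:bound_aadv_terms} for the advection residual, and the same $L^\infty$-based treatment of the potential terms. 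The only difference is notational.
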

\begin{proof}
For readibility, let us denote $\hat{c}_h^n = \mathcal{J}(\xi_c^n)$. 
We note  that the linear operator $\mathcal{J}$ is commutable with operator $\delta_\tau$. Recalling that  $\delta_\tau \xi_c^n \in M_{h0}^k$ and $\delta_\tau \hat{c}_h^n \in M_{h0}^k$, we have 
\begin{align} 
(\delta_\tau \xi_c^n, \mathcal{J}(\delta_\tau \hat{c}_h^n)) &= \adif(\delta_\tau \hat{c}_h^n, \mathcal{J}(\delta_\tau \hat{c}_h^n)) = \|\delta_\tau \hat{c}_h^n\|^2, \\ 
\adif(\xi_\mu^n, \mathcal{J}(\delta_\tau \hat{c}_h^n)) 
&= \adif(\xi_\mu^n -\overline{\xi_\mu^n}, \mathcal{J}(\delta_\tau \hat{c}_h^n)) 
= (\delta_\tau \hat{c}_h^n, \xi_\mu^n -\overline{\xi_\mu^n})
= (\delta_\tau \hat{c}_h^n, \xi_\mu^n),\\
\adif(\xi_c^n, \delta_\tau \hat{c}_h^n) &= (\xi_c^n, \delta_\tau \xi_c^n). \label{eq:hatcp3}
\end{align}
Choose $\chi_h = \mathcal{J}(\delta_\tau \hat{c}_h^n)$ in \eqref{eq:first_err_eq} and 
 $\varphi_h = \delta_\tau \hat{c}_h^n$ in \eqref{eq:second_err_eq}, and add the resulting equations. 
With the identities above, we obtain
 \begin{multline}
  \|\delta_\tau \hat{c}_h^n\|^2 +  \frac{\kappa}{2\tau}\left(\|\xi_c^n\|^2 - \|\xi_c^{n-1}\|^2 + \|\xi_c^n - \xi_c^{n-1}\|^2\right) = \\(-\delta_\tau c^n + (\partial_t c)^n - \delta_\tau \eta_c^n, \mathcal{J}(\delta_\tau \hat{c}_h^n)) - \aadv(c_h^{n-1}, \bm{u}_h^{n-1}, \mathcal{J}(\delta_\tau \hat{c}_h^n)) + \aadv(c^{n}, \bm{u}^n, \mathcal{J}(\delta_\tau \hat{c}_h^n)) \\ + (\eta_\mu^n, \delta_\tau \hat{c}_h^n)  + (\Phi_{+}\,\!'(c^n) - \Phi_{+}\,\!'(c_h^n),\delta_\tau \hat{c}_h^n) + (\Phi_{-}\,\!'(c^{n}) - \Phi_{-}\,\!'(c_h^{n-1}), \delta_\tau \hat{c}_h^n) = \sum_{i=1}^{6} W_i.   \label{eq:l2bound_xic_semi}
 \end{multline} 
With \eqref{eq:poincare_ineq},  \eqref{eq:coercivity_adiff} and \eqref{eq:def_of_J}, it follows that 
\begin{equation}
    \vert\mathcal{J}(\delta_\tau \hat{c}_h^n)\vert_{\mathrm{DG}} \leq \frac{C_\mathrm{P}}{K_\alpha} \|\delta_\tau \hat{c}_h^n\|.  \label{eq:dg_bd_J}
\end{equation}
Hence, the terms $W_1$ and $W_4$ are bounded with Cauchy--Schwarz's inequality, Taylor's expansions, Poincar\'e's inequality, and the optimal bounds on $\|\eta_\mu^n\| $ and on $\|\partial_t \eta_c^n\|$. We have
\begin{multline}
|W_1| + |W_4| \leq C \|-\delta_\tau c^n + (\partial_t c)^n - \delta_\tau \eta_c^n\| |\mathcal{J}(\delta_\tau \hat{c}_h^n)|_{\DG} 
+ C \|\eta_\mu^n\| \|\delta_\tau \hat{c}_h^n\|  \\ \leq  \frac{1}{8} \|\delta_\tau \hat{c}_h^n\|^2 + C\int_{t^{n-1}}^{t^n} \Big(\tau \| \partial_{tt} c\|^2 + \tau^{-1} h^{2k+2}|\partial_t c|^2_{H^{k+1}(\Omega)}\Big)   + Ch^{2k + 2 } \|\mu\|^2_{L^\infty(0,T;H^{k+1}(\Omega))}. \nonumber 
\end{multline}
To handle $W_2+W_3$, we use Lemma \ref{lemma:bound_aadv_terms}, with  $\epsilon = K_\alpha^2/(40 C_\mathrm{P}^2)$, and \eqref{eq:dg_bd_J}. 
\begin{align}
|W_2 + W_3| \leq \frac{1}{8} \|\delta_\tau \hat{c}_h^n\|^2 + C \tau \int_{t^{n-1}}^{t^n} \Big(\|\partial_t c\|^2_{H^1(\Omega)} + \|\partial_t \bm{u}\|^2_{H^1(\Omega)}\Big) \nonumber \\ 
+\, Ch^{2k+2} + C \left( \|\xi_c^{n-1}\|^2  + \|\erru^{n-1}\|^2 \right).\nonumber
\end{align}
We bound $W_5$ and $W_6$ by applying Cauchy--Schwarz's inequality and \Cref{lemma:L_infty_bound}. We have
\begin{multline}
|W_5| + |W_6| \leq C\left(\|c_h^n - c^n\|\|(c^n)^2 + c^nc_h^n + (c_h^n)^2\|_{L^\infty(\Omega)}  + \|c^n - c_h^{n-1}\| \right) \|\delta_\tau \hat{c}_h^n\|  \\ 
 \leq \frac{1}{8}\|\delta_\tau \hat{c}_h^n\|^2 + C\Big(\|\xi_c^n\|^2 + \|\xi_c^{n-1}\|^2 + h^{2k+2}\|c\|_{L^{\infty}(0,T;H^{k+1}(\Omega))}^2 + \tau \int_{t^{n-1}}^{t^n} \|\partial_t c\|^2 \Big). \nonumber
\end{multline}
We substitute the above bounds into \eqref{eq:l2bound_xic_semi}, multiply by $2\tau$, sum from $n=1$ to $n=m$, and conclude the proof by noticing $\xi_c^{0}=0$. 
\end{proof}
We now show an $L^2$ bound on $\xi_\mu^n - \overline{\xi_\mu^n}$. 
\begin{lemma}\label{lemma:bound_l2norm_mu} 
There exists a constant $C$ independent of $h$ and $\tau$ such that for all $m \geq 1$, we have 
\begin{equation}
\tau\sum_{n=1}^{m}\|\xi_\mu^n - \overline{\xi_\mu^n}\|^2 
\leq C(\tau^2 + h^{2k+2}) 
+ C\tau\sum_{n=0}^{m} \|\xi_c^n\|^2 + C\tau\sum_{n=0}^{m-1} \|\bm{\xi}_{\bm{u}}^{n}\|^2.
\end{equation}
\end{lemma}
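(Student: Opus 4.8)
The plan is to bound $\|\xi_\mu^n - \overline{\xi_\mu^n}\|$ by a duality argument built on the operator $\mathcal{J}$ of \eqref{eq:def_of_J}, in the same spirit as Lemma~\ref{lemma:bound_l2norm_c}. Write $\rho_h^n = \xi_\mu^n - \overline{\xi_\mu^n} \in M_{h0}^k$. Since $\rho_h^n$ has zero average, the definition of $\mathcal{J}$, the symmetry of $\adif$, and the fact that $\adif$ annihilates constants give $\|\rho_h^n\|^2 = (\rho_h^n,\rho_h^n) = \adif(\mathcal{J}(\rho_h^n),\rho_h^n) = \adif(\xi_\mu^n,\mathcal{J}(\rho_h^n))$. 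The purpose of this rewriting is that the first error equation \eqref{eq:first_err_eq} supplies an identity for $\adif(\xi_\mu^n,\chi_h)$; taking $\chi_h = \mathcal{J}(\rho_h^n)$ yields
\begin{align*}
\|\rho_h^n\|^2 = & -(\delta_\tau\xi_c^n,\mathcal{J}(\rho_h^n)) + (-\delta_\tau c^n + (\partial_t c)^n - \delta_\tau\eta_c^n,\mathcal{J}(\rho_h^n)) \\
& - \aadv(c_h^{n-1},\bm{u}_h^{n-1},\mathcal{J}(\rho_h^n)) + \aadv(c^n,\bm{u}^n,\mathcal{J}(\rho_h^n)).
\end{align*}

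Next I would record the two stability facts for $\mathcal{J}(\rho_h^n)$ that tame the right-hand side. Testing the definition of $\mathcal{J}$ with $\mathcal{J}(\rho_h^n)$ and using coercivity \eqref{eq:coercivity_adiff} together with Poincar\'e's inequality \eqref{eq:PoincareMh} gives $|\mathcal{J}(\rho_h^n)|_{\mathrm{DG}} \le (C_\mathrm{P}/K_\alpha)\|\rho_h^n\|$ and hence $\|\mathcal{J}(\rho_h^n)\| \le (C_\mathrm{P}^2/K_\alpha)\|\rho_h^n\|$, exactly as \eqref{eq:dg_bd_J} in the proof of Lemma~\ref{lemma:bound_l2norm_c}. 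For the first term I would use the self-adjointness of $\mathcal{J}$ and its commutation with $\delta_\tau$ (writing $\hat c_h^n = \mathcal{J}(\xi_c^n)$ as before) to obtain $(\delta_\tau\xi_c^n,\mathcal{J}(\rho_h^n)) = (\rho_h^n,\delta_\tau\hat c_h^n)$, which is bounded by $\tfrac18\|\rho_h^n\|^2 + C\|\delta_\tau\hat c_h^n\|^2$. Since $\int_\Omega(-\delta_\tau c^n + (\partial_t c)^n - \delta_\tau\eta_c^n) = 0$ by mass conservation and the definition of $\mathcal{P}_h$, the second term is handled by Cauchy--Schwarz, the bound $\|\mathcal{J}(\rho_h^n)\| \le C\|\rho_h^n\|$, a Taylor expansion of $\delta_\tau c^n - (\partial_t c)^n$, and the approximation property \eqref{eq:elliptic_projection_error} applied to $\delta_\tau\eta_c^n$. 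The two advection terms are controlled by Lemma~\ref{lemma:bound_aadv_terms} with $\chi_h = \mathcal{J}(\rho_h^n)$ and a sufficiently small $\epsilon$; the resulting $\epsilon|\mathcal{J}(\rho_h^n)|_{\mathrm{DG}}^2$ contribution is absorbed into $\|\rho_h^n\|^2$ via the stability estimate above.

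Collecting these bounds and absorbing the $\|\rho_h^n\|^2$ contributions on the left produces, for each $n$,
\begin{align*}
\|\rho_h^n\|^2 \le &\, C\|\delta_\tau\hat c_h^n\|^2 + C\int_{t^{n-1}}^{t^n}\big(\tau\|\partial_{tt}c\|^2 + \tau^{-1}h^{2k+2}|\partial_t c|^2_{H^{k+1}(\Omega)}\big) \\
& + C\tau\int_{t^{n-1}}^{t^n}\big(\|\partial_t c\|^2_{H^1(\Omega)} + \|\partial_t\bm{u}\|^2_{H^1(\Omega)}\big) + Ch^{2k+2} + C\big(\|\xi_c^{n-1}\|^2 + \|\erru^{n-1}\|^2\big).
\end{align*}
I would then multiply by $\tau$ and sum from $n=1$ to $m$: the time-integral terms accumulate into the global contributions $C(\tau^2 + h^{2k+2})$, and the crucial term $\tau\sum_n\|\delta_\tau\hat c_h^n\|^2$ is precisely what Lemma~\ref{lemma:bound_l2norm_c} bounds by $C(\tau^2 + h^{2k+2}) + C\tau\sum_n\|\xi_c^n\|^2 + C\tau\sum_n\|\erru^n\|^2$. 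Recalling $\xi_c^0 = 0$, this yields the stated estimate.

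The main obstacle is the first term $(\delta_\tau\xi_c^n,\mathcal{J}(\rho_h^n))$: a naive estimate would produce $\|\delta_\tau\xi_c^n\|$, which is not controlled in $L^2$. The key is to exploit the self-adjointness of $\mathcal{J}$ and its commutation with $\delta_\tau$ to move $\mathcal{J}$ onto $\delta_\tau\xi_c^n$, turning the term into $(\rho_h^n,\delta_\tau\hat c_h^n)$, so that after summation it is exactly the quantity estimated in Lemma~\ref{lemma:bound_l2norm_c}. A secondary technical point is verifying that every source tested against $\mathcal{J}(\rho_h^n)$ has zero average, so that Poincar\'e's inequality applies cleanly and no uncontrolled $\overline{\xi_\mu^n}$ contribution survives.
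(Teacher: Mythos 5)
Your proof is correct and follows the same duality framework as the paper: test the first error equation \eqref{eq:first_err_eq} with $\mathcal{J}(\xi_\mu^n - \overline{\xi_\mu^n})$, convert the diffusion term to $\|\xi_\mu^n - \overline{\xi_\mu^n}\|^2$ via the definition of $\mathcal{J}$, control the advection terms with Lemma~\ref{lemma:bound_aadv_terms}, and close by invoking Lemma~\ref{lemma:bound_l2norm_c} for $\tau\sum_n\|\delta_\tau \mathcal{J}(\xi_c^n)\|^2$. The one place you deviate is the treatment of $(\delta_\tau \xi_c^n, \mathcal{J}(\xi_\mu^n - \overline{\xi_\mu^n}))$: the paper cancels it exactly by also testing the second error equation \eqref{eq:second_err_eq} with $\mathcal{J}(\delta_\tau \xi_c^n)$ (which produces the extra telescoping term $\frac{\kappa}{2\tau}(\|\xi_c^n\|^2 - \|\xi_c^{n-1}\|^2)$ on the left but forces it to also bound the potential and $\eta_\mu^n$ terms $Z_4$--$Z_6$), whereas you estimate it directly after moving $\mathcal{J}$ across by its $\adif$-self-adjointness and absorb $\tfrac18\|\xi_\mu^n-\overline{\xi_\mu^n}\|^2$. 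Both routes rest on the same algebraic identity and both terminate in Lemma~\ref{lemma:bound_l2norm_c}; yours is marginally shorter since the second error equation never enters, at the cost of one extra Young inequality and of giving up the (unneeded) telescoping control of $\|\xi_c^m\|^2$.
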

\begin{proof}
Observe that  (with \eqref{eq:relation_J_nonzero} for the second equation):
\begin{align*}
\adif(\xi_\mu^n, \hat{\mu}_h^n) = \adif(\xi_\mu^n - \overline{\xi_\mu^n}, \hat{\mu}_h^n) = \|\xi_\mu^n - \overline{\xi_\mu^n}\|^2, \\
(\xi_\mu^n,  \mathcal{J}(\delta_\tau \xi_c^n)) = \adif(\mathcal{J}(\xi_\mu^n - \overline{\xi_\mu^n}), \mathcal{J}(\delta_\tau \xi_c^n)) = (\delta_\tau \xi_c^n, \mathcal{J}(\xi_\mu^n - \overline{\xi_\mu^n})).
\end{align*}
Choose $\chi_h =  \mathcal{J}(\xi_\mu^n - \overline{\xi_\mu^n})$ in \eqref{eq:first_err_eq} 
and $\varphi_h = \mathcal{J}(\delta_\tau \xi_c^n) = \delta_\tau \hat{c}_h^n$ in \eqref{eq:second_err_eq}. Adding the resulting
equations, using the identities above and \eqref{eq:hatcp3}, we obtain
\begin{multline}
\|\xi_\mu^n - \overline{\xi_\mu^n}\|^2 +  \frac{\kappa}{2\tau}\left(\|\xi_c^n\|^2 - \|\xi_c^{n-1}\|^2 + \|\xi_c^n - \xi_c^{n-1}\|^2\right) =  \\ (-\delta_\tau c^n + (\partial_t c)^n - \delta_\tau \eta_c^n,\mathcal{J} (\xi_\mu^n-\overline{\xi_\mu^n})) - \aadv(c_h^{n-1}\!\!, \bm{u}_h^{n-1}\!\!, \mathcal{J}(\xi_\mu^n-\overline{\xi_\mu^n})) + \aadv(c^{n}\!, \bm{u}^n\!, \mathcal{J}(\xi_\mu^n-\overline{\xi_\mu^n})) \\ + (\eta_\mu^n, \delta_\tau \hat{c}_h^n)  + (\Phi_{+}\,\!'(c^n) - \Phi_{+}\,\!'(c_h^n),\delta_\tau \hat{c}_h^n) + (\Phi_{-}\,\!'(c^{n}) - \Phi_{-}\,\!'(c_h^{n-1}), \delta_\tau \hat{c}_h^n) = \sum_{i=1}^{6} Z_i.  
\end{multline}
Comparing with \eqref{eq:l2bound_xic_semi}, we remark that 
$Z_i = W_i$ for $i\in\{4,5,6\}$ and we will use the same bounds in the proof of \Cref{lemma:bound_l2norm_c}.
For the other terms,  $Z_1, Z_2, Z_3$, we use similar bounds as for the terms $W_1, W_2, W_3$ where $\delta_\tau \hat{c}_h^n$ is replaced by $\xi_\mu^n - \overline{\xi_\mu^n}$. Thus, we obtain
\begin{multline}
\frac{1}{4}\|\xi_\mu^n - \overline{\xi_\mu^n}\|^2 +  \frac{\kappa}{2\tau}\left(\|\xi_c^n\|^2 - \|\xi_c^{n-1}\|^2 + \|\xi_c^n - \xi_c^{n-1}\|^2\right) \\ 
\leq Ch^{2k + 2} + C(\|\delta_\tau \hat{c}_h^n\|^2 +\|\xi_c^n\|^2 + \|\xi_c^{n-1}\|^2 + \|\bm{\xi}_{\bm{u}}^{n-1}\|^2) \\ 
+ C\tau \int_{t^{n-1}}^{t^n}\Big( \|\partial_t c\|_{H^{1}(\Omega)}^2 + \| \partial_{tt} c\|^2 +\|\partial_t \bm{u}\|_{H^{1}(\Omega)}^2 \Big) 
+ C \int_{t^{n-1}}^{t^n}\tau^{-1} h^{2k+2}|\partial_t c|^2_{H^{k+1}(\Omega)}.
\end{multline} 
Multiplying the above bound by $4\tau$, summing from $n = 1$ to $n = m$, and applying the results of Lemma~\ref{lemma:bound_l2norm_c}, we conclude the proof. 
\end{proof}
\subsubsection{Proof of the improved estimate \eqref{eq:improved_error_estimate_theorem}} \label{sec:proof_improved_err}
We now proceed to obtain bounds on $\|\erru\|$ and $\|\errv\|$
 via constructing a  dual Stokes problem and its dG discretization and we follow the argument
in \cite{masri2021improved}. 
To this end, define the error function 
\begin{alignat*}{2}
\bm{\chi}_{\bm{u}}(t) & = \bm{u}_h^n - \bm{u}(t), \quad &&\forall t^{n-1} < t\leq t^n, \quad  \forall n\geq 1, \quad
\bm{\chi}_{\bm{u}} (0) = \bm{u}_h^0 - \bm{u}^0.  
\end{alignat*}
Further, for $t\geq 0$, define $(\bm{U}(t),P(t)) \in  H^1_0(\Omega)^d \times L^2_0(\Omega)$ 
the weak solution  of the dual Stokes problem:  
 \begin{alignat}{2}
 -\laplace{\bm{U}(t)}  + \grad{P(t)} &= \bm{\chi}_{\bm{u}}(t) && \quad \mathrm{in}\,\, \Omega,  \label{eq:aux_pb_1}\\ 
 \div{\bm{U}(t)} & = 0 && \quad \mathrm{in } \,\, \Omega, \label{eq:aux_pb_2} \\
 \bm{U}(t) & = 0 && \quad \mathrm{on }\,\, \partial \Omega, \label{eq:aux_pb_3}
 \end{alignat} 
and $(\bm{U}_h(t), P_h(t)) \in \mathbf{X}^{k}_h \times M^{k-1}_{h0}$ its dG solution:
\begin{alignat}{2}
a_{\mathcal{D}}(\vec{U}_h(t), \vec{\theta}_h) - b_{\mathcal{P}}(\bm{\theta}_h, P_h(t)) &=(\vec{\chi}_{\vec{u}}(t), \vec{\theta}_h) && \quad \forall \vec{\theta}_h \in \mathbf{X}^{k}_h, \label{eq:aux_1_dg} \\ 
b_{\mathcal{P}}(\bm{U}_h(t), q_h) &= 0 && \quad \forall q_h \in M^{k-1}_{h0}. \label{eq:aux_2_dg}
\end{alignat}
Existence and uniqueness of $(\bm{U}_h(t), P_h(t))$ for all $t>0$ is a consequence of the coercivity of $a_{\mathcal{D}}$ and the inf-sup condition for the pair of spaces $(\mathbf{X}^{k}_h,M^{k-1}_{h0})$ \cite{riviere2008}. 
We take $\bm{\theta}_h = \bm{U}_h^n = \bm{U}_h(t^n)$ in \eqref{eq:third_err_eq}. We begin with handling the last two terms. Namely, we write 
\begin{align*}
    & b_{\mathcal{I}}(c_h^{n-1},\mu_h^n, \bm{U}_h^n) - b_{\mathcal{I}}(c^n, \mu^n, \bm{U}_h^n) = \aadv(c_h^{n-1}, \bm{U}_h^n, \xi_\mu^n) + \aadv(c_h^{n-1}, \bm{U}_h^n, \eta_\mu^n) \nonumber \\ 
    & - \aadv(c^n - c^{n-1},\bm{U}_h^n, \mu^n) + \aadv(\xi_c^{n-1}, \bm{U}_h^n, \mu^n) + \aadv(\eta_c^{n-1}, \bm{U}_h^n, \mu^n)  = \sum_{i=1}^5 K_i.
\end{align*}
We rewrite the term $K_1$ by applying 
Green's theorem. 
\begin{align*}
K_1 = &\sum_{E \in \mathcal{T}_h} \int_{E} \left(\grad c_h^{n-1} \cdot \bm{U}_h^n + c_h^{n-1} \div \bm{U}_h^n\right) (\xi_\mu^n - \overline{\xi_\mu^n}) \\&  - \sum_{e \in \Gamma_h \cup \partial \Omega} \int_{e} \{c_h^{n-1} (\xi_\mu^n - \overline{\xi_\mu^n})\}[\bm{U}_h^n \cdot \bm{n}_e]  - \sum_{e\in\Gamma_h} \int_{e} [c_h^{n-1}]\{\bm{U}_h^n \cdot \bm{n}_e\} \{\xi_\mu^n - \overline{\xi_\mu^n}\}. 
\end{align*}
With Holder's and Poincar\'e's inequalities and trace estimates, we have 
\begin{align*}
|K_1| \leq\, &C \left(\|\grad_h c_h^{n-1}\|_{L^3(\Omega)} + \|c_h^{n-1}\|_{L^{\infty}(\Omega)} \right)\|\bm{U}_h^n\|_{\DG}\|\xi_\mu^n - \overline{\xi_\mu^n}\| \\
+\, &C|c_h^{n-1} - c^{n-1}|_{\DG} \|\bm{U}_h^n\|_{L^{\infty}(\Omega)} \|\xi_\mu^n - \overline{\xi_\mu^n}\|.
\end{align*}
Note that with \eqref{eq:error_estimate_theorem} in Theorem~\ref{thm:conv_estimate_1}, inverse estimate \eqref{eq:inverse_estmate_lp}, Poincar\'e's inequality \eqref{eq:poincare_ineq}, and \eqref{eq:cfl_cond}, we obtain  
\begin{multline}
\label{eq:boundonchcn}
|c_h^{n-1} - c^{n-1}|_{\DG} \|\bm{U}_h^n\|_{L^{\infty}(\Omega)} \\ 
\leq \Big(\frac{C_\mathrm{err}}{\kappa K_\alpha}\Big)^{1/2} (\tau + h^{2k})^{1/2} h^{-d/6} \|\bm{U}_h^n\|_{L^6(\Omega)} 
\leq C \|\bm{U}_h^n\|_{\DG}.
\end{multline}
We remark that for $h \leq h_0 \leq  \min(1, C_{\mathrm{err}}^{-1}, C_\mathrm{err}^{-1/\delta})$, the constant $C$ in \eqref{eq:boundonchcn} is independent of $C_{\mathrm{err}}$.  

The term $K_2$ is handled similarly. That is, we use the same integration by parts formula. Here, we use the trace estimates for functions in $H^2(\mathcal{T}_h)$ and approximation properties.  We have 
\begin{equation*}
|K_2| \leq Ch^{k+1}|\mu^n|_{H^{k+1}(\Omega)}\|\bm{U}_h^n\|_{\DG}. 
\end{equation*}
From Lemma \ref{lemma:L_infty_bound} and Young's inequality, we have for $\epsilon  > 0$, 
\begin{equation*}
|K_1| + |K_2| \leq \epsilon \mu_{\mathrm{s}}\|\xi_\mu^n - \overline{\xi_\mu^n}\|^2 + C \Big(\frac{T+1}{\epsilon \mu_\mathrm{s}} + 1\Big) \|\bm{U}_h^n\|_{\DG}^2 + C h^{2k+2}|\mu |^2_{L^{\infty}(0,T;H^{k+1}(\Omega))}.  
\end{equation*}
For $K_3$, $K_4$, and $K_5$, the jumps of $\mu^n$ evaluate to zero. 
Thus, with Holder's inequality, we have 
\begin{align} 
& |K_3| + |K_4| + |K_5|  \leq C (\|c^{n} - c^{n-1}\| + \|\xi_c^{n-1}\| + \| \eta_c^{n-1}\| )\|\bm{U}_h^n\|_{\DG} |\mu^n|_{W^{1,3}(\Omega)} \nonumber \\ 
& \leq \epsilon \mu_\mathrm{s} \|\xi_c^{n-1}\|^2 + C\Big(\frac{1}{\epsilon \mu_\mathrm{s}} + 1\Big) \|\bm{U}_h^n\|_{\DG}^2
+ Ch^{2k+2} |c|^2_{L^{\infty}(0,T;H^{k+1}(\Omega))} + C\tau \int_{t^{n-1} }^{t^n} \|\partial_t c\|^2. \nonumber  
\end{align}
We refer to the proof of Theorem~1 in \cite{masri2021improved} to handle all the remaining terms in \eqref{eq:third_err_eq} with $\bm{\theta}_h = \bm{U}_h^n$ since the same arguments can be  used here.  For completeness, most details are given in Appendix~\ref{sec:app3}.
We have
\begin{align}
\label{eq:boundUhpaper}
& K_\mathcal{D} \|\vec{U}_h^m\|^2_{\DG}
+  \mu_\mathrm{s} \tau \sum_{n=1}^m \| \vec{\xi}_{\vec{u}}^n\|^2  \leq 
C(\tau^2+h^{2k+2} + \tau h^2)\\
&+\, C \tau \Big( \frac{1}{\mu_\mathrm{s}} + 1 \Big) \sum_{n=1}^m \|\vec{U}_h^n\|_{\DG}^2
+ \tau \sum_{n=1}^m \big(b_{\mathcal{I}}(c_h^{n-1},\mu_h^n, \vec{U}^n_h) - b_{\mathcal{I}}(c^n, \mu^n, \vec{U}^n_h)\big). \nonumber
\end{align}
With the bounds on $K_i$'s, we obtain
\begin{align*}
 & K_\mathcal{D} \|\vec{U}_h^m\|^2_{\DG}
+  \mu_\mathrm{s} \tau \sum_{n=1}^m \| \vec{\xi}_{\vec{u}}^n\|^2  \leq 
C (\tau^2+h^{2k+2} + \tau h^2)\\
&+\, C \Big( \frac{T+1}{\epsilon\mu_\mathrm{s}} + 1 \Big) \tau \sum_{n=1}^m \|\vec{U}_h^n\|_{\DG}^2
+\epsilon \mu_s  \tau \sum_{n=1}^m \|\xi_c^{n-1}\|^2
+\epsilon \mu_s  \tau \sum_{n=1}^m \|\xi_\mu^n - \overline{\xi_\mu^n}\|^2.
\end{align*}
We next use Lemma~\ref{lemma:bound_l2norm_mu}  to bound the last bound above.  
We then multiply the bound in Lemma~\ref{lemma:bound_l2norm_c} by $\epsilon \mu_\mathrm{s}$ and add to the resulting
inequality. 
\begin{align*}
 & K_\mathcal{D} \|\vec{U}_h^m\|^2_{\DG} + \epsilon \mu_s \kappa \Vert \xi_c^m\Vert^2 
+  \mu_\mathrm{s} (1-\hat{C} \epsilon) \tau \sum_{n=1}^m \| \vec{\xi}_{\vec{u}}^n\|^2  \leq 
C ((1+\epsilon)(\tau^2+h^{2k+2}) + \tau h^2)\\
&+\, C \Big( \frac{T+1}{\epsilon\mu_\mathrm{s}} + 1 \Big) \tau \sum_{n=1}^m \|\vec{U}_h^n\|_{\DG}^2
+ C \epsilon \mu_s  \tau \sum_{n=0}^m \|\xi_c^{n}\|^2
+ C \tau \epsilon\mu_s \Vert \xi_{\vec{u}}^0\Vert^2.
\end{align*}
We choose $\epsilon = 1/(2\hat{C})$ and note that $\|\bm{\xi}_{\bm{u}}^{0}\|^2 \leq Ch^{2k+2}$.
\begin{align*}
 & K_\mathcal{D} \|\vec{U}_h^m\|^2_{\DG} + \frac{\mu_s\kappa }{2\hat{C}}   \Vert \xi_c^m\Vert^2 
+  \frac{\mu_\mathrm{s}}{2}   \tau \sum_{n=1}^m \| \vec{\xi}_{\vec{u}}^n\|^2  \leq 
C (\tau^2+h^{2k+2} + \tau h^2)\\
&+\, C \Big( \frac{T+1}{\mu_\mathrm{s}} + 1 \Big) \tau \sum_{n=1}^m \|\vec{U}_h^n\|_{\DG}^2
+ C  \mu_s  \tau \sum_{n=0}^m \|\xi_c^{n}\|^2.
\end{align*}
Therefore, choosing $\tau$ small enough, 
and applying Gronwall's inequality, we obtain
\begin{equation}
\mu_\mathrm{s} \tau \sum_{n=1}^m \| \erru^n \|^2 
\leq C (\tau^2 + h^{2k+2} + \tau h^2). \label{eq:bounding_little_l2_erru} 
\end{equation}
To obtain a bound on $\|\errv^n\|$, we use \eqref{eq:erruvint4}, \eqref{eq:bounding_little_l2_erru} and \eqref{eq:error_estimate_theorem}: 
\begin{align}
\mu_\mathrm{s} \tau \sum_{n=1}^m \| \errv^n \|^2 
\leq \mu_\mathrm{s} \tau \sum_{n=1}^m \| \erru^n \|^2 + \mu_\mathrm{s} \tau^3 \sum_{n=1}^{m} |\phi_h^n|_{\DG}^2 
\leq C (\tau^2 + h^{2k+2} + \tau h^2).
\end{align}
Use \eqref{eq:bounding_little_l2_erru} in Lemma~\ref{lemma:bound_l2norm_c} and apply Gronwall's inequality to have
\begin{equation}
\kappa \|\xi_c^m\|^2 
\leq C (\tau^2 + h^{2k+2} + \tau h^2). 
\end{equation}
This bound with Lemma~\ref{lemma:bound_l2norm_mu} and \eqref{eq:bounding_little_l2_erru} yields
\begin{equation}
\mu_\mathrm{s} \tau \sum_{n=1}^m \|\xi_\mu^n - \overline{\xi_\mu^n}\|^2 
\leq C (\tau^2 + h^{2k+2} + \tau h^2). 
\end{equation}
To obtain a bound on $\|\xi_\mu^n\|$, it suffices to derive a bound on $\overline{\xi_\mu^n}$. Let $\varphi_h = 1$ in \eqref{eq:second_err_eq}.  Since the averages of $\eta_\mu^n$ and of $(c^n - c_h^{n-1})$ are zero,  we obtain: 
\begin{align*} 
| \int_\Omega \xi_\mu^n | 
\leq \|(c^n)^2 + c^n c_h^n + (c_h^n)^2\| \|c^n - c_h^n \| 
\leq C (\tau^2 + h^{2k+2} + \tau h^2)^{\frac{1}{2}}. 
\end{align*} 
Finally, we conclude the proof of \eqref{eq:improved_error_estimate_theorem} by using the bounds above, triangle inequality and approximation error bounds.

\section{Numerical Experiments}\label{sec:numerical_experimants}
In this section, our numerical method is first verified via convergence rate tests. Next, the spinodal decomposition simulation shows the proposed algorithm enjoys mass conservation and energy dissipation properties.
For all the numerial results, we choose $\sigma_\chi = 1/12$.

\subsection{Convergence study}\label{sec:num_exp:convergence}
We utilize the manufactured solution method for convergence study on the unit cube $\Omega=(0,1)^3$. The simulation end time is $T=1$. 
For convenience, we select parameters $\mu_\mathrm{s} = 1$ and $\kappa = 1$. The prescribed solution is defined as follows \cite{liu2022pressure}.
\begin{align*}
c(t,x,y,z)   &= \exp{(-t)}\sin{(2\pi x)}\sin{(2\pi y)}\sin{(2\pi z)},\\
v_1(t,x,y,z) &= -\exp{(-t+x)}\sin{(y+z)}-\exp{(-t+z)}\cos{(x+y)},\\
v_2(t,x,y,z) &= -\exp{(-t+y)}\sin{(x+z)}-\exp{(-t+x)}\cos{(y+z)},\\
v_3(t,x,y,z) &= -\exp{(-t+z)}\sin{(x+y)}-\exp{(-t+y)}\cos{(x+z)},\\
p(t,x,y,z)   &= -\exp{(-2t)}\Big(\exp(x+z)\sin{(y+z)}\cos{(x+y)}\\
&\hspace{1.95cm}+\exp(x+y)\sin{(x+z)}\cos{(y+z)}\\
&\hspace{1.95cm}+\exp(y+z)\sin{(x+y)}\cos{(x+z)}\\
&\hspace{1.95cm}+\frac{1}{2}\exp{(2x)} + \frac{1}{2}\exp{(2y)} + \frac{1}{2}\exp{(2z)}
-\bar{p}^0\Big),
\end{align*}
where, $\bar{p}^0=7.63958172715414$, which guarantees zero average pressure over $\Omega$ (up to machine precision). 
Here in above, the order parameter field is taken from \cite{frank2018finite}. The chemical potential is an intermediate variable, which value is derived by the order parameter. The velocity and pressure fields are borrowed from the Beltrami flow \cite{liu2019interior}, which enjoys the property that the nonlinear convection is balanced by the pressure gradient and the velocity is parallel to vorticity.
In addition, the initial conditions and Dirichlet boundary condition for velocity are imposed by the manufactured solutions. For order parameter and chemical potential, we apply Neumann boundary condition.
\par
We obtain spatial rates of convergence by computing the solutions on a sequence of uniformly refined meshes (see the second column of \Cref{tab:space_rate} for $h$). 
We fix $\tau = 1/2^{10}$ for $k=1$ ($\mathbbm{P}1$--$\mathbbm{P}1$--$\mathbbm{P}1$--$\mathbbm{P}0$ scheme); 
we fix $\tau = 1/2^{13}$ for $k=2$ ($\mathbbm{P}2$--$\mathbbm{P}2$--$\mathbbm{P}2$--$\mathbbm{P}1$ scheme); 
and we fix $\tau = 1/2^{15}$ for $k=3$ ($\mathbbm{P}3$--$\mathbbm{P}3$--$\mathbbm{P}3$--$\mathbbm{P}2$ scheme) to guarantee the spatial error dominates. 
We use SIPG and add subscript to distinguish the penalty parameter $\tilde{\sigma}$ in form $a_\mathrm{diff}$, namely in \eqref{eq:fully_dis1}-\eqref{eq:fully_dis2} by $\tilde{\sigma}_\mathrm{CH}$ and in \eqref{eq:fully_dis4} by $\tilde{\sigma}_\mathrm{ellip}$, respectively. Recall the penalty parameter in form $a_\mathcal{D}$ is $\sigma$.
For $\mathbbm{P}1$--$\mathbbm{P}1$--$\mathbbm{P}1$--$\mathbbm{P}0$ scheme, we set $\tilde{\sigma}_\mathrm{CH} = 2$, $\tilde{\sigma}_\mathrm{ellip} = 1$, $\sigma = 8$ on $\Gamma_h$ and $\sigma = 16$ on $\partial\Omega$. 
For $\mathbbm{P}2$--$\mathbbm{P}2$--$\mathbbm{P}2$--$\mathbbm{P}1$ scheme, we set $\tilde{\sigma}_\mathrm{CH} = 4$, $\tilde{\sigma}_\mathrm{ellip} = 2$, $\sigma = 64$ on $\Gamma_h$ and $\sigma = 128$ on $\partial\Omega$. 
For $\mathbbm{P}3$--$\mathbbm{P}3$--$\mathbbm{P}3$--$\mathbbm{P}2$ scheme, we set $\tilde{\sigma}_\mathrm{CH} = 8$, $\tilde{\sigma}_\mathrm{ellip} = 8$, $\sigma = 128$ on $\Gamma_h$ and $\sigma = 256$ on $\partial\Omega$. 
If $\mathtt{err}_{h}$ is the error on a mesh with resolution $h$, then the rate is defined by $\ln(\mathtt{err}_{h}/\mathtt{err}_{h/2})/\ln(2)$. We show the errors and rates in \Cref{tab:space_rate}.  The convergence rates are optimal.
\begin{table}[ht!]
\centering
\begin{tabularx}{\linewidth}{@{~}c@{~}|@{~}c@{~}|C@{\hspace{-0.6666em}}C@{\hspace{-0.6666em}}|C@{\hspace{-0.6666em}}C@{\hspace{-0.6666em}}|C@{\hspace{-0.6666em}}C@{\hspace{-0.6666em}}}
\toprule
$k$ & $h$ & $\|c_h^{\Nst}\!-c(T)\|$ & rate & $\|\bm{u}_h^{\Nst}\!-\bm{u}(T)\|$ & rate & $\|p_h^{\Nst}\!-p(T)\|$ & rate\\
\midrule
$1$ & $1/2^2$  & $6.363$\,E$-2$ & ---     & $2.306$\,E$-2$ & ---     & $1.449$\,E$-1$ & ---      \\
$~$ & $1/2^3$  & $2.599$\,E$-2$ & $1.292$ & $3.342$\,E$-3$ & $2.787$ & $3.417$\,E$-1$ & $-1.238$ \\
$~$ & $1/2^4$  & $8.242$\,E$-3$ & $1.657$ & $7.918$\,E$-4$ & $2.078$ & $1.381$\,E$-1$ & $1.307$  \\
$~$ & $1/2^5$  & $2.241$\,E$-3$ & $1.879$ & $1.988$\,E$-4$ & $1.994$ & $4.082$\,E$-2$ & $1.758$  \\
$~$ & $1/2^6$  & $5.767$\,E$-4$ & $1.958$ & $5.071$\,E$-5$ & $1.971$ & $1.136$\,E$-2$ & $1.845$  \\
\midrule
$2$ & $1/2^1$  & $4.939$\,E$-2$ & ---     & $6.827$\,E$-3$ & ---     & $1.810$\,E$-1$ & ---    \\
$~$ & $1/2^2$  & $2.413$\,E$-2$ & $1.033$ & $1.014$\,E$-3$ & $2.751$ & $3.061$\,E$-1$ & $-0.758$\\
$~$ & $1/2^3$  & $3.112$\,E$-3$ & $2.955$ & $1.327$\,E$-4$ & $2.934$ & $4.210$\,E$-2$ & $2.862$\\
$~$ & $1/2^4$  & $3.285$\,E$-4$ & $3.244$ & $1.512$\,E$-5$ & $3.134$ & $3.377$\,E$-3$ & $3.640$\\
$~$ & $1/2^5$  & $3.741$\,E$-5$ & $3.135$ & $1.638$\,E$-6$ & $3.207$ & $2.972$\,E$-4$ & $3.506$\\
\midrule
$3$ & $1/2^0$  & $1.553$\,E$-1$ & ---     & $5.130$\,E$-3$ & ---     & $9.792$\,E$-1$ & ---    \\
$~$ & $1/2^1$  & $6.856$\,E$-2$ & $1.180$ & $1.035$\,E$-3$ & $2.309$ & $9.538$\,E$-1$ & $0.038$\\
$~$ & $1/2^2$  & $5.764$\,E$-3$ & $3.572$ & $1.761$\,E$-4$ & $2.555$ & $6.414$\,E$-2$ & $3.895$\\
$~$ & $1/2^3$  & $3.279$\,E$-4$ & $4.136$ & $6.186$\,E$-6$ & $4.832$ & $2.365$\,E$-3$ & $4.762$\\
$~$ & $1/2^4$  & $1.989$\,E$-5$ & $4.043$ & $1.829$\,E$-7$ & $5.080$ & $7.923$\,E$-5$ & $4.900$\\
\bottomrule
\end{tabularx}
\caption{Errors and spatial convergence rates of order parameter, velocity, and pressure.}
\label{tab:space_rate}
\end{table}

\subsection{Spinodal decomposition}
Spinodal decomposition serves as a widely used benchmark test for the validation of numerical algorithms on solving the CHNS equations. As a mechanism of phase separation, an initially thermodynamical unstable homogeneous mixture decomposes into two separated phases, which are more thermodynamically favorable. Throughout this process, the mass of the system is conserved and the total energy is dissipated. 
\par
The computational domain, $\Omega = (0,1)^3$, is partitioned uniformly into cubic elements of edge length equal to $10^{-2}$. We select the time step size $\tau=10^{-3}$. The initial order parameter field is generated by sampling numbers from a discrete uniform distribution, namely, $\on{c^0}{E_k} \sim \mathrm{U}\{-1,+1\}$. The initial velocity field is taken to be zero. The polynomial degree is $k=1$. We choose the
viscosity $\mu_\mathrm{s} = 1$ and parameter $\kappa = 10^{-4}$. Following the same notation in Section~\ref{sec:num_exp:convergence}, the penalty parameters are  $\tilde{\sigma}_\mathrm{CH} = 2$, $\tilde{\sigma}_\mathrm{ellip} = 1$, and $\sigma = 8$. We compute $2^{15}$ time steps in total, which is equivalent to the end time $T = 32.768$. \Cref{Fig:spinodal_c_filed} displays 3D snapshots of the order parameter field.  \Cref{Fig:mass_and_energy} shows that 
the mass conservation and energy dissipation properties are satisfied.
\begin{figure}
\begin{tabularx}{\linewidth}{@{}c@{~}c@{~}c@{~}c@{}}
\includegraphics[width=0.25\textwidth]{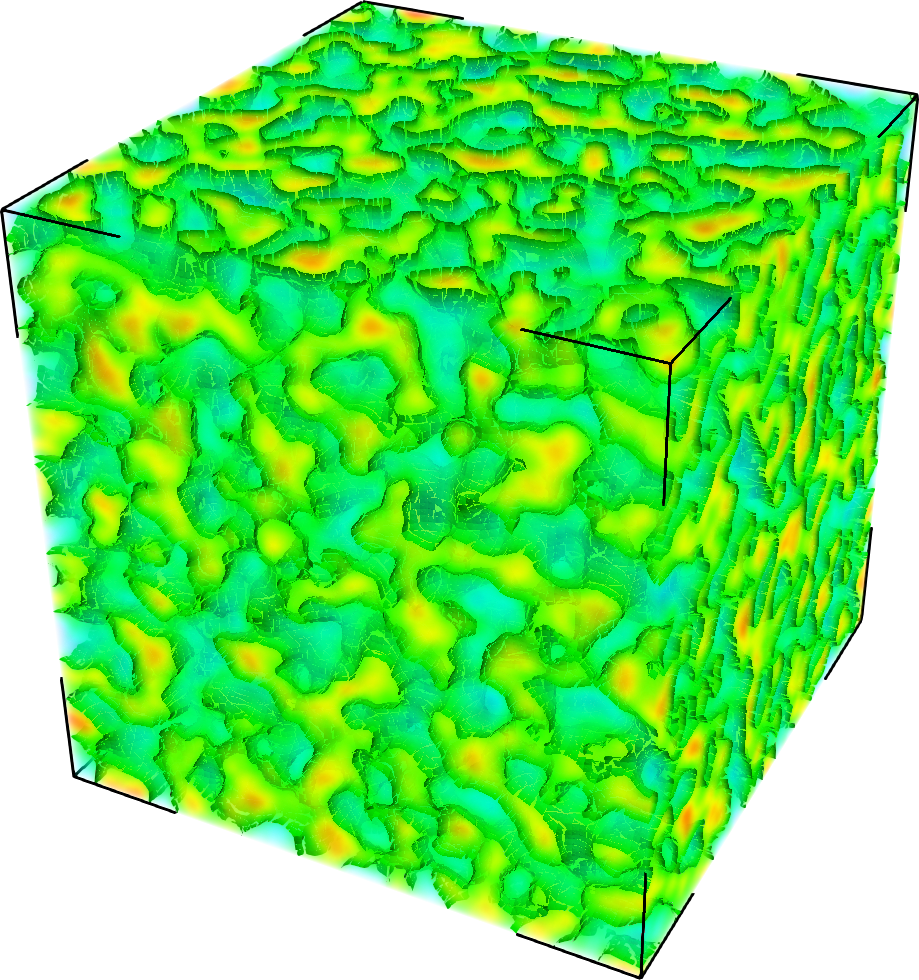} &
\includegraphics[width=0.25\textwidth]{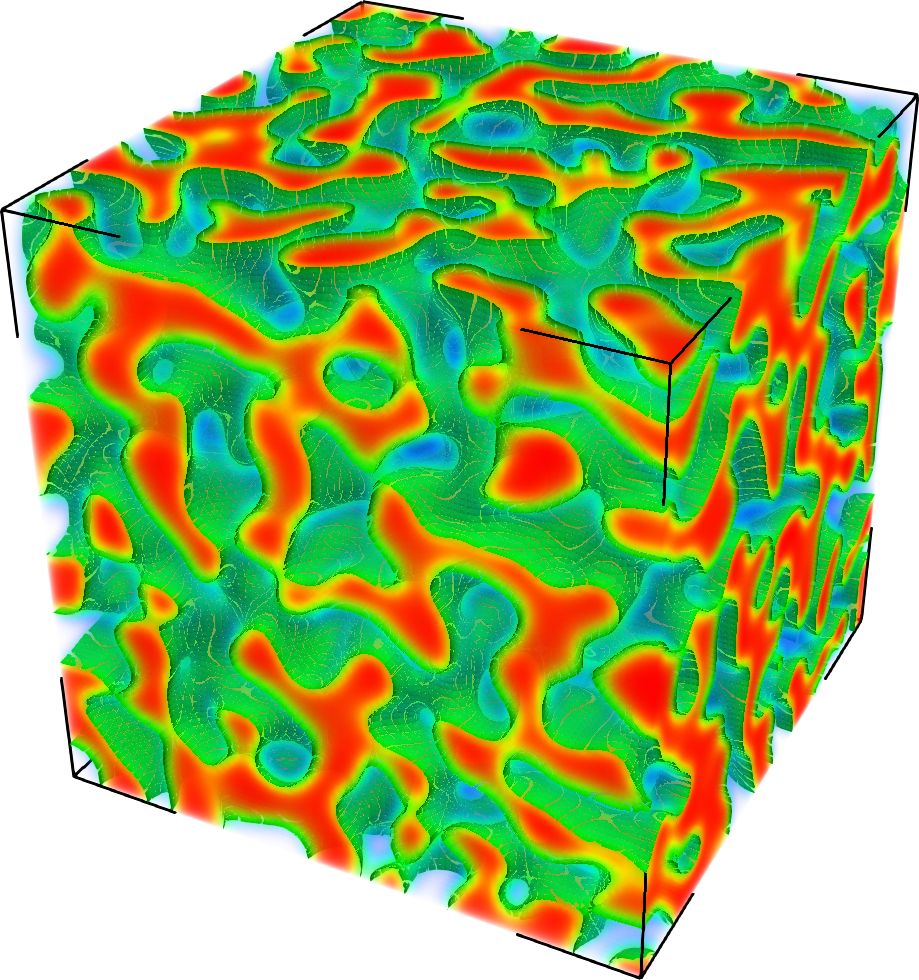} &
\includegraphics[width=0.25\textwidth]{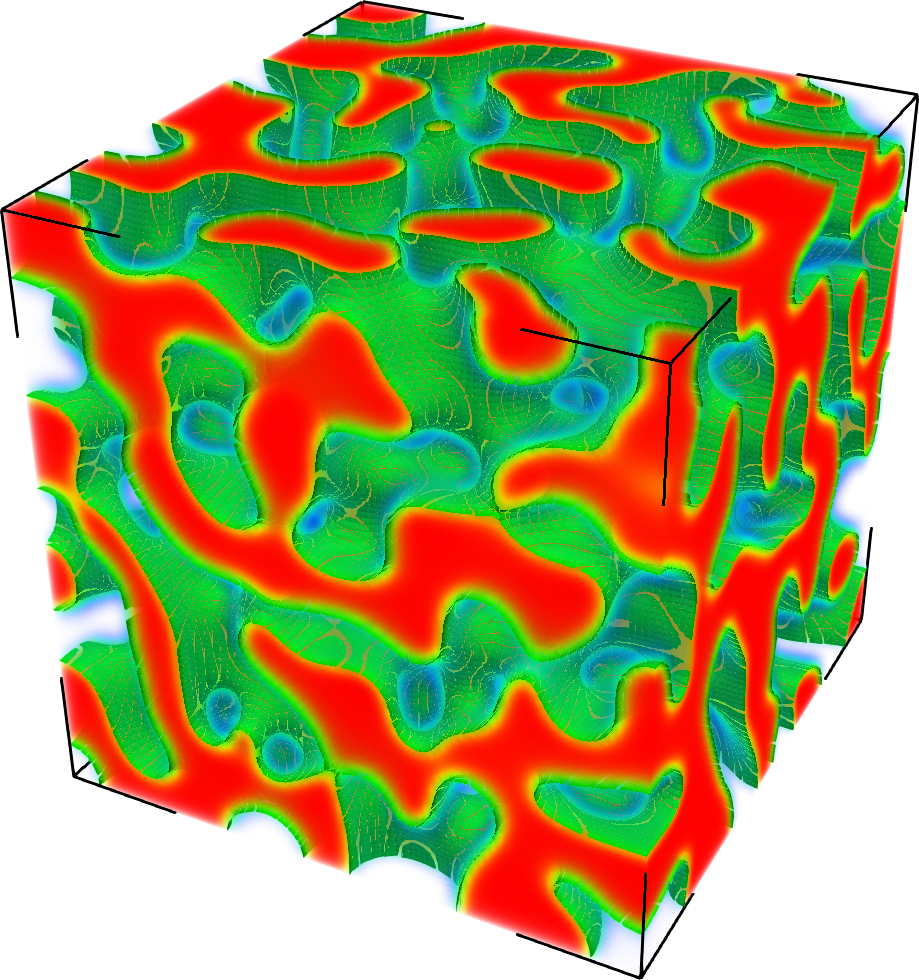} &
\includegraphics[width=0.25\textwidth]{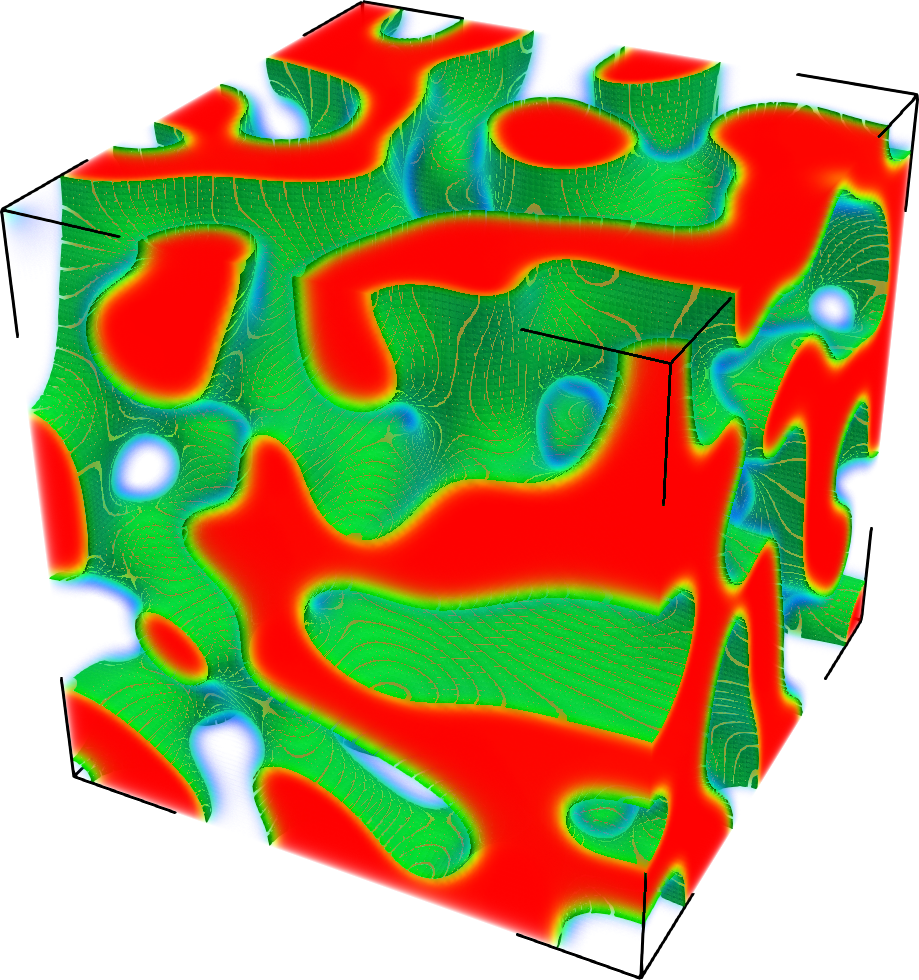} \\
\includegraphics[width=0.25\textwidth]{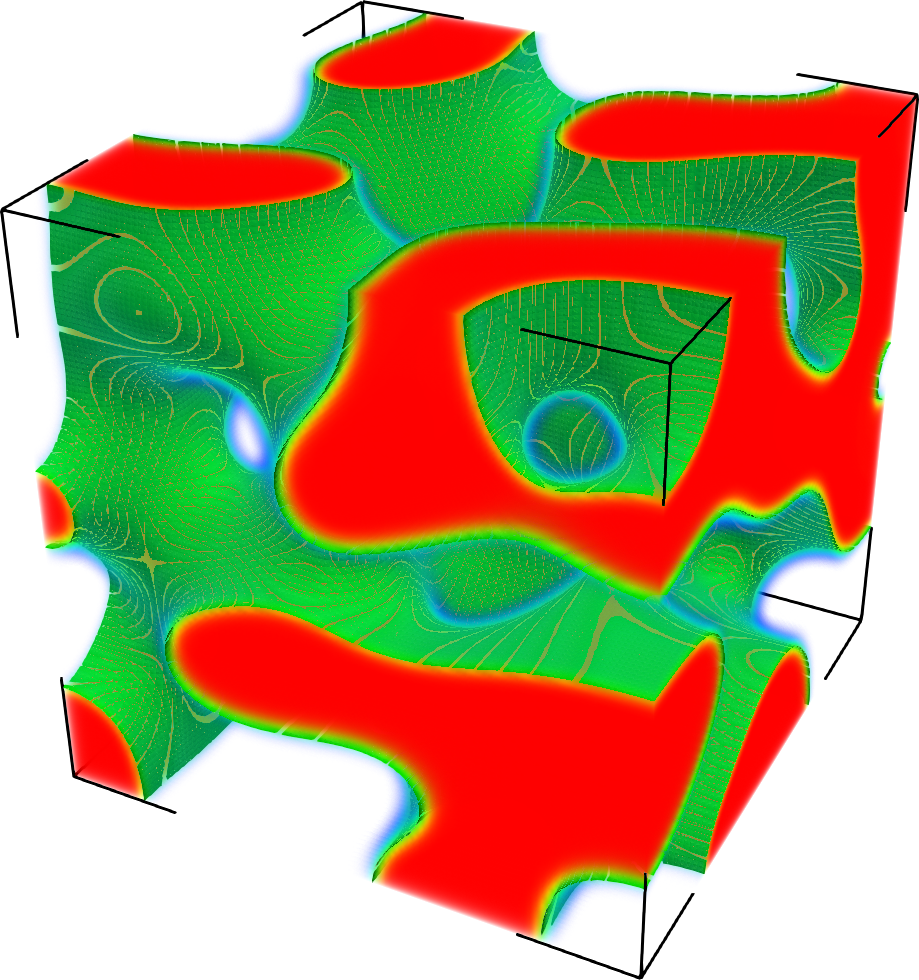} &
\includegraphics[width=0.25\textwidth]{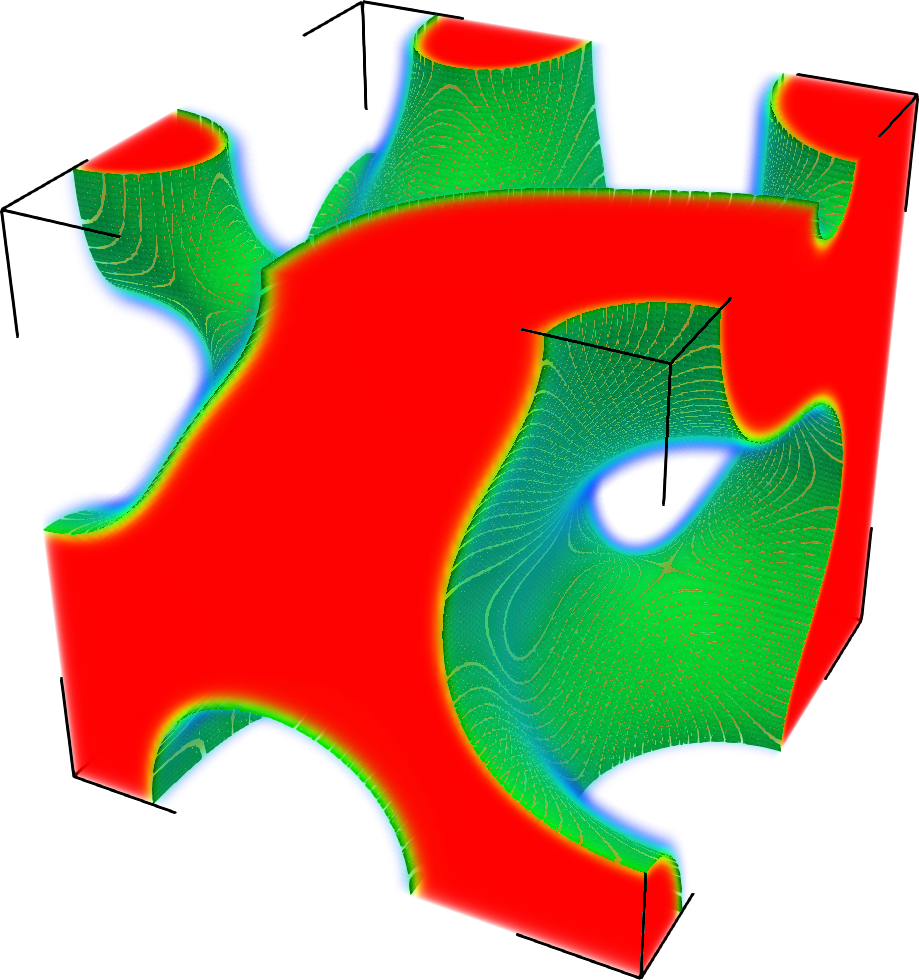} &
\includegraphics[width=0.25\textwidth]{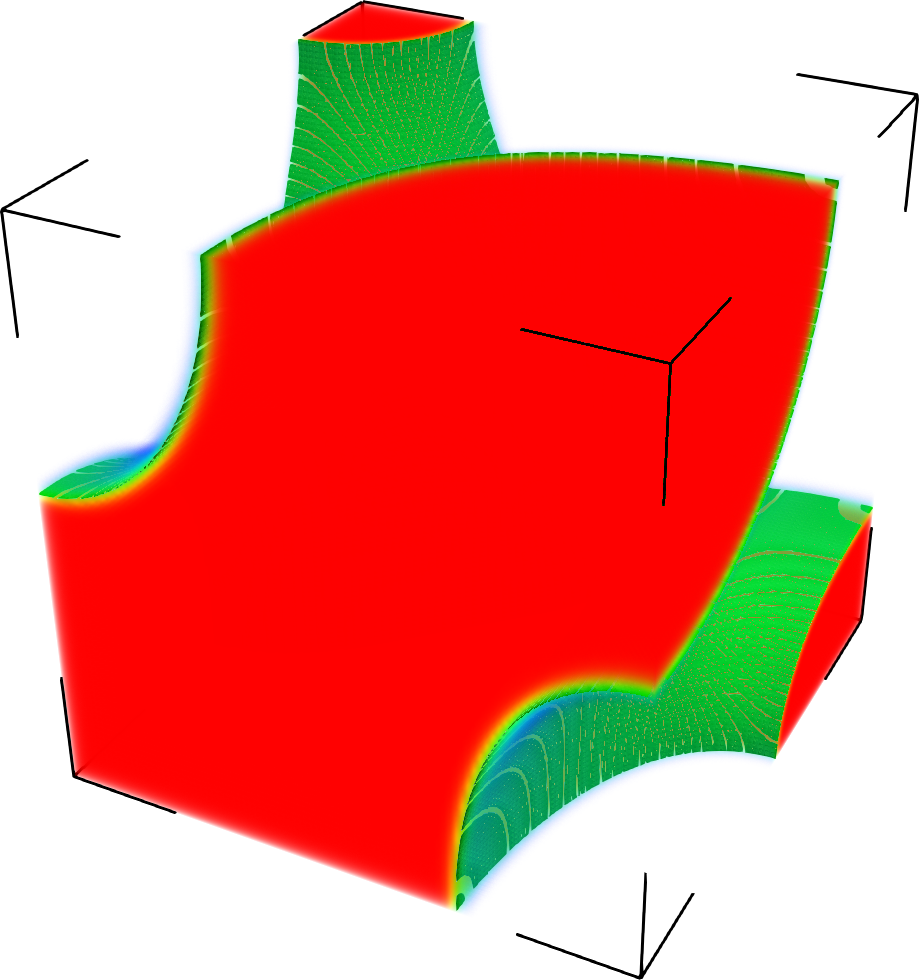} &
\includegraphics[width=0.25\textwidth]{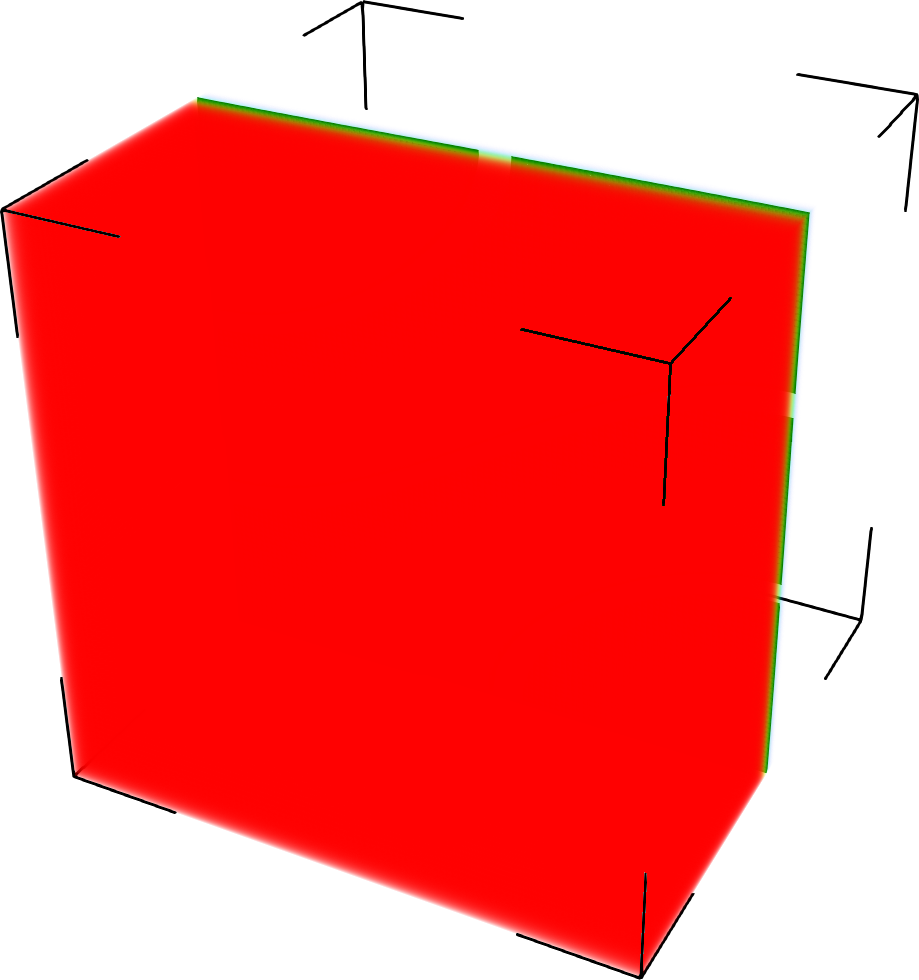} \\
\end{tabularx}
\caption{Selected snapshots for order parameter in spinodal decomposition at time step $2^n$, where $n = 1,3,\cdots,15$. The phase $\mathrm{A}$ (order parameter $c_h^n = +1$) is displayed in red and the phase $\mathrm{B}$ (order parameter $c_h^n = -1$) is in transparent. The green surface corresponds to the center of diffusive interface (order parameter $c_h^n = 0$).} 
\label{Fig:spinodal_c_filed}
\end{figure}
\begin{figure}
\begin{tabularx}{\linewidth}{@{}c@{~}c@{}}
\includegraphics[width=0.5115\textwidth]{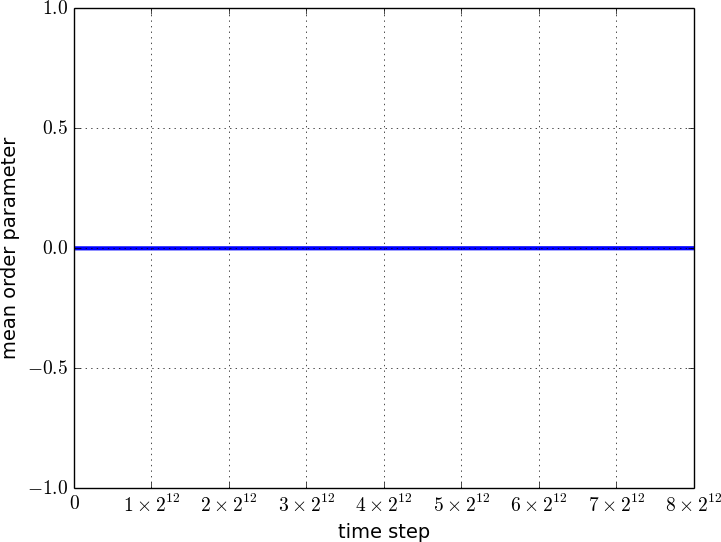} &
\includegraphics[width=0.4885\textwidth]{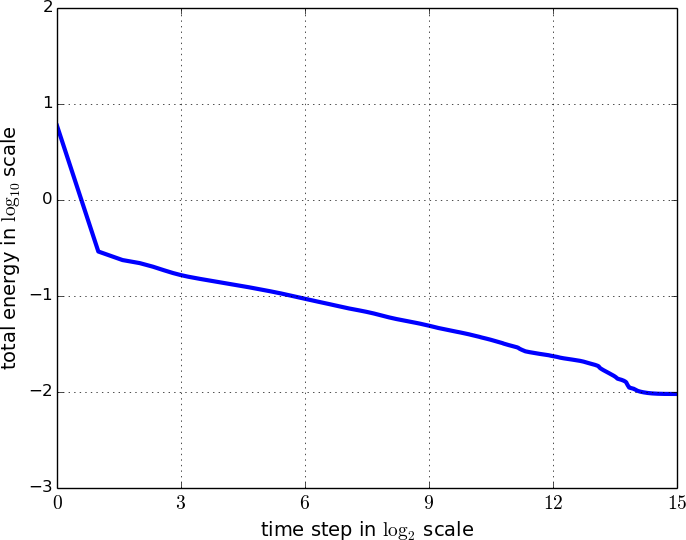} \\
\end{tabularx}
\caption{The total mass of the mixture system (left) and $\log$--$\log$ plot of the total energy (right).}
\label{Fig:mass_and_energy}
\end{figure}

\section{Conclusion}
The main contribution of this paper is the analysis of a fully decoupled splitting dG algorithm for solving the CHNS equations. 
Existence and uniqueness of the discrete solution are shown. 
We prove mass conservation, energy stability, and $L^\infty$ stability of the discrete order parameter.  
Optimal a priori error estimates in both the broken $H^1$ norm and the $L^2$ norm are derived, under a CFL condition. 
The analysis relies on induction and requires multiple steps and intermediate results. The analysis is novel 
and the results are stronger because they  do not assume any regularization of the potential function.
Numerical experiments verify the theoretical results.
Extending the numerical analysis to a higher order in time stepping method  will be the object of future work.

\section*{Acknowledgments}
This work used the Extreme Science and Engineering Discovery Environment (XSEDE), which is supported by grants TG-DMS  190021. Specifically, it used the Bridges system, which is supported by NSF award number
ACI-1548562, at the Pittsburgh Supercomputing Center.

\appendix 
\section{Proof of Lemma~\ref{lemma:bound_aadv_terms}}
\label{sec:app1}
\begin{proof}
Let $\mathcal{A} = \aadv(c^{n}, \vec{u}^n,\chi_h ) - \aadv(c_h^{n-1}, \vec{u}_h^{n-1}, \chi_h )$ and write  
\begin{multline}
\mathcal{A} = \aadv(c^n - c^{n-1}, \vec{u}^n,\chi_h ) + \aadv(c^{n-1}, \vec{u}^{n} - \vec{u}^{n-1},\chi_h) - \aadv(\eta_c^{n-1}, \vec{u}^{n-1},\chi_h ) \\    - \aadv(\xi_c^{n-1}, \vec{u}^{n-1},\chi_h)- \aadv(c_h^{n-1}, \vec{\eta}_{\vec{u}}^{n-1},\chi_h) - \aadv(c_h^{n-1},  \vec{\xi}_{\vec{u}}^{n-1},\chi_h) = \sum_{i=1}^6 \mathcal{A}_i. \nonumber
\end{multline}
We begin with bounding $\mathcal{A}_1$. 
With Holder's inequality, trace estimate \eqref{eq:trace_estimate_continuous}, Young's inequality and a Taylor expansion at $t^{n-1}$,  we obtain
\begin{align}
|\mathcal{A}_1| &\leq C \|\vec{u}^n\|_{L^{\infty}(\Omega)}\vert\chi_h \vert_{\mathrm{DG}} (\|c^n - c^{n-1}\|+ h\| \grad{(c^n - c^{n-1})}\|)  \\
& \leq \frac{C}{\epsilon} \tau \|\vec{u}\|_{L^{\infty}(0,T;H^{2}(\Omega))}^2 \int_{t^{n-1}}^{t^{n}} \|\partial_t c\|_{H^1(\Omega)}^2 + \epsilon \vert\chi_h\vert_{\mathrm{DG}}^2. \nonumber
\end{align}
For $\mathcal{A}_2$, similar arguments yield
\begin{align}
|\mathcal{A}_2| &\leq C\|c^{n-1}\|_{L^{\infty}(\Omega)}\vert\chi_h\vert_{\mathrm{DG}}(\|\vec{u}^n - \vec{u}^{n-1}\| + h \| \grad{(\vec{u}^n - \vec{u}^{n-1})}\|)  \\
& \leq  \frac{C}{\epsilon} \tau \|c\|^2_{L^{\infty}(0,T;H^2(\Omega))} \int_{t^{n-1}}^{t^n} \|\partial_t \vec{u}\|^2_{H^1(\Omega)} +  \epsilon  \vert\chi_h \vert_{\mathrm{DG}}^2.\nonumber
\end{align}
For $\mathcal{A}_3$, we utilize the convexity of the computational domain. Namely, using \eqref{eq:elliptic_projection_error}, 
we have
\begin{align}
|\mathcal{A}_3| 
&\leq C\|\vec{u}^{n-1}\|_{L^{\infty}(\Omega)} \vert\chi_h\vert_{\mathrm{DG}}(\|\eta_c^{n-1}\| + h \| \grad_h{\eta_c^{n-1}}\|)\\
&\leq \frac{C}{\epsilon} h^{2k+2} \|\vec{u}\|^2_{L^{\infty}(0,T;H^2(\Omega))}\|c\|^2_{L^{\infty}(0,T;H^2(\Omega))} + \epsilon \vert \chi_h \vert_{\mathrm{DG}}^2. \nonumber
\end{align}
For $\mathcal{A}_4$, with Holder's inequality, trace estimate \eqref{eq:trace_estimate_discrete}, and Young's inequality, we obtain the following bound.
\begin{align}
|\mathcal{A}_4| 
&\leq C\|\xi_c^{n-1}\| \|\vec{u}^{n-1}\|_{L^{\infty}(\Omega)} \vert \chi_h \vert_{\mathrm{DG}}  
\leq \frac{C}{\epsilon} \Vert \xi_c^{n-1}\Vert^2 \|\vec{u}\|_{L^{\infty}(0,T;H^{2}(\Omega))}^2 + \epsilon  \vert\chi_h \vert_{\mathrm{DG}}^2. 
\end{align}
For $\mathcal{A}_5$ and $\mathcal{A}_6$, we use trace inequalities \eqref{eq:trace_estimate_continuous} and \eqref{eq:trace_estimate_discrete} respectively. We obtain
\begin{align*}
|\mathcal{A}_5| + |\mathcal{A}_6| \leq C \|c_h^{n-1}\|_{L^{\infty}(\Omega)} (\|\vec{\eta}_{\vec{u}}^{n-1}\| + h \|\grad_h \vec{\eta}_{\vec{u}}^{n-1}\|+ \|\erru^{n-1}\|)\vert\chi_h\vert_{\mathrm{DG}}\\  \leq \frac{C}{\epsilon} \|c_h^{n-1}\|_{L^\infty(\Omega)}^2( h^{2k+2} \|\vec{u}\|_{L^{\infty}(0,T;H^{k+1}(\Omega))}^2 +
\|\erru^{n-1}\|^2) + \epsilon  \vert \chi_h \vert_{\mathrm{DG}}^2.
\end{align*}
Combining the bounds on $\mathcal{A}_i$ for $i = 1,\ldots, 6$ yields the result.
\end{proof}

\section{Proof of bound \eqref{eq:conv_1add2}}\label{sec:app:conv_1add2}
From Lemma~6.3 in \cite{inspaper1}, 
for all $q_h \in M_h^{k-1}$ and $n \geq 1$, the following equalities holds: 
\begin{align}
    b_{\mathcal{P}}(\erru^n, q_h) =\, &b_{\mathcal{P}}(\errv^n, q_h) + \tau\adif(\phi_h^n,q_h) \label{eq:div_error}\\ &-\tau \sum_{e \in \Gamma_h} \frac{\tilde{\sigma}}{h} \int_e [\phi_h^n][q_h] + \tau(\vec{G}_h([\phi_h^n]), \vec{G}_h([q_h])),\nonumber \\
    b_{\mathcal{P}}(\erru^n, q_h) =\, &-\tau \sum_{e \in \Gamma_h} \frac{\tilde{\sigma}}{h} \int_e [\phi_h^n][q_h] + \tau(\vec{G}_h([\phi_h^n]), \vec{G}_h([q_h])). \label{eq:div_error_2}
\end{align}
In addition, for $n \geq 1$ we have \cite{inspaper1}
\begin{align}
 \|\errv^n -\erru^{n-1}\|^2 =  & \|\erru^{n} - \erru^{n-1} \|^2 + \tau^2(\| \gradh \phi_h^n \|^2 + \|\vec{G}_h([\phi_h^n])\|^2) + \tau^2(A^n_1 -A^n_2) \label{eq:error_diff} \\   
&+  \tau^2\Big( \sum_{e\in\Gamma_h} \frac{\tilde{\sigma}}{h}\|[\phi_h^n - \phi_h^{n-1}]\|^2_{L^2(e)} - \|\vec{G}_h([\phi_h^n - \phi_h^{n-1}])\|^2\Big)  \nonumber\\
&-2\tau^2(\gradh \phi_h^n, \vec{G}_h([\phi_h^n]))  + 2\delta_{n,1} \tau b_{\mathcal{P}}(\vec{\xi}_{\vec{u}}^0, \phi_h^1).  \nonumber
\end{align}
Using \eqref{eq:lift_prop_g} and assuming $\tilde{\sigma} > 2\tilde{M}_{k}^2$, we have 
\begin{align*}
 \|\vec{G}_h([\phi_h^n - \phi_h^{n-1}])\|^2 & \leq  \sum_{e\in \Gamma_h} \frac{\tilde{M}_{k}^2}{h} \| [\phi_h^{n} - \phi_h^{n-1}]\|^2_{L^2(e)} \leq  \frac{1}{2}\sum_{e\in \Gamma_h} \frac{\tilde{\sigma}}{h} \|[\phi_h^{n} - \phi_h^{n-1}]\|^2_{L^2(e)}. 
\end{align*}
Similarly, with Cauchy--Schwarz's inequality, Young's inequality, \eqref{eq:lift_prop_g}, and the assumption that $\tilde{\sigma} \geq 4\tilde{M}_{k}^2$, we obtain 
\begin{align*}
|(\grad \phi_h^n , \vec{G}_h([\phi_h^n ]))|  
& \leq \frac{1}{4}\|\grad \phi_h^n\|^2 + \| \vec{G}_h([\phi_h^n])\|^2 
\leq \frac{1}{4}\|\grad \phi_h^n\|^2 + \sum_{e\in\Gamma_h} \frac{\tilde{M}_{k}^2}{h} \|[\phi_h^n] \|^2_{L^2(e)}\nonumber \\ 
& \leq  \frac{1}{4}\|\grad \phi_h^n\|^2 + \frac14 \sum_{e\in\Gamma_h} \frac{\tilde{\sigma}}{h} \|[\phi_h^n]\|^2_{L^2(e)}. 
\end{align*}
Using the above bounds in \eqref{eq:error_diff}, we obtain: 
\begin{multline}
\frac{1}{2} \|\erru^{n} - \erru^{n-1} \|^2 + \frac{\tau^2}{4} \| \grad \phi_h^n \|^2 + \frac{\tau^2}{2}\|\vec{G}_h([\phi_h^n])\|^2 \\
+ \frac{ \tau^2}{2} (A^n_1 - A^n_2) + \frac{\tau^2}{4} \sum_{e\in\Gamma_h} \frac{\tilde{\sigma}}{h}\|[\phi_h^n - \phi_h^{n-1}]\|^2_{L^2(e)} \leq \frac{1}{2}\|\errv^n  -\erru^{n-1}\|^2 \\
+ \frac{\tau^2}{4} \sum_{e\in\Gamma_h} \frac{\tilde{\sigma}}{h} \|[\phi_h^n]\|^2_{L^2(e)} + \delta_{n,1} \tau \abs{b_{\mathcal{P}}(\vec{\xi}_{\vec{u}}^0, \phi_h^1)}. \label{eq:conv_1} 
\end{multline}  
Inserting $\Pi_h \vec{u}^n$ in \eqref{eq:fully_dis6} yields: 
\begin{equation}
(\erru^n, \vec{\theta}_h) = (\errv^n, \vec{\theta}_h) + \tau b_{\mathcal{P}}(\vec{\theta}_h, \phi_h^n), \quad \forall \vec{\theta}_h \in \mathbf{X}_h^{k}. \label{eq:fourth_err_eq}
\end{equation}
Let $\vec{\theta}_h = \erru^n$ in \eqref{eq:fourth_err_eq} and use \eqref{eq:div_error_2}, we obtain 
\begin{equation}
\frac12 \left( \|\erru^n\|^2 - \|\errv^n\|^2 \right) + \frac12 \|\erru^n - \errv^n \|^2 + \tau^2 \sum_{e\in\Gamma_h} \frac{\tilde{\sigma}}{h} \|[\phi_h^n] \|_{L^2(e)}^2 = \tau^2 \|\vec{G}_h([\phi_h^n]) \|^2. \label{eq:conv_2_step1}
\end{equation}
We let $\vec{\theta}_h = \erru^n - \errv^n$ in \eqref{eq:fourth_err_eq} and use \eqref{eq:div_error}. We have
\begin{align*}
&\|\erru^n - \errv^n\|^2 
= \tau b(\erru^n - \errv^n, \phi_h^n) \\
=&\, \tau^2 \adif(\phi_h^n,\phi_h^n) - \tau^2 \sum_{e\in\Gamma_h} \frac{\tilde{\sigma}}{h} \|[\phi_h^n] \|_{L^2(e)}^2 + \tau^2 \|\vec{G}_h([\phi_h^n]) \|^2.
\end{align*}
Hence, \eqref{eq:conv_2_step1} reads 
\begin{equation}
    \frac12 \left( \|\erru^n \|^2 - \|\errv^n\|^2 \right) + \frac{\tau^2}{2} \adif(\phi_h^n,\phi_h^n) + \frac{\tau^2}{2} \sum_{e\in \Gamma_h} \frac{\tilde{\sigma}}{h} \|[\phi_h^n] \|^2_{L^2(e)} = \frac{\tau^2}{2} \|\vec{G}_h([\phi_h^n])\|^2.  \label{eq:conv_2}
\end{equation}
Taking the sum of \eqref{eq:conv_1} with \eqref{eq:conv_2} and multiplying by $1/\tau$ yield the result. 

\section{Proof of bound \eqref{eq:boundUhpaper}}\label{sec:app3}
We define the norm $\vertiii{\cdot} = \norm{\cdot}{L^\infty(\Omega)} + \norm{\cdot}{W^{1,3}(\Omega)}$.
To start the proof, we first note that: 
\begin{align}
\|\vec{U}(t)-\vec{U}_h(t)\| + h\|\vec{U}(t) -\vec{U}_h(t) \|_{\DG} + h\|P(t) - P_h(t)\| &\leq Ch^2\|\vec{\chi}_{\vec{u}}(t)\|, \label{eq:error_dg_aux} \\  
\vertiii{\vec{U}(t)} + \|\vec{U}_h(t)\|_{L^{\infty}(\Omega)} &\leq C \| \vec{\chi}_{\vec{u}}(t) \|. \label{eq:Linf_bd_Uh} 
\end{align}
The above estimates result from the error analysis of the dG formulation for the dual problem \eqref{eq:aux_pb_1}-\eqref{eq:aux_pb_3}. For more details, we refer to Lemma~5 in \cite{masri2021improved}. 
Choosing $\vec{\theta}_h = \vec{U}_h^n$ in \eqref{eq:third_err_eq} and multiplying by $\tau$ yields
\begin{align}\label{eq:first_error_eq_dual} 
(\vec{v}_h^n - \vec{u}^n - \vec{\chi}_{\vec{u}}^{n-1}\!,\vec{U}_h^n) + \tau \tilde{R}_{\mathcal{C}} (\vec{U}_h^n) + \tau \mu_\mathrm{s} a_\mathcal{D} (\vec{v}_h^n - \vec{u}^n, \vec{U}_h^n) = \tau b_{\mathcal{P}}(\vec{U}_h^n, p_h^{n-1}- p^n) \\
+ (\tau (\partial_t \vec{u})^n - (\vec{u}^n - \vec{u}^{n-1}), \vec{U}_h^n) + \tau b_{\mathcal{I}}(c_h^{n-1},\mu_h^n, \vec{U}^n_h) - \tau b_{\mathcal{I}}(c^n, \mu^n, \vec{U}^n_h),\nonumber
\end{align}
where 
\begin{align*}
\tilde{R}_{\mathcal{C}}(\vec{U}_h^n) = a_\mathcal{C}(\vec{u}_h^{n-1}; \vec{u}_h^{n-1}, \vec{v}_h^n, \vec{U}_h^{n}) -  a_\mathcal{C}(\vec{u}^{n}; \vec{u}^{n}, \vec{u}^n, \vec{U}_h^{n}).
\end{align*}
Let us begin with the first term on the right-hand side of \eqref{eq:first_error_eq_dual}.
We use the fact that $\vec{U}_h^n$ satisfies \eqref{eq:aux_2_dg} and the definition of $\pi_h p^n$ to obtain the following.  
\begin{align*}
b_{\mathcal{P}}(\vec{U}_h^n, p_h^{n-1} - p^n) 
= -b_{\mathcal{P}}(\vec{U}_h^n, p^n - \pi_h p^n) 
= \sum_{e \in \Gamma_h \cup \partial{\Omega}}\int_e \avg{p^n - \pi_h p^n}\jump{\vec{U}_h^n}\cdot \normal_e.  
\end{align*}
Let $\Delta_e$ denote the union of the two elements sharing a face $e$. By a trace inequality, approximation property \eqref{eq:l2_proj_approximation}, \eqref{eq:error_dg_aux}, and the fact that $\jump{\vec{U}^n} = \vec{0}$ a.e. on any face $e \in \Gamma_h \cup \partial \Omega$ since $\vec{U}^n \in H_0^2(\Omega)^d$, we obtain  for any $\epsilon>0$.
\begin{align}\label{eq:bd_b_dual}
\abs{b_{\mathcal{P}}(\vec{U}_h^n, p_h^{n-1} - p^n)} 
\leq C h^{k+1} \vert p^n \vert_ {H^k(\Omega)} \| \vec{\chi}_{\vec{u}}^n \| 
\leq \epsilon \mu_\mathrm{s} \| \vec{\xi}_\vec{u}^n \|^2 + C h^{2k+2} \Big(1+\frac{1}{\epsilon\mu_\mathrm{s}}\Big).
\end{align}
In the above, we used that 
\begin{equation}\label{eq:boundchin}
\|\vec{\chi}_{\vec{u}}^n\| \leq C(h^{k+1}| \vec{u}^n |_{H^{k+1}(\Omega)} + \|\vec{\xi}_\vec{u}^n\|),
\end{equation}
which is obtained by applying the triangle inequality and approximation property \eqref{eq:approximation_prop_1}. This bound will be used repeatedly in this proof. For the second term on the right-hand side of \eqref{eq:first_error_eq_dual}, we simply have: 
\begin{equation}\label{eq:bounding_time_err_dual}
|(\tau (\partial_t \vec{u})^n - (\vec{u}^n - \vec{u}^{n-1}),  \vec{U}_h^n)| 
\leq C\tau^2 \int_{t^{n-1}}^{t^n} \|\partial_{tt} \vec{u}\|^2 + \tau \|\vec{U}_h^n\|_{\DG}^2. 
\end{equation} 
We now consider the terms on the left-hand side of \eqref{eq:first_error_eq_dual}. With \eqref{eq:fully_dis6} and \eqref{eq:aux_2_dg}, we have
\begin{align*}
(\vec{v}_h^n - \vec{u}^n - \vec{\chi}_{\vec{u}}^{n-1},\vec{U}_h^n) 
= (\vec{\chi}_{\vec{u}}^{n} - \vec{\chi}_{\vec{u}}^{n-1}, \vec{U}_h^n) - \tau b_{\mathcal{P}}(\vec{U}_h^n, \phi_h^n) 
= (\vec{\chi}_{\vec{u}}^{n} - \vec{\chi}_{\vec{u}}^{n-1}, \vec{U}_h^n).
\end{align*}
Note that from \eqref{eq:aux_1_dg}, \eqref{eq:aux_2_dg}, the above equality, and the symmetry of $a_{\mathcal{D}}(\cdot, \cdot)$, we have 
\begin{align}\label{eq:expanding_aepsi_dual}
&(\vec{v}_h^n - \vec{u}^n - \vec{\chi}^{n-1},\vec{U}_h^n) 
= a_{\mathcal{D}}(\vec{U}_h^n - \vec{U}_h^{n-1}, \vec{U}_h^n)\\
=&\, \frac{1}{2} \big(a_{\mathcal{D}}(\vec{U}_h^n, \vec{U}_h^n) - a_{\mathcal{D}}(\vec{U}_h^{n-1}, \vec{U}_h^{n-1}) +  a_{\mathcal{D}}(\vec{U}_h^{n} - \vec{U}_h^{n-1}, \vec{U}_h^{n} - \vec{U}_h^{n-1})\big). \nonumber
\end{align}
In addition, we write 
\begin{align}\label{eq:breaking_up_aD_first_err_dual}
a_\mathcal{D}(\vec{v}_h^n - \vec{u}^n , \vec{U}_h^n) 
= a_\mathcal{D}(\vec{\xi}_{\vec{v}}^n - \vec{\xi}_{\vec{u}}^n, \vec{U}_h^n) 
+ a_\mathcal{D}(\vec{\xi}_{\vec{u}}^n, \vec{U}_h^n) 
+ a_\mathcal{D}(\vec{\eta}_{\vec{u}}^n, \vec{U}_h^n).
\end{align}
To handle the last term in above equality, let $\mathcal{Q}_h \vec{u}^n$ be the elliptic projection of $\vec{u}^n$ onto the space $\mathbf{X}_h$. Since the domain is convex, this projection satisfies \cite{riviere2008}: 
\begin{equation}
\forall \vec{\theta}_h \in \mathbf{X}_h,~ a_{\mathcal{D}}(\vec{u}^n - \mathcal{Q}_h \vec{u}^n , \vec{\theta}_h) = 0 
~\mathrm{and}~ 
\| \vec{u}^n - \mathcal{Q}_h \vec{u}^n \| \leq Ch^{k+1}|\vec{u}^n|_{H^{k+1}(\Omega)}. \label{eq:elliptic_projection}
\end{equation}
Let $\vec{\theta}_h = \Pi_h \vec{u}^n - \mathcal{Q}_h \vec{u}^n$ in \eqref{eq:aux_1_dg}. We obtain 
\begin{align*}
a_{\mathcal{D}}(\vec{\eta}_{\vec{u}}^n, \vec{U}_h^n) 
= a_{\mathcal{D}} (\Pi_h \vec{u}^n - \mathcal{Q}_h \vec{u}^n, \vec{U}_h^n) 
= (\vec{\chi}_{\vec{u}}^n , \Pi_h \vec{u}^n - \mathcal{Q}_h\vec{u}^n ) + b_{\mathcal{P}}(\Pi_h \vec{u}^n - \mathcal{Q}_h\vec{u}^n, P_h^n).
\end{align*}
We have  
\begin{align*}
\abs{b_{\mathcal{P}}(\Pi_h \vec{u}^n - \mathcal{Q}_h\vec{u}^n, P_h^n)} 
\leq C\|\Pi_h \vec{u}^n - \mathcal{Q}_h \vec{u}^n \| |P_h^n|_{\DG}. 
\end{align*}
Further, with approximation properties and \eqref{eq:boundchin}, we obtain 
\begin{align}\label{eq:dg_bound_ph}
|P_h^n|_{\DG} &\leq |P_h^n - \pi_h P^n|_{\DG} + |\pi_h P^n|_{\DG} \leq Ch^{-1}\|P_h^n - \pi_h P^n\|+ C| P^n|_{H^1(\Omega)}\\ 
&\leq C (\|\vec{\xi}_{\vec{u}}^n\| + h^{k+1}|\vec{u}^n|_{H^{k+1}(\Omega)}).\nonumber
\end{align}
Hence, Cauchy--Schwarz's inequality, the above bounds, the regularity assumption that $\vec{u} \in L^{\infty}(0,T;H^{k+1}(\Omega)^d)$,  and Young's inequality yield for any $\epsilon>0$: 
\begin{align}\label{eq:pih_un_u_dual}
&| a_{\mathcal{D}} (\Pi_h \vec{u}^n - \vec{u}^n, \vec{U}_h^n) |  \leq C  \|\Pi_h \vec{u}^n  - \mathcal{Q}_h \vec{u}^n \|(\|\vec{\xi}_{\vec{u}}^n\| + h^{k+1}|\vec{u}^n|_{H^{k+1}(\Omega)})    
\\ \leq&\, C h^{k+1 } |\vec{u}^n|_{H^{k+1}(\Omega)}(\|\vec{\xi}_{\vec{u}}^n\| + h^{k+1}|\vec{u}^n|_{H^{k+1}(\Omega)}) \leq  \epsilon  \|\vec{\xi}_{\vec{u}}^n \|^2 + C\left(\frac{1}{\epsilon} + 1  \right)  h^{2k+2}. \nonumber
\end{align} 
Consider now the second term in \eqref{eq:breaking_up_aD_first_err_dual}.  Letting $\vec{\theta}_h = \vec{\xi}_{\vec{u}}^n$ in \eqref{eq:aux_1_dg},  we obtain
\begin{equation}
a_{\mathcal{D}}(\vec{\xi}_{\vec{u}}^n,\vec{U}_h^n) = \| \vec{\xi}_{\vec{u}}^n \|^2 + (\vec{\chi}_{\vec{u}}^n - \vec{\xi}_{\vec{u}}^n , \vec{\xi}_{\vec{u}}^n) +  b_\mathcal{P}(\vec{\xi}_{\vec{u}}^n, P_h^n). \label{eq:rewriting_a_ep_dual}
\end{equation}
With \eqref{eq:bd_b_dual}, \eqref{eq:bounding_time_err_dual}, \eqref{eq:expanding_aepsi_dual}, \eqref{eq:breaking_up_aD_first_err_dual}, \eqref{eq:pih_un_u_dual}, and \eqref{eq:rewriting_a_ep_dual}, the equality \eqref{eq:first_error_eq_dual} becomes 
\begin{align}\label{eq:second_err_eq_dual}
&\frac{1}{2} \big(a_{\mathcal{D}}(\vec{U}_h^n, \vec{U}_h^n) - a_{\mathcal{D}}(\vec{U}_h^{n-1}, \vec{U}_h^{n-1}) + a_{\mathcal{D}}(\vec{U}_h^{n} - \vec{U}_h^{n-1}\!, \vec{U}_h^{n} - \vec{U}_h^{n-1})\big)
+ \tau \mu_\mathrm{s} \| \vec{\xi}_{\vec{u}}^n \|^2 \\
\leq&\, 
C \Big(\mu_\mathrm{s}\Big(1+\frac{1}{\epsilon}\Big) + 1 + \frac{1}{\epsilon\mu_\mathrm{s}}\Big) \tau h^{2k+2} 
+ C\tau^2 \int_{t^{n-1}}^{t^n}  \| \partial_{tt} \vec{u} \|^2 
+ C \tau \|\vec{U}_h^n\|_{\DG}^2 \nonumber
\\ 
-&\, \tau \tilde{R}_{\mathcal{C}}(\vec{U}_h^n) 
+ \tau b_{\mathcal{I}}(c_h^{n-1},\mu_h^n, \vec{U}^n_h) - \tau b_{\mathcal{I}}(c^n, \mu^n, \vec{U}^n_h) 
+ 2\epsilon \tau \mu_\mathrm{s} \|\vec{\xi}_{\vec{u}}^n\|^2 \nonumber\\
-&\, \tau \mu_\mathrm{s} a_{\mathcal{D}}(\vec{\xi}_{\vec{v}}^n - \vec{\xi}_{\vec{u}}^n, \vec{U}_h^n) -\tau \mu_\mathrm{s} (\vec{\eta}_{\vec{u}}^n, \vec{\xi}_\vec{u}^n) - \tau \mu_\mathrm{s} b_{\mathcal{P}}(\vec{\xi}_{\vec{u}}^n, P_h^n). \nonumber 
\end{align}
We now handle the last three terms in the above bound. 
Let $\vec{\theta}_h = \vec{\xi}_{\vec{v}}^n - \vec{\xi}_{\vec{u}}^n$ in \eqref{eq:aux_1_dg}. 
\begin{align} 
a_{\mathcal{D}}( \vec{\xi}_{\vec{v}}^n - \vec{\xi}_{\vec{u}}^n,\vec{U}_h^n)
&= (\vec{\xi}_{\vec{u}}^n, \vec{\xi}_{\vec{v}}^n - \vec{\xi}_{\vec{u}}^n)  +  (\vec{\eta}_{\vec{u}}^n, \vec{\xi}_{\vec{v}}^n - \vec{\xi}_{\vec{u}}^n) + b_{\mathcal{P}}(\vec{\xi}_{\vec{v}}^n - \vec{\xi}_{\vec{u}}^n,  P_h^n). 
\end{align} 
Recall that by \eqref{eq:fully_dis6}, \eqref{eq:div_error_2}, \eqref{eq:lift_prop_g}, and the assumption that $\tilde{\sigma} \geq  \tilde{M}_k^2$,  we have 
\begin{equation}
(\vec{\xi}_{\vec{u}}^n,\vec{\xi}_{\vec{v}}^n - \vec{\xi}_{\vec{u}}^n) 
= -\tau b_{\mathcal{P}} ( \vec{\xi}_{\vec{u}}^n,\phi_h^n) 
= \tau^2 \sum_{e\in \Gamma_h} \frac{\tilde{\sigma}}{h_e} \|\jump{\phi_h^n}\|_{L^2(e)}^2  - \tau^2 \| \vec{G}_h([\phi_h^n])\|^2  \geq 0.
\end{equation}
Using Cauchy--Schwarz's inequality,  \eqref{eq:approximation_prop_1} and \eqref{eq:erruvint4}, we have the following bound. 
\begin{align}
\abs{(\vec{\eta}_{\vec{u}}^n , \vec{\xi}_{\vec{v}}^n - \vec{\xi}_{\vec{u}}^n)} 
+ \abs{b_{\mathcal{P}}(\vec{\xi}_{\vec{v}}^n - \vec{\xi}_{\vec{u}}^n , P_h^n)} 
\leq &C \|\vec{\xi}_{\vec{v}}^n - \vec{\xi}_{\vec{u}}^n \| (h^{k+1}|\vec{u}^n|_{H^{k+1} (\Omega)} + \vert P_h^n\vert_{\DG})\nonumber\\
\leq &C \tau \vert \phi_h^n \vert_{\mathrm{DG}}  (h^{k+1}|\vec{u}^n|_{H^{k+1} (\Omega)} + \vert P_h^n\vert_{\DG}).   \label{eq:bound105}
\end{align}
Therefore with \eqref{eq:dg_bound_ph}, the regularity assumption that $\vec{u} \in L^{\infty}(0,T; H^{k+1}(\Omega)^d)$,   and Young's inequality, the bound \eqref{eq:bound105} becomes 
\begin{align}
| (\vec{\eta}_{\vec{u}}^n, \vec{\xi}_{\vec{v}}^n - \vec{\xi}_{\vec{u}}^n) |  & +   |b_\mathcal{P}(\vec{\xi}_{\vec{v}}^n - \vec{\xi}_{\vec{u}}^n, P_h^n)|\leq  C \tau \vert \phi_h^n\vert_{\DG}( h^{k+1}|\vec{u}^n|_{H^{k+1} (\Omega)} + \|  \vec{\xi}_{\vec{u}}^n\|) \\ & \leq  \epsilon \| \vec{\xi}_{\vec{u}}^n\|^2+ C \left(1+ \frac{1}{\epsilon}\right) \tau^2 |\phi_h^n|^2_{\DG}  + Ch^{2k+2}.\nonumber
\end{align}
With \eqref{eq:approximation_prop_1}, we have 
\begin{align} 
|(\vec{\eta}_{\vec{u}}^n, \vec{\xi}_\vec{u}^n )| \leq \epsilon \|\vec{\xi}_\vec{u}^n\|^2 + \frac{C}{\epsilon} h^{2k+2} |\vec{u}^n|^2_{H^{k+1}(\Omega)}.
\end{align}
To handle the last term in \eqref{eq:second_err_eq_dual}, we use \eqref{eq:div_error_2}, \eqref{eq:lift_prop_g}, and  \eqref{eq:dg_bound_ph}. 
\begin{align}
&\abs{b_{\mathcal{P}}(\vec{\xi}_{\vec{u}}^n, P_h^n)}  
= \Big| -\tau  \sum_{e \in \Gamma_h} \frac{\tilde{\sigma}}{h_e} \int_e \jump{\phi_h^n}\jump{P_h^n} + \tau (\vec{G}_h(\jump{\phi_h^n}),\vec{G}_h(\jump{P_h^n}))\Big| \\
\leq&\, C\tau \vert \phi_h^n \vert_{\DG} \vert P_h^n \vert_{\DG} \leq  \epsilon \|\vec{\xi}_{\vec{u}}^n\|^2  + C \left(1+ \frac{1}{\epsilon}\right) \tau^2 |\phi_h^n|^2_{\DG}+ Ch^{2k+2}.\nonumber
\end{align} 
With the above bounds combined, \eqref{eq:second_err_eq_dual} becomes  
\begin{multline}\label{eq:improved_estimate_semi_final}
\frac{1}{2} \big(a_{\mathcal{D}}(\vec{U}_h^n, \vec{U}_h^n) - a_{\mathcal{D}}(\vec{U}_h^{n-1}, \vec{U}_h^{n-1}) + a_{\mathcal{D}}(\vec{U}_h^{n} - \vec{U}_h^{n-1}\!, \vec{U}_h^{n} - \vec{U}_h^{n-1})\big) 
+ \tau \mu_\mathrm{s} \| \vec{\xi}_{\vec{u}}^n \|^2\\ 
\leq C\Big(\mu_\mathrm{s}\Big(1+\frac{1}{\epsilon}\Big) + 1 + \frac{1}{\epsilon \mu_\mathrm{s}}\Big)\tau h^{2k+2}
+ C\tau^2 \int_{t^{n-1}}^{t^n} \|\partial_{tt} \vec{u}\|^2 
+ C \tau \|\vec{U}_h^n\|_{\DG}^2
- \tau \tilde{R}_{\mathcal{C}}(\vec{U}_h^n)
\\
+ \tau b_{\mathcal{I}}(c_h^{n-1},\mu_h^n, \vec{U}^n_h) - \tau  b_{\mathcal{I}}(c^n, \mu^n, \vec{U}^n_h)
+ 5\epsilon \tau \mu_\mathrm{s} \|\vec{\xi}_{\vec{u}}^n\|^2 + C \mu_\mathrm{s} \left(1+ \frac{1}{\epsilon}\right)\tau^3 \vert \phi_h^n \vert_{\DG}^2.
\end{multline}
The bound for the nonlinear term $\tilde{R}_{\mathcal{C}}(\vec{U}_h^n)$ is technical and can be found in \cite{masri2021improved}. 
Namely, we have 
\begin{align}\label{eq:bound_R_C_improved}
& |\tilde{R}_{\mathcal{C}}(\vec{U}_h^n)| \leq  7\epsilon \mu_\mathrm{s}\|\vec{\xi}_{\vec{u}}^n\|^2 +2\epsilon\|\vec{\xi}_{\vec{u}}^{n-1} - \vec{\xi}_{\vec{u}}^n\|^2  + C\Big(\frac{h^2}{\epsilon \mu_\mathrm{s}} + 1\Big)(\tau^2 |\phi_h^n|^2_{\DG} + h^{2k+2}) \\ 
& + C\Big( \frac{1}{\epsilon\mu_\mathrm{s}} + 1\Big)\big( h^{2}(\|\vec{\xi}_{\vec{u}}^n\|^2 + \|\vec{\xi}_{\vec{u}}^{n-1}\|^2)+(\|\vec{\xi}_{\vec{u}}^{n-1}\|^2+h^2)\| \vec{\xi}_{\vec{v}}^n \|_{\DG}^2 \big)\nonumber \\ 
& + C\tau \int_{t^{n-1}}^{t^n} \|\partial_t \vec{u}\|^2 + C\Big( \frac{1}{\epsilon\mu_\mathrm{s}} + 1\Big) \|\vec{U}_h^n\|_{\DG}^2. \nonumber 
\end{align}
%
%
We use \eqref{eq:bound_R_C_improved} in \eqref{eq:improved_estimate_semi_final}, use the coercivity property \eqref{eq:coercivity_astrain}, and choose $\epsilon = 1/24$. We sum the resulting equation,  from $n= 1$ to $n =m$, use the regularity assumptions, and obtain the following. 
\begin{align}\label{eq:error_eq_semi_final_improved_estimate}
&\frac{1}{2} a_{\mathcal{D}}(\vec{U}_h^m, \vec{U}_h^m)  - \frac{1}{2}a_{\mathcal{D}}( \vec{U}_h^{0}, \vec{U}_h^{0}) + \frac{K_\mathcal{D}}{4} \sum_{n = 1}^m  \|\vec{U}_h^n - \vec{U}_h^{n-1}\|_{\DG}^2 + \frac{\mu_\mathrm{s}\tau}{2} \sum_{n=1}^m \| \vec{\xi}_{\vec{u}}^n \|^2 \\
\leq&\, C\Big(1+ \frac{1}{\mu_\mathrm{s}} + \mu_\mathrm{s}\Big) h^{2k+2} + C\tau^2  + C \tau^3  \Big(\frac{h^2}{\mu_\mathrm{s}} +  1 + \mu_\mathrm{s} \Big)\sum_{n=1}^m \vert \phi_h^n \vert_{\DG}^2 \nonumber \\
+&\, \frac{\tau}{12} \sum_{n=1}^m \| \vec{\xi}_{\vec{u}}^n - \vec{\xi}_{\vec{u}}^{n-1} \|^2 + C \tau \Big( \frac{1}{\mu_\mathrm{s}} + 1 \Big) \sum_{n=1}^m \|\vec{U}_h^n\|_{\DG}^2 \nonumber \\ 
+&\, C\Big(\frac{1}{\mu_\mathrm{s}}+1\Big)\tau \sum_{n=1}^m \big( h^{2}(\|\vec{\xi}_{\vec{u}}^n\|^2  + \|\vec{\xi}_{\vec{u}}^{n-1}\|^2)+(\|\vec{\xi}_{\vec{u}}^{n-1}\|^2+h^2)\| \vec{\xi}_{\vec{v}}^n \|_{\DG}^2 \big) \nonumber \\ 
+&\, \tau \sum_{n=1}^m \big(b_{\mathcal{I}}(c_h^{n-1},\mu_h^n, \vec{U}^n_h) - b_{\mathcal{I}}(c^n, \mu^n, \vec{U}^n_h)\big).\nonumber
\end{align}
Note that by \eqref{eq:aux_1_dg}-\eqref{eq:aux_2_dg}, we easily see that $\vec{U}_h^0 = \vec{0}$. 
With \eqref{eq:error_estimate_theorem} and coercivity of $a_\mathcal{D}$ \eqref{eq:coercivity_astrain}, we have
\begin{align*}
& K_\mathcal{D} \|\vec{U}_h^m\|^2_{\DG} 
+  \mu_\mathrm{s} \tau \sum_{n=1}^m \| \vec{\xi}_{\vec{u}}^n\|^2  \leq 
C(C_\mathrm{err}) (\tau^2+h^{2k+2} + \tau h^2) 
\\
+&\, C \tau \Big( \frac{1}{\mu_\mathrm{s}} + 1 \Big) \sum_{n=1}^m \|\vec{U}_h^n\|_{\DG}^2
+ \tau \sum_{n=1}^m \big(b_{\mathcal{I}}(c_h^{n-1},\mu_h^n, \vec{U}^n_h) - b_{\mathcal{I}}(c^n, \mu^n, \vec{U}^n_h)\big). 
\end{align*}

\bibliographystyle{siamplain}
\bibliography{references}
\end{document}